\newcolumntype{C}[1]{>{\centering\let\newline\\\arraybackslash\hspace{0pt}}m{#1}}
\newtheorem{thm}{Theorem}[section]
\newtheorem{cor}[thm]{Corollary}
\newtheorem{lemma}[thm]{Lemma}
\newtheorem{prop}[thm]{Proposition}
\newtheorem{conj}[thm]{Conjecture}
\newtheorem{defn}[thm]{Definition}
\newtheorem{rmk}[thm]{Remark}
\newtheorem{eg}[thm]{Example}
\numberwithin{equation}{section}
\newcommand{\ol}{\overline}
\newcommand{\wt}{\widetilde}
\newcommand{\fns}{\footnotesize}
\renewcommand{\epsilon}{\varepsilon}
\newcommand{\cut}{\!\bbslash\!}
\renewcommand{\top}{\mathrm{top}}
\renewcommand{\bot}{\mathrm{bot}}
\newcommand{\brloc}{\mathrm{brloc}}
\newcommand{\boa}{\boldsymbol{\alpha}}
\newcommand{\bob}{\boldsymbol{\beta}}
\newcommand{\x}{\bold{x}}
\newcommand{\y}{\bold{y}}
\newcommand{\z}{\bold{z}}
\newcommand{\s}{\mathfrak{s}}
\newcommand{\xt}{\bold{x}^\text{top}}
\newcommand{\xb}{\bold{x}^\text{bot}}
\newcommand{\sym}{\text{Sym}}
\newcommand{\Z}{\mathbb{Z}}
\newcommand{\R}{\mathbb{R}}
\newcommand{\T}{\mathbb{T}}
\newcommand{\gS}{\mathfrak{S}}
\newcommand{\hd}{ (\Sigma, \boa, \bob)}
\newcommand{\e}{ \widetilde{\epsilon}}
\newcommand{\ts}{ \widetilde{\mathfrak{s}}}
\DeclareMathOperator{\spinc}{\text{spin}^\text{c}}
\DeclareMathOperator{\Spinc}{\text{Spin}^\text{c}}
\begin{document}

\title[Heegaard Floer theory and pseudo-Anosov flows II]{Heegaard Floer theory and pseudo-Anosov flows II: differential  and Fried pants}

\author{Antonio Alfieri}
\address{department of mathematics\\
University of Georgia at Athens \\
1023 D. W. Brooks Drive, Athens, GA 30605}
\email{alfieriantonio90@gmail.com}

\author{Chi Cheuk Tsang}
\address{Département de mathématiques \\
Université du Québec à Montréal \\
201 President Kennedy Avenue \\
Montréal, QC, Canada H2X 3Y7}
\email{tsang.chi\_cheuk@uqam.ca}

\begin{abstract}
In earlier work, relying on work of Agol-Guéritaud and Landry-Minsky-Taylor, we showed that given a pseudo-Anosov flow $(Y,\phi)$ and a collection of closed orbits $\mathcal{C}$ satisfying the `no perfect fit' condition, one can construct a special Heegaard diagram for the link complement  $Y^\sharp= Y \setminus \nu (\mathcal{C})$  framed by the degeneracy curves. 
In this paper, we demonstrate how the special combinatorics of this diagram can be used to understand the differential of the associated Heegaard Floer chain complex.
More specifically, we introduce a refinement of the $\text{spin}^\text{c}$-grading obstructing two Heegaard states from being connected by an effective domain.
We describe explicitly the subcomplexes in the refined gradings that represent irreducible multi-orbits, in the sense that they contain states corresponding to multi-orbits which cannot be resolved along Fried pants. In particular we show that the homology of these subcomplexes are 1-dimensional.
When specialized to the case of suspension flows our arguments prove some results in the spirit of Ni, Ghiggini, and Spano: the next-to-top non-zero sutured Floer group counts the number of periodic points of least period.
\end{abstract}

\maketitle

% Indent subsections and subsubsections in table of contents
\setcounter{tocdepth}{3}
\makeatletter
\def\l@subsection{\@tocline{2}{0pt}{2.5pc}{5pc}{}}
\def\l@subsubsection{\@tocline{2}{0pt}{5pc}{7.5pc}{}}
\makeatother

%\tableofcontents
\thispagestyle{empty}
\section{Introduction}

A flow $\phi$ on a closed oriented 3-manifold $Y$ is called \emph{pseudo-Anosov} if there exists a pair of singular 2-dimensional foliations $(\Lambda^s, \Lambda^u)$ whose leaves intersect transversely in the flow lines. The two foliations are required to be such that the flow lines in each leaf of $\Lambda^s$ are forward asymptotic, and the flow lines in each leaf of $\Lambda^u$ are backward asymptotic. 
A pseudo-Anosov flow $\phi$ is said to have \emph{no perfect fits} relative to a collection of closed orbits $\mathcal{C}$ if $\phi$ has no anti-homotopic orbits in the complement of $\mathcal{C}$.

Given such a pair $(\phi,\mathcal{C})$, work of Agol-Guéritaud, Schleimer-Segerman \cite{SS19}, and Landry-Minsky-Taylor \cite{LMT23} state that the dynamical information in $(\phi,\mathcal{C})$ can be encoded by a combinatorial device $B$, called a \emph{veering branched surface}, on the blown-up manifold $Y^\sharp = Y \backslash \nu(\mathcal{C})$. 
In the prequel to this paper \cite{AT25a} we showed that from $B$, one can construct a Heegaard diagram $\hd$ for $\mathcal{C}$ as a link in $Y$ framed by the degeneracy curves. 
In other words, $\hd$ is a Heegaard diagram for $Y^\sharp$ with a suture along each closed boundary orbit of the blown-up flow $\phi^\sharp$.

The focus of \cite{AT25a} was on the generators of the associated chain complex $SFC(\phi,\mathcal{C})$, i.e. the Heegaard states of the specific Heegaard diagrams we built.
Using the special combinatorics of $B$, we showed that each Heegaard state $\x$ corresponds to an embedded multi-loop $\mu_\x$ of the augmented dual graph $G_+$.
Furthermore, using the theory of veering branched surfaces we show that $\mu_\x$ is homotopic to a closed multi-orbit $\gamma_\x$ of the blown-up flow $\phi^\sharp$.

The focus of this paper will be on the differential of the chain complex $SFC(\phi,\mathcal{C})$.
If we had to  summarize our work  into one slogan,  that would be: \emph{A differential can appear between two generators $\x$ and $\y$ only if their multi-loops $\mu_\x$ and $\mu_\y$ can be deformed into each other across sectors, and/or by resolving their multi-orbits $\gamma_\x$ and $\gamma_\y$ along Fried pants}. Here by deforming across sectors we mean strumming, as specified in \cite[Section 4.4]{AT25a}, while by Fried pants resolution we mean a form of cobordism on orbits introduced in \cite{Fri83}, and recently used by Zung to define a certain combinatorial invariant of pseudo-Anosov flows \cite{Zun25}.

\subsection{The $\ts$-grading}

Part of our work consists in developing a technique to tell apart  generators in the differential of $SFC(\phi,\mathcal{C})$. We briefly summarize our ideas. 

First of all in \cite{AT25a} we showed that the Heegaard diagram $(\Sigma, \boa, \bob)$ is a balanced, admissible sutured Heegaard diagram, and that it makes sense to consider the sutured Heegaard Floer chain group $SFC(\phi, \mathcal{C})=CF\hd$ as in \cite{Juh06}. Let $\mathfrak{S}(\Sigma, \boa, \bob)$ denote the generators of $SFC(\phi, \mathcal{C})$, that is the Heegaard states of the diagram $\hd$.

In \cite{OS04c} Ozsv\' ath and Szab\' o introduced a map 
\[\epsilon: \mathfrak{S}(\Sigma, \boa, \bob) \times \mathfrak{S}(\Sigma, \boa, \bob)  \to H_1(M)\]
that can be used to obstruct two generators $\x$ and $\y$ from being connected by some Heegaard Floer differential. More specifically, in \cite{OS04c} they prove the following fact. See also \cite[Definition 4.6]{Juh06}.
\begin{thm}[Ozsv\' ath-Szab\' o]
    Two generators $\x$ and $\y$ are connected by a $C^\infty$ Whitney disk if and only if $\epsilon(\x,\y)=0$. Consequently, if $\epsilon(\x,\y)\neq 0$ then  in the Heegaard Floer differential 
    \[\partial \x = \sum_{\y \in \mathfrak{S}\hd} c(\x, \y) \cdot  \y \ , \]
    the coefficient $c(\x,\y)$  vanishes.  
\end{thm}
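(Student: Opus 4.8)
The plan is to recall the definition of $\epsilon$ and then follow the original argument of Ozsv\'ath--Szab\'o, which reduces the whole statement to tracking a single homology class. First observe that the ``consequently'' clause only uses the easy direction of the equivalence: a nonzero coefficient $c(\x,\y)$ in the differential is, by definition, witnessed by an index-one pseudo-holomorphic Whitney disk from $\x$ to $\y$, which (by elliptic regularity) is in particular a $C^\infty$ Whitney disk, so once we know every $C^\infty$ Whitney disk forces $\epsilon(\x,\y)=0$, the contrapositive gives $\epsilon(\x,\y)\neq0\Rightarrow c(\x,\y)=0$. So the real content is the stated equivalence. I would then record the construction of $\epsilon$: for $\x=\{x_1,\dots,x_d\}$ and $\y=\{y_1,\dots,y_d\}$ one picks a $1$-chain $a$ carried by $\boa$ with $\partial a=\y-\x$ and a $1$-chain $b$ carried by $\bob$ with $\partial b=\x-\y$; then $a+b$ is a $1$-cycle in $\Sigma$, and $\epsilon(\x,\y)$ is the image of $[a+b]$ in $H_1(M)$. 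Well-definedness is the observation that a different choice of $a$ changes $a+b$ by a cycle carried by the $\alpha$-curves (and similarly for $b$), which dies in $H_1(M)\cong H_1(\Sigma)/\langle[\alpha_i],[\beta_j]\rangle$ because the $\alpha$- and $\beta$-curves bound in the two compression bodies.

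For the forward implication, given a Whitney disk $u\in\pi_2(\x,\y)$ I would pass to its domain $D(u)=\sum_i n_i D_i$, the $2$-chain on $\Sigma$ obtained by recording the multiplicities of $u$ along the divisors $\{z_i\}\times\sym^{d-1}(\Sigma)$. The Lagrangian boundary conditions defining a Whitney disk translate verbatim into $\partial D(u)=a_u+b_u$, where $a_u$ is carried by $\boa$ with $\partial a_u=\y-\x$ and $b_u$ is carried by $\bob$ with $\partial b_u=\x-\y$. Thus $(a_u,b_u)$ is an admissible pair in the definition of $\epsilon$, and $a_u+b_u=\partial D(u)$ is a boundary in $\Sigma$; hence $\epsilon(\x,\y)=0$. (Only continuity of $u$ enters here, via its domain.)

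For the reverse implication --- the homotopy-theoretic core, and the step I expect to be the main obstacle --- I would work in $\sym^d(\Sigma)$ and use the classical computation $\pi_1(\sym^d\Sigma)\cong H_1(\Sigma;\ZZ)$ (the symmetric product forces abelianization of $\pi_1(\Sigma)$). A Whitney disk connecting $\x$ to $\y$ exists precisely when, for some path $\gamma_\alpha$ from $\x$ to $\y$ in the torus $\mathbb{T}_\alpha$ spanned by the $\alpha$-curves and some path $\gamma_\beta$ from $\y$ to $\x$ in $\mathbb{T}_\beta$, the concatenation $\gamma_\alpha*\gamma_\beta$ is nullhomotopic in $\sym^d\Sigma$. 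Since $\pi_1(\sym^d\Sigma)$ is abelian, as $(\gamma_\alpha,\gamma_\beta)$ vary this class moves through a coset of $(\iota_\alpha)_*\pi_1\mathbb{T}_\alpha+(\iota_\beta)_*\pi_1\mathbb{T}_\beta$, so a disk exists iff the image of $[\gamma_\alpha*\gamma_\beta]$ in the quotient
\[\pi_1(\sym^d\Sigma)\big/\big((\iota_\alpha)_*\pi_1\mathbb{T}_\alpha+(\iota_\beta)_*\pi_1\mathbb{T}_\beta\big)\;\cong\;H_1(\Sigma)\big/\langle[\alpha_i],[\beta_j]\rangle\;\cong\;H_1(M)\]
vanishes. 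Finally I would check that pushing $\gamma_\alpha*\gamma_\beta$ down to $\Sigma$ yields exactly a cycle $a+b$ of the type appearing in the definition of $\epsilon$, so that this image class equals $\epsilon(\x,\y)$; when it vanishes, a nullhomotopy of $\gamma_\alpha*\gamma_\beta$ (read with the two boundary arcs of the disk mapping into $\mathbb{T}_\alpha$ and $\mathbb{T}_\beta$) is a Whitney disk, and since $\sym^d\Sigma$ is a smooth manifold it can be smoothed, rel the boundary conditions, to a $C^\infty$ one.

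The subtle points are all concentrated in this last implication: the identification $\pi_1(\sym^d\Sigma)\cong H_1(\Sigma)$ (and the fact that $\pi_2(\sym^d\Sigma)$ never obstructs the \emph{existence} of a disk, only the enumeration of homotopy classes), the computation of the quotient displayed above, and the matching of the topologically defined obstruction with the combinatorial $\epsilon$. In the sutured setting of $\hd$ one runs the identical argument after the standard closing-up of $(\Sigma,\boa,\bob)$, where $\mathbb{T}_\alpha$ and $\mathbb{T}_\bob$ are genuine tori, replacing $H_1(Y)$ by $H_1(M)$ and $\spinc$-structures by relative $\spinc$-structures as in \cite{Juh06}; no new ingredient is required.
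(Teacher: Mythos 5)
The paper itself does not prove this statement (it is quoted from Ozsv\'ath--Szab\'o, with \cite{Juh06} for the sutured version), so the comparison is with the classical argument, and your closed-diagram core reproduces it correctly: the domain of a disk for the forward direction, $\pi_1(\sym^d\overline{\Sigma})\cong H_1(\overline{\Sigma})$ and the quotient by the images of $\pi_1\T_{\boa}$ and $\pi_1\T_{\bob}$ for the reverse, then smoothing. The gap is concentrated in your last sentence, where you claim the sutured case needs ``no new ingredient after the standard closing-up.'' In the setting of $\hd$ the Whitney disks relevant to this theorem (and the disks counted by the differential) are maps into $\sym^d(\overline{\Sigma})\setminus\bigcup_i\{z_i\}\times\sym^{d-1}(\overline{\Sigma})$, and $\epsilon(\x,\y)$ lives in $H_1(M)\cong H_1(\Sigma)/\langle[\alpha_i],[\beta_j]\rangle$ with $\Sigma$ the surface with boundary. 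If you run the $\pi_1$-argument in all of $\sym^d(\overline{\Sigma})$, the obstruction you compute lies in $H_1(\overline{\Sigma})/\langle[\alpha_i],[\beta_j]\rangle$; since pinching $\partial\Sigma$ kills the classes of the boundary circles, this is $H_1(M)$ modulo the classes of the sutures, a proper quotient in general (the sutures here are degeneracy-slope curves, typically essential in homology). So, read literally, your reverse direction proves existence of a disk, possibly crossing the divisors, under the weaker hypothesis that the image of $\epsilon(\x,\y)$ in that quotient vanishes; symmetrically, in the forward direction a disk that crosses the divisors only yields $[a+b]=0$ in $H_1(\overline{\Sigma})$, not $\epsilon(\x,\y)=0$ in $H_1(M)$.

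The repair is small but must be made explicit. Either work in the complement of the divisors throughout: $\sym^d(\overline{\Sigma})\setminus\bigcup_i\{z_i\}\times\sym^{d-1}(\overline{\Sigma})$ coincides with $\sym^d\bigl(\overline{\Sigma}\setminus\{z_1,\dots,z_k\}\bigr)$, whose fundamental group is $H_1(\overline{\Sigma}\setminus\{z_1,\dots,z_k\})\cong H_1(\Sigma)$, so the relevant quotient really is $H_1(M)$ and the null-homotopy you build is a Whitney disk avoiding the divisors (hence of the kind appearing in the differential). Or argue through domains, which is the route implicit in the paper: $\epsilon(\x,\y)=0$ means $[a+b]$ is a $\ZZ$-combination of the $[\alpha_i],[\beta_j]$ in $H_1(\Sigma)$, so after adjusting $a$ and $b$ by full $\alpha$- and $\beta$-curves one gets a $2$-chain $D$ in $\Sigma$ with $\partial D=a+b$; any such chain automatically has multiplicity zero on every region meeting $\partial\Sigma$ (nothing in $\boa\cup\bob$ can cancel the $\partial\Sigma$-portion of such a region's boundary), so $D$ is a domain, and the correspondence between domains and $C^\infty$ disks quoted in the background section produces the required disk. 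Your handling of the ``consequently'' clause, and of the forward direction for disks missing the divisors, is fine.
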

Note that the Ozsv\'ath-Szab\'o $\epsilon$-map satisfies some functional equations:
\begin{equation} \label{eq:epsilonproperties}
\epsilon(\x,\x)=0\ , \ \ \  \epsilon(\x,\y)=\epsilon(\y,\x)\ , \ \ \  \epsilon(\x,\y)+\epsilon(\y,\z)= \epsilon(\x,\z)
\end{equation}
that allows one to define an equivalence relation: two generators $\x$ and $\y$ are declared to be $\epsilon$-equivalent $\x\sim \y$ if and only if $\epsilon(\x,\y)=0$. The equivalence classes of this relation partition generators according to their $\spinc$-grading. Indeed, the $\spinc$-grading can be thought of as a map
\[\Spinc\hd := \mathfrak{S}\hd/\sim \  \longrightarrow \Spinc(M, \Gamma)\]
labeling each equivalence class with a $\spinc$-structure of the sutured manifold $(M,\Gamma)$.

In the specific set-up of veering branched surfaces, we showed in \cite{AT25a} that there are two preferential Heegaard states which we call $\xt$ and $\xb$. Using the bottom generator $\xb$ as \emph{reference generator} we can turn the $\epsilon$-map into a map 
\[\epsilon: \mathfrak{S}(\Sigma, \boa, \bob) \to H_1(M)\] 
by specifying the second argument: $\epsilon(\x)=\epsilon(\x, \xb)$. It follows from \Cref{eq:epsilonproperties} that in this notation two generators $\x$ and $\y$ are $\epsilon$-equivalent if and only if $\epsilon(\x)=\epsilon(\y)$. Furthermore, since $M$ retracts onto $B$, there is an identification
\[H_1(M)= \frac{H_1(\Sigma)}{ \text{Span}_\Z \langle[\alpha_1], \dots, [\alpha_d],[\beta_1], \dots , [\beta_d]  \rangle } = \frac{H_1(G)}{ \text{Span}_\Z\langle [\partial S_1], \dots , [\partial S_d]  \rangle} \ ,\] 
where $G$ denotes the $1$-skeleton of $B$, and   $S_1, \dots, S_d$ denotes the sectors of $B$. 

In \Cref{sec:wtsgrading} we show that the $\epsilon$-map lifts to a multi-valued map
\begin{center}
   \begin{tikzcd}[column sep=small]
     & H_1(G)\arrow{d}{i_*}  \\
\mathfrak{S}(\Sigma, \boa, \bob) \arrow{ru}{\widetilde{\epsilon}}  
 \arrow{r}{\epsilon} & H_1(M)
\end{tikzcd} 
\end{center}
that can be used to obstruct the existence of effective domains.
 
\begin{rmk} 
Here by a multi-valued map $\widetilde{f}: X\to Y$ we mean a genuine map from $X$ to the finite subsets of $Y$. For a multi-valued map to have  the lifting  property
 \begin{center}
   \begin{tikzcd}[column sep=small]
     & Y\arrow{d}{\pi}  \\
X\arrow{ru}{\widetilde{f}}  
 \arrow{r}{f} & Z
\end{tikzcd} 
\end{center}
 we mean that $f(x)=\pi(y)$, for all $y \in \widetilde{f}(x)$.  
 \end{rmk}

One of our main technical results can be stated as follows.

\begin{lemma} \label{lemma:introepsilondifference}
Suppose that $\hd$ is the Heegaard diagram associated to a veering branched surface, and that $\x$ and $\y$ are two Heegaard states. If there exists an effective domain $D$ with $n_\z(D)=0$ connecting $\x$ to $\y$, then $\widetilde{\epsilon}(\x) \cap \widetilde{\epsilon}(\y) \neq \varnothing$. 
\end{lemma}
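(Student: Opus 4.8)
The plan is to unwind the definition of the multi-valued lift $\widetilde\epsilon$ from \Cref{sec:wtsgrading} and to build, out of the domain $D$ itself, a $1$-cycle on $G$ witnessing $\widetilde\epsilon(\x)\cap\widetilde\epsilon(\y)\neq\varnothing$. I would begin with a formal reduction. As in the unrefined case, one has a relative lift $\widetilde\epsilon(\x,\y)\subset H_1(G)$ with $\widetilde\epsilon(\x)=\widetilde\epsilon(\x,\xb)$, and one checks a multi-valued analogue of the cocycle relation \eqref{eq:epsilonproperties}: every element of $\widetilde\epsilon(\x)$ has the form $u+v$ with $u\in\widetilde\epsilon(\x,\y)$ and $v\in\widetilde\epsilon(\y)$, and conversely each such sum lies in $\widetilde\epsilon(\x)$. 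Since $\widetilde\epsilon(\y)$ is a nonempty finite set, it then suffices to prove the single statement $0\in\widetilde\epsilon(\x,\y)$: given that, $v=0+v\in\widetilde\epsilon(\x)$ for every $v\in\widetilde\epsilon(\y)$, so the two sets meet. In other words, the content of the lemma is that an effective domain $D$ with $n_\z(D)=0$ certifies that the \emph{zero} class of $H_1(G)$ is an admissible lift of $\epsilon(\x,\y)$ — a class which is automatically $0$ in $H_1(M)$ by Ozsv\'ath--Szab\'o, but a priori need not lift to $0$ in $H_1(G)$ for a general $\epsilon$-equivalent pair.

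Next I would read a preferred lift directly off of $D$. Set $a:=\partial D\cap\boa$ and $b:=\partial D\cap\bob$, so that $\partial D=a-b$ with $\partial a=\partial b=\y-\x$ as $0$-chains of intersection points; in particular $a-b$ is one of the $1$-cycles on $\Sigma$ that compute $\epsilon(\x,\y)$. The effectivity of $D$ is what singles out a preferred way to isotope this cycle onto the $1$-skeleton $G$: across each arc of $a$ lying on an $\alpha$-curve, and each arc of $b$ lying on a $\beta$-curve, the two local multiplicities of $D$ on either side are well-defined non-negative integers differing by one, and one pushes the arc onto $G$ along the side carrying the \emph{smaller} multiplicity. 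Let $\zeta(D)\subset G$ be the resulting $1$-cycle. By construction its class is one of the finitely many values making up $\widetilde\epsilon(\x,\y)$, so we are reduced to proving $[\zeta(D)]=0$; since $G$ is a graph, this amounts to $\zeta(D)$ vanishing as a $1$-chain.

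The last point is where both hypotheses genuinely enter, and where I expect the main difficulty to lie. Without them one learns only that $[\zeta(D)]\in\text{Span}_\Z\langle[\partial S_1],\dots,[\partial S_d]\rangle$, i.e.\ that $\epsilon(\x,\y)=0$, which is just the Ozsv\'ath--Szab\'o theorem. To sharpen this I would track the region multiplicities of $D$. The hypothesis $n_\z(D)=0$ says that $D$ has multiplicity $0$ on the region containing $\z$; feeding this into the combinatorics of the regions that make up each sector of the veering branched surface $B$ (as analysed in \cite{AT25a}), effectivity — non-negativity on every region — propagates the vanishing and forces the multiplicities to be monotone along each sector. Translating this back through the ``smaller multiplicity'' rule, the contributions of $a$ and of $b$ to $\zeta(D)$ cancel edge by edge along $G$, so $\zeta(D)=0$. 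Combined with the reduction of the first paragraph, this gives $0\in\widetilde\epsilon(\x,\y)$ and hence $\widetilde\epsilon(\x)\cap\widetilde\epsilon(\y)\neq\varnothing$. The multiplicity propagation and the edge-by-edge cancellation are the real work; the rest is bookkeeping with the constructions of \cite{AT25a} and \Cref{sec:wtsgrading}.
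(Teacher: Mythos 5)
Your proposal shares the paper's high-level idea — turn the domain $D$ into a $1$-chain on $G$ and read off a common element of $\widetilde\epsilon(\x)$ and $\widetilde\epsilon(\y)$ — but there are genuine gaps at both ends of the argument, and the hard combinatorial step is deferred rather than done.

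On the formal reduction in your first paragraph: you invoke a relative multi-valued lift $\widetilde\epsilon(\x,\y)$ and a multi-valued cocycle relation, but you never define $\widetilde\epsilon(\x,\y)$. If you define it as a Minkowski difference $\widetilde\epsilon(\x)-\widetilde\epsilon(\y)$, then the statement $0\in\widetilde\epsilon(\x,\y)$ is not a reduction of the lemma — it \emph{is} the lemma, reworded. If instead you want an intrinsic definition via $\alpha$- and $\beta$-paths from $\x$ to $\y$ pushed onto $G$, you would need to check that such a push always lands on strums of $\mu_\x$ and $\mu_\y$ (rather than arbitrary $1$-chains of $G$), and this is exactly where the combinatorics of the diagram is needed. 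The paper's proof of \Cref{prop:transhomclassobsdomain} sidesteps the relative $\widetilde\epsilon$ entirely: it works only with $\partial_\beta D$, pushes each $\beta$-arc $e_i\subset\beta_i$ to a simplicial arc $\widetilde e_i\subset\partial S_i\subset G$ in the canonical way (independent of $D$), and then directly produces a common element by showing that $\sum_i\widetilde e_{\x,i}=\sum_i\widetilde e_{\y,i}$ in $Z_1(G)=H_1(G)$, where the two sides are literal strums of $\mu_\x$ and $\mu_\y$.

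The more serious gap is your third paragraph, which you flag as ``the real work.'' Your ``smaller multiplicity'' rule is $D$-dependent and is not tied to the definition of $\widetilde\epsilon$ (strums are specific directed paths from the bottom corner $x_i^\bot$ of each sector $S_i$); even if $\zeta(D)=0$ were true under that rule, you would not have exhibited elements of $\widetilde\epsilon(\x)\cap\widetilde\epsilon(\y)$. The paper's actual proof needs three structural facts about effective domains in this diagram, all from \Cref{prop:admdomaincorners}: (1) $D$ is embedded, so all multiplicities are $0$ or $1$ and $\partial_\beta D$ decomposes into single arcs $e_i$, one per $\beta$-curve; (2) boundary arcs alternate strictly between $\boa$ and $\bob$; and (3) no $\alpha$- or $\beta$-arc contains a point of $\xt$ in its interior. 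Fact (3) is essential and is what makes the decomposition $\widetilde e_i=\widetilde e_{\x,i}-\widetilde e_{\y,i}$ (with both terms directed paths from $x_i^\bot$) possible; the verification is a genuine case analysis, carried out in \Cref{fig:epsilondifference}. Your phrases ``multiplicities are monotone along each sector'' and ``contributions cancel edge by edge'' do not engage with (3) at all and do not amount to the needed case check, so the core of the argument is missing.
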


We use the $\e$-map to define a finer equivalence relation than Ozsv\' ath-Szab\' o  $\spinc$-grading encoded by $\epsilon$-equivalence. This allows us to partition  the generating set of the Heegaard Floer chain group  into smaller equivalence classes. 
If $\widetilde{\Spinc}\hd$ denotes the set of equivalence classes of the $\e$-equivalence relation then there are maps 
\[\mathfrak{S}\hd \to \widetilde{\Spinc}\hd \to \Spinc\hd \ .\]
The first map is what we call the $\ts$-grading. As a consequence of \Cref{lemma:introepsilondifference}, we have a direct-sum decomposition 
$$SFC(\phi,\mathcal{C}) = \bigoplus_{\ts \in \widetilde{\Spinc}\hd} SFC(\phi,\mathcal{C},\ts)$$
of chain-complexes. To illustrate the $\ts$-grading we use the example of the figure-eight knot complement.

\begin{eg}[Figure-eight knot] 
Let $\phi$ be the suspension flow on the mapping torus of the torus homeomorphism corresponding to the matrix $\begin{bmatrix} 2 & 1 \\ 1 & 1 \end{bmatrix} \in SL_2(\Z)$. This has a fixed point at the origin, which suspends to a closed orbit $\gamma$. One can see that $\phi$ has no perfect fits with respect to $\mathcal{C}=\{\gamma\}$. The blow-up flow $\phi^\#$ is a flow on the complement of the figure-eight knot $K \subset S^3$. This has a branched surface $B$ whose dual graph $G$ looks as in \Cref{fig:figureeighteg} top left. 

Since $G$ has two branch loops, after cutting along the $\alpha$-curves, the Heegaard diagram decomposes into two punctured annuli with 3 strands of $\beta$-curves going along their cores. See \Cref{fig:figureeighteg} bottom left. Inspecting the diagram one can see that there is a total of 10 Heegaard states. Two of them are $\xt = u_1v_1$ and  $\xb = u_3v_3$, located in $\s$-gradings $\s_{\phi^\sharp}$ and $\overline{\s_{\phi^\sharp}}$, respectively. The other eight generators $\{u_2v_3,u_4v_3,u_3v_2,u_3v_4,u_2v_2,u_4v_2,u_2v_4,u_4v_4\}$ are partitioned into two other $\spinc$-structures. In each of these $\spinc$-gradings there is a total of four generators. But one of the $\spinc$-gradings further partitions in 4 distinct $\ts$-gradings, while the other does not further decompose. 
See \Cref{fig:figureeighteg} right.
\end{eg}

\begin{figure}
    \centering
    \selectfont\fontsize{6pt}{6pt}
    \resizebox{!}{6cm}{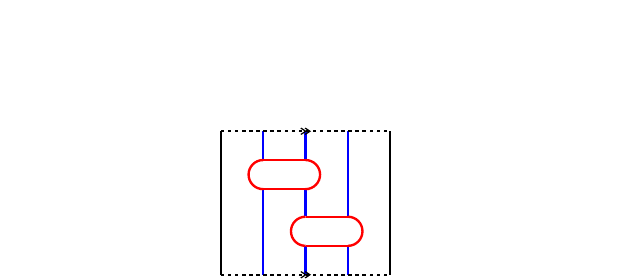}
    \caption{The figure-eight knot complement $Y^\sharp$ admits a veering branched surface $B$. Top left: Dual graph of $B$. Bottom left: Heegaard diagram of $Y^\sharp$ associated to $B$. Right: $SFH(Y^\sharp)$ computed using this Heegaard diagram, along with the $\mathfrak{s}$- and $\ts$-gradings.}
    \label{fig:figureeighteg}
\end{figure}

\subsection{Relation with Fried pants} 

Let $\phi$ be a pseudo-Anosov flow. An \emph{(immersed) Fried pant}  is an immersed pair of pants that is transverse to $\phi$ in its interior and whose boundary components are closed orbits of $\phi$.
These were introduced in \cite{Fri83} as a way to build transverse surfaces to pseudo-Anosov flows. For this paper, we will consider these as a form of cobordism between closed orbits.
We say that a multi-orbit is \emph{pants-irreducible} if it is not cobordant with another multi-orbit through a Fried pant.
In \Cref{sec:friedpants} we study generators corresponding to pants-irreducible multi-orbits.

We show that if $\gamma$ is pants-irreducible then all generators $\x \in \mathfrak{S}\hd$ with $\gamma_\x=\gamma$ make up a class $\ts\in \widetilde{\Spinc}\hd$. This gives us a direct summand 
\[SFC(\phi,\mathcal{C},\gamma)=SFC(\phi,\mathcal{C}, \ts)\] 
of the Heegaard Floer chain complex associated to the flow through the veering branched surface.

\begin{thm} \label{thm:introdim1}
Let $\gamma$ be a pants-irreducible multi-orbit of $\phi^\sharp$. Then 
\[H_*(SFC(\phi,\mathcal{C},\gamma)) \cong \mathbb{F} \ .\]
\end{thm}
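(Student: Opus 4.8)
The plan is to reduce \Cref{thm:introdim1} to a purely combinatorial statement about the ``strumming complex'' of $\gamma$, and then to collapse that complex by an acyclic matching.

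The first step is to pin down the generators and the differential of the summand $SFC(\phi,\mathcal{C},\gamma)$. By the analysis of \Cref{sec:friedpants} its generators are exactly the Heegaard states $\x$ with $\gamma_\x=\gamma$, equivalently the embedded multi-loops $\mu$ of $G_+$ that are carried by $B$ and homotopic to $\gamma$; write $\mathcal{M}_\gamma$ for this set. Because $\gamma$ is pants-irreducible, the structural results of the paper---the slogan that a differential can appear between $\x$ and $\y$ only if $\mu_\x$ and $\mu_\y$ differ by strumming across sectors and/or by a Fried pants resolution---leave only strumming: if $c(\x,\y)\neq0$ with $\x,\y\in SFC(\phi,\mathcal{C},\gamma)$, then $\mu_\y$ is obtained from $\mu_\x$ by a single elementary strumming across one sector, the connecting domain is an embedded bigon or rectangle, and (by the index-one analysis of the preceding sections, or the Riemann mapping theorem for a disk with two or four marked points) it has a unique holomorphic representative, so $c(\x,\y)=1\in\mathbb{F}$.

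The second step is to describe $\mathcal{M}_\gamma$ combinatorially. Carrying $\gamma$ by $B$, there is a distinguished ``lowest'' position $\mu_{\min}$, obtained by pushing the multi-loop to the bottom side of every sector it meets, and every $\mu\in\mathcal{M}_\gamma$ is recorded by the set $\sigma(\mu)$ of sectors across which it has been strummed relative to $\mu_{\min}$. The embeddedness and carrying constraints become a local compatibility condition on $\sigma(\mu)$ along $\gamma$, and I expect this identifies $\mathcal{M}_\gamma$ with the vertex set of a contractible, cube-like complex on the poset $P_\gamma$ of strummable sectors, whose edges are the elementary strummings and whose cell dimension is recorded by the Maslov grading. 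With this in hand, \Cref{thm:introdim1} becomes the assertion that $SFC(\phi,\mathcal{C},\gamma)$, equipped with its strumming differential, has the homology of a point.

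The third step is the collapse. On $\mathcal{M}_\gamma\setminus\{\mu_{\min}\}$ define a matching that sends $\mu$ to the multi-loop obtained by toggling the first sector, in a fixed linear extension of $P_\gamma$, at which $\mu$ disagrees with the canonical strumming determined by the earlier sectors; this is a fixed-point-free matching whose partners differ by a single elementary strumming, hence have differential coefficient $1$, and it is acyclic because toggling changes $|\sigma(\mu)|$ in a controlled way along the chosen order. The standard cancellation (Gaussian elimination) lemma over $\mathbb{F}$ then shows that $SFC(\phi,\mathcal{C},\gamma)$ is chain homotopy equivalent to the one-dimensional complex on $\mu_{\min}$ with zero differential, so $H_*(SFC(\phi,\mathcal{C},\gamma))\cong\mathbb{F}$. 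The main obstacle is the middle step: making the combinatorics of $\mathcal{M}_\gamma$ and of strumming precise enough that the matching is genuinely acyclic and that each matched pair really contributes $1$ to the differential---that the strumming domain has index one, is unobstructed, and carries no competing holomorphic disks---which is exactly where the ``nice'' combinatorics of the veering Heegaard diagram established earlier must be used. For a suspension flow one can shortcut the combinatorics of general $\mathcal{M}_\gamma$: the diagram is adapted to the fibration in the sense of Ni and Ghiggini--Spano, the $\ts$-grading picks out the piece Poincar\'e dual to the pants-irreducible orbit, and uniqueness up to isotopy of the corresponding generator configuration gives $\mathbb{F}$ directly---this is the mechanism behind the periodic-point count stated in the introduction.
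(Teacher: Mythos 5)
Your proposal identifies the right high-level strategy — reduce the Floer complex in the given $\widetilde{\s}$-grading to a purely combinatorial ``strumming complex'' and then show that complex is contractible — and you correctly flag where the hard work lies. But both halves differ from the paper's route in ways worth spelling out, and the second half as written has a gap that your discrete-Morse matching would need to overcome.

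On the reduction: the paper does not attack pants-irreducible orbits directly. It first proves (\Cref{prop:pantsirreducibleimplysleek}) that pants-irreducible implies \emph{sleek} — every multi-loop representing $\gamma$ is embedded — via \Cref{prop:loopresolveimplyorbitresolve}, and then proves the homology computation for all sleek $\widetilde{\s}$-gradings (\Cref{thm:insulatedorbithomology}). Sleekness is what guarantees that the generator set is exactly all embedded representatives of $\gamma$ and that they all lie in one $\widetilde{\s}$-summand, and it is what makes the elementary domains $D(t)$ in the proof of \Cref{prop:sfc=cc} have disjoint interiors so that the Lipshitz index computation and empty-polygon count go through. More importantly, you do not invoke the $\widetilde{\epsilon}$-grading (\Cref{prop:transhomclassobsdomain}): this is the mechanism by which the paper rules out differentials to states with the wrong $\widetilde{\epsilon}$-class, and it is the essential input to the claim that the Floer differential on this summand only sees single-sector strums. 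Without it, ``pants-irreducibility leaves only strumming'' is a slogan, not an argument.

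On the collapse: the paper does not use a global acyclic matching on a poset $P_\gamma$ of strummable sectors. Instead it organizes the complex via the dynamic annulus or M\"obius band $D(\gamma)$ (\Cref{sec:dynannuli}), defines a combinatorial complex $CC(C)$ for each compact core $C$, identifies $SFC(\phi,\mathcal{C},\widetilde{\s})$ with $CC(C_{\max})$ (\Cref{prop:sfc=cc}), and then runs an induction removing one sector at a time (\Cref{lemma:cchomologyinduct}) via a two-step filtration and a mapping-cone argument, terminating at a single-loop core with homology $\mathbb{F}$. Your proposed matching ``toggle the first sector, in a fixed linear extension of $P_\gamma$, at which $\mu$ disagrees with the canonical strumming'' has a real problem the paper's combinatorics force you to confront: the differential of $CC(C)$ is not symmetric under strumming. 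The differential sends $c$ to $c'$ if $c'$ is a strum of $c$ across a \emph{red} sector, but also sends $c$ to $c'$ if $c$ is a strum of $c'$ across a \emph{blue} sector (see the definitions of $\mathcal{R}(c)$ and $\mathcal{L}(c)$ in \Cref{subsec:combinchaincomplex}, which come out of \Cref{prop:sfc=cc}). So the differential does \emph{not} consistently move down or up any obvious grading like $|\sigma(\mu)|$; a naive matching that decreases the number of strummed sectors would not respect the direction of the differential across blue sectors. Any matching you write down must account for this red/blue asymmetry, and the paper's core-by-core peeling, which always strips a single sector of one definite color off the boundary of $D(\gamma)\backslash C$, is precisely the device that handles it cleanly. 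You would also need to establish that a distinguished $\mu_{\min}$ exists and that the embeddedness constraints carve out a cube-like complex; neither is obvious, and the paper sidesteps both by working inside the dynamic annulus where every representing loop is automatically embedded (\Cref{prop:dynannuluscoreloops}) and by using \Cref{lemma:coresbounded} and \Cref{prop:maxcore} to produce the relevant finiteness. Your suspension-flow ``shortcut'' at the end is also not what the paper does; the periodic-point counts are deduced from the general theorem, not used to prove it.
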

As an immediate consequence of our work we have the following corollary of dynamical significance.
\begin{cor} \label{cor:intropantsirreduciblelowerbound}
Let $\phi$ be a pseudo-Anosov flow on a closed oriented 3-manifold $Y$ and let $\mathcal{C}$ be a collection of closed orbits such that $\phi$ has no perfect fits relative to $\mathcal{C}$. 
Then 
$$\dim SFH(Y^\sharp) \geq \text{\# pants-irreducible multi-orbits of $\phi^\sharp$}.$$
\end{cor}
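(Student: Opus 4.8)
The plan is to reduce \Cref{cor:intropantsirreduciblelowerbound} to \Cref{thm:introdim1} and the $\ts$-graded splitting recalled above, so that the statement becomes a matter of counting summands. First I would fix a veering branched surface and its Heegaard diagram: since $\phi$ has no perfect fits relative to $\mathcal{C}$, the prequel \cite{AT25a} provides a veering branched surface $B$ on $Y^\sharp$ together with a balanced, admissible sutured Heegaard diagram $\hd$ for the sutured manifold $Y^\sharp$ (with sutures along the boundary orbits of $\phi^\sharp$, framed by the degeneracy curves). By the invariance of sutured Floer homology \cite{Juh06}, $SFH(Y^\sharp)\cong H_*(CF\hd)=H_*(SFC(\phi,\mathcal{C}))$; as $\hd$ has finitely many Heegaard states this is a finite-dimensional $\mathbb{F}$-vector space, and it suffices to bound its dimension from below.

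Next I would invoke the direct-sum decomposition of chain complexes
\[
SFC(\phi,\mathcal{C})=\bigoplus_{\ts\in\widetilde{\Spinc}\hd}SFC(\phi,\mathcal{C},\ts),
\]
which holds by \Cref{lemma:introepsilondifference} (an effective domain $D$ with $n_\z(D)=0$ connecting two generators forces their $\widetilde{\epsilon}$-values to overlap, so the differential respects the partition of $\mathfrak{S}\hd$ into $\ts$-classes). Passing to homology and using that it commutes with a finite direct sum, $\dim_{\mathbb{F}}SFH(Y^\sharp)=\sum_{\ts}\dim_{\mathbb{F}}H_*(SFC(\phi,\mathcal{C},\ts))$, so this is at least $\sum_{\ts\in T}\dim_{\mathbb{F}}H_*(SFC(\phi,\mathcal{C},\ts))$ for any subset $T\subseteq\widetilde{\Spinc}\hd$, each summand being of non-negative dimension.

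I would then take $T$ to consist of the classes coming from pants-irreducible multi-orbits. By the structural result of \Cref{sec:friedpants}, for each pants-irreducible multi-orbit $\gamma$ of $\phi^\sharp$ the Heegaard states $\x$ with $\gamma_\x=\gamma$ form exactly one class $\ts(\gamma)\in\widetilde{\Spinc}\hd$, giving the summand $SFC(\phi,\mathcal{C},\gamma)=SFC(\phi,\mathcal{C},\ts(\gamma))$. Since each generator $\x$ has a well-defined multi-orbit $\gamma_\x$, the sets $\{\x:\gamma_\x=\gamma\}$ attached to distinct pants-irreducible multi-orbits are disjoint and non-empty, so $\gamma\mapsto\ts(\gamma)$ is injective and $|T|$ equals the number of pants-irreducible multi-orbits. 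Finally \Cref{thm:introdim1} gives $H_*(SFC(\phi,\mathcal{C},\ts(\gamma)))\cong\mathbb{F}$ for each such $\gamma$, contributing $1$ apiece, whence
\[
\dim SFH(Y^\sharp)\;\geq\;\sum_{\gamma\ \text{pants-irreducible}}\dim_{\mathbb{F}}H_*(SFC(\phi,\mathcal{C},\ts(\gamma)))\;=\;\#\{\text{pants-irreducible multi-orbits of }\phi^\sharp\}.
\]

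There is no serious obstacle in the corollary itself: all the work sits upstream, in \Cref{thm:introdim1} and in the Fried-pants analysis of \Cref{sec:friedpants} showing that a pants-irreducible $\gamma$ fills out an entire $\ts$-class. Within the present argument the only point demanding a moment's care is that distinct pants-irreducible multi-orbits index distinct summands of the decomposition — immediate once one recalls that $\gamma_\x$ is a function of $\x$ and that \Cref{thm:introdim1} in particular guarantees each such class is non-empty — together with the harmless remark that discarding the remaining $\ts$-classes only loses information, since all summand dimensions are non-negative.
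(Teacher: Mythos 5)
Your argument is correct and is exactly the route the paper intends: the corollary is stated there as an immediate consequence of the $\ts$-graded splitting coming from \Cref{lemma:introepsilondifference} together with \Cref{thm:introdim1}, with distinct pants-irreducible multi-orbits indexing distinct nonempty summands each contributing one dimension. Nothing in your write-up deviates from or adds a gap to that argument.
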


\Cref{thm:introdim1} is actually a special case of a more general theorem involving a special class of multi-orbits we call sleek. 

\subsection{Sleek multi-orbits} \label{subsec:introsleek}
We define a multi-orbit of $\phi^\sharp$ to be \emph{sleek} if all of its corresponding multi-loops of $G$ are embedded.
This terminology is motivated from the imagery that such multi-loops never get `tangled-up' and become non-embedded even as one sweeps them across sectors.

\begin{figure}[t]
    \centering
    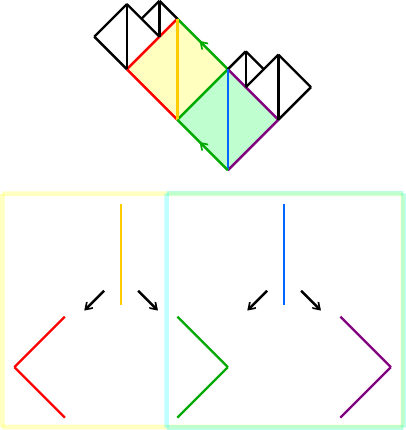
    \caption{A simple example illustrating the induction on cores in the proof of \Cref{thm:insulatedorbithomology}. In this example there are two cores: the one shaded in yellow contains the other shaded in blue (with the area of overlap appearing green). The portions of the chain complex corresponding to the yellow and blue cores are indicated by the yellow and blue boxes respectively. }
    \label{fig:introzigzag}
\end{figure}

For example, a pants-irreducible multi-orbit is sleek, since if a multi-orbit $\gamma$ corresponds to a non-embedded multi-loop $c$, one can resolve $c$ at a vertex, which corresponds to resolving $\gamma$ along a Fried pants.

Generalizing the discussion above, if $\gamma$ is sleek then all generators $\x \in \mathfrak{S}\hd$ with $\gamma_\x=\gamma$ generate a direct summand $SFC(\phi,\mathcal{C},\gamma)$.
The following theorem, proved in \Cref{sec:sleeksummands}, implies \Cref{thm:introdim1}.

\begin{thm}\label{thm:introinsulatedorbithomology}
Let $\gamma$ be a sleek multi-orbit of $\phi^\sharp$. Then 
\[H_*(SFC(\phi,\mathcal{C},\gamma)) \cong \mathbb{F} \ .\]
\end{thm}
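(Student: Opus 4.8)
The plan is to identify $SFC(\phi,\mathcal{C},\gamma)$ with an explicit combinatorial chain complex $C_\gamma$ built directly from the sleek multi-orbit $\gamma$, and then to compute $H_*(C_\gamma)$ by an induction that peels off one ``core'' of $\gamma$ at a time, in the manner suggested by \Cref{fig:introzigzag}. Throughout, $\mathbb{F}$ denotes the ground field.

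\emph{The combinatorial model.} By the description of generators recalled in the introduction, every Heegaard state $\x$ with $\gamma_\x=\gamma$ is recorded by a position of the (embedded) multi-loop relative to the sectors of $B$ that $\gamma$ runs through: after cutting the diagram along the $\alpha$-curves one obtains a union of punctured annuli with $\beta$-strands along their cores, and once the strands carrying $\gamma$ are fixed, the remaining data is the choice of one intersection point along each $\alpha$-curve. Thus $\mathfrak{S}\hd\cap SFC(\phi,\mathcal{C},\gamma)$ is in bijection with the admissible such configurations, i.e. those realizing the fixed $\widetilde{\mathfrak{s}}$-grading. I would then analyze the differential domain by domain. \Cref{lemma:introepsilondifference} already discards every domain $D$ with $n_\z(D)=0$ for which $\widetilde{\epsilon}(\x)\cap\widetilde{\epsilon}(\y)=\varnothing$; the crucial additional input is sleekness: since \emph{every} multi-loop representing $\gamma$ is embedded, no positive domain lying in this $\widetilde{\mathfrak{s}}$-grading can encode a resolution of $\gamma$ along a Fried pant, so the only domains that survive are the elementary strumming domains — bigons and rectangles supported near a single sector — whose holomorphic representatives are counted by the nice local combinatorics established in the prequel. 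The outcome is a purely combinatorial complex $C_\gamma$ over $\mathbb{F}$, whose generators are the positions above and whose differential is a count (with appropriate signs) of elementary strummings.

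\emph{The induction on cores.} I would organize $C_\gamma$ according to the cores of $\gamma$: a core is a maximal portion of the multi-loop along which it can be strummed back and forth, cores are nested, and each core $\kappa$ cuts out a ``box'' inside $C_\gamma$ — a subcomplex, for an innermost core, and in general a subquotient — while discarding a core produces the complex $C_{\gamma'}$ of a multi-orbit $\gamma'$ with one fewer core. The base case is a coreless multi-orbit, where $C_\gamma$ is a single zigzag complex $\mathbb{F}\to\mathbb{F}\leftarrow\mathbb{F}\to\cdots$ with all arrows of unit coefficient, which one checks by hand has $H_*\cong\mathbb{F}$ (one of the ``source'' generators survives). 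For the inductive step I peel off an innermost core $\kappa$: I would show that the box $C_\kappa\subseteq C_\gamma$ is a zigzag-type subcomplex arranged so as to be \emph{acyclic}, and that the quotient $C_\gamma/C_\kappa$ is chain isomorphic to $C_{\gamma'}$. The long exact sequence of $0\to C_\kappa\to C_\gamma\to C_{\gamma'}\to 0$, together with $H_*(C_\kappa)=0$ and the inductive hypothesis $H_*(C_{\gamma'})\cong\mathbb{F}$, then yields $H_*(C_\gamma)\cong\mathbb{F}$. This proves \Cref{thm:introinsulatedorbithomology}; \Cref{thm:introdim1} follows at once, because a pants-irreducible multi-orbit is sleek.

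\emph{Main obstacle.} The hardest step is the domain analysis underlying the combinatorial model: showing that, inside a fixed sleek $\widetilde{\mathfrak{s}}$-grading, the positive domains with $n_\z=0$ are \emph{exactly} the elementary strumming domains, with the asserted coefficients. Combining the $\widetilde{\epsilon}$-obstruction with a direct inspection of the cut-open diagram is needed to exclude ``long'' domains that wrap around the annuli and would tangle the multi-loop; it is precisely here that one must translate the hypothesis ``$\gamma$ is sleek'' from a statement about multi-loops into a statement forbidding such domains. A secondary difficulty is the bookkeeping of the nested-core induction — ensuring that the acyclic boxes $C_\kappa$ and the identifications $C_\gamma/C_\kappa\cong C_{\gamma'}$ are mutually consistent for all cores simultaneously; the partial overlap of cores visible in \Cref{fig:introzigzag} (the green region) is what prevents this from being formal.
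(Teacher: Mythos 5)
Your proposal matches the paper's broad strategy—build a combinatorial model for the summand $SFC(\phi,\mathcal{C},\widetilde{\mathfrak{s}})$, identify it with the Floer complex via the $\widetilde{\epsilon}$-obstruction plus sleekness, and then compute by an inductive zig-zag argument—and you correctly flag the domain analysis as the technical heart. But the inductive structure you describe is not the one the paper uses, and as stated it does not work.

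The paper's argument (\Cref{sec:dynannuli} and \Cref{sec:sleeksummands}) hinges on a precise notion of \emph{core}: a core is a compact subset $C$ of the dynamic annulus/M\"obius band $D(\gamma)$ whose complementary components are half-open annuli with boundary a loop of $G(\gamma)$ representing $\gamma$ (or $\gamma^2$). The combinatorial complex $CC(C)$ is assigned to each such core; there is a unique \emph{maximal} core $C$ (\Cref{prop:maxcore}), and for that maximal core $SFC(\phi,\mathcal{C},\widetilde{\mathfrak{s}}) \cong CC(C)$ (\Cref{prop:sfc=cc}). The induction then shrinks the core one sector at a time (\Cref{lemma:cchomologyinduct}): when $C' = C \cup S$ for a single sector $S$, the quotient $CC(C')/CC(C)$ (when $S$ is red; or the analogous kernel, when $S$ is blue) is the mapping cone of an explicit isomorphism, hence acyclic, so $H_*(CC(C)) \cong H_*(CC(C'))$. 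One terminates at the minimal core—a single loop representing $\gamma$—for which $CC(C)\cong\mathbb{F}$ with vanishing differential. Your proposal instead treats a ``core'' as ``a maximal portion of the multi-loop along which it can be strummed back and forth,'' and your inductive step ``peels off an innermost core $\kappa$'' claiming (a) that the box $C_\kappa$ is an \emph{acyclic} subcomplex and (b) that the quotient is $C_{\gamma'}$ for a multi-orbit $\gamma'$ ``with one fewer core.'' Neither clause is correct in the paper's framework: cores are nested compact regions in the dynamic annulus of the \emph{same} orbit $\gamma$, so peeling one off cannot produce a different multi-orbit; and the smallest core-box is $\mathbb{F}$ with zero differential, which is visibly \emph{not} acyclic—it carries the surviving generator. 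Relatedly, your base case ``coreless multi-orbit with a zig-zag complex $\mathbb{F}\to\mathbb{F}\leftarrow\mathbb{F}\to\cdots$'' is not a well-defined situation (every $\gamma$ has a core), and even taken at face value a finite zig-zag complex over $\mathbb{F}_2$ need not have one-dimensional homology (for instance, $\mathbb{F}\to\mathbb{F}\leftarrow\mathbb{F}\to\mathbb{F}$ is acyclic). To repair the argument you would need to (i) introduce the dynamic annulus $D(\gamma)$, (ii) define cores as compact subsets of $D(\gamma)$, (iii) establish the existence of a unique maximal core and a sequence of cores between any core and the maximal one that grows by one sector at a time, and (iv) run the induction sector-by-sector with the mapping-cone observation on the appropriate side of the short exact sequence depending on the sector's color.
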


Here is a quick outline of the proof of \Cref{thm:insulatedorbithomology}:
Consider the \emph{dynamic annulus} or \emph{Möbius band} $D(\gamma)$ that contains $\gamma$. 
We combinatorially define a chain complex $CC(C)$ for each compact core $C \subset D(\gamma)$ whose boundary lies along $G$.
The generators of $CC(C)$ are the multi-loops of $G_+$ corresponding to $\gamma$ that lie in $C$, while the differentials are given by strumming.

A large enough core $C$ contains all multi-loops that represent $\gamma$. 
For such a core $C$, we show that the combinatorial chain complex $CC(C)$ equals the summand $SFC(\phi,\mathcal{C},\widetilde{\s})$.
These chain complexes already have the same generators, so it suffices to identify the differentials in $SFC(\phi,\mathcal{C},\widetilde{\s})$. To do this, we make great use of \Cref{lemma:introepsilondifference}.

We then run an induction argument by showing that if $C'$ is a sub-core of $C$ with one less sector, then the inclusion $CC(C') \subset CC(C)$ induces an isomorphism on homology. 
The proof of this is essentially a classic zig-zag argument. See \Cref{fig:introzigzag} for a very simple example.
Hence we reduce to the case when $C$ just consists of one orbit, in which case it is clear that $H_*(CC(C)) = \mathbb{F}$.

\subsection{Applications to suspension flows} \Cref{thm:introdim1} has a neat application to suspension pseudo-Anosov flows. Recall that if $f$ is a pseudo-Anosov map on a closed oriented surface $S$, then the suspension flow $\phi_f$ on the mapping torus $Y_f$ is pseudo-Anosov. 
Furthermore, $\phi_f$ has no perfect fits relative to any collection of closed orbits $\mathcal{C}$ that contains the singular orbits.

With the choice of $\mathcal{C}$ fixed, the blown-up flow $\phi_f^\sharp$ is the suspension flow of the map $f^\sharp: S^\sharp \to S^\sharp$ obtained by blowing up $f$ at the points where $\mathcal{C}$ meets $S$. 
Let $e = \sum_{x \in S \cap \mathcal{C}} p_x$, where the sum is taken over the periodic points $x \in S \cap \mathcal{C}$, and we suppose each $x$ is $p_x$-pronged.

In \cite{AT25a}, we showed that under the identification $\Spinc(Y^\sharp) \cong H_1(Y^\sharp)$ that sends $\overline{\s_{\phi^\sharp}}$ to $0$, we have
$$\dim SFH(Y^\sharp,n) := \dim \bigoplus_{\langle \mathfrak{s}, [S^\sharp] \rangle = n} SFH(Y^\sharp,\mathfrak{s}) =
\begin{cases}
0 & \text{if $n > \frac{3e}{2}$} \\
1 & \text{if $n = \frac{3e}{2}$} \\
1 & \text{if $n = 0$} \\
0 & \text{if $n < 0$}
\end{cases}.$$
In particular, one should think of $SFH(Y^\sharp,\frac{3e}{2})$ as the top grading and $SFH(Y^\sharp,0)$ as the bottom grading.

Let $P$ be the minimum period among periodic points of $f^\sharp$.
For example, if $f^\sharp$ has fixed points, then $P=1$. If $f^\sharp$ does not have fixed points but has periodic points of period 2, then $P=2$, and so on.
Then by definition there are no closed orbits of $\phi^\sharp_f$ that intersect the fiber surface $S^\sharp$ in the mapping torus $Y^\sharp_f$ for less than $P$ times.
Furthermore, any closed orbit that intersects $S^\sharp$ between $P$ and $2P-1$ times must be pants-irreducible, since otherwise at least one of the resolved orbits will intersect $S^\sharp$ for less than $P$ times.
This allows us to apply \Cref{thm:introdim1} to deduce the following theorem.

\begin{thm} \label{thm:introperiodicpoint}
Let $f$ be a pseudo-Anosov map on a closed oriented surface $S$. Let $\mathcal{C}$ be a collection of closed orbits of the suspension flow $\phi_f$ containing the singular orbits. 

Let $e = \sum_{x \in S \cap \mathcal{C}} p_x$, where the sum is taken over the periodic points $x \in S \cap \mathcal{C}$, and we suppose each $x$ is $p_x$-pronged.
Let $P$ be the minimum period among periodic points of the blown-up map $f^\sharp$. 

Then under the identification $\Spinc(Y^\sharp) \cong H_1(Y^\sharp)$ that sends $\overline{\s_{\phi^\sharp}}$ to $0$, we have
\begin{align*}
\dim SFH(Y^\sharp,n) &:= \bigoplus_{\langle \s,[S^\sharp] \rangle = n} SFH(Y^\sharp,\s) \\
&= \text{\# periodic orbits of $\phi_f^\sharp$ that intersect $S^\sharp$ for $n$ times} \\
&= \frac{1}{n} \cdot \text{\# periodic points of $f^\sharp$ of period $n$}
\end{align*}
for $n = P,\dots,2P-1$ and $n=\frac{3e}{2}-(2P-1),\dots,\frac{3e}{2}-P$.
\end{thm}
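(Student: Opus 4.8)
The plan is to deduce \Cref{thm:introperiodicpoint} from \Cref{thm:introdim1} together with the dimension count from \cite{AT25a} recalled just above. First I would fix $n$ in the stated range $P \leq n \leq 2P-1$ (the other range follows by the duality $\s \leftrightarrow \overline{\s}$, i.e.\ by applying the first case to the reversed flow, or equivalently by Poincaré duality on sutured Floer homology, which sends grading $n$ to grading $\frac{3e}{2}-n$). Under the identification $\Spinc(Y^\sharp_f) \cong H_1(Y^\sharp_f)$ sending $\overline{\s_{\phi^\sharp}}$ to $0$, the subgroup $SFH(Y^\sharp_f,n)$ collects exactly those $\spinc$-structures evaluating to $n$ against $[S^\sharp]$. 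The key observation is that the recipe of \cite{AT25a} associates to each generator $\x$ a multi-loop $\mu_\x$ homotopic to a closed multi-orbit $\gamma_\x$ of $\phi^\sharp_f$, and for a suspension flow the homology class $[\gamma_\x]$ pairs with $[S^\sharp]$ to give precisely the number of times $\gamma_\x$ crosses the fiber; so generators lying in grading $n$ are exactly those whose associated multi-orbit crosses $S^\sharp$ exactly $n$ times. Since $n \leq 2P-1$ and the minimum crossing number is $P$, such a multi-orbit cannot be written as a disjoint union of two nonempty sub-multi-orbits without one piece crossing fewer than $P$ times; hence each such $\gamma_\x$ is a single orbit crossing $n$ times, i.e.\ corresponds to a periodic point of $f^\sharp$ of period exactly $n$ (up to the $n$-fold ambiguity of choosing a point on the orbit).

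Next I would argue pants-irreducibility. If $\gamma$ crosses $S^\sharp$ exactly $n$ times with $P \leq n \leq 2P-1$ and $\gamma$ were cobordant through a Fried pant to a multi-orbit $\gamma'$, then $\gamma'$ has total crossing number $n$ (the pants cobordism is carried by a transverse surface, hence preserves the algebraic, and here geometric, intersection with the fiber — one boundary component versus two, with crossing numbers $n = a+b$), so $\gamma'$ splits into two orbits with crossing numbers $a,b \geq 1$, $a+b = n \leq 2P-1$, forcing $\min(a,b) \leq P-1 < P$, contradicting minimality of $P$. Therefore every such $\gamma$ is pants-irreducible, and \Cref{thm:introdim1} gives $H_*(SFC(\phi_f,\mathcal{C},\gamma)) \cong \mathbb{F}$. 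Summing the direct-sum decomposition $SFC(\phi_f,\mathcal{C}) = \bigoplus_\gamma SFC(\phi_f,\mathcal{C},\gamma)$ over the (finitely many) orbits $\gamma$ crossing $S^\sharp$ exactly $n$ times yields that $\dim SFH(Y^\sharp_f, n)$ equals the number of such orbits. Finally, each orbit of $\phi^\sharp_f$ crossing the fiber $n$ times corresponds bijectively to an $f^\sharp$-orbit $\{x, f^\sharp(x), \dots, (f^\sharp)^{n-1}(x)\}$ of length $n$, and the count of length-$n$ orbits is $\frac{1}{n}$ times the count of period-$n$ periodic points, giving the last equality.

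The step I expect to be the main obstacle is making precise and airtight the claim that the relevant multi-orbits in gradings $P,\dots,2P-1$ are genuinely single pants-irreducible orbits and that \Cref{thm:introdim1} applies with $SFC(\phi_f,\mathcal{C},\gamma)$ being exactly the grading-$n$ summand. Two things need care here: (i) one must know that \emph{every} generator in grading $n$ has $\gamma_\x$ crossing $S^\sharp$ exactly $n$ times — this requires that the map $\s \mapsto \langle \s,[S^\sharp]\rangle$ agrees, on the image of the Heegaard states, with the crossing number of $\gamma_\x$; this should follow from the homological identification $H_1(M) = H_1(G)/\langle[\partial S_i]\rangle$ and the fact that $[\mu_\x] = [\gamma_\x]$, but it needs to be checked that the normalization sending $\overline{\s_{\phi^\sharp}}$ to $0$ matches the one in \cite{AT25a}; (ii) one must verify that a Fried pant genuinely induces an equality of total crossing numbers — i.e.\ that a Fried pant, being transverse to $\phi^\sharp$ in its interior with orbit boundary, is part of (or can be capped to) a surface whose intersection pairing with $[S^\sharp]$ is additive over the boundary orbits with the right signs. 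Granting these homological bookkeeping facts, which are implicit in \cite{AT25a} and in the definition of Fried pants from \cite{Fri83}, the rest is the assembly described above. I would also remark that when $f^\sharp$ has fixed points ($P=1$) only the single grading $n=1$ is covered, recovering a result in the spirit of Ni--Ghiggini--Spano on the next-to-top sutured Floer group.
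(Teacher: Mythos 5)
Your proposal is correct and takes essentially the same route as the paper: identify the $\spinc$-grading of a state with the fiber-crossing number of its associated orbit via $\s(\x)=[\gamma_\x]$, show that multi-orbits of period $n\in[P,2P-1]$ are single, pants-irreducible orbits by homological additivity of crossing numbers (this is exactly \Cref{prop:leastorbitpantsirreducible}), apply the one-dimensionality theorem summand-by-summand, and convert orbit counts into periodic-point counts. The only differences are cosmetic: the paper passes through sleekness (\Cref{prop:pantsirreducibleimplysleek}) to invoke \Cref{thm:insulatedorbithomology} rather than quoting \Cref{thm:introdim1} directly, and you make explicit the conjugation/reversed-flow symmetry handling the gradings near $\frac{3e}{2}$, which the paper's proof leaves implicit.
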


Specifically, the next-to-top nonzero grading counts the number of periodic points of $f^\sharp$ of the least period. 
In particular, it is always true that the second-to-top grading counts the number of fixed points of $f^\sharp$.

In the special case when $\mathcal{C}$ is a fibered knot in a 3-manifold, this matches up with the results of Ni \cite{Ni22} and Ghiggini-Spano \cite{GS22}.

\begin{cor} \label{cor:introknotfixedpoint}
Let $K$ be a fibered hyperbolic knot in a closed oriented 3-manifold $Y$. Let $g$ be the genus of the fiber surface, and let $f^\sharp$ be the monodromy of the fibering. 
Suppose $f^\sharp$ does not have interior singularities, and suppose the degeneracy slope is the meridian of $K$.
Then 
$$\dim \widehat{HFK}(Y,K,g-1) = (\text{\# interior fixed points of $f^\sharp$})+4g-1.$$
\end{cor}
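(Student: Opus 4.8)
The plan is to derive \Cref{cor:introknotfixedpoint} from \Cref{thm:introperiodicpoint} by specializing to the case $\mathcal{C} = K$, a fibered hyperbolic knot with no interior singularities. First I would observe that since $f^\sharp$ has no interior singularities, the only prongs contributing to $e = \sum_{x \in S \cap \mathcal{C}} p_x$ come from the single puncture created by blowing up $K$; because the degeneracy slope is assumed to be the meridian, this puncture is a $1$-pronged boundary orbit in the blow-up, so $e = 1$. Wait — that cannot be right, since $\frac{3e}{2}$ must be an integer. In fact the blow-up of a smooth (non-singular) point produces a boundary circle along which the flow has $2$ prongs in the relevant normalization coming from the veering structure, so $e = 2$ and $\frac{3e}{2} = 3$. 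I would then identify the sutured Floer homology of the knot complement with knot Floer homology via the standard relation $SFH(Y^\sharp) \cong \widehat{HFK}(Y,K)$, under which the Alexander grading matches the $\spinc$-grading $\langle \s, [S^\sharp]\rangle$ up to the affine shift that sends the top Alexander grading $g$ to the top $\spinc$-grading $\frac{3e}{2} = 3$; thus $\widehat{HFK}(Y,K,g-j)$ sits in $\spinc$-grading $n = 3 - j$.

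Next I would apply \Cref{thm:introperiodicpoint} with $n$ in the next-to-top band. Since $f^\sharp$ is assumed to have interior fixed points (the generic expectation; if it has none the statement degenerates and $P > 1$), the minimum period is $P = 1$, so the theorem applies for $n \in \{1\} \cup \{3 - 1\} = \{1, 2\}$, i.e. at $\frac{3e}{2} - P = 2 = 3-1$, which corresponds to Alexander grading $g - 1$. The theorem gives that $\dim SFH(Y^\sharp, 2)$ equals the number of periodic orbits of $\phi_f^\sharp$ meeting $S^\sharp$ exactly twice, equivalently $\frac{1}{2} \cdot (\#\text{ period-}2\text{ points of } f^\sharp)$. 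But this is not quite what the corollary asks: the corollary wants the count of interior \emph{fixed} points of $f^\sharp$ plus $4g - 1$. The discrepancy is exactly the contribution of the boundary of the sutured manifold: in the transition from $SFH$ of the knot complement with the meridional suture to the \emph{closed-up} invariant $\widehat{HFK}$, and in passing from counting period-$2$ orbits back to fixed points, one must carefully track the orbits that run along or near the boundary torus. I would argue that the period-$2$ orbits of $\phi_f^\sharp$ meeting $S^\sharp$ twice are in bijection with: (i) the interior fixed points of $f^\sharp$, each contributing a single orbit (its suspension, which meets the fiber once but is counted here after we note $n = 2$ is in fact the second-from-top grading $g-1$, not the fixed-point grading $g$ — so this requires the more delicate identification below), together with (ii) a fixed, purely topological number of boundary-parallel orbits, which I expect to total $4g - 1$.

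The correct bookkeeping, which is the heart of the argument, goes as follows. The top grading $n = \frac{3e}{2} = 3$, i.e. Alexander grading $g$, has $\dim \widehat{HFK}(Y,K,g) = 1$ by the fiberedness theorem of Ni (and Ghiggini), consistent with the table quoted from \cite{AT25a}. The next grading down, $n = 2$, Alexander grading $g - 1$, is where \Cref{thm:introperiodicpoint} with $P = 1$ bites: but I must be careful, because the theorem is stated for $n = P, \dots, 2P - 1$, which when $P = 1$ is just $n = 1$, \emph{not} $n = 2$; the symmetric band $n = \frac{3e}{2} - (2P-1), \dots, \frac{3e}{2} - P$ is $n = 2, \dots, 2$, i.e.\ $n = 2$. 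So at $n = 2$ the theorem gives $\dim SFH(Y^\sharp, 2) = \#\{\text{periodic orbits of } \phi_f^\sharp \text{ meeting } S^\sharp \text{ exactly } 2 = 3 - 1 \text{ times}\}$. By the symmetry of knot Floer homology $\widehat{HFK}(Y,K,-g) \cong \widehat{HFK}(Y,K,g)$ and of $SFH$ under the $\spinc$-conjugation exchanging $n \leftrightarrow \frac{3e}{2} - n$, counting orbits meeting $S^\sharp$ twice is the same as computing the grading just below the top, Alexander grading $g-1$. Now I would invoke the classical dynamical identity that the number of period-$\leq 2$ points of a surface homeomorphism, computed via the Lefschetz fixed point formula applied to $f^\sharp$ and $(f^\sharp)^2$, expresses the count in (ii) in terms of the Lefschetz numbers, hence in terms of the genus: on a once-punctured genus-$g$ surface the relevant correction is $4g - 1$. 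Assembling: $\dim \widehat{HFK}(Y,K,g-1) = \#(\text{interior fixed points of } f^\sharp) + 4g - 1$.

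The main obstacle I anticipate is precisely this last index-bookkeeping step: matching the grading conventions (Alexander vs.\ $\spinc$ vs.\ the $\s$-grading of \cite{AT25a}, with its shift by $\frac{3e}{2}$ and the value $e = 2$ forced by the meridional degeneracy slope and absence of singularities), and separating the interior periodic-point count from the boundary-parallel contribution $4g - 1$. The clean way to handle the boundary contribution is to compare, orbit by orbit, the closed mapping torus $Y_f$ (where $\mathcal{C}$ is a genuine knot and one computes $\widehat{HFK}$) against the blown-up $Y_f^\sharp$ (where one has a sutured manifold): the blow-up replaces the single orbit of $\phi_f$ along $K$ by a $\width$-worth of boundary dynamics, and the periodic points of $f^\sharp$ of low period that were created in this blow-up, together with the Euler-characteristic correction from passing to the punctured fiber, should account for exactly $4g - 1$. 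I expect this to reduce to a short computation with the Lefschetz zeta function of $f^\sharp$ in periods $1$ and $2$, combined with the $a_{g-1}$ coefficient of the Alexander polynomial, which by the Fox--Milnor / fiberedness relations equals the same Lefschetz-type count; reconciling these two computations of the same number is where the care is needed, but no new technique beyond \Cref{thm:introperiodicpoint} is required.
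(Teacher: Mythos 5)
Your proposal breaks down at the very first computation: you assert that blowing up a non-singular point gives $e=2$, but the hypothesis that $f^\sharp$ has \emph{no interior singularities} together with the Euler--Poincar\'e (Poincar\'e--Hopf) formula for a genus-$g$ fiber forces the blown-up point to be a $(4g-2)$-pronged singularity of the underlying closed-surface map $f$ (a hyperbolic fibered knot with $g\geq 2$ cannot have Anosov-free-of-singularities monodromy). So $e=4g-2$, the top grading is $\tfrac{3e}{2}=6g-3$, and the relevant next-to-top grading is $n=6g-4$, not $n=2$; your value $e=2$ is only correct when $g=1$, and all of your subsequent grading bookkeeping (Alexander grading $g-1$ sitting at $\spinc$-grading $2$, the symmetric band, etc.) is built on this error. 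With the correct $e$, \Cref{thm:introperiodicpoint} at $P=1$ gives $\dim SFH(Y^\sharp,6g-4)=\#\{\text{fixed points of }f^\sharp\}$, and one must then note that these split as the interior fixed points plus $8g-4$ fixed points on the boundary circle created by the $(4g-2)$-pronged blow-up.

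The second gap is that you never actually produce the constant $4g-1$: you defer it to an unexecuted Lefschetz zeta function / Fox--Milnor computation and an asserted bijection with ``boundary-parallel orbits.'' In the paper the mechanism is entirely different and is sutured-topological rather than dynamical: since the degeneracy slope is the meridian, $Y^\sharp$ carries $8g-4$ meridional sutures, and one decomposes it along product annuli into $Y^\natural$ (two meridional sutures, so $SFH(Y^\natural)\cong\widehat{HFK}(Y,K)$ by Juh\'asz) together with $4g-3$ solid tori each with four longitudinal sutures, giving $SFH(Y^\sharp)\cong SFH(Y^\natural)\otimes SFH(T_1)\otimes\cdots\otimes SFH(T_{4g-3})$ with each $SFH(T_i)$ two-dimensional. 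Tracking gradings through this tensor decomposition yields $\dim SFH(Y^\sharp,6g-4)=\dim SFH(Y^\natural,2g-1)+(4g-3)$, and combining with the fixed-point count gives $\dim\widehat{HFK}(Y,K,g-1)=\#\{\text{interior fixed points}\}+(8g-4)-(4g-3)=\#\{\text{interior fixed points}\}+4g-1$. Without the correct prong count and without this decomposition step (or a genuine substitute for it), your argument does not reach the stated formula.
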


\subsection{Relation with Zung's chain complex} \label{subsec:introzung}

In \cite{Zun25}, Zung constructs a chain complex $CA(\phi)$ associated to an Anosov flow $\phi$, defined over a Novikov ring.
The generators of $CA(\phi)$ are the closed multi-orbits of $\phi$. 
In the case of no perfect fits, the differential of $CA(\phi)$ is defined by counting embedded Fried pants (weighted by the flux of the flow through the pants).

It is interesting to note the differences between our chain complex $SFC(\phi,\mathcal{C})$ we study in this paper, and the chain complex $CA(\phi)$ defined by Zung.
\begin{itemize}
    \item To define $SFC(\phi,\mathcal{C})$, one has to choose a collection of closed orbits $\mathcal{C}$ relative to which $\phi$ has no perfect fits. As long as $\phi$ is transitive, such a choice always exists. However, for general $\phi$, we do not know if a canonical choice exists. For the rest of this discussion, we simply fix some choice of $\mathcal{C}$.
    \item The generators of $SFC(\phi,\mathcal{C})$ have associated multi-orbits. However, different generators could have the same associated multi-orbit. Also, only finitely many multi-orbits appear. Meanwhile, the generators of $CA(\phi)$ are exactly the multi-orbits of $\phi$, for which there are infinitely many.
    \item The differential of $SFC(\phi,\mathcal{C})$ arises from strumming the multi-loops $\mu_\x$ across sectors and resolving the multi-orbits along Fried pants $\gamma_\x$. 
    \begin{itemize}
        \item Morally, the act of strumming the multi-loops removes the redundancy of having the same multi-orbit correspond to multiple generators. 
        \item It is possible that the differential picks up compositions of Fried pants, since two multi-loops representing the same homology class may be related by multiple resolutions. These correspond to surfaces with larger number of boundary components or genus that are transverse to $\phi^\sharp$ in their interiors. 
    \end{itemize}
    Meanwhile, the differential of $CA(\phi)$ only counts Fried pants.
    \begin{itemize}
        \item Morally, the `long' orbits, i.e. the ones that do not appear as generators in $SFC(\phi,\mathcal{C})$ cancel each other out. This is an instance of a general algebro-combinatorial principle that since non-embedded loops can be resolved in an even number of ways, they cancel each other out.
        \item Surfaces with larger number of boundary components or genus are disregarded for ECH index reasons.
    \end{itemize}
\end{itemize}

In private communication, Zung has offered the following conjecture to reconcile the two chain complexes.

\begin{conj}[Zung]
There is a chain complex $C\Omega(\phi)$ generated by all multi-loops of $G_+$ (not just the embedded ones) and whose differential counts strumming of multi-loops and cobordisms of orbits along transverse surfaces.

The differential of $C\Omega(\phi)$ is filtered in two ways:
\begin{enumerate}
    \item By grading the generators according to their non-embeddedness, then separating the differential into the part $d^0_{SFC}$ that goes between generators of the same grading, and other parts $d^n_{SFC}$ that go between generators with various other grading differences. 
    \item By separating the differential into the part $d^0_{CA}$ counting strumming, the part $d^1_{CA}$ counting Fried pants, and other parts $d^n_{CA}$ counting various other transverse surfaces.
\end{enumerate}

The spectral sequence associated to filtration (1) collapses at the $E_1$ page and gives $H_*(SFC(\phi,\mathcal{C}))$. Morally, this will be because non-embedded multi-loops can be resolved in an even number of ways, which cancel each other out in $d^0_{SFC}$.
The spectral sequence associated to filtration (2) recovers $CA(\phi)$ at its $E_1$ page. Morally, this will be because $d^0_{CA}$ cancels out all but one multi-loop for each multi-orbit, similarly as \Cref{thm:introinsulatedorbithomology}.
\end{conj}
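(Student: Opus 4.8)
The plan is to build $C\Omega(\phi)$ explicitly and then extract the two spectral sequences from it, reusing the combinatorics of \cite{AT25a} and \Cref{sec:sleeksummands} as much as possible. For the underlying module take the free module, over the same Novikov ring as Zung's $CA(\phi)$, on the set of all multi-loops of $G_+$ --- embedded or not --- with the formal variable recording homology class in $H_1(Y^\sharp)$. Define the differential as $d = d_{\mathrm{str}} + d_{\mathrm{cob}}$, where $d_{\mathrm{str}}$ is the signed count of elementary strumming moves across a single sector, taken verbatim from \cite[Section 4.4]{AT25a} but now allowed to act on non-embedded multi-loops, and $d_{\mathrm{cob}}$ is the flux-weighted count of connected surfaces transverse to $\phi^\sharp$ in their interiors whose oriented boundary on $G_+$ is a prescribed difference of two multi-loops; the pair-of-pants contributions recover Zung's $d^1_{CA}$ and higher-complexity surfaces give the $d^n_{CA}$. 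The first task is $d^2 = 0$: the identity $d_{\mathrm{str}}^2 = 0$ is a local combinatorial check of the sort already implicit in the complexes $CC(C)$ of \Cref{sec:sleeksummands}, while the cross term $d_{\mathrm{str}} d_{\mathrm{cob}} + d_{\mathrm{cob}} d_{\mathrm{str}}$ and the term $d_{\mathrm{cob}}^2$ require analyzing one-parameter families of transverse surfaces with boundary on $G_+$ and showing that their ends break either into two transverse pieces, or into a transverse surface with an extra strumming move attached --- that is, that the compactified moduli space has the expected boundary.

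Granting $d^2 = 0$, filtration (2) is the more accessible half. Filter by the complexity (negative Euler characteristic, then number of boundary components) of the transverse surfaces, so that the associated-graded differential is $d^0_{CA} = d_{\mathrm{str}}$, the next piece is the Fried-pants count $d^1_{CA}$, and so on. The $E_0$-page splits as a direct sum over free homotopy classes of multi-loops in $Y^\sharp$ --- equivalently, over closed multi-orbits $\gamma$ of $\phi^\sharp$, the homotopy class of a multi-loop of $G_+$ determining such a $\gamma$ under the no-perfect-fit hypothesis --- and within each summand one must show that $H_*$ of the complex of multi-loops of $G_+$ homotopic to $\gamma$, with differential $d_{\mathrm{str}}$, is $\cong \mathbb{F}$. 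This is exactly \Cref{thm:introinsulatedorbithomology} when $\gamma$ is sleek; in general the same inductive zig-zag over compact cores $C \subset D(\gamma)$ of the dynamic annulus or M\"obius band should apply, since strumming preserves the homotopy class, so the combinatorial complexes $CC(C)$ make sense for arbitrary $\gamma$ and the induction on the number of sectors in a core goes through as in \Cref{sec:sleeksummands} --- the only new ingredient being that one now allows non-embedded multi-loops among the generators of $CC(C)$, which leaves the zig-zag intact. The resulting $E_1$-page is then, as a differential module, the free module on all closed multi-orbits with differential the Fried-pants count, i.e. $CA(\phi)$; matching the flux weights with Zung's is bookkeeping.

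Filtration (1) is the harder half, and I expect the main obstacle to live there. Choose a non-negative integer $\nu(c)$ measuring the failure of a multi-loop $c$ to be embedded --- for instance $\nu(c) = \sum_v \binom{m_v(c)}{2}$, with $m_v(c)$ the local multiplicity of $c$ at the vertex $v$, so that $\nu(c) = 0$ exactly when $c$ is embedded --- and first establish that $d$ is filtered for this grading, possibly after refining $\nu$ should strumming fail to be monotone in $\nu$ alone. Then $d^0_{SFC}$ is the $\nu$-preserving part of $d$; on $\nu = 0$ it is, by construction and because the Heegaard-state generators are precisely the embedded multi-loops, the differential of $SFC(\phi,\mathcal{C})$, with \Cref{lemma:introepsilondifference} controlling which embedded multi-loops it connects. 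The crux is to prove that the subcomplex $(\nu = k,\ d^0_{SFC})$ is acyclic for every $k > 0$: a multi-loop with a coincidence at a vertex $v$ has two local resolutions there that enter $d^0_{SFC}$ with opposite signs, so one hopes to organize all positive-$\nu$ generators into an acyclic matching in the sense of discrete Morse theory by pairing each $c$ with its resolution at, say, the first bad vertex in some fixed order. Once $E_1$ of filtration (1) is identified with $H_*(SFC(\phi,\mathcal{C}))$, collapse is automatic --- nothing of positive $\nu$-grading survives to $E_1$ for a higher differential to hit. The two genuinely delicate points, and the reason this remains a conjecture, are: (i) setting up moduli of transverse surfaces with boundary on $G_+$ with enough transversality to prove the gluing formula behind $d^2 = 0$ and the $d^n_{CA}$ decomposition; and (ii) promoting the even-resolution cancellation from a homological slogan to a chain-level statement about $d^0_{SFC}$, which requires knowing that strumming interacts with local resolutions in the rigid way the discrete-Morse argument demands. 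Both lie beyond the combinatorics developed so far.
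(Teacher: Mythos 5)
You should note at the outset that the statement you are proving is stated in the paper as a \emph{conjecture}, attributed to Zung in private communication (see \Cref{subsec:introzung}); the paper offers no proof, only the heuristic ``morally'' remarks, so there is no argument of the authors to compare yours against, and a complete proof would be new mathematics rather than a reconstruction. Judged as a proof, your proposal has genuine gaps --- which, to your credit, you name yourself. Concretely: (i) the complex $C\Omega(\phi)$ is never actually constructed. The entire content of $d^2=0$ lives in a compactness/gluing statement for one-parameter families of surfaces transverse to $\phi^\sharp$ with boundary constrained to $G_+$, and nothing in the paper's toolkit (which is purely the combinatorics of domains in a Heegaard diagram, \Cref{prop:transhomclassobsdomain} and \Cref{sec:sleeksummands}) provides such a theory; without it both filtrations are filtrations of an undefined object. (ii) Your identification of the $E_1$ page of filtration (2) with $CA(\phi)$ needs the statement ``for every closed multi-orbit $\gamma$, the complex of \emph{all} multi-loops of $G_+$ representing $\gamma$ with strumming differential has homology $\mathbb{F}$,'' i.e.\ \Cref{thm:introinsulatedorbithomology} without the sleekness hypothesis. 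The paper's proof does not extend as easily as you suggest: the zig-zag of \Cref{sec:sleeksummands} is run on complexes $CC(C)$ generated by loops in the dynamic annulus of a \emph{single} orbit, and the crucial identification \Cref{prop:sfc=cc} uses embeddedness of the reference loop $c_0$ to force the elementary domains $D(t)$ to have disjoint interiors and to classify all effective domains; for a non-sleek multi-orbit one must handle several components at once, multi-loops whose images in $G_+$ self-intersect, and the interaction between strumming and the resolutions at vertices --- exactly the phenomena the conjecture is meant to organize, so asserting the induction ``goes through'' is circular in spirit. (iii) For filtration (1), the acyclicity of the positive-$\nu$ strata under $d^0_{SFC}$ is precisely the ``even number of resolutions'' slogan; your discrete-Morse pairing is a sensible ansatz, but you have not verified that the matching is acyclic, nor even that $d$ respects a $\nu$-type filtration (strumming can both create and destroy coincidences, as you concede). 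So the proposal is a reasonable research plan consonant with the paper's heuristics, but it does not constitute a proof, and the paper does not contain one either.
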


This conjecture would explain why both $SFC(\phi,\mathcal{C})$ and $CA(\phi)$ categorify (slight variations of) the zeta function of $\phi$.

{\it {\bf Acknowledgments.} The first author would like to thank  Andras Stipsicz for some useful correspondence.
The second author would like to thank Ian Agol and Jonathan Zung for many enlightening conversations on Floer theory, pseudo-Anosov flows, and veering triangulations, many of which influenced the directions and results of this paper.
The ideas we expose in this paper were developed when the two authors were postdoctoral fellows at CIRGET, the geometry and topology lab of Université du Québec à Montréal. We would like to thank Steven Boyer for creating the fertile scientific environment in which this collaboration originated.}

{\bf Notational conventions.} Throughout this paper we follow the same notational conventions as in \cite{AT25a}.

\section{Background material and review of previous work}

In this section, we review some background material about veering branched surfaces, pseudo-Anosov flows, Heegaard Floer homology, and our previous paper \cite{AT25a}.

\subsection{Veering branched surfaces}

Let $M$ be a compact oriented 3-manifold with torus boundary components.
A \emph{branched surface} in $M$ is a 2-complex $B \subset M$ where every point in $B$ has a neighborhood smoothly modeled on a point in \Cref{fig:veeringcondition}.

The union of the nonmanifold points of $B$ is called the \emph{branch locus} of $B$ and is denoted by $\brloc(B)$. There is a natural structure of a 4-valent graph on $\brloc(B)$. The vertices of $\brloc(B)$ are called the \emph{triple points} of $B$. 
The \emph{sectors} of $B$ are the complementary regions of $\brloc(B)$ in $B$.
Each edge of $\brloc(B)$ has a canonical coorientation on $B$, which we call the \emph{maw coorientation}, given by the direction from the side with more sectors to the side with less sectors, as indicated in \Cref{fig:veeringcondition}.

A \emph{branch loop} of $B$ is a smoothly immersed closed curve in $\brloc(B)$. 
Each sector of $B$ carries a natural structure of surfaces with corners. The corners are where the boundary switches from lying on one branch loop to another branch loop locally.

\begin{defn} \label{defn:vbs}
A branched surface $B$ in $M$ is \emph{veering} if:
\begin{enumerate}
    \item Each sector of $B$ is homeomorphic to a disc.
    \item Each component of $M \cut B$ is a \emph{cusped torus shell}, i.e. a thickened torus with a nonempty collection of cusp curves on one of its boundary components.
    \item There is a choice of orientation on each branch loop so that at each triple point, the orientation of each branch loop induces the maw coorientation on the other branch loop. See \Cref{fig:veeringcondition}.
\end{enumerate}
\end{defn}

\begin{figure}
    \centering
    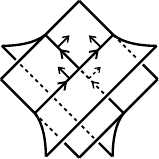
    \caption{A local picture around a triple point in a branched surface satisfying \Cref{defn:vbs}(3).}
    \label{fig:veeringcondition}
\end{figure}

The choice of orientations in (3) is unique if it exists. As such, given any veering branched surface $B$, we will implicitly adopt those orientations on its branch loops.

\subsection{Combinatorics of veering branched surfaces}

The edges of $\brloc(B)$ inherit orientations from its branch loops.
This upgrades the structure of $\brloc(B)$ as a 4-valent graph to a $(2,2)$-valent directed graph, i.e. a directed graph where every vertex has 2 incoming edges and 2 outgoing edges. We refer to $\brloc(B)$ with this upgraded structure as the \emph{dual graph} of $B$ and denote it by $G$.

\begin{prop} \label{prop:vbssector}
Each sector $S$ is a diamond, i.e. a disc with four corners, where the orientations on the sides all point from the bottom corner to the opposite top corner. See \Cref{fig:vbssector}. 
\end{prop}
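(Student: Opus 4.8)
The plan is to pin down the local structure of $B$ around a single triple point directly from \Cref{defn:vbs}, and then to propagate that information around the boundary circle of a sector. Fix a sector $S$. By \Cref{defn:vbs}(1) it is a disc, so $\partial S$ is a circle, subdivided into \emph{edges} — maximal arcs lying on a single branch loop — meeting at \emph{corners}, which by the definition of the corner structure are triple points of $B$. Each edge inherits an orientation from its branch loop, so at each corner exactly one of three things happens: both incident edges point away from the corner (call it a \emph{bottom corner} of $S$), both point toward it (a \emph{top corner}), or one points in and the other out (a \emph{side corner}). The goal is to show $\partial S$ has exactly four corners, one of each of the first two types and two of the third, arranged alternately — which is precisely the diamond of \Cref{fig:vbssector}, with every edge running from the bottom corner to the top corner.

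For the local analysis, fix a triple point $t$. Its link in $M$ is a $2$-sphere meeting $\brloc(B)$ in the four edge-directions of $G$ at $t$, and, since $B$ has the standard local model of three half-sheets along its branch locus, the link $L$ of $t$ in $B$ is a trivalent graph on those four vertices; the veering condition \Cref{defn:vbs}(3), which is what is drawn in \Cref{fig:veeringcondition}, forces $L$ to be the complete graph $K_4$, whose six edges are exactly the germs of sectors at $t$. Label the two strands of $\brloc(B)$ through $t$ by $a,b$, with $a^-,b^-$ the incoming and $a^+,b^+$ the outgoing edges of $G$ at $t$. The germs $\{a^-,a^+\}$ and $\{b^-,b^+\}$ keep their boundary on a single strand through $t$, so for the two sectors carrying them $t$ is interior to an edge, not a corner. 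The remaining four germs are corners: $\{a^+,b^+\}$ is a bottom corner of its sector, $\{a^-,b^-\}$ is a top corner, and $\{a^-,b^+\}$, $\{a^+,b^-\}$ are side corners. Hence every triple point is a bottom corner of exactly one incident sector, a top corner of exactly one, and a side corner along its other corner germs.

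Now traverse $\partial S$ once, recording for each edge whether its branch-loop orientation agrees with the direction of travel; by the local classification this agreement is preserved across a side corner and reversed across a top or bottom corner. Two consecutive top-or-bottom corners along $\partial S$ must therefore be of opposite types, so top and bottom corners alternate around $\partial S$ and each arc between a consecutive bottom–top pair is a coherently oriented directed path in $G$. It remains to see there is exactly one corner of each type and exactly one side corner on each arc. Here I would invoke \Cref{defn:vbs}(2): decomposing $M$ into a regular neighborhood $N(B)$ and the cusped torus shells $M\cut B$ and computing Euler characteristics gives $\#\{\text{sectors}\} = \#\{\text{triple points}\}$, hence $\sum_S \#\{\text{corners of }S\} = 4\cdot\#\{\text{triple points}\} = 4\cdot\#\{\text{sectors}\}$, so the average number of corners per sector is four, reducing us to ruling out the few sector shapes other than the diamond (a disc with a coherently oriented boundary cycle, a bigon, a triangle, a non-diamond square).

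I expect this last reduction to be the main obstacle: the $K_4$ picture controls each corner individually and the count fixes the average, but excluding the deficient shapes genuinely needs \Cref{defn:vbs}(2) rather than just the local branching data. I would carry it out by tracking, along $\partial S$, the cusp curves of the cusped torus shells adjacent to $S$, and showing that a deficient sector forces an impossible identification on one of the shells' boundary tori — the combinatorial reflection of the fact that $B$ encodes a pseudo-Anosov flow with no contractible closed orbits. Once all non-diamond shapes are eliminated, the orientation data recorded in the traversal above gives exactly the claimed diamond, with all four edges oriented from the bottom corner to the opposite top corner.
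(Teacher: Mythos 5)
The paper states \Cref{prop:vbssector} without proof; it is recalled as background from the veering-triangulation literature and from \cite{AT25a}, so I am assessing your proposal on its own. The local analysis at a triple point (the link of $t$ in $B$ is a $K_4$ whose six edges are the sector germs, two of them smooth passages along a single branch loop and four of them corners of the four flavours bottom, top, side, side), the alternation argument (top and bottom corners of a sector alternate around $\partial S$), and the Euler-characteristic count ($\chi(B)=\chi(M)=0$, so the number of sectors equals the number of triple points, so the average number of corners per sector is four) are all correct, and the plan you propose --- rule out the small non-diamond shapes by hand and then let the average do the rest --- is logically coherent.

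The problem is that the proposal does not take a single step towards actually ruling anything out, and that step is essentially the entire content of the proposition. Everything you establish (the $K_4$ link, alternation, the average) is perfectly consistent with, say, one sector being a bigon and another a hexagon with two top, two bottom, and two side corners; the count does nothing until the deficient shapes are excluded by a genuine argument, and you offer only a one-sentence sketch about tracking the cusp curves of the adjacent cusped torus shells. That sketch points at the right input --- condition~(2) of \Cref{defn:vbs} is where global information enters, and the sector bounded by a coherently oriented cycle of $G$ is indeed the deepest case --- but it is not a proof. It is also worth noting that the argument so far extracts from \Cref{defn:vbs}(3) only the orientations on the branch loops; the specific compatibility it demands between the orientation of one branch loop and the maw coorientation of the other at each triple point is a further local constraint (it controls on which of the one-sheet/two-sheet sides of the branch locus $S$ sits as one traverses $\partial S$), and this is exactly the kind of information the rest of the argument would need to lean on. As written, the proposal correctly identifies the shape of a proof but leaves its essential core unproved.
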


\begin{figure}
    \centering
    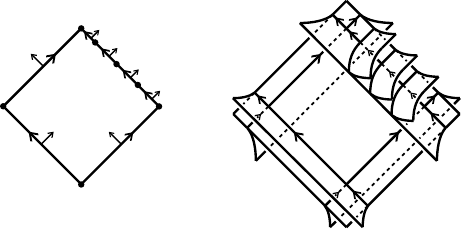
    \caption{Each sector of a veering branched surface is a diamond.}
    \label{fig:vbssector}
\end{figure}

We define the \emph{augmented dual graph} $G_+$ to be the dual graph $G$ with an additional directed edge going from the bottom corner to the top corner in each sector.

\begin{defn}[Strum] \label{defn:strum}
Let $c$ be a loop of $G_+$ containing an edge $e$ that does not lie in $G$, i.e. $e$ connects the bottom vertex to the top vertex in some sector $S$. We can define another loop $\widetilde{c}$ of $G_+$ by replacing $e$ in $c$ by the path in $G$ that traverses a bottom side and a top side of $S$.
We refer to $\widetilde{c}$ as a \emph{strum} of $c$ across $S$. 
See \Cref{fig:strum}.
\end{defn}

\begin{figure}
    \centering
    \selectfont\fontsize{8pt}{8pt}
    %% Creator: Inkscape 1.3 (0e150ed6c4, 2023-07-21), www.inkscape.org
%% PDF/EPS/PS + LaTeX output extension by Johan Engelen, 2010
%% Accompanies image file '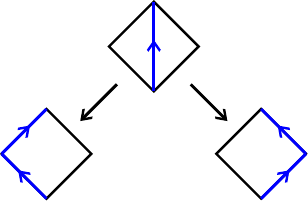' (pdf, eps, ps)
%%
%% To include the image in your LaTeX document, write
%%   \input{<filename>.pdf_tex}
%%  instead of
%%   \includegraphics{<filename>.pdf}
%% To scale the image, write
%%   \def\svgwidth{<desired width>}
%%   \input{<filename>.pdf_tex}
%%  instead of
%%   \includegraphics[width=<desired width>]{<filename>.pdf}
%%
%% Images with a different path to the parent latex file can
%% be accessed with the `import' package (which may need to be
%% installed) using
%%   \usepackage{import}
%% in the preamble, and then including the image with
%%   \import{<path to file>}{<filename>.pdf_tex}
%% Alternatively, one can specify
%%   \graphicspath{{<path to file>/}}
%% 
%% For more information, please see info/svg-inkscape on CTAN:
%%   http://tug.ctan.org/tex-archive/info/svg-inkscape
%%
\begingroup%
  \makeatletter%
  \providecommand\color[2][]{%
    \errmessage{(Inkscape) Color is used for the text in Inkscape, but the package 'color.sty' is not loaded}%
    \renewcommand\color[2][]{}%
  }%
  \providecommand\transparent[1]{%
    \errmessage{(Inkscape) Transparency is used (non-zero) for the text in Inkscape, but the package 'transparent.sty' is not loaded}%
    \renewcommand\transparent[1]{}%
  }%
  \providecommand\rotatebox[2]{#2}%
  \newcommand*\fsize{\dimexpr\f@size pt\relax}%
  \newcommand*\lineheight[1]{\fontsize{\fsize}{#1\fsize}\selectfont}%
  \ifx\svgwidth\undefined%
    \setlength{\unitlength}{147.67640602bp}%
    \ifx\svgscale\undefined%
      \relax%
    \else%
      \setlength{\unitlength}{\unitlength * \real{\svgscale}}%
    \fi%
  \else%
    \setlength{\unitlength}{\svgwidth}%
  \fi%
  \global\let\svgwidth\undefined%
  \global\let\svgscale\undefined%
  \makeatother%
  \begin{picture}(1,0.65114128)%
    \lineheight{1}%
    \setlength\tabcolsep{0pt}%
    \put(0,0){\includegraphics[width=\unitlength,page=1]{strum.pdf}}%
    \put(0.17200929,0.34456134){\color[rgb]{0,0,0}\makebox(0,0)[lt]{\lineheight{1.25}\smash{\begin{tabular}[t]{l}strum\end{tabular}}}}%
    \put(0.67549017,0.34456134){\color[rgb]{0,0,0}\makebox(0,0)[lt]{\lineheight{1.25}\smash{\begin{tabular}[t]{l}strum\end{tabular}}}}%
  \end{picture}%
\endgroup%

    \caption{Each edge of $G_+ \backslash G$ can be strum into a path of $G$ in two ways.}
    \label{fig:strum}
\end{figure}

\begin{defn}[Sweep-equivalent] \label{defn:sweepequivalent}
We say that two loops in $G_+$ are \emph{sweep-equivalent} if they are related by a finite sequence of strums. 
More precisely, if we have a sequence of loops $c_0,\dots,c_n$ in $G_+$ where for each $i$, either $c_{i-1}$ is a strum of $c_i$ or $c_i$ is a strum of $c_{i-1}$, then we say that $c_0$ and $c_n$ are sweep-equivalent.
\end{defn}

A triple point $v$ of $B$ is \emph{blue} or \emph{red} depending on whether $B$ is locally of the form in \Cref{fig:vbscolor} left or right at $v$. 
Here the orientation on $M$ is used to distinguish between the two local pictures.

\begin{defn}[Blue/red, toggle, fan sectors] \label{defn:blueredsectors}
A sector is \emph{blue/red} if its top corner is blue/red, respectively.
We say that a sector is \emph{toggle} if the colors of its top and bottom corners are different. We say that it is \emph{fan} if the colors of its top and bottom corners are the same.
\end{defn}

\begin{figure}
    \centering
    %% Creator: Inkscape 1.3 (0e150ed6c4, 2023-07-21), www.inkscape.org
%% PDF/EPS/PS + LaTeX output extension by Johan Engelen, 2010
%% Accompanies image file '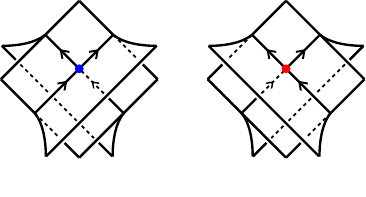' (pdf, eps, ps)
%%
%% To include the image in your LaTeX document, write
%%   \input{<filename>.pdf_tex}
%%  instead of
%%   \includegraphics{<filename>.pdf}
%% To scale the image, write
%%   \def\svgwidth{<desired width>}
%%   \input{<filename>.pdf_tex}
%%  instead of
%%   \includegraphics[width=<desired width>]{<filename>.pdf}
%%
%% Images with a different path to the parent latex file can
%% be accessed with the `import' package (which may need to be
%% installed) using
%%   \usepackage{import}
%% in the preamble, and then including the image with
%%   \import{<path to file>}{<filename>.pdf_tex}
%% Alternatively, one can specify
%%   \graphicspath{{<path to file>/}}
%% 
%% For more information, please see info/svg-inkscape on CTAN:
%%   http://tug.ctan.org/tex-archive/info/svg-inkscape
%%
\begingroup%
  \makeatletter%
  \providecommand\color[2][]{%
    \errmessage{(Inkscape) Color is used for the text in Inkscape, but the package 'color.sty' is not loaded}%
    \renewcommand\color[2][]{}%
  }%
  \providecommand\transparent[1]{%
    \errmessage{(Inkscape) Transparency is used (non-zero) for the text in Inkscape, but the package 'transparent.sty' is not loaded}%
    \renewcommand\transparent[1]{}%
  }%
  \providecommand\rotatebox[2]{#2}%
  \newcommand*\fsize{\dimexpr\f@size pt\relax}%
  \newcommand*\lineheight[1]{\fontsize{\fsize}{#1\fsize}\selectfont}%
  \ifx\svgwidth\undefined%
    \setlength{\unitlength}{175.35233986bp}%
    \ifx\svgscale\undefined%
      \relax%
    \else%
      \setlength{\unitlength}{\unitlength * \real{\svgscale}}%
    \fi%
  \else%
    \setlength{\unitlength}{\svgwidth}%
  \fi%
  \global\let\svgwidth\undefined%
  \global\let\svgscale\undefined%
  \makeatother%
  \begin{picture}(1,0.5367177)%
    \lineheight{1}%
    \setlength\tabcolsep{0pt}%
    \put(0,0){\includegraphics[width=\unitlength,page=1]{vbscolor.pdf}}%
    \put(0.16083516,0.00086324){\color[rgb]{0,0,1}\makebox(0,0)[lt]{\lineheight{1.25}\smash{\begin{tabular}[t]{l}bLue\end{tabular}}}}%
    \put(0.76408306,0.00086324){\color[rgb]{1,0,0}\makebox(0,0)[lt]{\lineheight{1.25}\smash{\begin{tabular}[t]{l}Red\end{tabular}}}}%
  \end{picture}%
\endgroup%

    \caption{The color of a triple point is determined by the local form of the branched surface. Here the orientation on $M$ is used to distinguish between the two local pictures.}
    \label{fig:vbscolor}
\end{figure}

\begin{prop} \label{prop:vbstogglefan}
Let $S$ be a blue/red sector of a veering branched surface $B$.
The top and side corners of $S$ are blue/red, respectively.
Each bottom side of $s$ is an edge of $\brloc(B)$.
Each top side of $S$ is the union of $\delta \geq 1$ edges of $\brloc(B)$.

Suppose a top side of $S$ is divided into edges $e_1,\dots,e_\delta$, listed from bottom to top. Let $S_i$ be the sector that has $e_i$ as a bottom side. If $\delta=1$, then $S_1$ is blue/red fan, respectively. If $\delta \geq 2$, then $S_1$ and $S_\delta$ are toggle while $S_i$ for $i=2,\dots,\delta-1$ are red/blue fan, respectively.
\end{prop}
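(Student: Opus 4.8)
The plan is a local-to-global argument driven entirely by the combinatorics of a neighborhood of a triple point. By \Cref{fig:veeringcondition} and \Cref{fig:vbscolor} such a neighborhood comes in exactly two shapes, blue and red, which are interchanged by an orientation-reversing homeomorphism of $M$; so it suffices to treat the blue case and appeal to this symmetry for the red one.

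First I would compile a local dictionary at a single triple point $v$. Starting from the four edges of $G$ at $v$ (two incoming, two outgoing) together with their maw coorientations, the diamond structure from \Cref{prop:vbssector}, and the veering condition \Cref{defn:vbs}(3), one records: which outgoing edge at $v$ is a genuine bottom side of the sector having $v$ as its bottom corner; which edges through $v$ are interior to a top side of a sector --- equivalently, which sides of which sectors pass smoothly through $v$ rather than turning a corner there; and the color of each sector-corner occurring at $v$. The output is a short finite list of the admissible local configurations at a blue triple point. This step carries all the content, and it is where I expect the main difficulty to lie: one must keep the maw coorientations perfectly straight in order to tell the ``large side'' and ``small side'' sectors apart and to read off how the colors of the corners propagate. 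Once this list is established, the rest of the argument is purely formal.

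Granting the list, the first sentence of the proposition --- that the top corner and the two side corners of any sector $S$ all have the color of $S$ --- follows by applying the list at each of the four corners of the diamond $S$. The statement that each bottom side of $S$ is a single edge of $\brloc(B)$, while a top side may be a concatenation $e_1\cdots e_\delta$, follows too: the list shows that a side of $S$ can continue smoothly through a non-corner triple point only when $S$ lies there on the maw-target (small) side, and this situation occurs along top sides but is impossible along bottom sides.

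For the last assertion, fix a top side of $S$ and label its vertices, from the side corner up to the top corner, as $\ell=w_0, w_1,\dots,w_{\delta-1}, w_\delta=t$, with $e_i$ the edge from $w_{i-1}$ to $w_i$; then $S_i$ has $w_{i-1}$ as its bottom corner and $w_i$ as a side corner. Walking along this side and applying the local list at each interior triple point $w_i$ (for $1\le i\le\delta-1$), where $S$ passes smoothly, shows that every such $w_i$ carries the color opposite to that of $S$. Combining this with the first sentence of the proposition: for $2\le i\le\delta-1$ the sector $S_i$ has bottom corner $w_{i-1}$ and, via its side corner $w_i$, color --- hence also top corner --- all of the opposite color, so it is a fan sector of that color. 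The sector $S_1$ instead has bottom corner $\ell=w_0$, which is a side corner of $S$ and so has the color of $S$, different from the color of $w_1$ and hence of the top corner of $S_1$; thus $S_1$ is toggle. Symmetrically, $S_\delta$ has the side corner $w_\delta=t$ of $S$ among its corners, which forces it to be toggle as well. Finally, when $\delta=1$ there are no interior triple points: $S_1$ has both $\ell$ and $t$, each of the color of $S$, among its corners, so $S_1$ is a fan sector of the color of $S$. This is precisely the asserted pattern.
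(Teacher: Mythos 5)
The paper does not prove this proposition---it appears in the background section as a recollection of standard veering-branched-surface combinatorics (cf.\ \cite{SS19}, \cite{LMT23}, \cite{AT25a})---so there is no in-text argument to compare against; I will evaluate your proposal on its own terms.

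Your framework and your closing deductions are sound. Labeling the top-side vertices $\ell=w_0,\dots,w_\delta=t$, reading off that $S_i$ has bottom corner $w_{i-1}$ and side corner $w_i$, and then propagating the two facts (i) the side corners and top corner of a sector all carry that sector's color, and (ii) the interior vertices $w_1,\dots,w_{\delta-1}$ of a top side carry the \emph{opposite} color, does correctly yield the stated toggle/fan pattern, including the $\delta=1$ case; and the reduction of the red case to the blue one by reversing the orientation of $M$ is fine. The genuine gap is that (i), (ii), and the claim that bottom sides are single edges while top sides may be concatenations, all rest on the ``local dictionary at a blue triple point'' which you announce but never actually compile. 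You acknowledge this yourself (``this step carries all the content''), and you are right that it does: one has to unwind \Cref{defn:vbs}(3) together with the two local pictures in \Cref{fig:vbscolor} at a single triple point to see, concretely, which of the adjacent sectors have that point as a top, bottom, or side corner, which sectors' sides pass smoothly through it, on which (maw-source vs.\ maw-target) side those smooth sides sit, and what color each of those corners receives. Without that finite case-check the proposal is a correct \emph{plan} for a proof rather than a proof. Everything after the local dictionary is a clean formal consequence of it, so supplying the triple-point analysis (ideally with a picture in the spirit of \Cref{fig:vbssectors}) would complete the argument.
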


See \Cref{fig:vbssectors} for an example. 

\begin{figure}
    \centering
    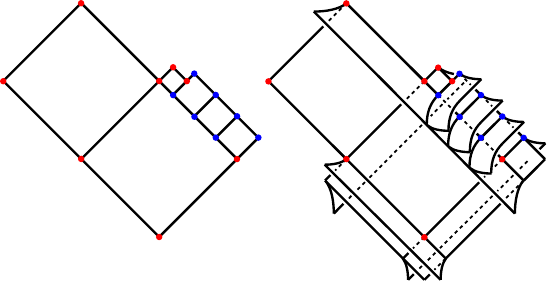
    \caption{An example of the local configuration of sectors in a veering branched surface.}
    \label{fig:vbssectors}
\end{figure}

\subsection{Pseudo-Anosov flows}

For the purposes of this paper, a \emph{pseudo-Anosov flow} on a closed 3-manifold $Y$ is a continuous flow $\phi$ for which there is a pair of singular 2-dimensional foliations $(\Lambda^s, \Lambda^u)$ whose leaves intersect transversely in the flow lines, such that the flow lines in each leaf of the \emph{stable foliation} $\Lambda^s$ are forward asymptotic, and the flow lines in each leaf of the \emph{unstable foliation} $\Lambda^u$ are backward asymptotic.
The singularity locus of the stable and unstable foliations coincide and is equal to the collection of \emph{singular} orbits of $\phi$.
See \Cref{fig:paflow} for local pictures of pseudo-Anosov flows.
The precise definition of a pseudo-Anosov flow involves some details on regularity; we refer the reader to \cite[Section 4]{FM01}.

\begin{figure}
    \centering
    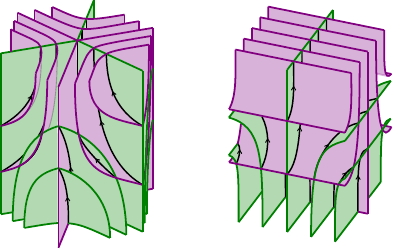
    \caption{Local picture of a pseudo-Anosov flow. Left: Near a singular orbit. Right: Away from a singular orbit.}
    \label{fig:paflow}
\end{figure}

Two pseudo-Anosov flows $\phi_1$ and $\phi_2$ on closed $3$-manifolds $Y_1$ and $Y_2$ are \emph{orbit equivalent} if there exists a homeomorphism $h:Y_1 \to Y_2$ that sends the orbits of $\phi_1$ to those of $\phi_2$ in an orientation-preserving way, but not necessarily preserving their parametrizations. 
We will consider orbit equivalent flows as being equivalent.

Given a finite collection $\mathcal{C}$ of closed orbits, one can define the \emph{blown-up flow} $\phi^\sharp$ on $Y^\sharp = Y \backslash \nu(\mathcal{C})$.
The restriction of $\phi^\sharp$ to the interior of $Y^\sharp$ can be identified with the restriction of $\phi$ to $Y \backslash \mathcal{C}$. 
Meanwhile, each boundary component $T$ of $M$ corresponds to a closed orbit $\gamma \in \mathcal{C}$. 
The stable and unstable foliations of $\gamma$ intersect $T$ in an equal number of circles. 
When restricted to $T$, $\phi^\sharp$ has repelling and attracting closed orbit exactly along these circles, respectively.
The \emph{degeneracy curves} of $\phi$ on $T$ is the isotopy class of the collection of repelling closed orbits on $T$, or equivalently, the isotopy class of the collection of attracting closed orbits on $T$.
We illustrate an example in \Cref{fig:paflowblowup} where we blow up along \Cref{fig:paflow} left. 

\begin{figure}
    \centering
    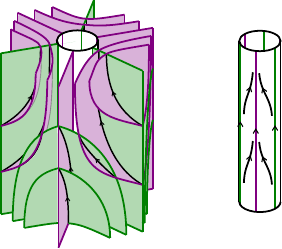
    \caption{Blowing up along \Cref{fig:paflow} left.}
    \label{fig:paflowblowup}
\end{figure}

We say that $\phi$ has \emph{no perfect fits} relative to $\mathcal{C}$ if the blown-up flow $\phi^\sharp$ does not admit two closed orbits $\gamma_1$ and $\gamma_2$ such that $\gamma_1$ is homotopic to $\gamma_2^{-1}$ in $Y^\sharp$.

\subsection{Correspondence between pseudo-Anosov flows and veering branched surfaces}

There is a correspondence theorem between pseudo-Anosov flows and veering branched surfaces.
We summarize the aspects of this theory that we will need in this paper below.
See \cite[Theorem 2.5]{AT25a} and \cite[Chapter 2]{Tsathesis} for more details.

\begin{thm} \label{thm:pavbscorr}
Let $\phi$ be a pseudo-Anosov flow on a closed oriented 3-manifold $Y$. Let $\mathcal{C}$ be a nonempty finite collection of closed orbits, containing the collection of singular orbits, such that $\phi$ has no perfect fits relative to $\mathcal{C}$.
Then $M = Y \backslash \nu(\mathcal{C})$ admits a veering branched surface $B(\phi,\mathcal{C})$ that carries the blown-up unstable foliation of $\phi$ in $M$.

Furthermore, there is a bijection $\mathcal{F}$ from the sweep-equivalence classes of loops in $G_+$ to the closed orbits of $\phi^\sharp$, such that $\mathcal{F}(c)$ is homotopic to $c$ in $M$ for each loop $c$.
\end{thm}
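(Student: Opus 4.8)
The plan is to assemble this statement from three inputs: the Agol-Guéritaud construction of a veering triangulation of $M$ from a pseudo-Anosov flow with no perfect fits, the dual branched surface of a veering triangulation in the sense of Schleimer-Segerman \cite{SS19}, and the flow-graph analysis of Landry-Minsky-Taylor \cite{LMT23}. This is exactly what is carried out in \cite[Chapter 2]{Tsathesis} and \cite[Theorem 2.5]{AT25a}, and in the body of the present paper we will simply cite the result; here I indicate the shape of the argument.

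For the first assertion I would work in the orbit space. By Fenley's structure theory for pseudo-Anosov flows, the lift of $\phi^\sharp$ to the universal cover of $M$ has orbit space a plane $\mathcal{P}$ carrying two transverse foliations induced by $\Lambda^s$ and $\Lambda^u$ (nonsingular, since we have blown up along $\mathcal{C}$, which contains the singular orbits), and $\pi_1(M)$ acts on $\mathcal{P}$ preserving this bifoliated structure. The no-perfect-fit hypothesis is precisely the statement that no nontrivial deck transformation has an `anti-axis' in $\mathcal{P}$, which is the combinatorial input the Agol-Guéritaud construction needs: one takes the $\pi_1(M)$-invariant set of maximal rectangles of $\mathcal{P}$, associates to each an ideal tetrahedron colored according to which pair of opposite corners is stable and which is unstable, and glues these across their faces to obtain a veering triangulation $\tau$ of $M$. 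One then lets $B = B(\phi,\mathcal{C})$ be the standard dual branched surface of $\tau$: sectors dual to edges of $\tau$, complementary pieces dual to ideal vertices, branch locus recording faces and tetrahedra. Verifying \Cref{defn:vbs} is then a local check: each sector is a disc, in fact a diamond as in \Cref{prop:vbssector}; each component of $M \cut B$ is a cusped torus shell dual to an ideal vertex; and the blue/red alternation at triple points required by \Cref{defn:vbs}(3) is exactly the veering edge-coloring of $\tau$. That $B$ carries the blown-up unstable foliation is built into the construction, since the upper track placed on each face of $\tau$ is carried by the blow-up of $\Lambda^u$, so the resulting lamination splits open to it.

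For the second assertion I would identify $G_+$ with the flow graph of $\tau$ studied in \cite{LMT23}: the dual graph $G = \brloc(B)$ carries the edges coming from faces and tetrahedra, while the extra bottom-to-top diagonal edges of $G_+$ are the `infinitesimal' edges, so that the loops of $G_+$ are precisely the forward-monotone carried cycles of the flow graph. Flowing such a cycle $c$ forward converges, by the pseudo-Anosov dynamics, to a unique closed orbit of $\phi^\sharp$; this defines $\mathcal{F}(c)$, and since $c$ lies in $B$ and $B$ deformation retracts $M$, the orbit $\mathcal{F}(c)$ is freely homotopic to $c$. A single strum is a homotopy of $c$ across one sector disc and does not change its forward limit, so $\mathcal{F}$ descends to sweep-equivalence classes. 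Surjectivity holds because any closed orbit of $\phi^\sharp$ can be homotoped to be transverse to the faces of $\tau$, hence onto a carried cycle of $G_+$. Injectivity is the crux: if two carried cycles share a forward limit orbit $\gamma$, then flowing both sufficiently far forward places them in an arbitrarily thin flow-box neighborhood of $\gamma$, and one checks by a local combinatorial move that any two carried representatives inside such a neighborhood differ by a finite sequence of strums.

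The principal obstacle is twofold. First, the Agol-Guéritaud construction is partly folklore, so some care is needed to confirm that the branched surface extracted from $\tau$ satisfies the precise axioms of \Cref{defn:vbs} --- with the structural consequences \Cref{prop:vbssector} and \Cref{prop:vbstogglefan} --- and that it carries the blown-up unstable foliation exactly, not merely up to isotopy. Second, the injectivity of $\mathcal{F}$ is the delicate dynamical point: to know that non-sweep-equivalent loops of $G_+$ represent genuinely distinct closed orbits, one needs the fact that the forward flow `combs' a carried cycle toward its limiting orbit through a finite, controlled sequence of strums, and this is where the argument really uses the flow rather than the combinatorics of $B$ alone.
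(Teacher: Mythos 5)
The paper does not prove \Cref{thm:pavbscorr}; it is recalled as background and cited to \cite[Theorem 2.5]{AT25a}, \cite[Chapter 2]{Tsathesis}, \cite{SS19}, and \cite{LMT23}. You correctly recognize this, cite the same sources, and your sketch of how the result is established there (orbit-space maximal rectangles $\to$ Agol--Gu\'eritaud veering triangulation $\to$ dual branched surface, then matching loops of $G_+$ with closed orbits via the Landry--Minsky--Taylor flow-graph machinery) is consistent with that literature, so this is essentially the same approach.
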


\subsection{Sutured manifolds} 

A \emph{sutured manifold} is a pair $(M, \Gamma)$ where $M$ is a 3-manifold whose boundary is decomposed into the union of two surfaces $R^+$, and $R^-$ meeting along a multicurve $\Gamma \subset \partial M$. We further require the components of $\Gamma$ to be homotopically non-trivial, and we call them \emph{sutures}. A sutured manifold $(M, \gamma)$ is \emph{balanced} if $\chi(R_+)=\chi(R_-)$ and each connected component of $\partial M$ contains at least one suture.

When $M$ is a 3-manifold with torus boundary components, a sutured structure consists simply of the choice of an even number of parallel curves on each boundary component of $M$. Note that these structures are automatically balanced since $R_+$ and $R_-$ in this case are collections of annuli, so $\chi(R_+)=\chi(R_-)=0$. 

If $(Y, \phi)$ is a pseudo-Anosov flow and $\mathcal{C}$ is a collection of closed orbits one can consider the blown-up flow $(Y^\sharp=Y \setminus \nu(C), \phi^\sharp)$. This has a natural sutured structure $\Gamma$ having one suture for each closed orbit of $\phi^\sharp$ on $\partial Y^\sharp$. 
Note that on the boundary component $T_\gamma$ corresponding to a closed orbit $\gamma \in \mathcal{C}$, the sutures have the same slope as the degeneracy slope of $\phi$ at $\gamma$.

\subsection{Sutured Heegaard diagrams}

Sutured manifolds can be described by \emph{sutured Heegaard diagrams}. 
These are triples $(\Sigma, \boa, \bob)$ consisting of (1) a compact, connected, oriented surface with boundary $\Sigma$, and (2) two collections of oriented, pairwise disjoint, simple closed curves $\boa=\{\alpha_1, \dots ,\alpha_n\}$ and  $\bob=\{\beta_1, \dots ,\beta_m\}$ contained in the interior of $\Sigma$.

Starting from a sutured Heegaard diagram $(\Sigma, \boa, \bob)$ with $n$ $\alpha$-curves and $m$ $\beta$-curves one can construct a sutured manifold $(M, \Gamma)$ by thickening $\Sigma$ to $\Sigma \times [-1,1]$ then attaching three-dimensional $2$-handles along $\alpha_i \times \{-1\}$ and $\beta_i \times \{1\}$, and finally taking $\Gamma = \partial \Sigma \times \{0\}$.

For the sutured manifold  $(M, \Gamma)$ associated to an Heegaard diagram $(\Sigma, \boa, \bob)$ to be balanced one must have that:
\begin{itemize}
    \item $|\boa|=|\bob|$, that is, there are as many $\alpha$-curves as many $\beta$-curves 
    \item every connected component of $\Sigma\ \setminus \bigcup_{i=1}^d \alpha_i$ contains at least a component of $\partial \Sigma$
     \item every connected component of $\Sigma\ \setminus \bigcup_{i=1}^d \beta_i$ contains at least a component of $\partial \Sigma$.
\end{itemize}
Consequently a diagram satisfying these conditions is called \emph{balanced}. It was shown in \cite[Proposition 2.13]{Juh06} that any balanced sutured manifold can be described by a sutured Heegaard diagram.

\subsection{Canonical diagrams of veering branched surfaces} \label{subsec:vbstoheegaarddiagram}

In the first part of this series \cite{AT25a} we showed that a veering branched surface $B$ in a 3-manifold $M$ gives rise to a canonical sutured Heegaard diagram $(\Sigma, \boa, \bob)$ for the sutured structure $\Gamma$ having two parallel sutures for each cusp curve of $B$.
We review the construction below.

We shall denote with $v_1, \dots , v_n$ the triple points of $B$, and by $C_1, \dots, C_k$, $C_i\simeq T^2 \times I$, the closure of the connected components of $M\setminus B$. Furthermore, let $G$ denote the $1$-skeleton of $B$, and $S_1, \dots, S_m$ its sectors. 

Let $U=N(G)$ be a tubular neighbourhood of the $1$-skeleton of $B$. 
Let $\Sigma_0=\partial U$ be the boundary of $U$. We place a set of curves $\boa = \{\alpha_1,\dots,\alpha_n\}$ in correspondence of the triple points of $B$ as suggested in \Cref{fig:vbstoheegaardtriplepoint}, and place a set of curves $\bob = \{\beta_1,\dots,\beta_n\}$ by setting $\beta_i = S_i \cap \Sigma_0$.

\begin{figure}
    \centering
    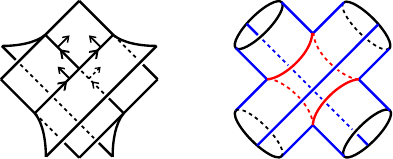
    \caption{The $\alpha$-curves are placed in correspondence with the triple points of $B$.}
    \label{fig:vbstoheegaardtriplepoint}
\end{figure}

After compressing the $\alpha$-curves along disks $D_i \subset U$, and the $\beta$-curves along the sectors we get a three-manifold $M_0\subset M$ with complement $M \setminus M_0$ a collection of solid tori $\{T_1, \dots, T_\ell\}$, one for each cusp curve on $\partial M$. 
Note that the core $\sigma_i$  of each solid torus $T_i\simeq \sigma_i\times D^2$ co-bounds an annulus $A_i \subset M$ with some cusp curve on $\partial M$. See \Cref{fig:vbstoheegaardcutannulus}. 

Let $M'_0\subset M_0 \subset M$ be the three-manifold obtained from $M_0$ by cutting along this collection of annuli. In the  manifold $M_0'$ the surface $\Sigma_0$ descends to an embedded surface $\Sigma \subset M_0'$ with boundary $\partial \Sigma \subset \partial M_0'$ . Furthermore, since the annuli $A_i$ intersect $\Sigma_0$ away from the $\alpha$- and the $\beta$-curves we can consider the  diagram $(\Sigma, \boa, \bob)$.

\begin{figure}
    \centering
    %% Creator: Inkscape 1.3 (0e150ed6c4, 2023-07-21), www.inkscape.org
%% PDF/EPS/PS + LaTeX output extension by Johan Engelen, 2010
%% Accompanies image file '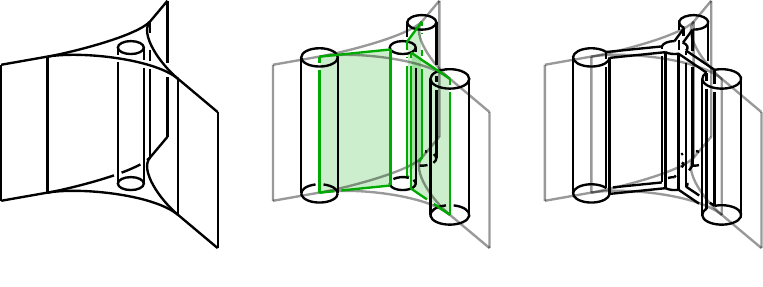' (pdf, eps, ps)
%%
%% To include the image in your LaTeX document, write
%%   \input{<filename>.pdf_tex}
%%  instead of
%%   \includegraphics{<filename>.pdf}
%% To scale the image, write
%%   \def\svgwidth{<desired width>}
%%   \input{<filename>.pdf_tex}
%%  instead of
%%   \includegraphics[width=<desired width>]{<filename>.pdf}
%%
%% Images with a different path to the parent latex file can
%% be accessed with the `import' package (which may need to be
%% installed) using
%%   \usepackage{import}
%% in the preamble, and then including the image with
%%   \import{<path to file>}{<filename>.pdf_tex}
%% Alternatively, one can specify
%%   \graphicspath{{<path to file>/}}
%% 
%% For more information, please see info/svg-inkscape on CTAN:
%%   http://tug.ctan.org/tex-archive/info/svg-inkscape
%%
\begingroup%
  \makeatletter%
  \providecommand\color[2][]{%
    \errmessage{(Inkscape) Color is used for the text in Inkscape, but the package 'color.sty' is not loaded}%
    \renewcommand\color[2][]{}%
  }%
  \providecommand\transparent[1]{%
    \errmessage{(Inkscape) Transparency is used (non-zero) for the text in Inkscape, but the package 'transparent.sty' is not loaded}%
    \renewcommand\transparent[1]{}%
  }%
  \providecommand\rotatebox[2]{#2}%
  \newcommand*\fsize{\dimexpr\f@size pt\relax}%
  \newcommand*\lineheight[1]{\fontsize{\fsize}{#1\fsize}\selectfont}%
  \ifx\svgwidth\undefined%
    \setlength{\unitlength}{366.05601874bp}%
    \ifx\svgscale\undefined%
      \relax%
    \else%
      \setlength{\unitlength}{\unitlength * \real{\svgscale}}%
    \fi%
  \else%
    \setlength{\unitlength}{\svgwidth}%
  \fi%
  \global\let\svgwidth\undefined%
  \global\let\svgscale\undefined%
  \makeatother%
  \begin{picture}(1,0.37523267)%
    \lineheight{1}%
    \setlength\tabcolsep{0pt}%
    \put(0,0){\includegraphics[width=\unitlength,page=1]{vbstoheegaardcutannulus.pdf}}%
    \put(0.13854616,0.00577843){\color[rgb]{0,0,0}\makebox(0,0)[lt]{\lineheight{1.25}\smash{\begin{tabular}[t]{l}$M$\end{tabular}}}}%
    \put(0.4950502,0.00577849){\color[rgb]{0,0,0}\makebox(0,0)[lt]{\lineheight{1.25}\smash{\begin{tabular}[t]{l}$M_0$\end{tabular}}}}%
    \put(0.85155383,0.00577849){\color[rgb]{0,0,0}\makebox(0,0)[lt]{\lineheight{1.25}\smash{\begin{tabular}[t]{l}$M'_0$\end{tabular}}}}%
  \end{picture}%
\endgroup%

    \caption{The 3-manifold obtained from $(\Sigma_0, \boa, \bob)$ is the submanifold $M_0 \subset M$ with complement $M \setminus M_0$ a collection of solid tori $\{T_1, \dots, T_\ell\}$, one for each cusp curve on $\partial M$. In $M_0$, we have a collection of annuli $A_i$, each with one boundary component along the core of $T_i$ and the other boundary component along $\partial M$.}
   \label{fig:vbstoheegaardcutannulus}
\end{figure}

The combinatorics of the Heegaard diagram $(\Sigma, \boa, \bob)$ can be read from the dual graph of the veering branched surface.

\begin{prop}[Alfieri-Tsang {\cite[Proposition 5.8]{AT25a}}] \label{prop:globalcombinatorics}
Suppose that $B$ is a veering branched surface, and that $(\Sigma, \boa, \bob)$ is its corresponding Heegaard diagram. Then $\Sigma \setminus \boa$ decomposes as a union of punctured annuli $A_1, \dots , A_l$ in one-to-one correspondence with the branch loops of $B$. Furthermore, each annulus $A_i$ can be decomposed into a union of punctured rectangles, one for each $\alpha$-curve it contains in its closure.  
Each of these rectangles looks like one of the two local models depicted in \Cref{fig:heegaardcombinalphaglue}, and the specific local model is decided from the color of the triple point corresponding to the $\alpha$-curve. \qed
\end{prop}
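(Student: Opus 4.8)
The plan is to reduce the global combinatorics of $(\Sigma, \boa, \bob)$ to a finite local computation at the triple points of $B$ and then to assemble these local pictures along the edges of the dual graph $G$. Since $U = N(G)$ is a regular neighbourhood of the $(2,2)$-valent graph $G$, it is built from a ball $N(v)$ for each triple point $v$ and a tube $N(e) \cong e \times D^2$ for each edge $e$; correspondingly $\Sigma_0 = \partial U$ is obtained by gluing, for each $v$, the four-holed sphere $\partial N(v)$ minus the four edge-attaching disks, to the annuli $e \times \partial D^2$. First I would record the portion of $\Sigma_0$ lying over a single edge $e$: it is an annulus carrying no $\alpha$-curve, and it meets the traces $S_i \cap \Sigma_0$ of the adjacent sectors in a standard way dictated by which sectors have $e$ as a top or a bottom side, information supplied by \Cref{prop:vbssector} and \Cref{prop:vbstogglefan}. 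Next I would record the portion over a triple point $v$, together with the curve $\alpha_v$ placed as in \Cref{fig:vbstoheegaardtriplepoint}: the curve $\alpha_v$ runs through $\partial N(v)$ separating the two edge-holes of one branch loop through $v$ from the two edge-holes of the other, and the $\beta$-arcs crossing this region are determined by the local form of $B$ at $v$. The essential observation is that, once one imposes the veering condition \Cref{defn:vbs}(3) together with the maw coorientations, there are exactly two possibilities for this local picture, according as $v$ is blue or red in the sense of \Cref{fig:vbscolor}.

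With the two local models in hand, the decomposition follows by gluing. The edges of $G$ are partitioned by the branch loops $\ell_1, \dots, \ell_l$, and cutting $\Sigma$ along the $\alpha$-curves severs the vertex regions so that the edge-annuli over a fixed branch loop $\ell_i$ are joined end-to-end into a single annular piece $A_i$; this establishes the bijection between the components of $\Sigma \setminus \boa$ and the branch loops. Inside $A_i$, the arcs in which the $\alpha$-curves at the triple points of $\ell_i$ meet $\overline{A_i}$ cut $A_i$ into one rectangle per such triple point, i.e. one for each $\alpha$-curve in its closure, and by the local analysis each rectangle is one of the two models of \Cref{fig:heegaardcombinalphaglue}, with the model selected by the colour of the corresponding triple point. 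Finally I would track the punctures: these are produced when $\Sigma_0$ is cut along the cusp annuli (as in \Cref{fig:vbstoheegaardcutannulus}) to form $\Sigma$, and a bookkeeping of which cusp curves abut which branch loop shows that they distribute among the rectangles exactly as in the two local models, upgrading the annuli and rectangles to punctured annuli and punctured rectangles. A consistency check via Euler characteristics and a count of the $\alpha$-curves confirms that nothing has been over- or under-counted.

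The main obstacle is the local analysis at a triple point. Drawing $\partial N(v)$, the curve $\alpha_v$, and every incident $\beta$-arc correctly, and then verifying that the veering and maw conventions really do force precisely the two models of \Cref{fig:heegaardcombinalphaglue} with the stated colour dependence, is where essentially all of the content lies: one must keep careful track of the cyclic order of the four edges at $v$, the pairing of these edges into the two branch loops through $v$, the induced coorientations, and the colours of the top and side corners of the incident sectors as prescribed by \Cref{prop:vbstogglefan}. Once these two models are pinned down and seen to glue consistently along shared edges, the remainder of the argument is routine.
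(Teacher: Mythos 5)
First, a point of calibration: this paper does not actually prove \Cref{prop:globalcombinatorics} — the statement is imported from the prequel \cite{AT25a} and recorded here with no argument — so the only thing to measure your proposal against is the construction reviewed in \Cref{subsec:vbstoheegaarddiagram} and the content of the statement itself. Your overall strategy is certainly the natural one, and presumably the one used in the prequel: build $\Sigma_0=\partial N(G)$ from a four-holed sphere per triple point and a tube per edge of $G$, understand the picture over each edge and each vertex (with $\alpha_v$ placed as in \Cref{fig:vbstoheegaardtriplepoint} and the $\beta$-traces dictated by \Cref{prop:vbssector} and \Cref{prop:vbstogglefan}), cut along the $\alpha$-curves, glue the edge pieces along each branch loop, and account for the punctures coming from the cusp-annulus cuts of \Cref{fig:vbstoheegaardcutannulus}.

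The problem is that, as written, the proposal defers exactly the step that constitutes the proposition. The assertion that the veering condition \Cref{defn:vbs}(3) together with the maw coorientations forces precisely the two local models of \Cref{fig:heegaardcombinalphaglue}, with the model selected by the colour of the triple point, is stated but never derived — and you acknowledge that this is where essentially all the content lies. The same goes for the global assembly: you assert that $\alpha_v$ separates the two edge-holes of one branch loop through $v$ from the two of the other and that the cyclic gluing of vertex pieces and edge tubes then yields one \emph{annulus} per branch loop, but a priori such a cyclic chain of multi-holed vertex pieces could close up into a component of the wrong genus or with the wrong number of $\alpha$-boundary circles; it is precisely the puncture/Euler-characteristic bookkeeping that you relegate to a final ``consistency check'' which must rule this out, and that bookkeeping also has to say exactly where the circles $A_i \cap \Sigma_0$ land so that each rectangle acquires its punctures as in the two models. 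Until the local pictures at a blue and at a red triple point are actually drawn and verified, and the gluing and puncture counts are carried out, the proposal is a correct plan of attack rather than a proof.
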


\begin{figure}
    \centering
    \selectfont\fontsize{12pt}{12pt}
    \resizebox{!}{10.5cm}{%% Creator: Inkscape 1.3 (0e150ed6c4, 2023-07-21), www.inkscape.org
%% PDF/EPS/PS + LaTeX output extension by Johan Engelen, 2010
%% Accompanies image file '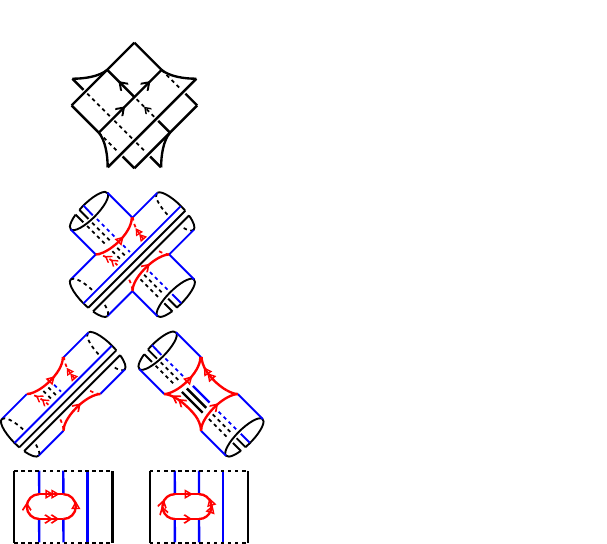' (pdf, eps, ps)
%%
%% To include the image in your LaTeX document, write
%%   \input{<filename>.pdf_tex}
%%  instead of
%%   \includegraphics{<filename>.pdf}
%% To scale the image, write
%%   \def\svgwidth{<desired width>}
%%   \input{<filename>.pdf_tex}
%%  instead of
%%   \includegraphics[width=<desired width>]{<filename>.pdf}
%%
%% Images with a different path to the parent latex file can
%% be accessed with the `import' package (which may need to be
%% installed) using
%%   \usepackage{import}
%% in the preamble, and then including the image with
%%   \import{<path to file>}{<filename>.pdf_tex}
%% Alternatively, one can specify
%%   \graphicspath{{<path to file>/}}
%% 
%% For more information, please see info/svg-inkscape on CTAN:
%%   http://tug.ctan.org/tex-archive/info/svg-inkscape
%%
\begingroup%
  \makeatletter%
  \providecommand\color[2][]{%
    \errmessage{(Inkscape) Color is used for the text in Inkscape, but the package 'color.sty' is not loaded}%
    \renewcommand\color[2][]{}%
  }%
  \providecommand\transparent[1]{%
    \errmessage{(Inkscape) Transparency is used (non-zero) for the text in Inkscape, but the package 'transparent.sty' is not loaded}%
    \renewcommand\transparent[1]{}%
  }%
  \providecommand\rotatebox[2]{#2}%
  \newcommand*\fsize{\dimexpr\f@size pt\relax}%
  \newcommand*\lineheight[1]{\fontsize{\fsize}{#1\fsize}\selectfont}%
  \ifx\svgwidth\undefined%
    \setlength{\unitlength}{291.24206543bp}%
    \ifx\svgscale\undefined%
      \relax%
    \else%
      \setlength{\unitlength}{\unitlength * \real{\svgscale}}%
    \fi%
  \else%
    \setlength{\unitlength}{\svgwidth}%
  \fi%
  \global\let\svgwidth\undefined%
  \global\let\svgscale\undefined%
  \makeatother%
  \begin{picture}(1,0.89589967)%
    \lineheight{1}%
    \setlength\tabcolsep{0pt}%
    \put(0,0){\includegraphics[width=\unitlength,page=1]{heegaardcombinalphaglue.pdf}}%
    \put(0.18276867,0.8645952){\color[rgb]{0,0,0}\makebox(0,0)[lt]{\lineheight{1.25}\smash{\begin{tabular}[t]{l}bLue\end{tabular}}}}%
    \put(0,0){\includegraphics[width=\unitlength,page=2]{heegaardcombinalphaglue.pdf}}%
    \put(0.76225466,0.8645946){\color[rgb]{0,0,0}\makebox(0,0)[lt]{\lineheight{1.25}\smash{\begin{tabular}[t]{l}Red\end{tabular}}}}%
    \put(0,0){\includegraphics[width=\unitlength,page=3]{heegaardcombinalphaglue.pdf}}%
    \put(0.45711594,0.52857955){\color[rgb]{1,0.8,0}\makebox(0,0)[lt]{\lineheight{1.25}\smash{\begin{tabular}[t]{l}$\x^\bot$\end{tabular}}}}%
    \put(0.45769232,0.40556463){\color[rgb]{0.50196078,0,0.50196078}\makebox(0,0)[lt]{\lineheight{1.25}\smash{\begin{tabular}[t]{l}$\x^\top$\end{tabular}}}}%
  \end{picture}%
\endgroup%
}
    \caption{The local combinatorics of the Heegaard diagram near a triple point $v$ depends on the color of $v$.}
    \label{fig:heegaardcombinalphaglue}
\end{figure}

\subsection{Sutured Heegaard Floer homology}
In \cite{Juh06} Juhász introduced a variation of Heegaard Floer homology \cite{OS04c} called sutured Floer homology. This is a topological invariant of sutured manifolds in the form of a finite dimensional vector space. We briefly review the construction and highlight some important features of this invariant, for more details we refer to \cite{OSS}.

Let $(M, \Gamma)$ be a balanced sutured manifold. Choose a balanced Heegaard diagram $(\Sigma, \boa, \bob)$ for $(M, \Gamma)$, and pinch the boundary components of $\Sigma$ to turn it into a closed surface $\overline{\Sigma}$ with marked points $\z=\{z_1, \dots , z_k\}$.  

Let $d$ be the common number of $\alpha$- and $\beta$-curves in the diagram.
Let $\text{Sym}^d(\Sigma)$ be the $d$-fold symmetric product of $\overline{\Sigma}$. This is a smooth complex variety having as points all possible unordered $d$-tuples $\{x_1, \dots, x_d\}$ with $x_i \in \overline{\Sigma}$. We can consider the tori $\T_{\boa}=\alpha_1 \times \dots \times \alpha_d$, and $\T_{\bob}=\beta_1 \times \dots \times \beta_d$ as submanifolds of $\text{Sym}^d(\Sigma)$. 

The Floer chain complex $CF(\Sigma, \boa , \bob):= C_*(\T_{\boa}, \T_{\bob})$ is defined as the vector space formally generated by the intersection points of $\T_{\boa}$ and $\T_{\bob}$:
\[C_*(\T_{\boa}, \T_{\bob})= \bigoplus_{\bold{x}\in \T_{\boa}\cap \T_{\bob}} \mathbb{F} \cdot  \bold{x} \ , \]
equipped with differential
\begin{equation}\label{differential}
    \partial \x = \sum_{\y \in \T_{\boa} \cap \T_{\bob}} c(\x, \y ) \cdot \y \ ,
\end{equation}
where the coefficients $c(\x, \y)$ are defined as follows. First, one chooses  a suitable generic perturbation of the complex structure of $\text{Sym}^d(\Sigma)$, then for each pair of intersection points $\x,\y \in \T_{\boa}  \cap \T_{\bob}$, one considers all \emph{holomorphic disks} connecting $\x$ to $\y$, that is, all holomorphic maps 
\[u: D^2 \to \text{Sym}^d(\Sigma)\setminus  \ \bigcup_{i=1}^k\  \{z_i\} \times \sym^{d-1}(\overline{\Sigma})\] 
such that $u(-i)=\x$, $u(i)=\y$, $u(\{ x+iy : y>0  \}) \subset \T_{\boa}$, and  $u(\{ x+iy : y<0  \}) \subset \T_{\bob}$. Up to reparametrization of the source (Möbius transformations preserving $i$ and $-i$) there are only finitely many such maps with \emph{Maslov index one}. The coefficient $c(\x,\y)$ is defined as the total number of these holomorphic disks. 

\begin{thm}[Ozsváth-Szabó, Juhász] \label{thm:sfhwelldefined}
Suppose $(\Sigma, \boa, \bob)$ is an admissible, balanced Heegaard diagram for a sutured manifold $(M, \Gamma)$. Then the sutured Floer chain complex $CF(\Sigma, \boa, \bob)$ is well-defined and its homology only depends on the homeomorphism class of $(M,\Gamma)$. 
\end{thm}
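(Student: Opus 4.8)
The plan is to follow the standard template for proving invariance of a Lagrangian-type Floer homology, in the form adapted to sutured manifolds by Juhász. \textbf{Well-definedness.} First I would show the chain complex is defined, i.e.\ the coefficients $c(\x,\y)$ in \eqref{differential} are finite and $\partial^2 = 0$. Finiteness uses the admissibility hypothesis: in an admissible diagram, for any pair $\x,\y$ there are only finitely many effective domains $D$ with $n_\z(D)=0$ and Maslov index $1$ connecting $\x$ to $\y$, and for a generic perturbation of the complex structure on $\sym^d(\ol\Sigma)$ the moduli space of holomorphic disks in each such class is a compact $0$-manifold; summing the mod-$2$ counts over the finitely many classes gives $c(\x,\y)$. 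For $\partial^2 = 0$ one runs the usual cobordism argument: in a Maslov index $2$ class $D$ with $n_\z(D) = 0$, the moduli space modulo the $\RR$-action is, after perturbation, a $1$-manifold whose ends are identified, by Gromov compactness and a gluing theorem, with the broken configurations $D_1 * D_2$ with $\mu(D_i)=1$ and $n_\z(D_i)=0$; a compact $1$-manifold has an even number of boundary points, and this count is exactly the $\y$-coefficient of $\partial^2 \x$, so it vanishes over $\mathbb{F}$.

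\textbf{Invariance.} Next I would prove the homology is independent of all choices. Independence of the (path of) perturbations for a fixed diagram follows from continuation maps counting index-$0$ disks for a generic path, with the chain homotopy counting index-$(-1)$ disks, plus the standard composition argument. Independence of the diagram reduces to the three Heegaard moves. Isotopies of the $\boa$- or $\bob$-curves are again handled by continuation maps, taking care at the instants where a tangency of $\boa$ with $\bob$ is created or destroyed (or via an exact Hamiltonian isotopy). Handleslides within $\boa$ (resp.\ within $\bob$) are handled by the holomorphic triangle map of the triple diagram $(\ol\Sigma,\boa,\boa',\bob)$: a model count in the subsurface where $\boa$ and $\boa'$ differ shows the top class contributes, and an associativity (pentagon) argument shows the triangle maps are mutually inverse quasi-isomorphisms. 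Stabilization --- adding an index-$1$/index-$2$ canceling pair of curves supported in a new once-punctured torus summand --- is a purely local computation showing the new intersection point contributes a tensor factor $\mathbb{F}$.

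\textbf{Conclusion.} Finally I would invoke the sutured Reidemeister--Singer theorem of Juhász: any two balanced Heegaard diagrams for the same sutured manifold $(M,\Gamma)$ are related by a finite sequence of the moves above, and one may arrange every intermediate diagram to be admissible (admissibility is restored by winding the $\boa$-curves, itself a sequence of isotopies). Concatenating the chain homotopy equivalences from the previous paragraph yields an isomorphism on homology, so $H_*(CF(\Sigma,\boa,\bob))$ depends only on the homeomorphism type of $(M,\Gamma)$.

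\textbf{Main obstacle.} The hard part is the analytic package underpinning well-definedness: transversality for generic almost complex structures, Gromov compactness in $\sym^d(\ol\Sigma)$ (using that the diagram is balanced and that the basepoints $\z$ are avoided by holomorphic disks), and the gluing theorem describing the ends of the $1$-dimensional moduli spaces. One does not reprove this here but cites Ozsváth--Szabó \cite{OS04c} for the closed case and Juhász \cite{Juh06} for the passage to sutured manifolds; Lipshitz's cylindrical reformulation is an alternative that avoids working in the symmetric product. The genuinely sutured content is the bookkeeping with the marked points $\z$ and the sutured Reidemeister--Singer theorem.
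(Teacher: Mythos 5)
This theorem is quoted in the paper as background and is not proved there: the authors simply cite Ozsváth--Szabó \cite{OS04c} and Juhász \cite{Juh06}, only remarking that admissibility was handled in the prequel. Your outline is exactly the standard argument from those references (admissibility for finiteness, gluing/compactness for $\partial^2=0$, continuation and triangle maps for isotopies and handleslides, the local stabilization computation, and the sutured Reidemeister--Singer theorem with admissibility restored by winding), so it is correct and takes the same route as the paper's cited source.
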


Note that in \Cref{thm:sfhwelldefined} the sutured Heegaard diagram is required to be admissible. We shall not address this technicality here since it was already discussed in the first paper of this series.

\begin{prop}[Alfieri-Tsang {\cite[Proposition 5.14]{AT25a}}] The sutured Heegaard diagram of a veering branched surface is admissible. Consequently, its sutured Floer chain complex is well-defined.
\end{prop}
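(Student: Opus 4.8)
The plan is to verify the admissibility criterion of Juhász \cite{Juh06}: a balanced sutured Heegaard diagram is admissible precisely when every non-trivial periodic domain has both positive and negative local multiplicities. After pinching $\partial\Sigma$ to obtain the closed surface $\ol{\Sigma}$ with basepoints $\z$, a periodic domain is a $2$-chain $\mathcal{P}=\sum a_R R$, the sum ranging over the regions $R$ of $\ol{\Sigma}\setminus(\boa\cup\bob)$, with $\partial\mathcal{P}=\sum_i n_i\alpha_i+\sum_j m_j\beta_j$ and $n_\z(\mathcal{P})=0$. Such domains genuinely exist (their group has positive rank, matching $H_2(M)$ for $M$ with torus boundary), so the content of the statement is the non-negativity obstruction: it suffices to show that any periodic domain $\mathcal{P}$ with all $a_R\geq 0$ is identically zero.

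First I would exploit the balancing conditions recalled above: every connected component of $\Sigma\setminus\boa$ contains a component of $\partial\Sigma$, hence a basepoint. Since $n_\z(\mathcal{P})=0$, in each such component the region meeting that basepoint has multiplicity $0$ in $\mathcal{P}$. Next I would invoke \Cref{prop:globalcombinatorics}: $\Sigma\setminus\boa$ is a disjoint union of punctured annuli $A_1,\dots,A_l$, one for each branch loop of $B$, and each $A_j$ is a cyclic concatenation of punctured rectangles glued along $\alpha$-curves, each rectangle being one of the two local models of \Cref{fig:heegaardcombinalphaglue} according to the colour of the corresponding triple point, with a controlled number of $\beta$-strands running through it.

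The heart of the argument is a propagation estimate for the multiplicities of $\mathcal{P}$. Working inside a fixed annulus $A_j$, I would start from a region of multiplicity $0$ furnished by a basepoint and track how the multiplicity changes as one crosses the $\beta$-strands (changing by $\pm m_j$) and passes between consecutive rectangles across the $\alpha$-curves (changing by $\pm n_i$). Using the precise way the $\beta$-strands enter and leave each blue or red rectangle and how consecutive rectangles are identified, one sees that non-negativity of all the $a_R$, together with the requirement that the multiplicities close up consistently after going once around the annulus and returning to a basepoint region, forces every $m_j$ to vanish. Once every $m_j=0$, the chain $\mathcal{P}$ has no jump across any $\beta$-curve, so it is locally constant on each component of $\Sigma\setminus\boa$; since each such component contains a basepoint where $\mathcal{P}$ vanishes, we conclude $\mathcal{P}\equiv 0$, contradicting non-triviality.

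I expect the main obstacle to be exactly this propagation step: one must carry out a finite but somewhat delicate case analysis over the blue/red local models of \Cref{fig:heegaardcombinalphaglue} and the gluings between adjacent rectangles, verifying in each case that a non-trivial non-negative ``staircase'' of multiplicities running once around a punctured annulus is impossible. This is precisely where the veering hypothesis is used, through the diamond structure of the sectors (\Cref{prop:vbssector}) and the blue/red dichotomy of \Cref{prop:globalcombinatorics}; for a general branched surface the analogous diagram need not be admissible.
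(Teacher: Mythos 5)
First, a point of logistics: this paper does not actually prove the statement — it is imported verbatim from the prequel (\cite[Proposition 5.14]{AT25a}), so there is no in-paper argument to measure your proposal against. Judged on its own terms, your setup is the standard and correct one: admissibility in Juh\'asz's sense is equivalent to the assertion that every periodic domain with all multiplicities $\geq 0$ vanishes; the balancing condition guarantees that every component of $\Sigma \setminus \boa$ meets $\partial\Sigma$ and hence contains a region of multiplicity $0$; and once one knows that all $\beta$-coefficients of $\partial\mathcal{P}$ vanish, local constancy on the components of $\Sigma\setminus\boa$ finishes the argument. (A minor slip: periodic domains need not exist at all — e.g.\ for the figure-eight knot complement $H_2(M)=0$ — but that does not affect the logic.)

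The genuine gap is that the only step carrying mathematical content is the one you defer: the claim that for a non-negative periodic domain the combinatorics of the blue/red rectangles of \Cref{prop:globalcombinatorics} forces every $\beta$-multiplicity to vanish. As you yourself note, admissibility is false for general sutured diagrams, so this is exactly where the veering structure must be used, and asserting that ``one sees'' it after a ``finite but somewhat delicate case analysis'' is not a proof. Moreover, the propagation scheme as described is not obviously set up correctly: inside a punctured annulus of $\Sigma\setminus\boa$ the $\beta$-strands run along the core, so a loop ``once around the annulus'' returning to a basepoint region runs essentially parallel to the strands and need not cross them, while a transverse path crosses each strand once and then exits through an $\alpha$-curve into a different component; the constraint you want must therefore be extracted from the gluing across the $\alpha$-curves together with the signs with which each $\beta$-curve's coefficient enters at each crossing (different strands in one annulus belong to different $\beta$-curves, so your ``$\pm m_j$ per annulus'' bookkeeping conflates distinct coefficients). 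A priori a non-negative ``staircase'' that rises and then falls is not excluded by positivity alone, and ruling it out is precisely the case analysis over the local models of \Cref{fig:heegaardcombinalphaglue} that you have not carried out. Until that step is done, the proposal is a plausible strategy outline rather than a proof of the proposition.
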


Suppose $B$ is a veering branched surface corresponding to a pseudo-Anosov flow $\phi$ and a collection of closed orbits $\mathcal{C}$. For the rest of this paper, we will write $SFC(\phi,\mathcal{C})$ for the sutured Floer chain complex associated to the sutured Heegaard diagram of $B$.

\subsection{Heegaard states} \label{subsec:heegaardstates}

The generators of the Heegaard Floer chain complex are called \emph{Heegaard states}. These are unordered $d$-tuples $\x=\{x_1, \dots, x_d\}$ such that $x_i\in \alpha_i \cap \beta_{\sigma(i)}$ for $i=1, \dots, d$, where $\sigma\in \mathfrak{S}_d$ is  some permutation of $\{1, \dots, d\}$. We shall denote by $\mathfrak{S}(\Sigma, \boa , \bob)$ the set of all Heegaard states of a diagram $(\Sigma, \boa , \bob)$.

\begin{prop}[Alfieri-Tsang {\cite[Proposition 4.6]{AT25a}}] \label{prop:generators}
Let $B$ a veering branched surface with sectors $S_1, \dots, S_n$ and triple points $v_1, \dots , v_n$. The Heegaard states $\mathfrak{S}(\Sigma, \boa , \bob)$ of the Heegaard diagram $(\Sigma, \boa , \bob)$ associated to $B$ are in natural correspondence with ways of assigning to each sector one of its four corners, so that each triple point is picked exactly once.
\end{prop}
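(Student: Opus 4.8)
The plan is to turn the enumeration of Heegaard states into a matching problem on the incidence structure between triple points and sectors, once the local picture of the diagram near each triple point is understood. Throughout, note that by the construction in \Cref{subsec:vbstoheegaarddiagram} the diagram has $d=n$, since there is exactly one $\alpha$-curve $\alpha_i$ for each triple point $v_i$ and exactly one $\beta$-curve $\beta_j=S_j\cap\Sigma_0$ for each sector $S_j$.

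\emph{Step 1: the local model at a triple point.} Fix a triple point $v_i$. Because $\brloc(B)=G$ is $(2,2)$-valent, four edges of $G$ emanate from $v_i$, and they cut a small disk neighborhood of $v_i$ in $B$ into four quadrants, each of which is occupied by the corner of a (not necessarily distinct) sector; here we use \Cref{prop:vbssector}, that every sector is a diamond, so that its corners are honest quadrant-germs at triple points. The key local claim is then: $\alpha_i$ meets $\bigcup_j \beta_j$ in precisely four points, one in each of these four quadrants, and a point of $\alpha_i\cap\beta_j$ occurs exactly once for each corner of $S_j$ situated at $v_i$ and never otherwise; in particular $\alpha_i\cap\beta_j=\varnothing$ unless $S_j$ has a corner at $v_i$. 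This is read off directly from the placement of $\alpha_i$ around $v_i$ in \Cref{fig:vbstoheegaardtriplepoint} together with the two local models of \Cref{fig:heegaardcombinalphaglue} (one for each color of $v_i$); the only thing requiring verification is that these four crossings exhaust $\alpha_i\cap\bigcup_j\beta_j$ and are transverse and simple. If a single sector happens to occupy two of the four quadrants at $v_i$, the statement is unchanged provided we count the two corresponding points of $\alpha_i\cap\beta_j$ as two distinct (labelled) corners of $S_j$; equivalently, we phrase everything in terms of the incidence multigraph with sectors and triple points as vertices.

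\emph{Step 2: reformulating Heegaard states and dualizing.} By \Cref{subsec:heegaardstates}, a Heegaard state is a permutation $\sigma\in\mathfrak{S}_n$ together with a point $x_i\in\alpha_i\cap\beta_{\sigma(i)}$ for each $i$. Feeding in Step 1, choosing $x_i$ is the same as choosing one of the four corners situated at $v_i$, and the condition $x_i\in\beta_{\sigma(i)}$ says precisely that the chosen corner belongs to $S_{\sigma(i)}$ (which in particular forces $S_{\sigma(i)}$ to have a corner at $v_i$). Thus a Heegaard state is a bijection $i\mapsto\sigma(i)$ from triple points to sectors such that $S_{\sigma(i)}$ has a corner at $v_i$, together with a choice of one such corner for each $i$. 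Reading the same data starting from the sectors: for each sector $S_j$ set $i=\sigma^{-1}(j)$ and assign to $S_j$ the corner of $S_j$ at $v_i$ chosen above — one of its four corners; since $\sigma$ is a bijection each sector is assigned exactly one corner, and the triple point at which $S_j$'s corner sits is $v_{\sigma^{-1}(j)}$, so as $j$ ranges over all sectors each triple point is picked exactly once. Conversely, an assignment to each sector of one of its four corners with each triple point picked exactly once determines $\sigma$ (send $v_i$ to the sector whose chosen corner sits at $v_i$) together with the points $x_i$, and $\sigma$ is a permutation precisely because each triple point is picked once. These two constructions are mutually inverse and respect the combinatorial data, giving the asserted natural correspondence.

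\emph{Main obstacle.} Everything past Step 1 is formal bookkeeping; the real content is the local claim that $\alpha_i$ crosses the $\beta$-curves exactly four times, once for each incident sector-corner and never for a non-incident sector. I expect this to be the delicate point, since it is where the special combinatorics of the veering branched surface enter, and it amounts to an honest inspection of the two local models in \Cref{fig:heegaardcombinalphaglue}, distinguishing the blue and red cases, together with the precise shape of $\alpha_i$ in \Cref{fig:vbstoheegaardtriplepoint}. It is worth emphasizing that the number four appearing here is simultaneously the valence of $\brloc(B)$ at a vertex and the number of corners of a diamond sector, which is exactly why the resulting correspondence comes out so symmetric.
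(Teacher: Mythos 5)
Your argument is correct and follows essentially the same route as the source of this statement (it is quoted from \cite[Proposition 4.6]{AT25a} and not reproved here): one identifies the points of $\alpha_i \cap \beta_j$ with the corners of $S_j$ at $v_i$ via the local models of \Cref{fig:vbstoheegaardtriplepoint} and \Cref{fig:heegaardcombinalphaglue}, so that each $\alpha$-curve carries exactly four intersection points, and then the permutation bookkeeping gives the stated bijection. You correctly isolate the local four-crossings claim as the only substantive input, and your dualization in Step 2, including the remark about a sector having two corners at the same triple point, is exactly the intended correspondence.
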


The Floer chain complex of a veering branched surface $B$ has two preferential generators $\xt$ and $\xb$. The top generator $\xt$ of $CF(\Sigma, \boa, \bob)$ is defined by pairing each sector with its top corner, while the bottom generator $\xb$ is defined by pairing each sector with its bottom corner.

We can associate to each Heegaard state $\x \in \mathfrak{S}(\Sigma, \boa, \bob)$ a loop $\mu_\x$ of $G_+$ as follows.
Let $\x$ be a Heegaard state. We consider $\x$ to be a way of assigning each sector to one of its corner as in \Cref{prop:generators}.
For each sector $S_i$, let $\x(S_i)$ be the triple point assigned to $S_i$.
\begin{itemize}
    \item If $\x(S_i)$ is the bottom corner of $S_i$, take $e_{\x,i}$ to be the empty set.
    \item If $\x(S_i)$ is not the bottom corner of $S_i$, take $e_{\x,i}$ to be the edge of $G_+$ connecting the bottom corner of $S_i$ to $\x(S_i)$. (See \Cref{fig:statemultiloopdefn}.)
\end{itemize}
The union of edges $\mu_\x = \bigcup_i e_{\x,i}$ is an embedded multi-loop of $G_+$.

\begin{figure}
    \centering
    \selectfont\fontsize{12pt}{12pt}
    %% Creator: Inkscape 1.3 (0e150ed6c4, 2023-07-21), www.inkscape.org
%% PDF/EPS/PS + LaTeX output extension by Johan Engelen, 2010
%% Accompanies image file '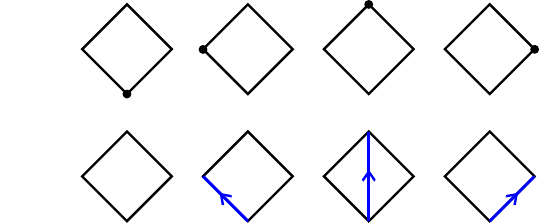' (pdf, eps, ps)
%%
%% To include the image in your LaTeX document, write
%%   \input{<filename>.pdf_tex}
%%  instead of
%%   \includegraphics{<filename>.pdf}
%% To scale the image, write
%%   \def\svgwidth{<desired width>}
%%   \input{<filename>.pdf_tex}
%%  instead of
%%   \includegraphics[width=<desired width>]{<filename>.pdf}
%%
%% Images with a different path to the parent latex file can
%% be accessed with the `import' package (which may need to be
%% installed) using
%%   \usepackage{import}
%% in the preamble, and then including the image with
%%   \import{<path to file>}{<filename>.pdf_tex}
%% Alternatively, one can specify
%%   \graphicspath{{<path to file>/}}
%% 
%% For more information, please see info/svg-inkscape on CTAN:
%%   http://tug.ctan.org/tex-archive/info/svg-inkscape
%%
\begingroup%
  \makeatletter%
  \providecommand\color[2][]{%
    \errmessage{(Inkscape) Color is used for the text in Inkscape, but the package 'color.sty' is not loaded}%
    \renewcommand\color[2][]{}%
  }%
  \providecommand\transparent[1]{%
    \errmessage{(Inkscape) Transparency is used (non-zero) for the text in Inkscape, but the package 'transparent.sty' is not loaded}%
    \renewcommand\transparent[1]{}%
  }%
  \providecommand\rotatebox[2]{#2}%
  \newcommand*\fsize{\dimexpr\f@size pt\relax}%
  \newcommand*\lineheight[1]{\fontsize{\fsize}{#1\fsize}\selectfont}%
  \ifx\svgwidth\undefined%
    \setlength{\unitlength}{258.7668904bp}%
    \ifx\svgscale\undefined%
      \relax%
    \else%
      \setlength{\unitlength}{\unitlength * \real{\svgscale}}%
    \fi%
  \else%
    \setlength{\unitlength}{\svgwidth}%
  \fi%
  \global\let\svgwidth\undefined%
  \global\let\svgscale\undefined%
  \makeatother%
  \begin{picture}(1,0.41337832)%
    \lineheight{1}%
    \setlength\tabcolsep{0pt}%
    \put(0,0){\includegraphics[width=\unitlength,page=1]{statemultiloopdefn.pdf}}%
    \put(-0.0023398,0.31115073){\color[rgb]{0,0,0}\makebox(0,0)[lt]{\lineheight{1.25}\smash{\begin{tabular}[t]{l}$\x(S_i)$\end{tabular}}}}%
    \put(0.00925366,0.07348486){\color[rgb]{0,0,0}\makebox(0,0)[lt]{\lineheight{1.25}\smash{\begin{tabular}[t]{l}$e_{\x,i}$\end{tabular}}}}%
  \end{picture}%
\endgroup%

    \caption{Picking the edges $e_{\x,i}$, the union of which is the multi-loop $\mu_\x$ associated to $\x$.}
    \label{fig:statemultiloopdefn}
\end{figure}

\begin{prop}[Alfieri-Tsang {\cite[Proposition 4.12]{AT25a}}]
The map $\x \to \mu_\x$ defines a bijection between the states and the embedded multi-loops of $G_+$.
\end{prop}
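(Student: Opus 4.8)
The plan is to construct an explicit inverse to $\x\mapsto\mu_\x$ directly from the local combinatorics of the branched surface and then check that the two maps undo each other; the only substantive step is a degree count at each triple point that reinterprets the defining condition of a Heegaard state (\Cref{prop:generators}) as the defining condition of an embedded multi-loop.

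First I would record the local dictionary forced by \Cref{prop:vbssector} and \Cref{prop:vbstogglefan}. Each triple point $v$ is the bottom corner of exactly one sector, which I will call $S_v$, and the two bottom sides of $S_v$ together with the augmented edge of $S_v$ are precisely the edges of $G_+$ with tail at $v$; their heads are, respectively, the two side corners and the top corner of $S_v$. Thus assigning to $S_v$ one of its four corners $w$ is the same datum as choosing either the empty edge (when $w=v$) or a single edge of $G_+$ issuing from $v$ (when $w\neq v$), exactly as in \Cref{fig:statemultiloopdefn}; and conversely every edge of $G_+$ arises in exactly one such way, with $v$ its tail and $S_v$ the sector containing it.

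Next I would define the candidate inverse $\Phi$ on an embedded multi-loop $c$ of $G_+$: since $c$ uses at most one outgoing edge at each vertex, for each triple point $v$ set $\x_c(S_v)$ to be the head of the outgoing edge of $c$ at $v$ if there is one, and $\x_c(S_v)=v$ otherwise. To see that $\x_c$ is a Heegaard state I would verify that each triple point is picked exactly once. Fix a triple point $u$: a sector $S$ satisfies $\x_c(S)=u$ either when $S=S_u$ and $c$ has no outgoing edge at $u$, or when $u$ is a side or top corner of $S$ and $c$ uses the edge of $G_+$ running from the bottom corner of $S$ to $u$ inside $S$. Counting shows that the number of sectors assigned to $u$ equals $[\text{$c$ has no outgoing edge at $u$}]$ plus the in-degree of $u$ in $c$; since $c$ is embedded, its in- and out-degrees at $u$ are equal and at most $1$, so this sum is always $1$. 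This degree bookkeeping is the heart of the argument — one must pin down exactly which oriented edges of $G_+$ meet a given triple point and recognize that ``each triple point picked exactly once'' is literally ``$\mathrm{indeg}=\mathrm{outdeg}\in\{0,1\}$ at every vertex'' — and it is where \Cref{prop:vbssector} and \Cref{prop:vbstogglefan} do the real work.

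Finally I would check that $\Phi$ is a two-sided inverse. Given a state $\x$, at each triple point $v$ the outgoing edge of $\mu_\x$ at $v$ is precisely $e_{\x,S_v}$ (empty iff $\x(S_v)=v$), because the tails of the edges of $\mu_\x$ are exactly the bottom corners of sectors; hence $\Phi(\mu_\x)(S_v)=\x(S_v)$. Conversely, given $c$, the multi-loop $\mu_{\x_c}=\bigcup_v e_{\x_c,S_v}$ is, edge by edge, the collection of outgoing edges of $c$ at the various triple points, which is all of $c$ since every edge of $G_+$ has its tail at the bottom corner of a unique sector and $c$ uses at most one outgoing edge per vertex. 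Running $\Phi$ forward in the first computation also re-proves, in passing, that $\mu_\x$ is always an embedded multi-loop. With the local dictionary in place, every remaining verification is formal.
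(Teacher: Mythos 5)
Your proof is correct and takes the expected route: the paper only quotes this statement from \cite{AT25a}, and the intended argument is exactly your local dictionary — each triple point is the bottom corner of a unique sector whose two bottom sides and diagonal are the outgoing $G_+$-edges there, so the corner-assignment description of states in \Cref{prop:generators} translates vertex by vertex into the condition that the chosen edge set has in-degree equal to out-degree at most one, i.e.\ is an embedded multi-loop. Your degree count at each triple point (and the observation that it simultaneously re-proves embeddedness of $\mu_\x$) is precisely the substance of the original proof, so there is nothing to add.
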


We define $\gamma_\x$ to be the closed multi-orbit $\mathcal{F}(\mu_\x)$, where $\mathcal{F}$ is the correspondence between sweep-equivalence classes of loops of $G_+$ and closed orbits of $\phi^\sharp$ in \Cref{thm:pavbscorr}.

\subsection{Ozsváth-Szabó's theory of domains} 

Finally we review very carefully the theory of domains that can be used to understand the differential of Heegaard Floer homology. Since this is very relevant to our discussion in this paper we cover it in full detail.

\begin{defn}[Domains]
Let $(\Sigma, \boa, \bob)$ be a balanced sutured Heegaard diagram. Let $D_1, \dots, D_r$ be the complementary regions of $\boa \cup \bob$ in $\Sigma$ that do not meet $\partial \Sigma$.
We call $D_i$ the \emph{elementary domains} of the diagram.
A \emph{domain} is a formal linear combination $D=\sum_i n_i D_i$  with integer coefficients $n_i \in \Z$. The coefficient $n_i$ is called the \emph{multiplicity} of the domain $D$ at $D_i$.
\end{defn}

A domain $D=\sum_{i=1}^r n_i\ D_i$ is called \emph{effective} if $n_i\geq 0$ for $i=1, \dots, r$. In this case we shall write $D\geq 0$.

\begin{defn}[Connecting domains] \label{defn:connectingdomains}
Let $\x=x_1+ \dots + x_d$ and $\y=y_1+ \dots + y_d$ be two Heegaard states, and $D=\sum_i n_i D_i$ a domain. Thinking of $D$ as a $2$-chain on $\Sigma$, i.e. an element of $C_2(\Sigma; \Z)$, $\partial D$ is a $1$-cycle in the embedded graph $\boa \cup \bob$. Since $C_1(\boa \cup \bob; \Z)= C_1(\boa; \Z) \oplus C_1(\bob; \Z)$, the differential $\partial D$ naturally decomposes as the sum of two $1$-chains $\partial D=\partial_\alpha D + \partial_\beta D$, where $\partial_\alpha D= \partial D \cap \boa$ and $\partial_\beta D= \partial D \cap \bob$. 
We say that the domain $D$ \emph{connects $\x$ to $\y$} if $\partial_\alpha D=\y -\x$ and  $\partial_\beta D= \x -\y$. 
If $D$ connects $\x$ to $\y$ we call $\x$ the initial state of $D$ and $\y$ its final state.
\end{defn} 

We recall the definition of the Lipshitz index of a domain connecting two Heegaard states:
Let $S$ be a surface with corners. The \emph{Euler measure} of $S$ is defined to be 
$$e(S) = \chi(S) - \frac{1}{4} \text{\# corners}$$
where $\chi$ is the Euler characteristic of the underlying surface with boundary (i.e. forgetting the data of the corners).
In particular each elementary domain $D_i$, having a natural structure as a surface with corners, has an Euler measure $e(D_i)$.
The \emph{Euler measure} of a domain $D = \sum_i n_i D_i$ is defined to be $e(D) = \sum_i n_i e(D_i)$.

Suppose $x$ is a point in $\boa \cap \bob$. Suppose $D_i, D_j, D_k, D_l$ are the elementary domains with a corner at $x$. The \emph{average multiplicity} of a domain $D = \sum_i n_i D_i$ at $x$ is defined to be $n_x(D) = \frac{1}{4}(n_i+n_j+n_k+n_l)$.
The \textit{average multiplicity} of a domain $D$ at a state $\x = \{x_1,\dots,x_d\}$ is defined to be $n_\x(D) = \sum_{i=1}^d n_{x_i}(D)$.

\begin{defn} \label{defn:lipshitzindex}
The \emph{Lipshitz index} of a domain $D$ connecting states $\x$ to $\y$ is defined to be 
$$\mu(D) = e(D)+n_\x(D)+n_\y(D).$$
\end{defn}

Associated to a $C^\infty$-disk in $\sym^d(\overline{\Sigma})$ connecting two intersection points of $\T_{\boa}$ and $\T_{\bob}$
\[u: D^2 \to \sym^d(\overline{\Sigma})\setminus  \ \bigcup_{i=1}^k\  \{z_i\} \times \sym^{d-1}(\overline{\Sigma})\]
there is a domain 
\[D(u)= \sum_{i=1}^r\  \#\left( u^{-1} (\{z_i\} \times \sym^{d-1}(\overline{\Sigma})) \right) \cdot D_i \ . \]
We summarize some results about domains from \cite{OS04c} and \cite{Lip06} into the following.

\begin{thm}
The map $u \to D(u)$ has the following properties.
\begin{itemize}
    \item If $u_t$ is a homotopy of $C^\infty$-disks connecting $\x$ to $\y$ then $D(u_0)=D(u_1)$. Vice versa, if $u_0$ and $u_1$ are two disks connecting $\x$ to $\y$ such that $D(u_0)=D(u_1)$ then   $u_0$ and $u_1$ are homotopic through disks connecting $\x$ to $\y$.
    \item If $D$ is a domain connecting $\x$ to $\y$ then there exists a $C^\infty$-disk  in the symmetric product connecting $\x$ to $\y$ with domain $D$.
    \item If $u$ is a $C^\infty$-disk connecting two intersection points  then the Lipshitz index of its domain $D(u)$ equals the Maslov index of $u$.
    \item If $u$ is a holomorphic disk then its associated domain is effective, that is, $D(u)\geq 0$.
\end{itemize}
\end{thm}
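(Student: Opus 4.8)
These four assertions are standard facts about domains, assembled from Ozsv\'ath--Szab\'o \cite{OS04c} (in the sutured reformulation of Juh\'asz \cite{Juh06}) and Lipshitz \cite{Lip06}, so the plan is to recall the shape of each argument rather than to reprove everything. For the first bullet, note that by definition $D(u) = \sum_i \#(u^{-1}(V_i)) \cdot D_i$, where $V_i = \{z_i\} \times \sym^{d-1}(\overline{\Sigma})$ is a codimension-two subvariety disjoint from $\T_{\boa} \cup \T_{\bob}$ and hence from $u(\partial D^2)$. The algebraic intersection number of a disk with each $V_i$ is unchanged under homotopies through disks connecting $\x$ to $\y$, which gives $D(u_0) = D(u_1)$. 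For the converse I would invoke the identification of the (nonempty) set of homotopy classes of Whitney disks connecting $\x$ to $\y$ with a torsor over $\pi_2(\sym^d(\overline{\Sigma}))$, together with the computation of this group and the fact that the domain map detects the relevant homology class; two disks with the same domain then differ by a class mapping to zero and are therefore homotopic.

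For the second bullet, one checks that the map from homotopy classes of Whitney disks from $\x$ to $\y$ to the group of domains surjects onto the affine family of domains $D$ with $\partial_\alpha D = \y - \x$ and $\partial_\beta D = \x - \y$: since $\pi_2(\sym^d(\overline{\Sigma}))$ already realizes every periodic domain, once one domain in this family is realized, translating by periodic domains realizes all of them. This is Proposition 2.15 of \cite{OS04c}, transported to the sutured setting in \cite{Juh06}. The third bullet is Lipshitz's combinatorial index formula: passing to the cylindrical model $\Sigma \times [0,1] \times \R$, a holomorphic representative of the class of $u$ has index $e(D(u)) + n_\x(D(u)) + n_\y(D(u))$, and since the Maslov index is a homotopy invariant it agrees with the Maslov index of $u$ in the symmetric product; see \cite[Corollary 4.10]{Lip06}.

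For the fourth bullet, each $V_i$ is a complex hypersurface of $\sym^d(\overline{\Sigma})$ and $u$ is (pseudo)holomorphic, so positivity of intersections forces every point of $u^{-1}(V_i)$ to count with positive multiplicity; hence $\#(u^{-1}(V_i)) \geq 0$ for each $i$, i.e.\ $D(u) \geq 0$. The one genuinely nontrivial point, were this written out rather than cited, is the converse in the first bullet --- that equality of domains implies homotopy of disks --- which rests on the computation of $\pi_2(\sym^d(\overline{\Sigma}))$ and on the observation that the potentially obstructing sphere classes are already visible through the domain. In the sutured setting $\overline{\Sigma}$ is the closed surface obtained from $\Sigma$ by capping off $\partial\Sigma$ at the marked points $\z$, but this capping does not affect any of the arguments above.
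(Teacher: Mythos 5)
Your proposal takes essentially the same route as the paper, which offers no proof of this theorem at all but states it explicitly as a summary of results from \cite{OS04c} (transported to the sutured setting in \cite{Juh06}) and \cite{Lip06}; your citation of those sources together with the standard sketches (homotopy invariance of intersection numbers with the divisors $\{z_i\}\times\sym^{d-1}(\overline{\Sigma})$, the Lipshitz index formula, positivity of intersections) is exactly what the paper relies on. The one imprecision is that the set of homotopy classes of Whitney disks from $\x$ to $\y$ is a torsor over $\pi_2(\x,\x)$, whose non-spherical part is what realizes the periodic domains, rather than over $\pi_2(\sym^d(\overline{\Sigma}))$ alone; since you defer to the cited propositions for precisely these points, this does not affect the validity of the proposal.
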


In other words there is a one-to-one correspondence between $C^\infty$-disks connecting $\x$ to $\y$ and domains connecting the corresponding Heegaard states in the sense of \Cref{defn:connectingdomains}. Under this correspondence the Maslov index of a disk corresponds to the Lipshitz index of the domain, and holomorphic disks correspond to effective domains.

It is very important to observe here that not all effective domains come from holomorphic disks, and that there can be many holomorphic disks having the same effective domain. Given an effective domain $D$ connecting two intersection points $\x$ and $\y$, we denote by $\mathcal{M}(D)$ the set of all holomorphic disks with domain $D$ up to reparametrization of the source. Then
\[c(\x, \y)=\sum_{D \in \mathcal{P}(\x, \y) } \# \mathcal{M}(D) \ ,  \]
where $\mathcal{P}(\x, \y)$ denotes the set of all effective domains with Lipshitz index one from $\x$ to $\y$. Note that  $\mathcal{P}(\x, \y)= \widetilde{\mathcal{P}}(\x, \y)\cap \Z^r$ for some convex polytope $\widetilde{\mathcal{P}}(\x, \y)\subset \R^r$, and consequently computing $\mathcal{P}(\x, \y)$ is a problem of linear programming. 

Computing the number of points in the moduli space $\mathcal{M}(D) $ can be extremely challenging unless the domain is very specific. 

\begin{defn}[Polygons, {\cite[Definition~3.2]{SW10}}] \label{defn:emptypolygon}
Suppose that $\x$ and $\y$ are two Heegaard states. A \emph{2n-gon} from $\x$ to $\y$ is a domain $D=\sum n_i D_i$ satisfying the following conditions:
\begin{itemize}
\item all multiplicities $n_i$ in $D$ are either 0 or 1,
\item at every coordinate $x_i\in \x$ (and similarly for $y_i \in \y$) either all four domains meeting at $x_i$ have multiplicity 0 (in which case $x_i=y_i$) or exactly one domain has multiplicity 1 and all three others have multiplicity 0 (when $x_i \neq y_i$), and
\item the \emph{support} $s(D)$ of $D$, which is the union of the closures of the elementary domains that have multiplicity, is a subspace of $\Sigma$ which is homeomorphic to the closed disk, with $2n$ vertices on its boundary.
\end{itemize}
The $2n$--gon is \emph{empty} if the interior of $s(D)$ is disjoint from the two given states $\x$ and $\y$. 
\end{defn}

The following is a folklore result in the Heegaard Floer community, see \cite[Proof of Theorem 10.3]{OSS12}. It can be proved by means of the Riemann mapping theorem.

\begin{thm} \label{thm:polygonscountonce}
If $D$ is an empty $2n$-gon then $\#\mathcal{M}(D)=1$.
\end{thm}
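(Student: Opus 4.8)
The plan is to prove this by the Riemann mapping theorem, in the spirit of \cite[Proof of Theorem 10.3]{OSS12}, after a short preliminary reduction. First I would strip off the passive coordinates: by the second bullet of \Cref{defn:emptypolygon}, at any coordinate $x_i=y_i$ where all four elementary domains have multiplicity $0$ the corresponding strand of a holomorphic disk $u$ with $D(u)=D$ is forced to be constant, so deleting these $\alpha$- and $\beta$-curves reduces us to the case $d=n$, in which every coordinate of $\x$ and of $\y$ occurs as a corner of the $2n$-gon $P:=s(D)$. Since distinct $\alpha$-curves (and distinct $\beta$-curves) are disjoint, the $2n$ sides of $\partial P$ alternate between $\alpha$-arcs and $\beta$-arcs, and correspondingly the $2n$ corners alternate between coordinates of $\x$ and of $\y$; emptiness guarantees that no coordinate of $\x$ or $\y$ lies in $\ol P$ apart from these $2n$ corners.

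Next I would pass to the tautological branched-cover description of \cite{OS04c, Lip06}: a holomorphic $u\colon D^2\to\sym^n(\ol\Sigma)$ with $D(u)=D$ is the same datum as a degree-$n$ branched cover $\pi\colon\wh D\to D^2$ together with a holomorphic map $\wh u\colon\wh D\to\ol\Sigma$ satisfying $\wh u_*[\wh D]=[P]$. Because every multiplicity of $D$ equals $1$ and $P$ is an embedded disc, $\wh u$ is a proper, generically injective holomorphic map onto the smooth disc $P$, hence a biholomorphism $\wh D\xrightarrow{\ \cong\ }P$ (treating the corners as marked points on the smooth boundary). Therefore the conformal structure on $\wh D$ is \emph{forced} to be the one $P$ inherits from $\ol\Sigma$, and $u$ is nothing but a degree-$n$ holomorphic branched covering $\pi\colon P\to D^2$ that carries the $\alpha$-sides of $\partial P$ to the $\alpha$-arc of $\partial D^2$, the $\beta$-sides to the $\beta$-arc, the $\x$-corners to $-i$ and the $\y$-corners to $+i$. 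Such a $\pi$ is unramified over $\pm i$ (its fibres there being the $n$ distinct $\x$- resp.\ $\y$-corners) and, by Riemann--Hurwitz, has exactly $n-1$ simple interior branch points. Doubling $P$ and $D^2$ across their boundaries recasts this as a reflection-equivariant degree-$n$ rational self-map of the Riemann sphere sending the doubled marked points to $\{\pm i\}$ in the prescribed pattern — the form in which the rigidity of conformally marked discs, i.e.\ the Riemann mapping theorem, applies directly.

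The remaining task is to show that, up to the one-parameter reparametrization group, exactly one such $\pi$ exists. For $n=1$ this is the classical bigon count: a disc with two marked boundary points has no conformal moduli, so the Riemann map sending them to $\pm i$ is unique up to reparametrization. For $n\ge 2$ I would induct, cutting $P$ into empty polygons with fewer sides along arcs lying on $\alpha$- and $\beta$-curves — as in the reduction procedures of \cite{SW10} — transported through $\wh u$ so that the pieces again inherit their conformal structures from $\ol\Sigma$, and checking that $\pi$ and its branching restrict compatibly to the pieces; existence of one representative is then produced from the corresponding model cover together with the compactness input below.

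The step I expect to be the main obstacle is precisely this rigidity: one must rule out that the $n-1$ interior branch points of $\pi$ can be deformed, or that a combinatorially different cover can occur. This is exactly where emptiness of the $2n$-gon is essential — an interior branch point colliding with a coordinate of $\x$ or $\y$, or escaping to $\partial P$, is the kind of degeneration emptiness forbids — equivalently, emptiness is what ensures the compactified index-$1$ moduli space has no boundary strata, so that the count is simultaneously nonempty and rigid, which is ultimately what lets the Riemann mapping theorem close the argument.
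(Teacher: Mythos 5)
The paper offers no argument for \Cref{thm:polygonscountonce}; it is stated as a folklore result with a pointer to \cite[Proof of Theorem 10.3]{OSS12} and a one-line remark about the Riemann mapping theorem, so there is no in-paper proof to match. Your reduction and reformulation are correct: stripping passive coordinates, and observing via the tautological correspondence that since all multiplicities are $1$ and $s(D)=P$ is an embedded disc the map $\widehat u\colon\widehat D\to P$ is forced to be a biholomorphism, reduces the count to holomorphic degree-$n$ branched covers $\pi\colon P\to D^2$ respecting the corner and side labels, modulo $\mathrm{Aut}(D^2,\pm i)\cong\mathbb{R}$.

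However, your last two paragraphs concede rather than prove the crucial point, namely the existence and uniqueness of such a $\pi$. The cutting induction is not carried out and is conformally delicate, and the appeal to emptiness is a heuristic, not a rigidity argument (emptiness is what lets you strip passive coordinates and conclude $\widehat u$ is a biholomorphism onto $P$; it plays no role in the rigidity of $\pi$). The doubling route you mention does close the gap, and you should carry it through. Schwarz-reflect and uniformize so that the double of $P$ is $\mathbb{CP}^1$ with reflection $z\mapsto\bar z$, the $2n$ corners become $2n$ points $p_1,q_1,\dots,p_n,q_n$ of $\mathbb{R}\cup\{\infty\}$ alternating in cyclic order ($p_i$ the $\x$-corners, $q_i$ the $\y$-corners), and $D^2$ doubles to $\mathbb{CP}^1$ with $-i,+i$ at $\infty,0$ and the $\alpha$- and $\beta$-arcs the two real rays. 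The doubled $\widehat\pi$ is then a degree-$n$ rational map with real coefficients, zero divisor $\sum q_i$, and pole divisor $\sum p_i$, hence is forced to be $\widehat\pi(z)=c\prod_i(z-q_i)/(z-p_i)$ for some $c\in\mathbb{R}^\times$. A short sign check using only the cyclic alternation of the $p_i$ and $q_i$ shows that the logarithmic derivative $\sum_i(z-q_i)^{-1}-\sum_i(z-p_i)^{-1}$ never vanishes on $\mathbb{R}$ off the marked set, so all $2n-2$ critical points of $\widehat\pi$ lie off $\mathbb{R}$ in conjugate pairs, $\widehat\pi$ restricts to a degree-$n$ covering of $\mathbb{R}\cup\{\infty\}$ carrying each half-plane to a half-plane, and with the sign of $c$ fixed by orientation the remaining freedom $|c|\in\mathbb{R}_{>0}$ is exactly the reparametrization group $\mathrm{Aut}(D^2,\pm i)$. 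This gives $\#\mathcal{M}(D)=1$; without this computation your proposal only reformulates the statement rather than proving it.
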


\subsection{$\Spinc$-grading} 

For the definition of $\spinc$-structures on sutured manifolds see \cite[Section 3.5]{AT25a}.
In this paper we denote with $\Spinc(M, \Gamma)$ the set of all $\spinc$-structures on a sutured manifold $(M, \Gamma)$. This is an affine space over the group $H^2(M, \partial M;\Z)\simeq H_1(M; \Z)$, in particular there is an identification $\Spinc(M, \Gamma)=H_1(M; \Z)$ once a reference $\spinc$ structure is given. 

In what follows we shall be interested in the case when $M=Y^\sharp$ is the blow-up of a pseudo-Anosov flow $(Y, \phi)$ at a collection of closed orbits $\mathcal{C}$. In this case  there are two canonical $\spinc$-structures:  $\s_{\phi^\sharp}$ induced by the flow, and $\s_{-\phi^\sharp}$ induced by its opposite. Note that these are conjugate to each other, that is $\overline{\s_{\phi^\sharp}}=\s_{-\phi^\sharp}$.

$\Spinc$-structures are relevant in Heegaard Floer homology because given an Heegaard diagram $(\Sigma, \boa, \bob)$ each generator $\x\in \T_{\boa} \cap \T_{\bob}$ has an associated $\spinc$-structure $\s(\x)$. 

\begin{lemma} \label{lemma:obstruction1}
Suppose $\x$ and $\y$ are two Heegaard states. Then $\x$ and $\y$ are connected by a domain if and only if $\s(\x)=\s(\y)$. Consequently, if $\s(\x) \neq \s(\y)$ then $\mathcal{P}(\x, \y)=\varnothing$ and the coefficient $c(\x,\y)$ in the Floer differential vanishes. 
\end{lemma}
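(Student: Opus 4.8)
The plan is to chain together the two ingredients already assembled in this section: the Ozsv\'ath--Szab\'o characterization of when two states are joined by a $C^\infty$ Whitney disk in terms of the $\epsilon$-map, and the dictionary between $C^\infty$ Whitney disks in $\sym^d(\overline{\Sigma})$ and domains on $\Sigma$. No new geometric input should be required; the proof is a formal concatenation of equivalences.

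First I would record the equivalence that $\s(\x) = \s(\y)$ if and only if $\epsilon(\x,\y) = 0$. This is the content of the observation that the $\epsilon$-equivalence classes are precisely the fibers of the $\spinc$-grading map $\mathfrak{S}\hd \to \Spinc(M,\Gamma)$: the functional equations \eqref{eq:epsilonproperties} make $\sim$ an honest equivalence relation, and by the construction of $\s(\x)$ as the class of $\x$ relative to a fixed reference state, two states receive the same $\spinc$-structure exactly when their $\epsilon$-difference vanishes in $H_1(M)$. In the sutured setting this is \cite[Definition 4.6]{Juh06}.

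Combining this with the Ozsv\'ath--Szab\'o theorem stated above, there is a $C^\infty$ Whitney disk from $\x$ to $\y$ if and only if $\s(\x) = \s(\y)$. The theory of domains recalled above then says that $u \mapsto D(u)$ gives a correspondence between $C^\infty$ disks connecting $\x$ to $\y$ and domains connecting $\x$ to $\y$ in the sense of \Cref{defn:connectingdomains}; in particular a domain connecting $\x$ to $\y$ exists precisely when a $C^\infty$ disk does. Threading the three equivalences together proves the first assertion. For the consequence, suppose $\s(\x) \neq \s(\y)$, so that no domain connects $\x$ to $\y$. Since every element of $\mathcal{P}(\x,\y)$ is by definition an effective domain connecting $\x$ to $\y$, we get $\mathcal{P}(\x,\y) = \varnothing$, whence $c(\x,\y) = \sum_{D \in \mathcal{P}(\x,\y)} \#\mathcal{M}(D)$ is an empty sum and vanishes.

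The step I would be most careful about is the first one: verifying that the $\spinc$-grading invoked in the statement really is the quotient of $\mathfrak{S}\hd$ by $\epsilon$-equivalence in the sutured framework of \cite{Juh06}, rather than the closed framework of \cite{OS04c}. This is pure bookkeeping, and once it is in place the remainder is automatic. A minor point worth flagging in the write-up is that the Ozsv\'ath--Szab\'o theorem is phrased with $C^\infty$ Whitney disks in the symmetric product, while the conclusion is about domains on $\Sigma$; the bridge is exactly the correspondence $u \leftrightarrow D(u)$, so one should cite that result precisely rather than conflating disks and domains.
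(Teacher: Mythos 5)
Your proposal is correct and is essentially the justification the paper intends: the lemma is stated as a recollection of standard facts, and your concatenation of the equivalence $\s(\x)=\s(\y)\iff\epsilon(\x,\y)=0$ (which follows from \Cref{lemma:difference} since Poincar\'e duality is an isomorphism), the Ozsv\'ath--Szab\'o disk criterion, and the disk--domain correspondence $u\leftrightarrow D(u)$ is exactly the intended argument. The final deduction that $\mathcal{P}(\x,\y)=\varnothing$ forces $c(\x,\y)=0$ as an empty sum is also as in the paper, so nothing is missing.
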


It follows that the sutured Floer chain complex $CF(\Sigma, \boa, \bob)$ splits into a direct sum of smaller chain complexes:
\[CF(\Sigma, \boa, \bob) = \bigoplus_{\s \in  \Spinc(M, \gamma)} CF(\Sigma, \boa, \bob, \s) \ ,\]
where $CF(\Sigma, \boa, \bob, \s)= \bigoplus_{\s(\x)=\s } \mathbb{F} \cdot \x$. 

The difference between the $\spinc$-structures associated to two states $\x$ and $\y$ can be expressed in terms of Ozsváth-Szábo's $\epsilon$-map, which is defined as follows. Write $\x=x_1+\dots+x_d$ and $\y = y_1+\dots+y_d$.
Pick paths $a = \alpha_1 \cup \dots \cup \alpha_d$ such that $\partial a=y_1 +\dots+y_d-x_1 -\dots-x_d$ and pick paths $b = \beta_1 \cup \dots \cup \beta_d$ such that $\partial b=y_1 +\dots+y_d-x_1 -\dots-x_d$. Then $\epsilon(x,y)$ is the image of $a-b$ in \[H_1(M)= \frac{H_1(\Sigma)}{ \text{Span}_\Z \langle[\alpha_1], \dots, [\alpha_d],[\beta_1], \dots , [\beta_d]  \rangle } \ .\]

\begin{lemma} \label{lemma:difference}
For each pair of Heegaard states $\x$ and $\y$ we have that \[\s(\x)-\s(\y)= PD[\epsilon(\x,\y)]\] where $PD:  H_1(M; \Z) \to H^2(M, \partial M; \Z)$ denotes Poincar\' e duality.   
\end{lemma}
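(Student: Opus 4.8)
The plan is to unwind the definition of the $\spinc$-structure $\s(\x)$ attached to a Heegaard state and compute the difference class directly, following the template of \cite{OS04c} adapted to the sutured setting of \cite{Juh06}. First I would recall the vector-field description of $\s(\x)$: choose a Morse function $f$ on $M$ compatible with $\Gamma$ and with the diagram $\hd$, having one index-$0$, one index-$3$, and $d$ index-$1$ and $d$ index-$2$ critical points, together with a generic gradient-like vector field $v$. Here $\Sigma$ is the regular level set between the index-$1$ and index-$2$ critical points, $\boa$ records where the descending manifolds of the index-$1$ critical points meet $\Sigma$, and $\bob$ records where the ascending manifolds of the index-$2$ critical points meet $\Sigma$. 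A state $\x = x_1 + \dots + x_d$ selects, for each $i$, a flow line $\gamma_{x_i}$ of $v$ through $x_i$ joining an index-$1$ to an index-$2$ critical point; together with the flow lines through the marked points $\z$ these trajectories hit every critical point exactly once, so after deleting a tubular neighborhood of their union the field $v$ is nonvanishing and extends to a nowhere-zero field $v_\x$ on all of $M$. By definition $\s(\x) = [v_\x] \in \Spinc(M, \Gamma)$.

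Next I would compare $v_\x$ and $v_\y$. One arranges that these two fields agree outside a neighborhood $N$ of $\gamma_\x \cup \gamma_\y$ together with the $\z$-trajectories; in particular they agree near the index-$0$ and index-$3$ critical points and near $\z$. The difference $\s(\x) - \s(\y) \in H^2(M, \partial M) \cong H_1(M)$ is then represented by the usual difference cocycle of the pair $(v_\x, v_\y)$: after a generic homotopy rel $\partial N$ from $v_\x|_N$ to $v_\y|_N$, the locus where the homotopy points opposite to $v_\x$ is a properly embedded $1$-manifold in $N \times [0,1]$ whose image $\zeta_{\x,\y}$ in $M$ satisfies $[\zeta_{\x,\y}] = PD(\s(\x) - \s(\y))$.

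The heart of the argument is to identify $[\zeta_{\x,\y}]$ with $[\epsilon(\x,\y)]$. For this I follow the trajectories: sweeping $\gamma_\x$ to $\gamma_\y$ near their index-$1$ endpoints traces out a piece of the descending manifolds, which meets $\Sigma$ in arcs of the $\alpha$-curves running from the $x_i$ to the $y_i$; sweeping near the index-$2$ endpoints traces out arcs of the $\beta$-curves from the $y_i$ back to the $x_i$. Concatenating, $\zeta_{\x,\y}$ is homologous in $M$ to a $1$-cycle projecting to $a - b$ for some arc systems $a \subset \boa$, $b \subset \bob$ with $\partial a = \partial b = \y - \x$, which is precisely a representative of $\epsilon(\x,\y)$ in $H_1(M) = H_1(\Sigma)/\langle [\alpha_1],\dots,[\alpha_d],[\beta_1],\dots,[\beta_d]\rangle$. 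Hence $\s(\x) - \s(\y) = PD[\epsilon(\x,\y)]$; independence of the gradient-like vector field and of the arc systems $a, b$ is automatic, since any such change alters $a - b$ by an integer combination of the $[\alpha_i]$ and $[\beta_i]$, which vanish in $H_1(M)$.

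The step I expect to require the most care is this last identification: tracking orientations carefully enough to get $a - b$ (rather than $b - a$ or $a + b$), and verifying that the sutured boundary conditions, i.e.\ the marked points $\z$ and the passage to the pinched surface $\overline{\Sigma}$, contribute nothing to $\zeta_{\x,\y}$ because $v_\x$ and $v_\y$ were arranged to coincide there. Both of these checks are parallel to the closed-manifold case, so an alternative and cleaner write-up would simply cite \cite[Lemma~2.19]{OS04c} for the underlying $3$-manifold statement and then note that the sutured refinement follows formally once one observes that the two $\spinc$-structures agree on $\partial M$.
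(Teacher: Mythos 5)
Your proof is correct and is essentially a reconstruction of the original Ozsv\'ath--Szab\'o argument for \cite[Lemma 2.19]{OS04c} (adapted to the sutured setting as in \cite{Juh06}), which is exactly what the paper has in mind: the lemma appears in a background/review section and is stated without proof, implicitly citing those references. Your alternative suggestion at the end --- cite the closed-manifold statement and note the two $\spinc$-structures agree on $\partial M$ --- is indeed the clean way to record it, and matches the paper's treatment.
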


When $(\Sigma, \boa, \bob)$ is the Heegaard diagram of a veering branched surface there are two preferential intersection points $\xt$, and $\xb$. For these intersection points we showed that $\s(\xt)=\s_{\phi^\sharp}$, and $\s(\xb)=\s_{-\phi^\sharp}= \overline{\s_{\phi^\sharp}}$ in \cite[Proposition 4.19]{AT25a}. 

\begin{lemma}[Alfieri-Tsang {\cite[Proposition 4.18]{AT25a}}] \label{lemma:epsilon=mu=gamma}
If $\x$ is an Heegaard state of the Heegaard diagram associated to a veering branched surface then $\epsilon( \x, \xb)=[\mu_\x] =[\gamma_\x]$ in $H_1(M; \Z)$.
\end{lemma}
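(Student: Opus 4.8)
\emph{Plan.} The statement packages two equalities in $H_1(M;\Z)$, and I would prove them separately. The ``dynamical'' equality $[\mu_\x]=[\gamma_\x]$ is essentially formal: realize $\mu_\x$ as a loop in $M$ by leaving its $G$-edges on $\brloc(B)\subset B\subset M$ and pushing each edge of $G_+\setminus G$ slightly off its sector into the interior of $M$ — this is exactly the model for $\mu_\x$ used to set up the correspondence $\mathcal{F}$ — so by \Cref{thm:pavbscorr} the orbit $\gamma_\x=\mathcal{F}(\mu_\x)$ is freely homotopic to this loop in $M$, hence homologous to it. All the real work is therefore in the ``Floer-theoretic'' equality $\epsilon(\x,\xb)=[\mu_\x]$.

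\emph{Unwinding $\epsilon$.} By \Cref{prop:generators} each triple point is occupied exactly once by $\x$ and exactly once by $\xb$, so both states place a single intersection point on every $\alpha$-curve and on every $\beta$-curve. Hence one can choose a $1$-chain $a=\bigcup_i a_i$ supported in the $\alpha$-curves (with $a_i\subset\alpha_i$) and a $1$-chain $b=\bigcup_j b_j$ supported in the $\beta$-curves (with $b_j\subset\beta_j$), both having boundary $\xb-\x$; then $a-b$ is a cycle and $\epsilon(\x,\xb)=[a-b]\in H_1(M)$ by definition. I would compute this class through the identification $H_1(M)=H_1(G)/\langle[\partial S_1],\dots,[\partial S_d]\rangle=H_1(B)$: the Heegaard surface $\Sigma$ lies in $\partial N(G)$, which deformation retracts onto $G$, and postcomposing with the collapse of the sectors gives a map $\Sigma\to B$ on which $[a-b]$ can be evaluated.

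\emph{The local computation.} The heart of the proof is a sector-by-sector analysis using the explicit local models of \Cref{prop:globalcombinatorics} (\Cref{fig:heegaardcombinalphaglue}). The arc $b_j$ lies on $\beta_j=S_j\cap\partial N(G)$, a curve isotopic across the collar of the sector to $\partial S_j$; it runs from the point of $\x$ on $\beta_j$, which sits at the corner $\x(S_j)$ of the diamond $S_j$, to the point of $\xb$ on $\beta_j$, which sits at the bottom corner of $S_j$. Pushed into $B$, it becomes an arc of $\partial S_j$ running from $\x(S_j)$ down to the bottom corner, and — comparing with \Cref{fig:statemultiloopdefn} and using \Cref{prop:vbstogglefan} to read off the subdivision of the top sides — this arc represents, modulo $[\partial S_j]$, exactly $-e_{\x,j}$ (nothing if $\x(S_j)$ is the bottom corner, one boundary edge if $\x(S_j)$ is a side corner, and a half of $\partial S_j$, homologous to the diagonal, if $\x(S_j)$ is the top corner). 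Summing over $j$, the chain $b$ retracts to $-[\mu_\x]$, so, entering $a-b$ with a minus sign, the $\beta$-part contributes $+[\mu_\x]$. One then checks, using the local models again together with the fact that each $\alpha_i$ bounds its compressing disk $D_i$ inside $N(G)$, that the arcs $a_i$ retract to null-homologous loops concentrated near the triple points, so the $\alpha$-part contributes nothing. This yields $\epsilon(\x,\xb)=[\mu_\x]$, which combined with the dynamical equality finishes the proof.

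\emph{Main obstacle.} Everything outside the local computation is formal; inside it, the delicate points are fixing the orientation conventions so that the net contribution is $+[\mu_\x]$ rather than $-[\mu_\x]$, and treating the blue and red local models (and toggle versus fan sectors) separately as forced by \Cref{fig:heegaardcombinalphaglue}. Showing that the $\alpha$-arcs contribute nothing is the subtlest step, since a priori an arc of $\alpha_i$ could retract to a homologically nontrivial loop; ruling this out requires the precise placement of the intersection points along each $\alpha_i$ in the local models.
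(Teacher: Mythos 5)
Your overall strategy is right, and in particular the reduction $[\mu_\x]=[\gamma_\x]$ via \Cref{thm:pavbscorr} and the computation of $\epsilon(\x,\xb)$ by retracting $a-b$ onto $G$ are both sound; the sign bookkeeping (taking $\partial b_j = x_j^{\bot}-x_j$, so $b_j$ runs from $\x(S_j)$ to the bottom corner, hence the retracted image is $-e_{\x,j}$ modulo $[\partial S_j]$, so that $-b$ contributes $+[\mu_\x]$) is also consistent with the paper's conventions. The one place where the exposition is imprecise, and in fact where you over-estimate the difficulty, is the $\alpha$-part. You write that ``the arcs $a_i$ retract to null-homologous loops concentrated near the triple points,'' and flag ruling out a nontrivial $\alpha$-contribution as the subtlest step. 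This is not quite the right way to say it: the $a_i$ are arcs, not loops, and the relevant map is not a retraction $\partial N(G)\to G$ but rather the quotient $\Sigma\to\Sigma/\boa$ that pinches each $\alpha$-curve to a point (this is exactly the map the paper invokes in the proof of \Cref{prop:transhomclassobsdomain}, where ``after pinching each $\alpha$-curve to a point, the $\beta$-curves glue together to give a graph $C^0$-close to the 1-skeleton''). Under this pinching, $a_i\subset\alpha_i$ collapses to a single point, so its contribution vanishes tautologically — there is no placement of intersection points to track and no danger of a nontrivial loop. Once you phrase the $\alpha$-step this way, the argument is clean: the pinched cycle is $-b$, each retracted $-b_j$ equals $e_{\x,j}$ modulo $[\partial S_j]$, and summing gives $[\mu_\x]$. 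With that correction the proof is complete and essentially the same route the paper uses for $\epsilon$-computations in this diagram.
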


Thus if we use $\overline{\s_{\phi^\sharp}}$ as the base $\spinc$-structure, we can identify $\Spinc(Y^\sharp, \Gamma)$ with $H_1(Y^\sharp; \Z)$ and write $\s(\x)=[\gamma_\x]$. Note that with this convention $\s(\xb)=0$.

\section{A refinement of the $\spinc$-grading} \label{sec:wtsgrading}

\subsection{The $\widetilde{\epsilon}$-map}

As we pointed out in \Cref{subsec:heegaardstates}, in the Heegaard diagram $(\Sigma, \boa, \bob)$ of a veering branched surface one can associate to each Heegaard state $\x$ a multi-loop $\mu_\x=\{\mu_1, \dots, \mu_n\}$ in $G_+$. 
For each $i=1, \dots n$, the loop $\mu_i$ either lies in $G\subset G_+$, or it can be strum into a loop $\widetilde{\mu}_i$ in $G$ (recall \Cref{defn:strum}).

Since there is no \emph{canonical} way of strumming each loop $\mu_i$, we shall consider the set of all possible strums
\[\widetilde{\mu}_\x := \Big\{\widetilde{\mu}= \{\widetilde{\mu}_1, \dots, \widetilde{\mu}_n\} \subset G\ \mid \ \widetilde{\mu} \text{ is a strum of the multi-loop  } \mu_\x \Big\} \ .\]
Summarizing: there is no map associating to an Heegaard state a multi-loop in the dual graph $G$, but there is a multi-valued map doing so.

\begin{rmk}
We caution that although $\mu_\x$ is an embedded multi-loop of $G$, the multi-loops in $\widetilde{\mu}_\x$ can be non-embedded in general.    
\end{rmk}

The assignment $\x \mapsto \widetilde{\mu}_\x$ descends in homology to a multi-valued map $\widetilde{\epsilon}: \mathfrak{S}(\Sigma, \boa, \bob) \to H_1(G)$ by setting 
\[\widetilde{\epsilon}(\x) := \left\{[\widetilde{\mu}]=\sum_{i=1}^n [\widetilde{\mu}_i] \in H_1(G)\  \mid \ \widetilde{\mu}= \{\widetilde{\mu}_1, \dots, \widetilde{\mu}_n\} \text{ is a multi-loop in } \widetilde{\mu}_\x \right\} \ .\]
We caution \emph{against} thinking that there is not much difference between $\widetilde{\mu}_\x$ and $\widetilde{\epsilon}(\x)$. Since a given homology cycle can be represented by multiple multi-loops (see \Cref{lemma:loopresolution}), $\widetilde{\mu}_\x$ in fact carries strictly more information than $\widetilde{\epsilon}(\x)$. 

\begin{lemma} \label{lemma:transhomclassliftsepsilon}
The $\widetilde{\epsilon}$-map is a multi-valued lift of the Ozsv\'ath-Szab\' o $\epsilon$-map: 
\begin{center}
   \begin{tikzcd}[column sep=small]
     & H_1(G)\arrow{d}{i_*}  \\
\mathfrak{S}(\Sigma, \boa, \bob) \arrow{ru}{\widetilde{\epsilon}}  
 \arrow{r}{\epsilon} & H_1(M) 
\end{tikzcd} 
\end{center} 
where $i_*:H_1(G)\to H_1(M)$ denotes the map induced by the inclusion. Indeed, given an Heegaard state $\x$,  one has that $i_*(\lambda) =[\gamma_\x] = \epsilon(\x)$ for all  $\lambda \in \widetilde{\epsilon}(\x)$.
\end{lemma}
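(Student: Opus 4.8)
```latex
\begin{proof}[Proof proposal]
The plan is to establish the commutativity of the triangle by a direct homological computation, tracing through the two definitions: the definition of $\epsilon(\x) = \epsilon(\x,\xb)$ via the Ozsv\'ath--Szab\'o recipe, and the definition of $\widetilde{\epsilon}(\x)$ via strummed multi-loops. The key observation is that strumming a loop $\mu_i$ across a sector $S_j$ changes it by the boundary of that sector, i.e. $[\widetilde{\mu}_i] - [\mu_i] = \pm[\partial S_j]$ in $H_1(G)$; since $[\partial S_j]$ dies in $H_1(M)$ under the identification $H_1(M) = H_1(G)/\mathrm{Span}_\Z\langle [\partial S_1],\dots,[\partial S_d]\rangle$ recorded earlier, all elements of $\widetilde{\epsilon}(\x)$ map to the same class in $H_1(M)$, and that class is $[\mu_\x]$ (where we view $\mu_\x$ as a cycle in $G_+$, but its image in $H_1(M)$ agrees with the image of any strum by the same argument).

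First I would fix $\x$ and pick a representative strummed multi-loop $\widetilde{\mu} = \{\widetilde{\mu}_1,\dots,\widetilde{\mu}_n\} \subset G$ of $\mu_\x$, so that $\lambda = \sum_i [\widetilde{\mu}_i] \in \widetilde{\epsilon}(\x)$. Each $\widetilde{\mu}_i$ is obtained from $\mu_i \subset G_+$ either by doing nothing (if $\mu_i \subset G$ already) or by replacing the diagonal edge of a sector $S_{j(i)}$ by a path along two sides of that sector. In the latter case, the difference $[\widetilde{\mu}_i] - [\mu_i]$, computed in $H_1$ of the $1$-complex $G_+$ (or rather after including into $M$, where the sector $S_{j(i)}$ becomes a $2$-cell), is exactly $\pm[\partial S_{j(i)}]$, because the diagonal edge together with the two-sided path bounds the disk $S_{j(i)}$. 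Hence in $H_1(M)$ we have $i_*(\lambda) = \sum_i i_*[\mu_i] = [\mu_\x]$, independent of the choice of strum. This already shows $i_*(\lambda)$ is the same for all $\lambda \in \widetilde{\epsilon}(\x)$ and equals $[\mu_\x]$ in $H_1(M)$.

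Next I would invoke \Cref{lemma:epsilon=mu=gamma}, which states $\epsilon(\x,\xb) = [\mu_\x] = [\gamma_\x]$ in $H_1(M)$, to conclude $i_*(\lambda) = [\gamma_\x] = \epsilon(\x)$ for every $\lambda \in \widetilde{\epsilon}(\x)$, which is precisely the asserted lifting property. It remains to double-check the well-definedness of $\widetilde{\epsilon}$ as a (finite) multi-valued map, i.e. that there are only finitely many strummed multi-loops: this holds because each $\mu_i$ has at most one diagonal edge, hence at most one sector to strum across, with exactly two ways to strum it per \Cref{fig:strum}, so $\widetilde{\mu}_\x$ is a finite set and so is its image in $H_1(G)$.

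The main obstacle I anticipate is making the identity $[\widetilde{\mu}_i] - [\mu_i] = \pm[\partial S_{j(i)}]$ fully rigorous at the chain level, being careful about orientations and about the fact that $\mu_i$ is a loop in $G_+$ rather than in $G$, so one is really comparing classes after pushing into $H_1(M)$; here one should note that $G_+ = G \cup (\text{diagonal edges})$ and that in $M$ each diagonal edge is homotopic rel endpoints to a side-path of its sector, with the homotopy traced out by the diamond disk. Once this local picture is pinned down, the global statement follows by summing over $i$ and using linearity of $i_*$. A secondary point worth a sentence is that $[\partial S_j]$ genuinely lies in the span being quotiented out in the stated presentation of $H_1(M)$, which was recorded in the discussion preceding \Cref{sec:wtsgrading}.
\end{proof}
```
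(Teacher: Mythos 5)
Your proposal is correct and follows essentially the same route as the paper's proof: one shows that every strum changes a multi-loop only by boundaries of $2$-cells of $B$ (the paper phrases this via the triangles $\Delta_i^{\pm}$ into which $G_+$ subdivides each sector), so every $\lambda \in \widetilde{\epsilon}(\x)$ has the same image $[\mu_\x]$ in $H_1(B)=H_1(M)$, and then \Cref{lemma:epsilon=mu=gamma} gives $[\mu_\x]=[\gamma_\x]=\epsilon(\x)$. Two harmless slips, neither affecting the argument: the chain-level difference produced by a single strum is the boundary of a half-sector triangle rather than $\pm[\partial S_j]$ (your later remark that the diagonal is homotopic rel endpoints to a side path across the diamond is the correct fix, and in any case only vanishing in $H_1(M)$ is needed), and a component $\mu_i$ may contain several diagonal edges, though $\widetilde{\mu}_\x$ is still finite because there are finitely many diagonals, each with two strums.
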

\begin{proof}
First of all we note that $M$ retracts onto the veering branched surface $B$. Furthermore, the augmented dual graph $G^+$ subdivides a sector $S_i$ into a union of two triangles $\Delta_i^+$, and $\Delta_i^-$ giving $B$ the structure of a simplicial 2-complex.

If $\lambda$ is a strum of $\mu_\x$ then $\lambda \cup \mu_\x $ bounds a union of triangles in this simplicial decomposition. Thus, $\lambda-\mu_\x=\sum_{i=1}^n c_i\cdot \Delta_i^{a_i}$ for some coefficients $c_i\in\{-1, 0, 1\}$, and signs $a_1, \dots, a_n$. It follows from \Cref{lemma:epsilon=mu=gamma} that $[\lambda]=[\mu_\x]=[\gamma_\x] = \epsilon(\x)$ in $H_1(B)=H_1(M)$.
\end{proof}

\subsection{The $\widetilde{\mathfrak{s}}$-grading} 

We now set up a refinement of the $\spinc$-grading based on the multi-valued lift of the Ozsvath-Szabo $\epsilon$-map we defined in the previous subsection. 

We first recall the following facts about effective domains in our Heegaard diagram.

\begin{prop}[{\cite[Proposition 5.9, Lemma 5.13, Proposition 5.15]{AT25a}}] \label{prop:admdomaincorners}
Suppose $(\Sigma,\boa,\bob)$ is a Heegaard diagram associated to a veering branched surface. Let $D$ be an effective domain on $\Sigma$ connecting a Heegaard state $\x$ to a Heegaard state $\y$. 
Then 
\begin{enumerate}
    \item $D$ is embedded, i.e. all the multiplicities are either $0$ or $1$,
    \item the boundary of the support $s(D)$ (recall from \Cref{defn:emptypolygon}) consists of alternating arcs lying on $\alpha$ and $\beta$, i.e. there are no boundary components of $D$ that lie solely on $\alpha$ or $\beta$, and
    \item each $\alpha$- and $\beta$-arc does not contain a point of $\xt$ in its interior.
\end{enumerate}
\end{prop}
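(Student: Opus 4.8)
The plan is to argue combinatorially from the explicit description of the diagram $(\Sigma,\boa,\bob)$ in \Cref{prop:globalcombinatorics}: cutting $\Sigma$ along $\boa$ produces a disjoint union of punctured annuli $A_1,\dots,A_l$, one per branch loop of $B$, each tiled by punctured rectangles carrying one of the two local models of \Cref{fig:heegaardcombinalphaglue}, with the $\beta$-arcs running along the cores of the annuli. The structural fact I would use throughout is that each of these pieces contains a marked point, so the elementary domain meeting it has multiplicity $0$ in \emph{every} domain — which in the sutured setting is exactly the condition $n_\z(D)=0$. Since every elementary domain of $\Sigma$ lies inside exactly one piece $A_j$, it is enough to understand each restriction $D|_{A_j}$.

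For (1), I would restrict $D$ to a piece $A_j$: as no $\alpha$-arc enters its interior, the multiplicity is locally constant off the $\beta$-arcs and jumps across each $\beta$-arc by the corresponding coefficient of $\partial_\beta D$. Once full-curve boundary components are excluded (which I do in the course of proving (2) below), along each $\beta$-curve this coefficient equals $1$ on the single sub-arc between the $\x$-point and the $\y$-point lying on that curve, and $0$ elsewhere; so each crossing changes the multiplicity by $+1$, $0$, or $-1$. Propagating out from the multiplicity-$0$ region at the marked point and imposing $D\ge 0$, the multiplicities are controlled by how the ``$+1$-arcs'' of the various $\beta$-curves interleave, and the claim is that in each of the local models of \Cref{fig:heegaardcombinalphaglue} — and their blue/red, toggle/fan variants forced by \Cref{prop:vbstogglefan} — no chain of crossings stacks two $+1$-jumps, so the multiplicity never reaches $2$. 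This is a finite check; it transfers to $D$ globally and gives (1).

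For (2) and (3): granting (1), $s(D)$ is a union of closures of multiplicity-$1$ regions whose frontier consists of arcs of $\boa\cup\bob$. Around any boundary component these arcs must alternate between $\boa$ and $\bob$, because a vertex of $s(D)$ is a point of $\boa\cap\bob$ at which $D$ has a local corner and the two frontier arcs leaving it lie on different curves; the only way to fail alternation is a vertex-free boundary component, i.e.\ an entire $\alpha_i$ or an entire $\beta_i$. Such a component contributes a nonzero multiple of $[\alpha_i]$ or $[\beta_i]$ to $\partial D$, forcing a nonzero multiplicity jump all the way around that curve, which, traced back to the marked point sitting against it (again via \Cref{prop:globalcombinatorics}, or equivalently by admissibility), contradicts $n_\z(D)=0$; this proves (2). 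For (3), fix a coordinate $x^\top_k$ of $\xt$: it is the top corner of the sector $S_k$, hence a triple point, and in \Cref{fig:heegaardcombinalphaglue} it is the point labelled $\x^\top$. Reading off that figure, one of the four elementary domains incident to $\x^\top$ is incident to a marked point, hence has multiplicity $0$, while the surrounding configuration is such that a frontier arc of $s(D)$ running through the interior of either of the two curves through $\x^\top$ would force that multiplicity-$0$ region onto the multiplicity-$1$ side of the arc — impossible.

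The hard part is (1). Once the global shape of the diagram is in hand, (2) and (3) are essentially bookkeeping, but the bound ``multiplicities never reach $2$'' needs a delicate finite case analysis: one must track how the $\beta$-strands interleave across the blue and red rectangles as the annular pieces are reglued along $\boa$, and rule out two mutually parallel $+1$-arcs that together separate some region from all marked points. This is precisely where the toggle/fan dichotomy of \Cref{prop:vbstogglefan} forced by the veering condition is indispensable, and it is the content packaged into \cite[Propositions~5.9 and 5.13]{AT25a}; part (3) is \cite[Proposition~5.15]{AT25a}.
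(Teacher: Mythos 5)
This proposition is quoted verbatim from the prequel \cite{AT25a} (Proposition 5.9, Lemma 5.13, Proposition 5.15), and the present paper supplies no proof of it — it is used as a black box throughout Sections 3–5. So there is no in-paper argument to compare against; one can only judge your proposal on its own terms and against the structural facts the paper does state.

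Your setup is the right one: the annular decomposition from \Cref{prop:globalcombinatorics}, the observation that the sutured definition of ``domain'' already builds in $n_\z(D)=0$, and the idea of propagating multiplicities outward from the puncture regions across $\beta$-arcs are all reasonable, and the logical ordering (establish ``no full-curve boundary components'' first from the marked points, then use it as input to the multiplicity bound) is coherent. The main problem is that you never actually prove (1). You correctly identify where the content is — ruling out two stacked $+1$-jumps — but you then write ``This is a finite check'' and, at the end, defer that check entirely to the cited propositions in \cite{AT25a}. That finite check is not a routine verification: it is a global statement about how the $+1$-arcs of different $\beta$-curves are offset from one another across the rectangles of every annular piece, and it is exactly the content of the cited Propositions 5.9 and 5.13. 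As written, the proposal describes the shape of the argument but does not carry it out, so (1) — which you yourself call ``the hard part'' — is not established. A secondary soft spot is the ``no full-curve boundary'' step in (2): you assert that a full $\alpha_i$ or $\beta_i$ in $\partial D$ ``traced back to the marked point sitting against it'' contradicts $n_\z(D)=0$, but for this one needs to know (and you do not verify) that every such curve borders a puncture region in a way that makes the contradiction immediate; the parenthetical appeal to admissibility is also not a drop-in replacement, since admissibility is a statement about periodic domains, not about arbitrary effective domains. Part (3) is the most plausible of your three sketches, but it too depends on reading off which of the four quadrants at $x^\top_k$ contains the nearby puncture, which you state without checking the local model in \Cref{fig:heegaardcombinalphaglue}. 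In short: correct strategy and correct identification of the needed inputs, but the decisive combinatorial lemma in (1) is gestured at rather than proved.
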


Using \Cref{prop:admdomaincorners}, we can prove the following lemma.

\begin{lemma} \label{prop:transhomclassobsdomain}
In the Heegaard diagram of a veering branched surface, if there exists an effective domain $D\geq 0$ connecting a Heegaard state $\x$ to a Heegaard state $\y$ then $\widetilde{\epsilon}(\x) \cap \widetilde{\epsilon}(\y) \neq \varnothing$.
\end{lemma}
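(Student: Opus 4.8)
The plan is to show directly that the boundary of the effective domain $D$, when split into $\alpha$- and $\beta$-portions, produces a strum of $\mu_\x$ on one side and a strum of $\mu_\y$ on the other side, up to adding full cycles $\partial S_i$ (which are the nullhomologous classes we quotient out when passing from $H_1(G)$ to $H_1(M)$). The slogan is: an effective domain in this diagram is built out of the ``diamond'' elementary regions, and reading off its $\alpha$-boundary versus its $\beta$-boundary gives two different ways of resolving the same homological data — one adapted to $\x$ and one adapted to $\y$ — which meet in $H_1(G)$ precisely because they cobound a union of sectors.

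\textbf{Step 1: Interpret the local combinatorics.} Using \Cref{prop:globalcombinatorics} and \Cref{fig:heegaardcombinalphaglue}, I would recall how the elementary domains of $(\Sigma,\boa,\bob)$ sit inside the punctured rectangles associated to triple points, and how an $\alpha$-arc or $\beta$-arc of $\partial s(D)$ translates into a segment of $G$ (an edge or a pair of edges around a sector). The key point from \Cref{prop:admdomaincorners} is that $D$ is embedded, its boundary alternates between $\alpha$- and $\beta$-arcs, and no $\alpha$- or $\beta$-arc runs over a point of $\xt$. This last fact is what forces each arc to turn ``the short way'' around a sector, i.e. to correspond to a legitimate path in $G$ rather than wrapping.

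\textbf{Step 2: Extract two multi-loops in $G$.} From the constraints on corners in \Cref{defn:connectingdomains} — at each coordinate of $\x$ and of $\y$ exactly the expected domains have nonzero multiplicity — I would argue that $\partial_\alpha D$, pushed onto $G$, together with the edges $e_{\x,i}$ defining $\mu_\x$, assembles into a closed multi-loop $\widetilde\mu$ of $G$ that is a strum of $\mu_\x$ sector by sector; symmetrically, $\partial_\beta D$ assembles with the $e_{\y,i}$ into a strum $\widetilde\nu$ of $\mu_\y$. Concretely, for each sector $S_i$, whether $\x(S_i)$ is the bottom corner and whether the domain $D$ occupies (part of) that sector's rectangle together dictate which of the two strumming choices of \Cref{fig:strum} is realized; one checks case-by-case over the blue/red local models that the boundary behavior of an embedded effective $D$ is exactly consistent with such a choice.

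\textbf{Step 3: Close the homological gap.} Since $D$ is an honest $2$-chain on $\Sigma$ with $\partial D = \partial_\alpha D + \partial_\beta D$, pushing everything into $B$ (which $\Sigma$ maps to, collapsing each rectangle onto the relevant sectors) shows $\widetilde\mu - \widetilde\nu$ differs from $\partial D$'s image by a boundary of sectors, hence $[\widetilde\mu] = [\widetilde\nu]$ in $H_1(G)/\langle [\partial S_1],\dots,[\partial S_d]\rangle = H_1(M)$ — but more is true: reading the computation at the level of the simplicial chain groups of $B$ (as in the proof of \Cref{lemma:transhomclassliftsepsilon}), the correction term is literally a sum of the $2$-cells $\Delta_i^{\pm}$ coming from the sectors touched by $D$, so $[\widetilde\mu]$ and $[\widetilde\nu]$ may be taken to be the \emph{same} class in $H_1(G)$, not merely in $H_1(M)$. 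That common class then lies in $\widetilde\epsilon(\x) \cap \widetilde\epsilon(\y)$, which is therefore nonempty.

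\textbf{The main obstacle} I anticipate is Step 2: verifying, uniformly across the blue and red local models of \Cref{fig:heegaardcombinalphaglue} and across all the ways a sector's rectangle can be partially covered by an embedded effective $D$, that the $\alpha$- and $\beta$-boundary arcs really do glue up with the state-edges $e_{\x,i}$ and $e_{\y,i}$ into genuine strums — in particular that one never gets a path that is forced to traverse a sector ``the long way'' or to double back. This is exactly where condition (3) of \Cref{prop:admdomaincorners} (no $\xt$-point in the interior of an arc) does the work, but turning that into a clean combinatorial statement about strumming, rather than a picture-by-picture check, will be the delicate part of the write-up.
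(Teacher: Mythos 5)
Your overall strategy (read off the boundary of the effective domain, use \Cref{prop:admdomaincorners}, push into the dual graph) is in the right spirit, but the two load-bearing steps do not work as written. First, in Step 2, pushing $\partial_\alpha D$ onto $G$ cannot supply the edges of a strum of $\mu_\x$: in this diagram the $\alpha$-curves are small curves encircling the triple points, so under the collapse $\Sigma \to B$ they map to \emph{vertices} of $G$ and carry no $1$-dimensional information. The paper avoids this entirely by working only with $\partial_\beta D$: since $\partial D$ bounds, after pinching the $\alpha$-curves the image of $\partial_\beta D$ is a nullhomologous cycle in a space retracting to $G$, hence is literally the zero element of $Z_1(G)=H_1(G)$; then a per-sector case check (\Cref{fig:epsilondifference}, which is where \Cref{prop:admdomaincorners}(3) is used) writes each $\beta$-arc $\widetilde{e}_i$ as a difference $\widetilde{e}_{\x,i}-\widetilde{e}_{\y,i}$ of two directed paths in $G$ issuing from the common bottom corner $x_i^\bot$, so that $\sum_i \widetilde{e}_{\x,i}=\sum_i \widetilde{e}_{\y,i}$ as cycles, giving one and the same class in $\widetilde{\epsilon}(\x)\cap\widetilde{\epsilon}(\y)$.

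Second, and more seriously, Step 3 contains a genuine gap: from $\widetilde\mu-\widetilde\nu=\partial(\text{sum of sector $2$-cells})$ you conclude that the two classes ``may be taken to be the same in $H_1(G)$, not merely in $H_1(M)$.'' That inference is false as stated. For a graph, $H_1(G)=Z_1(G)$ with nothing quotiented out, so two cycles differing by boundaries of sector $2$-cells (e.g.\ by some $\partial S_i$) are in general \emph{distinct} elements of $H_1(G)$ -- this is exactly the difference between the $\epsilon$-map and the $\widetilde{\epsilon}$-map, i.e.\ between \Cref{lemma:transhomclassliftsepsilon} and the lemma you are trying to prove. To repair it you would have to show that every sector boundary occurring in the correction term can be absorbed by re-choosing the strums of $\mu_\x$ or $\mu_\y$ (which is only possible at sectors where those multi-loops actually use the diagonal edge), and that is essentially the content of the lemma; without that argument the proof establishes only $\epsilon(\x)=\epsilon(\y)$, which is already known. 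The fix is to follow the $\partial_\beta D$ route above, where the common representative is produced directly rather than compared after the fact.
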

\begin{proof}
Write $\x=x_1+ \dots+ x_n$ and $\y=y_1+ \dots+ y_n$ with the convention that $x_i\in \beta_i\cap \alpha_{\sigma(i)}$, and $y_i\in \beta_i\cap \alpha_{\tau(i)}$ for two permutations $\sigma$ and $\tau$.

In $H_1(\Sigma)$ one has that $0= \partial D=\partial_\alpha D + \partial_\beta D$. Thus, modding out by  $H_1(\T_{\boa}) = \text{Span}_\Z\{[\alpha_1], \dots, [\alpha_d]\}$, we conclude that $\partial_\beta D=0$ in $ H_1(\Sigma)/H_1(\T_{\boa})\cong H_1(G)$. Here the last isomorphism can be interpreted as follows: After pinching each $\alpha$-curve to a point, the $\beta$-curves glue together to give a graph $C^0$-close the $1$-skeleton of the branched surface $B$, and the surface $\Sigma$ gives rise to a surface with conic singularities that deformation retracts on said graph.

Since the domain $D$ is embedded (\Cref{prop:admdomaincorners}(1)), the edges involved in $\partial_\beta D$ all come with multiplicity one, so  $\partial_\beta D$ can be decomposed  as  a union of simplicial arcs $e_1, \dots, e_n$ with $e_i\subset \beta_i$, and $\partial e_i=x_i-y_i$. 
Each path $e_i$ gives rise to simplicial arc $\widetilde{e}_i \subset \partial S_i$. 
Using \Cref{prop:admdomaincorners}(3), we check that $\widetilde{e}_i$ can be decomposed as a difference $\widetilde{e}_{\x,i} -\widetilde{e}_{\y,i}$ with $\widetilde{e}_{\x,i}$ a directed path in $G$ connecting $x^\bot_i$ to $x_i$ and $\widetilde{e}_{\y,i}$ being a directed path in $G$ connecting $x^\bot_i$ to $y_i$.
See \Cref{fig:epsilondifference}.
Thus in $Z_1(G)=H_1(G)$ we have that
\[0=\partial_\beta D =\sum_i\  \widetilde{e}_i =\sum_i  \ \widetilde{e}_{\x,i} - \widetilde{e}_{\y,i} \ ,\]
that is
 \[ \sum_i  \widetilde{e}_{\x,i} =\sum_i \widetilde{e}_{\y,i} \ .\] 
It follows that $\widetilde{\epsilon}(\x)$ and $\widetilde{\epsilon}(\y)$  have some multi-loop in common.
\end{proof}

\begin{figure}
    \centering
    \selectfont\fontsize{8pt}{8pt}
    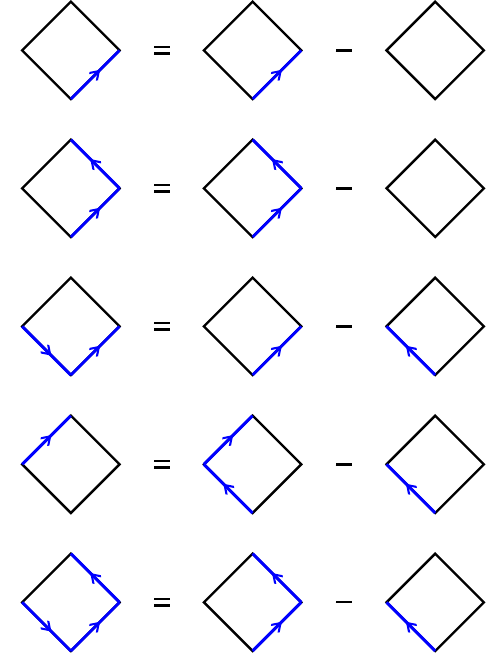
    \caption{Checking that every $\beta$-arc $\widetilde{e}_i$ of $D$, which goes from $y_i$ to $x_i$, can be written as a difference $\widetilde{e}_i = \widetilde{e}_{\x,i} -\widetilde{e}_{\y,i}$. Modulo the symmetry of reflecting the sector across the vertical diagonal, and interchanging $x_i$ and $y_i$, there are 8 cases. In 3 of the cases, $x_i=y_i$ in which case $\widetilde{e}_i$ is the empty arc and the check is trivial. The remaining 5 cases are as shown.}
    \label{fig:epsilondifference}
\end{figure}

The regular $\spinc$-grading can be thought as an equivalence relation where $\x \sim \y$ if and only if $\epsilon(\x)=\epsilon(\y)$. Indeed, by \Cref{lemma:difference}, for two states $\x$ and $\y$, one has that $\s(\x)=\s(\y)$ provided that
\[0= \epsilon(\x, \y)=
\epsilon(\x, \xb) - \epsilon(\y, \xb)=
\epsilon(\x)-\epsilon(\y)
\ . \] 
This equivalence relation partitions Heegaard states  in equivalence classes. If we denote by   $\Spinc(\Sigma, \boa, \bob)$ the set of such equivalence classes, then the map $\s: \gS \hd \to \Spinc(M, \Gamma)$ descends to an injective map $\Spinc(\Sigma, \boa, \bob) \hookrightarrow  \Spinc(M, \Gamma)$, labeling the equivalence classes of $\sim$ with $\spinc$-structures. If a $\spinc$-structure $\s$ does not appear in the image of this map then $SFH(M, \Gamma, \s)=0$. Conversely, if a $\spinc$-structure $\s$ appears in the image of this map the group $SFH(M, \Gamma, \s)$ is the homology of the matrix
\[\partial_\s =[c(\x, \y)]_{\x, \y \in \s} \ ,  \]
where $\x_1, \dots, \x_s$ denote the Heggaard states in the equivalence class labeled by $\s$. In fact, as a consequence of \Cref{lemma:obstruction1}, the matrix of the Heegaard Floer differential has a block decomposition
 \[\partial = \begin{bmatrix} \partial_{\s_1} & & \\ & \ddots & \\ & & \partial_{\s_s} \end{bmatrix} \]
 where $\s_1, \dots, \s_s$ denote the $\spinc$-structures hit by the $\s$-map. 
 
\Cref{prop:transhomclassobsdomain} suggests that each sub-matrix $\partial_\s$ can be further decomposed into smaller blocks. In order to describe this partition of the basis we  would like to declare two Heegaard states $\x$ and $\y$ to be equivalent if $\e(\x)\cap \e(\y)\neq \varnothing$. This is a reflexive  ($\e(\x)\cap \e(\x)\neq \varnothing$), and symmetric relation ($\e(\x)\cap \e(\y)=\e(\y)\cap \e(\x)$), but unfortunately it is \emph{not} a transitive relation: It can be that $\e(\x)\cap \e(\y)\neq \varnothing$, and  $\e(\y)\cap \e(\z)\neq \varnothing$ but that $\e(\x)\cap \e(\z)$ is empty. For this reason we have to consider the equivalence relation \emph{generated by} $\x\sim \y$ if $\e(\x)\cap \e(\y)\neq \varnothing$.  

\begin{defn}[$\widetilde{\epsilon}$-equivalence of Heegaard states] \label{defn:tsequivalence}
We declare two states $\x$ and $\y$ to be \textbf{$\widetilde{\epsilon}$-equivalent} if there exists a sequence of states $\x = \z_0, \dots, \z_n = \y$ such that 
$$\e(\z_i)\cap \e(\z_{i+1}) \neq \varnothing$$ 
for $i=0, \dots, n-1$.We define the \textbf{$\widetilde{\s}$-grading} of a state $\x$ to be the $\widetilde{\epsilon}$-equivalence class that contains $\x$, and denote it by $\ts(\x)$.
\end{defn}

In the following, we shall denote by $\widetilde{\Spinc}\hd$ the set of equivalence classes of the $\widetilde{\epsilon}$-equivalence relation.
\begin{prop}
The relation of $\epsilon$-equivalence is coarser than the relation of  $\widetilde{\epsilon}$-equivalence. Consequently, there is a map $\pi: \widetilde{\Spinc}\hd \to \Spinc \hd$. 
\end{prop}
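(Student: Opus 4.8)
The plan is to show that $\widetilde{\epsilon}$-equivalence is a refinement of $\epsilon$-equivalence; the existence of $\pi$ is then a formal consequence of passing to quotients. The single ingredient I would use is \Cref{lemma:transhomclassliftsepsilon}: for any Heegaard state $\x$ and any $\lambda \in \widetilde{\epsilon}(\x)$ one has $i_*(\lambda) = \epsilon(\x)$. Consequently, if $\x$ and $\y$ satisfy $\widetilde{\epsilon}(\x) \cap \widetilde{\epsilon}(\y) \neq \varnothing$, then choosing some $\lambda$ in the intersection gives $\epsilon(\x) = i_*(\lambda) = \epsilon(\y)$, so $\x$ and $\y$ are $\epsilon$-equivalent.

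Next I would promote this one-step statement to chains. Recall that $\epsilon$-equivalence is by definition the relation $\epsilon(\x) = \epsilon(\y)$ (equivalently, by \Cref{lemma:difference}, $\s(\x) = \s(\y)$), and in particular it is transitive. Hence, given a sequence $\x = \z_0, \dots, \z_n = \y$ witnessing $\widetilde{\epsilon}$-equivalence, i.e. with $\widetilde{\epsilon}(\z_i) \cap \widetilde{\epsilon}(\z_{i+1}) \neq \varnothing$ for all $i$, the previous paragraph yields $\epsilon(\z_i) = \epsilon(\z_{i+1})$ for each $i$, and chaining these equalities gives $\epsilon(\x) = \epsilon(\y)$. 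Thus every $\widetilde{\epsilon}$-equivalence class is contained in a single $\epsilon$-equivalence class, which is exactly the assertion that $\epsilon$-equivalence is coarser.

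Finally I would conclude: since each $\widetilde{\epsilon}$-class lies in a unique $\epsilon$-class, the identity map on $\gS\hd$ descends to a well-defined map $\pi \colon \widetilde{\Spinc}\hd \to \Spinc\hd$ sending the $\widetilde{\epsilon}$-class of $\x$ to its $\epsilon$-class, i.e. to the $\spinc$-structure $\s(\x)$; this map is surjective because the $\s$-grading is. In particular the composite $\gS\hd \to \widetilde{\Spinc}\hd \xrightarrow{\pi} \Spinc\hd$ recovers the $\mathfrak{s}$-grading, matching the chain of maps announced in the introduction.

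There is essentially no obstacle in this argument; the only point requiring care is that $\widetilde{\epsilon}$-equivalence was defined as the transitive closure of the (a priori non-transitive) relation $\widetilde{\epsilon}(\x) \cap \widetilde{\epsilon}(\y) \neq \varnothing$, so one must check that the refinement statement is not destroyed by taking this closure. As explained above, it survives precisely because $\epsilon$-equivalence is already an honest equivalence relation, so the chain of implications telescopes.
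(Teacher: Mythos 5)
Your argument is correct and is essentially the paper's own proof: both reduce to the one-step observation via \Cref{lemma:transhomclassliftsepsilon} that a common element of $\widetilde{\epsilon}(\x)\cap\widetilde{\epsilon}(\y)$ forces $\epsilon(\x)=\epsilon(\y)$ (the paper phrases this via $[\gamma_{\z_i}]=i_*[c_i]$), then chain the equalities along the witnessing sequence and invoke \Cref{lemma:difference} to conclude $\s(\x)=\s(\y)$, after which $\pi$ exists by passing to quotients. The only cosmetic difference is that the paper writes the chain in terms of the classes $[\gamma_{\z_i}]$ rather than $\epsilon(\z_i)$, which are the same by \Cref{lemma:epsilon=mu=gamma}.
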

\begin{proof}
If $\x$ and $\y$ are $\e$-equivalent there is a sequence $\x = \z_0, \dots, \z_n = \y$ of Heegaard states and a sequence of multi-loops $c_1, \dots , c_n$ with $c_i\in \e(\z_i) \cap \e(\z_{i+1})$, for $i=1, \dots, n$. By \Cref{lemma:transhomclassliftsepsilon}, we have that 
\[[\gamma_\x]=[\gamma_{\z_{0}}]=i_*[c_0]= [\gamma_{\z_{1}}]=i_*[c_1]=[\gamma_{\z_{2}}] = \dots = i_*[c_{n-1}]=[\gamma_{\z_{n}}] = [\gamma_\y]\] 
where $i_*$ denotes the map induced by the inclusion $G \to M$. Thus  we have that:
\[\s(\x)-\s(\y)= \epsilon(\x,\y)=\epsilon(\x)-\epsilon(\y)=[\gamma_\x]-[\gamma_\y]=0, \]
proving that $\x$ and $\y$ have the same $\spinc$-structure.
\end{proof}

Summarizing, we have maps
\begin{center}
   \begin{tikzcd}[column sep=small]
\mathfrak{S}(\Sigma, \boa, \bob) \arrow{r}{\widetilde{\s}}  
 & \widetilde{\Spinc}\hd  \arrow{r}{\pi} &      \Spinc\hd \hookrightarrow  \Spinc(M, \Gamma) \ ,
\end{tikzcd} 
\end{center} 
whose composition agrees with the traditional $\spinc$-grading map of Ozsváth and Szabó. 
It is unknown to the authors if the $\ts$-grading has an interpretation in terms of geometric structures as the $\spinc$ grading. 

\begin{cor}
Let $\x$ and $\y$ be two Heegaard states in the canonical Heegaard diagram of a veering branched surface. If $\ts(\x)\neq\ts(\y)$ then in the coefficient $c(\x,\y)$ in the Heegaard Floer differential vanish. \qed
\end{cor}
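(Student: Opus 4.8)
The plan is to assemble this from the three results already in hand: \Cref{prop:transhomclassobsdomain} (an effective domain from $\x$ to $\y$ forces $\widetilde{\epsilon}(\x)\cap\widetilde{\epsilon}(\y)\neq\varnothing$), the structure of the differential via $c(\x,\y)=\sum_{D\in\mathcal{P}(\x,\y)}\#\mathcal{M}(D)$, and the definition of $\widetilde{\epsilon}$-equivalence (\Cref{defn:tsequivalence}). The whole argument is essentially a one-line contrapositive.

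First I would recall that if $c(\x,\y)\neq 0$ then there must exist at least one effective domain $D\geq 0$ with Lipshitz index one connecting $\x$ to $\y$; indeed $\mathcal{P}(\x,\y)$ is nonempty since the sum defining $c(\x,\y)$ is a sum over $\mathcal{P}(\x,\y)$ of nonnegative integers $\#\mathcal{M}(D)$, so a nonzero value requires a nonempty index set. (One should note that since we are working over $\mathbb{F}=\mathbb{F}_2$, "nonzero coefficient" literally means the count is odd, hence in particular nonzero, hence the moduli spaces cannot all be empty — but even over $\mathbb{Z}$ the same conclusion holds.)

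Next I would apply \Cref{prop:transhomclassobsdomain} to this domain $D$: since $D$ is an effective domain connecting $\x$ to $\y$, we obtain $\widetilde{\epsilon}(\x)\cap\widetilde{\epsilon}(\y)\neq\varnothing$. By \Cref{defn:tsequivalence} (taking the trivial length-one chain $\z_0=\x$, $\z_1=\y$), this immediately gives that $\x$ and $\y$ are $\widetilde{\epsilon}$-equivalent, i.e. $\ts(\x)=\ts(\y)$. Taking the contrapositive: if $\ts(\x)\neq\ts(\y)$, then $c(\x,\y)=0$.

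I do not anticipate a genuine obstacle here — this corollary is a formal consequence of the machinery already set up, and the only subtlety worth spelling out is the reduction from "$c(\x,\y)\neq 0$" to "there exists an effective index-one domain from $\x$ to $\y$", which follows directly from the displayed formula $c(\x,\y)=\sum_{D\in\mathcal{P}(\x,\y)}\#\mathcal{M}(D)$ recalled earlier. If anything needs care it is only to make sure the statement is read correctly: the $\widetilde{\s}$-grading decomposition of the chain complex asserted just before \Cref{defn:tsequivalence}'s surrounding discussion is exactly this corollary, so the proof should simply cite \Cref{prop:transhomclassobsdomain} and the definition of $\widetilde{\epsilon}$-equivalence and be done.
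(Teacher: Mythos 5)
Your proposal is correct and follows exactly the route the paper intends: the corollary is stated with a \qed precisely because it is the immediate contrapositive of \Cref{prop:transhomclassobsdomain} combined with the fact that a nonzero coefficient $c(\x,\y)$ requires a nonempty set $\mathcal{P}(\x,\y)$ of effective domains, after which the definition of $\widetilde{\epsilon}$-equivalence gives $\ts(\x)=\ts(\y)$. Nothing is missing, and your spelled-out version matches the paper's implicit argument.
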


Given $\widetilde{\mathfrak{s}}\in \widetilde{\Spinc}\hd$ we denote by 
\[SFC(\phi,\mathcal{C},\widetilde{\mathfrak{s}}) = \bigoplus_{\x \in \widetilde{\mathfrak{s}}} \mathbb{F} \cdot \x\] the subcomplex of $SFC(\phi,\mathcal{C})$ spanned by the Heegaard states with $\ts$-grading $\ts$. For a given $\spinc$-structure $\s\in \Spinc \hd $, we have that
\[SFC(\phi,\mathcal{C},\s) = \bigoplus_{\pi(\ts)= \s } SFC(\phi,\mathcal{C},\widetilde{\mathfrak{s}})\]
where $\pi$ denotes the projection $\widetilde{\Spinc}\hd \to \Spinc \hd$.

\section{Relation with Fried pants} \label{sec:friedpants}

In the chain complex $SFC(\phi, \mathcal{C})$ the $\spinc$-grading can be interpreted as the map associating to a generator $\x$ the homology class $[\gamma_\x]\in H_1(Y^\sharp)$ of the multi-orbit $\gamma_\x$ attached to $\x$.
An obvious refinement of this would be to consider the multi-orbit $\gamma_\x$ itself. By \Cref{thm:pavbscorr}, this is equivalent to the sweep-equivalence class of the multi-loop $\mu_\x \subset G_+$.

Note that given two Heegaard states $\x$ and $\y$, the associated multi-loops $\mu_\x$ and $\mu_\y$ are sweep-equivalent if and only if there is a sequence  of states $\x = \z_0, \dots, \z_n = \y$  such that $\widetilde{\mu}_{\z_{i-1}}\cap \widetilde{\mu}_{\z_{i}} \neq \varnothing$ for $i=1, \dots, n$. 

In other words, the difference between considering the multi-orbit $\gamma_\x$ and the refined $\spinc$-grading $\ts(\x)$ is exactly the difference between stating \Cref{defn:tsequivalence} using $\widetilde{\mu}_{\x}$ or $\e(\x)$. In this section we explore the difference between these two concepts since the first is of geometric relevance in the dynamical set-up, while the second plays a special role in the Heegaard Floer chain complex.

\subsection{Sleekness}
We start by discussing a specific situation in which there is no difference between $\gamma_\x$ and the refined $\ts$-grading.

\begin{defn}[Sleek] \label{defn:sleek}
We shall make use of the following terminology:
\begin{itemize}
    \item We say that a multi-loop $c$ in the augmented dual graph $G_+$ is \emph{sleek} if every multi-loop $c'$ that is sweep-equivalent to $c$ is embedded.
    \item We say that a multi-orbit $\gamma$ is \emph{sleek} if $\gamma=\mathcal{F}(c)$ for some sleek multi-loop $c$ (recall $\mathcal{F}$ from \Cref{thm:pavbscorr}). 
    \item We say that a Heegaard state $\x$ is \emph{sleek} if its associated multi-orbit $\gamma_\x$ is sleek.
    \item We say that a class $\widetilde{\mathfrak{s}}\in\widetilde{\Spinc}\hd$ is \emph{sleek} if it contains some sleek state $\x$.
\end{itemize}
\end{defn}

The motivation of this terminology comes from the mental image that as one sweeps $c$ around, it never gets tangled up with itself.

\begin{eg} \label{prop:embbranchloopssleek}
Any embedded multi-loop that is a union of branch loops is sleek. This is because they are embedded themselves, and they cannot be swept around.
\end{eg}

We show that \Cref{defn:sleek} can be rephrased in the following ways.

\begin{lemma} \label{lemma:sleekorbit}
If $\gamma$ is a sleek multi-orbit then every multi-loop $c$ such that $\gamma=\mathcal{F}(c)$ is sleek.
\end{lemma}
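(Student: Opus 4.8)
The plan is to deduce the lemma formally from two facts: that sleekness of a multi-loop is a property of its sweep-equivalence class, and that $\mathcal{F}$ is a \emph{bijection} on sweep-equivalence classes (\Cref{thm:pavbscorr}).

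First I would record the elementary observation that sweep-equivalence is an equivalence relation on the multi-loops of $G_+$; in particular it is transitive, and every multi-loop is sweep-equivalent to itself via the length-zero sequence allowed in \Cref{defn:sweepequivalent}. Using transitivity, one sees that sleekness (in the sense of \Cref{defn:sleek}) is invariant within a sweep-equivalence class: if $c_0$ is sleek and $c$ is sweep-equivalent to $c_0$, then for any multi-loop $c'$ that is sweep-equivalent to $c$, concatenating the two sequences shows $c'$ is sweep-equivalent to $c_0$, hence embedded; so every multi-loop sweep-equivalent to $c$ is embedded, i.e. $c$ is sleek.

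Next, let $\gamma$ be a sleek multi-orbit. By \Cref{defn:sleek} there is a sleek multi-loop $c_0$ with $\mathcal{F}(c_0)=\gamma$. Let $c$ be any multi-loop with $\mathcal{F}(c)=\gamma$. Since $\mathcal{F}$ is injective on sweep-equivalence classes, $\mathcal{F}(c)=\mathcal{F}(c_0)$ forces $c$ and $c_0$ to lie in the same class, that is, $c$ is sweep-equivalent to $c_0$. By the previous paragraph $c$ is sleek, which is the claim.

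Since every step is formal, I do not expect a genuine obstacle here; the only point requiring care is that \Cref{thm:pavbscorr} is phrased for loops and closed orbits, whereas the lemma concerns multi-loops and multi-orbits. This is handled by the componentwise extension of $\mathcal{F}$ used implicitly throughout the paper (two multi-loops being sweep-equivalent when related by a finite sequence of strums, each affecting one component), and injectivity of this extension on sweep-classes — which is all the argument uses — follows immediately from injectivity on the level of individual loops.
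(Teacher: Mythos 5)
Your argument is correct and is essentially the same as the paper's: the paper also takes a sleek multi-loop $c'$ with $\mathcal{F}(c')=\gamma$, invokes the bijectivity of $\mathcal{F}$ on sweep-equivalence classes from \Cref{thm:pavbscorr} to conclude $c$ and $c'$ are sweep-equivalent, and then uses the (transitivity-based) invariance of sleekness within a sweep-class. Your write-up merely makes explicit the transitivity step and the componentwise extension to multi-loops, both of which the paper leaves implicit.
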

\begin{proof}
By definition, if $\gamma$ is sleek then one can find a sleek multi-loop $c'$ such that $\gamma=\mathcal{F}(c')$.
\Cref{thm:pavbscorr} implies that $c$ and $c'$ are sweep-equivalent, thus $c$ is sleek as well.
\end{proof}

\begin{lemma} \label{lemma:sleekgrading}
If $\widetilde{\mathfrak{s}}\in\widetilde{\Spinc}\hd$ is sleek  then every state $\x$ in $\widetilde{\mathfrak{s}}$ is sleek. Furthermore, there exists a multi-orbit  of the pseudo Anosov flow $\gamma_{\ts}$  such that $\gamma_\x=\gamma_{\ts}$ for all  $\x\in\ts$.
\end{lemma}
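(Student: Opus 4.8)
The plan is to prove the two assertions in sequence, the first being essentially a corollary of \Cref{lemma:transhomclassliftsepsilon} combined with \Cref{lemma:sleekorbit}, and the second requiring a short argument about how multi-loops sweep.

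First I would unwind the definition of sleekness for a class $\ts$. By hypothesis $\ts$ contains some sleek state $\x_0$, meaning $\gamma_{\x_0}$ is a sleek multi-orbit. Now take an arbitrary state $\x \in \ts$. By \Cref{defn:tsequivalence} there is a chain $\x_0 = \z_0, \z_1, \dots, \z_n = \x$ with $\e(\z_i) \cap \e(\z_{i+1}) \neq \varnothing$ for each $i$. Applying \Cref{lemma:transhomclassliftsepsilon} to each consecutive pair, every element of $\e(\z_i)$ and every element of $\e(\z_{i+1})$ map under $i_*$ to $[\gamma_{\z_i}]$ and $[\gamma_{\z_{i+1}}]$ respectively; since the intersection is nonempty, $[\gamma_{\z_i}] = [\gamma_{\z_{i+1}}]$ in $H_1(M)$. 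Hence $[\gamma_{\x}] = [\gamma_{\x_0}]$ for all $\x \in \ts$. This gives the \emph{homological} constancy but I want the stronger statement that $\gamma_\x = \gamma_{\x_0}$ as multi-orbits; for this I need to upgrade the $\e$-equality to a $\widetilde{\mu}$-equality.

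The key observation, which I would isolate as the crux of the argument, is that in our situation $\e(\z_i) \cap \e(\z_{i+1}) \neq \varnothing$ forces $\widetilde{\mu}_{\z_i} \cap \widetilde{\mu}_{\z_{i+1}} \neq \varnothing$, \emph{provided} one of the two states (equivalently, by the homological argument above, both, once we know they represent the same homology class as a sleek orbit) is sleek. Indeed, suppose $\lambda \in \e(\z_i) \cap \e(\z_{i+1})$; pick multi-loops $\widetilde{\mu} \in \widetilde{\mu}_{\z_i}$ and $\widetilde{\mu}' \in \widetilde{\mu}_{\z_{i+1}}$ with $[\widetilde{\mu}] = [\widetilde{\mu}'] = \lambda$ in $H_1(G)$. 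By \Cref{lemma:transhomclassliftsepsilon} these are strums of $\mu_{\z_i}$ and $\mu_{\z_{i+1}}$, hence homologous in $B$ to $\gamma_{\z_i}$, $\gamma_{\z_{i+1}}$, which by the previous paragraph represent the same homology class as the sleek orbit $\gamma_{\x_0}$. Now two multi-loops of $G$ that are homologous in $G$ and both embedded must in fact be sweep-equivalent — this is exactly the statement that along a sleek orbit there is no ambiguity, and should follow from \Cref{lemma:sleekorbit} together with the bijection $\mathcal{F}$ of \Cref{thm:pavbscorr} (a sleek multi-orbit is, by definition and \Cref{lemma:sleekorbit}, represented by a \emph{unique} sweep-equivalence class, all of whose members are embedded, and conversely an embedded multi-loop homologous to it must represent it). Therefore $\widetilde{\mu}$ and $\widetilde{\mu}'$ are sweep-equivalent, so $\mu_{\z_i}$ and $\mu_{\z_{i+1}}$ are sweep-equivalent, so $\gamma_{\z_i} = \mathcal{F}(\mu_{\z_i}) = \mathcal{F}(\mu_{\z_{i+1}}) = \gamma_{\z_{i+1}}$. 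Chaining through $i = 0, \dots, n-1$ gives $\gamma_\x = \gamma_{\x_0}$, so in particular $\gamma_\x$ is sleek and every $\x \in \ts$ is a sleek state. Setting $\gamma_{\ts} := \gamma_{\x_0}$ completes the proof.

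The main obstacle I anticipate is the step asserting that an embedded multi-loop of $G$ homologous to a sleek multi-orbit's representative must be sweep-equivalent to it. One must be careful that \Cref{lemma:transhomclassliftsepsilon} only gives homology in $H_1(B) = H_1(M)$, not in $H_1(G)$, and the strumming moves occur in $G$, so one should track which homology group is relevant at each stage; the cleanest route is probably to argue entirely within sweep-equivalence classes via \Cref{thm:pavbscorr} and \Cref{lemma:sleekorbit}, using the homological identity only to pin down \emph{which} sweep-class one lands in. A subsidiary point worth checking is that the chain $\z_0, \dots, \z_n$ can be taken so that every intermediate $\z_i$ lies in $\ts$, which is immediate from the definition of the equivalence relation, so that the inductive "both states are sleek" hypothesis propagates correctly.
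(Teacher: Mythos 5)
Your overall strategy matches the paper's: reduce to a single link of the $\widetilde{\epsilon}$-chain, pick common strums $\widetilde{\mu}\in\widetilde{\mu}_{\z_i}$, $\widetilde{\mu}'\in\widetilde{\mu}_{\z_{i+1}}$ with the same class in $H_1(G)$, use \Cref{lemma:sleekorbit} to see that $\widetilde{\mu}$ is embedded, and conclude that the two states carry the same multi-orbit. But the step you yourself flag as the "main obstacle" is a genuine gap, and the justification you sketch for it does not work. You assert that an embedded multi-loop of $G$ homologous (in $H_1(G)$) to a representative of a sleek orbit must be sweep-equivalent to it, and that this "should follow from \Cref{lemma:sleekorbit} together with the bijection $\mathcal{F}$ of \Cref{thm:pavbscorr}." Neither result says anything of the sort: \Cref{lemma:sleekorbit} and $\mathcal{F}$ relate sweep-equivalence classes to orbits, not homology classes in $H_1(G)$ to sweep-equivalence classes, so they cannot convert the equality $[\widetilde{\mu}]=[\widetilde{\mu}']$ into sweep-equivalence. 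Moreover, your formulation needs \emph{both} multi-loops to be embedded, but at the inductive step the embeddedness of $\widetilde{\mu}'$ is exactly what is in question (sleekness of $\z_{i+1}$ is the conclusion, not a hypothesis), and your parenthetical remark that it holds "once we know they represent the same homology class as a sleek orbit" is circular: having the homology class of a sleek orbit is a property of $H_1(M)$ or $H_1(G)$ classes and does not by itself make a state sleek.

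What closes the gap is an elementary combinatorial fact about directed graphs, which is how the paper argues: if a $1$-cycle $\eta\in H_1(G)=Z_1(G)$ is the class of an \emph{embedded} multi-loop $c$, then $\eta$ is not the class of any other multi-loop at all. Indeed, since loops traverse directed edges positively, any multi-loop with class $\eta$ uses each edge with multiplicity $\eta(e)\in\{0,1\}$, and embeddedness of $c$ forces at most one incoming and one outgoing such edge at each vertex, so there is no choice in how the edges concatenate and the multi-loop is $c$ itself. Applying this with $c=\widetilde{\mu}$ gives $\widetilde{\mu}'=\widetilde{\mu}$ on the nose, with no hypothesis on $\widetilde{\mu}'$; hence $\widetilde{\mu}'$ is embedded, $\mu_{\z_{i+1}}$ is sweep-equivalent to $\mu_{\z_i}$, and $\gamma_{\z_{i+1}}=\gamma_{\z_i}$ is sleek, which both repairs your inductive step and removes the need for the preliminary $H_1(M)$ computation via \Cref{lemma:transhomclassliftsepsilon} (which only yields the weaker homological constancy and is not used in the paper's proof).
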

\begin{proof}
We have  to show that if $\x$ and $\y$ are Heegaard states, $\x$ is sleek, and $\e(\x) \cap \e(\y) \neq \varnothing$, then $\y$ is also sleek and $\gamma_\x = \gamma_\y$. So suppose $c_\x \in \widetilde{\mu}_\x$ and $c_\y \in \widetilde{\mu}_\y$ are two multi-loop such that $[c_\x] = [c_\y]$ in $H_1(G)$. 

By definition, $\x$ being sleek means that the multi-orbit $\gamma_\x$ associated to $\x$ is sleek. 
Thus by \Cref{lemma:sleekorbit} every multi-loop in $\widetilde{\mu}_\x$ is embedded, showing that $c_\x$ is embedded. 

Note that in general, for a directed graph $G$, if a homology cycle $\eta$ is the homology class of an \emph{embedded} multi-loop $c$, then $\eta$ cannot be the homology class of any other multi-loop. Consequently, $c_\y = c_\x$ showing that also $c_\y$ is embedded, and that the multi-orbits $\gamma_\x$ and $\gamma_\y$ are equal.
\end{proof}

If $\ts \in \widetilde{\Spinc}\hd$ is sleek we say that the associated summand $SFC(\phi,\mathcal{C},\widetilde{\mathfrak{s}})$ is \emph{sleek}. It follows that sleek summands of $SFC(\phi,\mathcal{C})$
are generated by sleek states.

\begin{prop} \label{prop:sleekclasssleekorbit}
If $\x$ and $\y$ are sleek states then $\gamma_\x= \gamma_\y$ if and only if $\ts(\x)=\ts(\y)$.
\end{prop}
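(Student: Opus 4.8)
The plan is to prove the two implications separately; the reverse implication ``$\ts(\x)=\ts(\y)\Rightarrow\gamma_\x=\gamma_\y$'' is essentially immediate, while the forward one carries the content. For the reverse direction, observe that since $\x$ is sleek and $\x\in\ts(\x)$, the class $\ts(\x)$ is sleek in the sense of \Cref{defn:sleek}, so \Cref{lemma:sleekgrading} supplies a multi-orbit $\gamma_{\ts(\x)}$ of $\phi^\sharp$ with $\gamma_\z=\gamma_{\ts(\x)}$ for every state $\z\in\ts(\x)$. If $\ts(\y)=\ts(\x)$ then $\y$ lies in this class, so $\gamma_\y=\gamma_{\ts(\x)}=\gamma_\x$.

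For the forward implication I would start from $\gamma_\x=\gamma_\y$ and use the bijection $\mathcal{F}$ of \Cref{thm:pavbscorr}: it shows $\mu_\x$ and $\mu_\y$ are sweep-equivalent, hence joined by a finite sequence of multi-loops $\mu_\x=c_0,c_1,\dots,c_m=\mu_\y$ of $G_+$ in which consecutive terms differ by a single strum across a sector. Since $\x$ is sleek, $\gamma_\x$ is a sleek multi-orbit, so by \Cref{lemma:sleekorbit} the multi-loop $\mu_\x$ is sleek; hence every $c_i$, being sweep-equivalent to $\mu_\x$, is embedded. By \cite[Proposition 4.12]{AT25a} each $c_i$ is therefore $\mu_{\z_i}$ for a unique Heegaard state $\z_i$, with $\z_0=\x$ and $\z_m=\y$. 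It then suffices to show $\e(\z_i)\cap\e(\z_{i+1})\neq\varnothing$ for every $i$, since by \Cref{defn:tsequivalence} this forces $\ts(\x)=\ts(\z_0)=\cdots=\ts(\z_m)=\ts(\y)$.

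To establish this last point I would fix $i$ and, after swapping $\z_i$ with $\z_{i+1}$ if needed, assume $c_{i+1}$ is a strum of $c_i$: so $c_i$ contains the diagonal edge $e\in G_+\setminus G$ of some sector $S$, and $c_{i+1}$ is obtained from $c_i$ by replacing $e$ with a path $p\subset G$ running along a bottom side and a top side of $S$. Consequently the edges of $G_+\setminus G$ occurring in $c_{i+1}$ are exactly those occurring in $c_i$ other than $e$. Choosing, once and for all, a strum of each of these remaining diagonals into a path of $G$, let $\widetilde c$ be the resulting multi-loop of $G$; performing the same strums on $c_i$ together with the strum $e\mapsto p$ yields the same $\widetilde c$. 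Hence $\widetilde c\in\widetilde{\mu}_{\z_i}\cap\widetilde{\mu}_{\z_{i+1}}$, so $[\widetilde c]\in\e(\z_i)\cap\e(\z_{i+1})$, as required.

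The step I expect to be the real obstacle is the appeal to sleekness in the second paragraph: it is precisely what guarantees that a strumming sequence realizing the sweep-equivalence of $\mu_\x$ and $\mu_\y$ can be kept among \emph{embedded} multi-loops, so that every intermediate $c_i$ corresponds to a genuine Heegaard state $\z_i$ and the chain $\z_0,\dots,\z_m$ witnessing $\ts$-equivalence actually exists. By contrast, the commuting-strums observation in the third paragraph is purely combinatorial and needs nothing beyond unwinding \Cref{defn:strum} and the definition of $\widetilde{\mu}_\x$.
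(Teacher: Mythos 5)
Your proof is correct and essentially identical to the paper's: both use \Cref{lemma:sleekgrading} for the reverse implication and, for the forward one, invoke the sweep-equivalence supplied by \Cref{thm:pavbscorr} together with sleekness (via \Cref{lemma:sleekorbit}) to produce a chain of embedded intermediate multi-loops $c_0,\dots,c_m$, each realizing a Heegaard state, linking $\x$ to $\y$ inside a single $\ts$-class. The only difference is that your third paragraph explicitly justifies $\e(\z_i)\cap\e(\z_{i+1})\neq\varnothing$ through the commuting-strums observation, a step the paper leaves as a bare assertion.
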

\begin{proof}
If $\widetilde{\mathfrak{s}}(\x)=\widetilde{\mathfrak{s}}(\y)$ then $\gamma_\x = \gamma_{\widetilde{\mathfrak{s}}(\x)}=\gamma_{\widetilde{\mathfrak{s}}(\y)} = \gamma_\y$ by \Cref{lemma:sleekgrading}. 
Conversely, if $\gamma_{\x} = \gamma_{\y}$ then $\mu_{\x}$ and $\mu_{\y}$ are sweep-equivalent by \Cref{thm:pavbscorr}. That is, there exists  multi-loops $\mu_{\x} = c_0,\dots,c_n = \mu_{\y}$ in the augmented dual graph $G_+$ such that $c_{i-1}$ and $c_i$ are related by strumming.
Since $\mu_{\x}$ is sleek, each $c_i$ is embedded. Thus by \Cref{prop:generators} they each determine a state $\y_i$, with $\e(\y_{i-1}) \cap \e(\y_{i}) \neq \varnothing$. 
Thus $\x$ and $\y$ lie in the same $\ts$-grading.
\end{proof}

\subsection{Pants resolutions}

In this subsection, we explain a general  way of translating the loss of information between the multi-orbits and the homology cycles in terms of the flow.

\begin{defn} \label{defn:loopjoin}
Let $c_1 = (e_{1,i})_{i \in \mathbb{Z}/N_1}$ and $c_2 = (e_{2,i})_{i \in \mathbb{Z}/N_2}$ be two loops in $G$. Suppose the terminal vertex of $e_{1,N_1}$ and the initial vertex of $e_{2,1}$ are the same vertex $v$ of $G$. Then we can form the concatenation $c_1 * c_2 = (e_{1,1},\dots,e_{1,N_1},e_{2,1},\dots,e_{2,N_2})$. 
\end{defn}

Suppose that $c=\{c_1, c_2,c_3, \dots, c_n\}$ is a multi-loop in the dual graph, and that $c_1$ and $c_2$ are as in \Cref{defn:loopjoin}.
Then we say that $c$ and $c'=\{c_1*c_2,c_3 \dots , c_n\}$ are related by \emph{resolution} at $v$.

\begin{lemma} \label{lemma:loopresolution}
Let $\mu$ and $\mu'$ be two multi-loops in $G$. Suppose $[\mu] = [\mu']$ in $H_1(G)$, then there exists multi-loops $\mu=\mu_0, \dots, \mu_n = \mu'$ such that $\mu_i$ and $\mu_{i+1}$ are connected by resolution at some vertex for each $i$.
\end{lemma}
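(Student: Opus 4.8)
The plan is to translate everything into the language of transition systems on directed graphs. Since $G$ is a graph we have $H_1(G)=Z_1(G)$, so the hypothesis says precisely that $\mu$ and $\mu'$ represent one and the same non-negative integral $1$-cycle $z=\sum_e n_e\, e$, with $n_e\in\Z_{\geq 0}$. The first step is a harmless reduction to the case $n_e\in\{0,1\}$: replace $G$ by the multigraph $\Gamma$ with $n_e$ parallel copies of each edge $e$, so that $\mu$ and $\mu'$ lift to multi-loops of $\Gamma$ that traverse every edge of $\Gamma$ exactly once, and note that resolutions in $G$ correspond bijectively to resolutions in $\Gamma$ (merging or splitting loops at a vertex is insensitive to how parallel copies are labeled). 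Hence it suffices to connect the two lifts by resolutions in $\Gamma$; from now on I write $\Gamma$ for $G$ and assume every edge is used exactly once.

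Next I would set up the dictionary between multi-loops representing $z$ and permutations. To a multi-loop $c$ representing $z$ associate the permutation $\sigma_c$ of the edge set of $\Gamma$ that sends each edge $e$ to the edge of $c$ immediately following it; because $c$ is a disjoint union of walks, $\sigma_c(e)$ starts at the terminal vertex of $e$, and because $c$ uses each edge exactly once, $\sigma_c$ is a bijection. Call a permutation $\sigma$ of the edges \emph{admissible} if $\sigma(e)$ starts where $e$ ends for every $e$; equivalently, $\sigma$ restricts at each vertex $v$ to a bijection from the edges incoming to $v$ to the edges outgoing from $v$. The orbits of an admissible $\sigma$ are directed closed walks, i.e. loops, so $\sigma\mapsto\{\text{orbits of }\sigma\}$ is a two-sided inverse of $c\mapsto\sigma_c$; thus multi-loops representing $z$ correspond bijectively to admissible permutations. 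Under this correspondence, a resolution at a vertex $v$ is exactly right multiplication $\sigma\mapsto\sigma\circ(e,f)$ by a transposition of two edges $e,f$ incoming to $v$: merging loops $c_1,c_2$ at $v$ swaps the successors of their last edges $e\in c_1$ and $f\in c_2$ (which lie in distinct $\sigma$-orbits), and this is precisely multiplication by $(e,f)$, while a split is the reverse move and corresponds to transposing two incoming edges at $v$ lying in a common orbit. Conversely, for any two edges $e,f$ incoming to a common vertex the permutation $\sigma\circ(e,f)$ is again admissible (check the incidence condition at that vertex directly), so it corresponds to an honest multi-loop obtained from $c$ by a single resolution.

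With the dictionary in hand the lemma reduces to a one-line group-theoretic observation. Let $\sigma=\sigma_\mu$, $\sigma'=\sigma_{\mu'}$, and set $\rho=\sigma^{-1}\circ\sigma'$. For each vertex $v$, $\sigma'$ carries the set of edges incoming to $v$ onto the edges outgoing from $v$, and $\sigma^{-1}$ carries the latter back onto the former, so $\rho$ permutes the set of edges incoming to $v$ among themselves. As these sets partition the edge set of $\Gamma$, the permutation $\rho$ is a product $\tau_1\cdots\tau_m$ of transpositions, each interchanging two edges incoming to a single vertex. Therefore $\sigma'=\sigma\circ\tau_1\circ\cdots\circ\tau_m$; every partial product $\sigma\circ\tau_1\circ\cdots\circ\tau_k$ is admissible, hence corresponds to a multi-loop $\mu_k$, and $\mu_{k-1}$ and $\mu_k$ are related by a resolution (a merge or a split at the relevant vertex, according to whether the two transposed edges lie in distinct orbits of $\sigma\circ\tau_1\circ\cdots\circ\tau_{k-1}$ or not). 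Thus $\mu=\mu_0,\mu_1,\dots,\mu_m=\mu'$ is the desired sequence; unwinding the reduction of the first paragraph gives the sequence of $G$-resolutions connecting the original $\mu$ and $\mu'$.

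The step requiring the most care is the dictionary in the second paragraph — especially pinning down that a resolution is exactly right multiplication by a transposition of two incoming edges at a vertex, and checking that such multiplication preserves admissibility; once that is in place, the conclusion is immediate. It is worth recording why a more naive induction on $\sum_e n_e$ (trying to match $\mu$ and $\mu'$ along a longest common prefix after first merging everything into single loops) gets stuck: a single resolution can only recombine two pieces and so merely rotates the loop, whereas one genuinely needs to split a loop into more than two pieces and recombine them in a new order, which is precisely the flexibility the permutation picture encodes.
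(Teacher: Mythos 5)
Your proof is correct and is essentially the paper's own argument: the paper also indexes the $\alpha(e)$ copies of each edge, encodes a multi-loop by the vertex-local bijections from incoming to outgoing edge-copies, identifies composing such a bijection with a transposition as a resolution, and concludes since any two bijections differ by a product of transpositions. Your global successor-permutation formulation and the explicit multigraph reduction are just a more detailed packaging of the same idea.
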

\begin{proof}
Let $\alpha$ be the common cycle $[\mu] = [\mu']$. 
Then each multi-loop $c$ whose homology class is $\alpha$ can be constructed by taking $\alpha(e)$ copies of each directed edge $e$ and connecting their endpoints in some way at each vertex $v$.
More precisely, indexing the $\alpha(e)$ copies of $e$ as $e_1,\dots,e_{\alpha(e)}$, we have a bijection 
$$\bigsqcup_{\text{$e$ incoming at $v$}} \{e_1,\dots,e_{\alpha(e)}\} \longleftrightarrow \bigsqcup_{\text{$e$ outgoing at $v$}} \{e_1,\dots,e_{\alpha(e)}\}$$
at each vertex $v$.
Composing the bijection at $v$ by a transposition corresponds to performing a resolution at $v$.
Since any two bijections are related by a sequence of transpositions, any two multi-loops representing $\alpha$ are related by a sequence of resolutions. 
\end{proof}

We then introduce the analogous resolution operation for orbits.

\begin{defn}
An \emph{(immersed) Fried pants} of $\phi^\sharp$ is an oriented immersed surface $P \looparrowright M$ where:
\begin{itemize}
    \item $P$ is homeomorphic to a pair of pants.
    \item The interior of $P$ is positively transverse to $\phi^\sharp$.
    \item Each boundary component of $P$ is a closed orbit.
\end{itemize}
See \Cref{fig:friedpants}.
Each boundary component of $P$ inherits an orientation from the direction of the flow and an orientation from the orientation on $P$.
We say that a boundary component is \emph{positive} if these two orientations agree, otherwise we say that the boundary is \emph{negative}.
\end{defn}

One can think about Fried pants as some sort of surgery operation on closed orbits. We use Fried pants  to introduce an equivalence relation on  multi-orbits.

\begin{figure}
    \centering
    \selectfont\fontsize{8pt}{8pt}
    %% Creator: Inkscape 1.3 (0e150ed6c4, 2023-07-21), www.inkscape.org
%% PDF/EPS/PS + LaTeX output extension by Johan Engelen, 2010
%% Accompanies image file '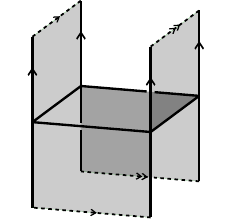' (pdf, eps, ps)
%%
%% To include the image in your LaTeX document, write
%%   \input{<filename>.pdf_tex}
%%  instead of
%%   \includegraphics{<filename>.pdf}
%% To scale the image, write
%%   \def\svgwidth{<desired width>}
%%   \input{<filename>.pdf_tex}
%%  instead of
%%   \includegraphics[width=<desired width>]{<filename>.pdf}
%%
%% Images with a different path to the parent latex file can
%% be accessed with the `import' package (which may need to be
%% installed) using
%%   \usepackage{import}
%% in the preamble, and then including the image with
%%   \import{<path to file>}{<filename>.pdf_tex}
%% Alternatively, one can specify
%%   \graphicspath{{<path to file>/}}
%% 
%% For more information, please see info/svg-inkscape on CTAN:
%%   http://tug.ctan.org/tex-archive/info/svg-inkscape
%%
\begingroup%
  \makeatletter%
  \providecommand\color[2][]{%
    \errmessage{(Inkscape) Color is used for the text in Inkscape, but the package 'color.sty' is not loaded}%
    \renewcommand\color[2][]{}%
  }%
  \providecommand\transparent[1]{%
    \errmessage{(Inkscape) Transparency is used (non-zero) for the text in Inkscape, but the package 'transparent.sty' is not loaded}%
    \renewcommand\transparent[1]{}%
  }%
  \providecommand\rotatebox[2]{#2}%
  \newcommand*\fsize{\dimexpr\f@size pt\relax}%
  \newcommand*\lineheight[1]{\fontsize{\fsize}{#1\fsize}\selectfont}%
  \ifx\svgwidth\undefined%
    \setlength{\unitlength}{112.81716534bp}%
    \ifx\svgscale\undefined%
      \relax%
    \else%
      \setlength{\unitlength}{\unitlength * \real{\svgscale}}%
    \fi%
  \else%
    \setlength{\unitlength}{\svgwidth}%
  \fi%
  \global\let\svgwidth\undefined%
  \global\let\svgscale\undefined%
  \makeatother%
  \begin{picture}(1,0.92895679)%
    \lineheight{1}%
    \setlength\tabcolsep{0pt}%
    \put(0,0){\includegraphics[width=\unitlength,page=1]{friedpants.pdf}}%
    \put(0.38001232,0.79554494){\color[rgb]{0,0,0}\makebox(0,0)[lt]{\lineheight{1.25}\smash{\begin{tabular}[t]{l}$\gamma_+$\end{tabular}}}}%
    \put(0.49596537,0.59626631){\color[rgb]{0,0,0}\makebox(0,0)[lt]{\lineheight{1.25}\smash{\begin{tabular}[t]{l}$\gamma_+$\end{tabular}}}}%
    \put(0.88449212,0.75840244){\color[rgb]{0,0,0}\makebox(0,0)[lt]{\lineheight{1.25}\smash{\begin{tabular}[t]{l}$\gamma_-$\end{tabular}}}}%
    \put(-0.00134173,0.64110963){\color[rgb]{0,0,0}\makebox(0,0)[lt]{\lineheight{1.25}\smash{\begin{tabular}[t]{l}$\gamma_-$\end{tabular}}}}%
  \end{picture}%
\endgroup%

    \caption{A Fried pants}
    \label{fig:friedpants}
\end{figure}

\begin{defn}[Pants-equivalence]
Let $P$ be a Fried pants. Let $\gamma_\pm$ be the closed multi-orbit that is the collection of positive/negative boundary components of $P$. For every closed multi-orbit $\gamma^\circ$, we say that the closed multi-orbits $\gamma^\circ \cup \{G\}$ and $\gamma^\circ \cup \{\gamma_-\}$ are related by \emph{resolution} along $P$. 

We say that two multi-orbits $\gamma$ and $\gamma'$ are \emph{pants-equivalent} if there exists a sequence of multi-orbits $\gamma = \gamma_0, \dots, \gamma_n = \gamma'$ such that $\gamma_i$ and $\gamma_{i+1}$ are related by resolution along a Fried pants.
\end{defn}

We now prove one of the main technical lemmas of the paper.

\begin{lemma} \label{prop:loopresolveimplyorbitresolve}
Suppose multi-loops $c$ and $c'$ of $G$ are related by a resolution at a vertex, then the corresponding multi-orbits $\mathcal{F}(c)$ and $\mathcal{F}(c')$ are related by resolution along a Fried pants.
\end{lemma}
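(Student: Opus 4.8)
The plan is to reverse-engineer the Fried pants directly from the local picture of the resolution, using the correspondence $\mathcal F$ between sweep-equivalence classes of loops in $G_+$ and closed orbits of $\phi^\sharp$ from \Cref{thm:pavbscorr}. Write $c=\{c_1,c_2,c_3,\dots,c_n\}$ with $c_1$ and $c_2$ meeting at a vertex $v$ of $G$, so that $c'=\{c_1*c_2,c_3,\dots,c_n\}$. Since the loops $c_3,\dots,c_n$ are unchanged, it suffices (by definition of pants-equivalence, taking $\gamma^\circ = \mathcal F(\{c_3,\dots,c_n\})$) to produce a Fried pants $P$ whose positive boundary is the multi-orbit $\mathcal F(\{c_1*c_2\})$ and whose negative boundary is $\mathcal F(\{c_1,c_2\}) = \mathcal F(\{c_1\})\cup\mathcal F(\{c_2\})$, or the reverse, so that $\gamma^\circ\cup\{\gamma_+\}$ and $\gamma^\circ\cup\{\gamma_-\}$ are exactly $\mathcal F(c)$ and $\mathcal F(c')$.

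First I would recall that $M$ deformation retracts onto the veering branched surface $B$, and that a loop $c$ in $G_+$ is homotopic in $M$ to the orbit $\mathcal F(c)$. The key geometric input is that $B$ carries the blown-up unstable foliation of $\phi^\sharp$, so that the flow is positively transverse to the interiors of the sectors of $B$ in an appropriate sense: a disc lying in the union of sectors, with corners at the branch locus, can be pushed off to a surface positively transverse to $\phi^\sharp$ in its interior whose boundary is a union of closed orbits realizing the relevant loops. Concretely, I would build the pants $P$ as follows. Consider a small model: the two loops $c_1$ and $c_2$ pass through $v$; take a neighbourhood of $v$ in $B$ and a band in the sectors adjacent to $v$ that realizes the ``smoothing'' switching $c_1\ast c_2$ into $c_1\sqcup c_2$ (as in the bijection-versus-transposition picture of the proof of \Cref{lemma:loopresolution}). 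Away from this band, $P$ is (a collar of) the loops themselves, pushed into the sectors so as to be flow-transverse. Gluing the band to these three collars produces a pair of pants $P$ immersed in $M$, positively transverse to $\phi^\sharp$ in its interior, whose three boundary components can be capped off (flowed forward/backward) to the closed orbits $\mathcal F(\{c_1\ast c_2\})$, $\mathcal F(\{c_1\})$, $\mathcal F(\{c_2\})$.

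The orientations require care: $P$ must be oriented so that the single boundary component coming from $c_1\ast c_2$ is positive and the two coming from $c_1,c_2$ are negative (or all signs reversed), which is forced by the fact that the flow-transverse band has a consistent coorientation determined by the maw coorientation on $B$; traversing $c_1\ast c_2$ with the orientation induced by the directed-graph structure of $G_+$ agrees with the flow direction, and similarly for $c_1,c_2$, so comparing the boundary orientation of $P$ with the flow orientation gives one positive and two negative ends. The main obstacle I anticipate is precisely this local construction of the flow-transverse band at $v$ together with the orientation bookkeeping: one must check that the smoothing can always be realized by a band lying in the sectors incident to $v$ in a flow-transverse way (this is where \Cref{prop:vbssector} and \Cref{prop:vbstogglefan}, describing the diamond structure of sectors and the blue/red/toggle/fan configuration around a branch locus, are used), and that the resulting surface is genuinely a pair of pants rather than an annulus or higher-genus surface — i.e. that no further identification of boundary components occurs. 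Once the band and its transversality and orientation are pinned down, identifying the three boundary components of $P$ with $\mathcal F(\{c_1\ast c_2\})$, $\mathcal F(\{c_1\})$, $\mathcal F(\{c_2\})$ is immediate from \Cref{thm:pavbscorr}, since each boundary is homotopic in $M$ to the corresponding loop of $G_+$.
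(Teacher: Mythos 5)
There is a genuine gap here, and it lies exactly where you flag your "main obstacle": the construction of a flow-transverse surface out of pieces of $B$ cannot work as sketched. The branched surface $B$ carries the blown-up \emph{unstable} foliation of $\phi^\sharp$, whose leaves contain the flow lines; the sectors of $B$ (and hence any collar of a loop of $G$ pushed into the sectors) are coarsely \emph{tangent} to the flow, not transverse to it. So your "key geometric input" — that a disc lying in the union of sectors with corners on the branch locus can be pushed off to a surface positively transverse to $\phi^\sharp$ — is not available, and no appeal to \Cref{prop:vbssector} or \Cref{prop:vbstogglefan} will produce it. A second, independent problem is the boundary: a Fried pants must have its boundary components equal to actual closed orbits, whereas the boundary of your proposed surface consists of the loops $c_1$, $c_2$, $c_1\ast c_2$ of $G$, which are only freely homotopic to $\mathcal{F}(\{c_1\})$, $\mathcal{F}(\{c_2\})$, $\mathcal{F}(\{c_1\ast c_2\})$ by \Cref{thm:pavbscorr}. "Capping off by flowing forward/backward" is not a defined operation here; interpolating from the combinatorial loop to the genuine orbit while keeping the interior positively transverse is precisely the hard content of the lemma, and it is not addressed.

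The paper's proof goes through the orbit space rather than through the branched surface. Each vertex of $\widetilde{G}$ determines a maximal rectangle transverse to the lifted flow, and following the flow gives partially defined maps between the rectangles along the lifts of $c$ and $c'$. Composing these around $c\ast c'$ yields a map from a full-height subrectangle onto a full-width subrectangle of the common rectangle at the lift of $v$, which forces a fixed point; this realizes the closed orbit $\gamma''$ corresponding to $c_1\ast c_2$ \emph{geometrically}, passing through the same rectangle as $\gamma=\mathcal{F}(\{c_1\})$ and $\gamma'=\mathcal{F}(\{c_2\})$. The pants is then assembled from two flow-tangent rectangles (unions of flow segments over arcs in that rectangle joining the intersection points of $\gamma,\gamma',\gamma''$ with it), together with a quadrilateral in the common rectangle, and finally perturbed to be transverse to the flow in its interior — this is Fried's construction, and it is what supplies both the interior transversality and the fact that the boundary components are the actual orbits, with the correct positive/negative signs. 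If you want to salvage your approach, you would essentially have to reprove this; the combinatorial picture of the resolution at $v$ alone does not produce the pants.
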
 
\begin{proof}
For this proof, we need to recall the definition of the map $\mathcal{F}$ between loops of $G$ and closed orbits of $\phi^\sharp$.

Lift $B$ to a branched surface $\widetilde{B}$ with branch locus $\widetilde{G}$ in the universal cover $\widetilde{M}$.
Each vertex $\widetilde{v}_0$ of $\widetilde{G}$ corresponds to a \emph{maximal rectangle} $R_{\widetilde{v}_0}$ in the orbit space $\mathcal{O}$ of $\phi$. This is a rectangle embedded in $\mathcal{O}$ where the restrictions of the stable/unstable foliations $\mathcal{O}^{s/u}$ foliate $R_{\widetilde{v}_0}$ as a product, such that there are no elements of $\widetilde{\mathcal{C}}$ in the interior of $R_{\widetilde{v}_0}$, but there is an element of $\widetilde{\mathcal{C}}$ in the interior of each edge of $R_{\widetilde{v}_0}$.

Furthermore, if $\widetilde{v}_1$ and $\widetilde{v}_2$ are the two vertices that follow $\widetilde{v}_0$ in $\widetilde{G}$, then $R_{\widetilde{v}_1}$ and $R_{\widetilde{v}_2}$ are the two maximal rectangles sharing three of the four points of $\widetilde{\mathcal{C}}$ on the boundary as $R_{\widetilde{v}_0}$, including the two that lie on the $\mathcal{O}^s$-sides of $R_{\widetilde{v}_0}$. 
See \Cref{fig:maxrectdualgraph}. 
In particular each $R_{\widetilde{v}_i}$ is taller and thinner than $R_{\widetilde{v}_0}$, in the sense that every $\mathcal{O}^s$-leaf that intersects $R_{\widetilde{v}_i}$ also intersects $R_{\widetilde{v}_0}$ and every $\mathcal{O}^u$-leaf that intersects $R_{\widetilde{v}_0}$ also intersects $R_{\widetilde{v}_i}$.

\begin{figure}
    \centering
    \selectfont\fontsize{6pt}{6pt}
    \resizebox{!}{4cm}{%% Creator: Inkscape 1.3 (0e150ed6c4, 2023-07-21), www.inkscape.org
%% PDF/EPS/PS + LaTeX output extension by Johan Engelen, 2010
%% Accompanies image file '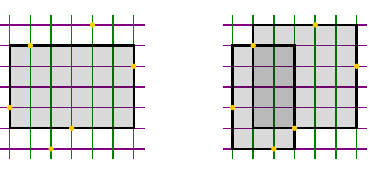' (pdf, eps, ps)
%%
%% To include the image in your LaTeX document, write
%%   \input{<filename>.pdf_tex}
%%  instead of
%%   \includegraphics{<filename>.pdf}
%% To scale the image, write
%%   \def\svgwidth{<desired width>}
%%   \input{<filename>.pdf_tex}
%%  instead of
%%   \includegraphics[width=<desired width>]{<filename>.pdf}
%%
%% Images with a different path to the parent latex file can
%% be accessed with the `import' package (which may need to be
%% installed) using
%%   \usepackage{import}
%% in the preamble, and then including the image with
%%   \import{<path to file>}{<filename>.pdf_tex}
%% Alternatively, one can specify
%%   \graphicspath{{<path to file>/}}
%% 
%% For more information, please see info/svg-inkscape on CTAN:
%%   http://tug.ctan.org/tex-archive/info/svg-inkscape
%%
\begingroup%
  \makeatletter%
  \providecommand\color[2][]{%
    \errmessage{(Inkscape) Color is used for the text in Inkscape, but the package 'color.sty' is not loaded}%
    \renewcommand\color[2][]{}%
  }%
  \providecommand\transparent[1]{%
    \errmessage{(Inkscape) Transparency is used (non-zero) for the text in Inkscape, but the package 'transparent.sty' is not loaded}%
    \renewcommand\transparent[1]{}%
  }%
  \providecommand\rotatebox[2]{#2}%
  \newcommand*\fsize{\dimexpr\f@size pt\relax}%
  \newcommand*\lineheight[1]{\fontsize{\fsize}{#1\fsize}\selectfont}%
  \ifx\svgwidth\undefined%
    \setlength{\unitlength}{181.17328794bp}%
    \ifx\svgscale\undefined%
      \relax%
    \else%
      \setlength{\unitlength}{\unitlength * \real{\svgscale}}%
    \fi%
  \else%
    \setlength{\unitlength}{\svgwidth}%
  \fi%
  \global\let\svgwidth\undefined%
  \global\let\svgscale\undefined%
  \makeatother%
  \begin{picture}(1,0.45923758)%
    \lineheight{1}%
    \setlength\tabcolsep{0pt}%
    \put(0,0){\includegraphics[width=\unitlength,page=1]{maxrectdualgraph.pdf}}%
    \put(0.36535996,0.08128249){\color[rgb]{0,0,0}\makebox(0,0)[lt]{\lineheight{1.25}\smash{\begin{tabular}[t]{l}$R_{\wt{v}_0}$\end{tabular}}}}%
    \put(0.65418333,0.00422591){\color[rgb]{0,0,0}\makebox(0,0)[lt]{\lineheight{1.25}\smash{\begin{tabular}[t]{l}$R_{\wt{v}_1}$\end{tabular}}}}%
    \put(0.81008133,0.44224764){\color[rgb]{0,0,0}\makebox(0,0)[lt]{\lineheight{1.25}\smash{\begin{tabular}[t]{l}$R_{\wt{v}_2}$\end{tabular}}}}%
  \end{picture}%
\endgroup%
}
    \caption{Left: Each vertex $\widetilde{v}_0$ of $\widetilde{G}$ gives a maximal rectangle $R_{\widetilde{v}_0}$. Right: If $\widetilde{v}_1$ and $\widetilde{v}_2$ are the two vertices that follow $\widetilde{v}_0$ in $\widetilde{G}$, then $R_{\widetilde{v}_1}$ and $R_{\widetilde{v}_2}$ are taller and thinner than $R_{\widetilde{v}_0}$.}
    \label{fig:maxrectdualgraph}
\end{figure}

Given a loop $c$, lift it to a directed bi-infinite path $\widetilde{c}$ in $\widetilde{G}$. Take the sequence of vertices $(\widetilde{v}_n)$ passed through by this line. The intersection of rectangles $\bigcap R_{\widetilde{v}_n}$ is a point, corresponding to an orbit $\widetilde{\gamma}$ of $\widetilde{\phi}$. One can show that the image of $\widetilde{\gamma}$ in $M$ is a closed orbit $\gamma$ of homotopy class $[c]$.
We refer the reader to \cite{LMT23} for more details.
One defines $\mathcal{F}(c)$ to be $\gamma$.

Now suppose $c$ and $c'$ are two loops in $G$ passing through the same vertex $v$. Lift $c$ and $c'$ to bi-infinite paths $\widetilde{c}$ and $\widetilde{c}'$ passing through a lift $\widetilde{v}$ of $v$. 
Let $\widetilde{v}_0, \dots, \widetilde{v}_n = \widetilde{v}$ be the vertices passed through by one fundamental domain of $\widetilde{c}$, and let $\widetilde{v} = \widetilde{v}'_0, \dots, \widetilde{v}'_{n'}$ be the vertices passed through by one fundamental domain of $\widetilde{c}'$.
In particular, we have $[c] \cdot \widetilde{v}_0 = \widetilde{v}_n = \widetilde{v}$ and $[c'] \cdot \widetilde{v} = [c'] \cdot \widetilde{v}'_0 = \widetilde{v}'_{n'}$.

For each $i=0,\dots,n$, lift the rectangle $R_{\widetilde{v}_i} \subset \mathcal{O}$ an embedded rectangle $\widetilde{R}_{\widetilde{v}_i} \subset \widetilde{M}$ positively transverse to $\widetilde{\phi}$ and intersecting $\widetilde{\gamma}$.
Similarly, for each $i=0,\dots,n'$, lift the rectangle $R_{\widetilde{v}'_i} \subset \mathcal{O}$ an embedded rectangle $\widetilde{R}_{\widetilde{v}'_i} \subset \widetilde{M}$ positively transverse to $\widetilde{\phi}$ and intersecting $\widetilde{\gamma}'$.
Furthermore, we choose the lifts so that $[c] \cdot \widetilde{R}_{\widetilde{v}_0} = \widetilde{R}_{\widetilde{v}_n} = \widetilde{R}_{\widetilde{v}}$ and $[c'] \cdot \widetilde{R}_{\widetilde{v}} = [c'] \cdot \widetilde{R}_{\widetilde{v}'_0} = \widetilde{R}_{\widetilde{v}'_{n'}}$.
In particular, the images of $\widetilde{R}_{\widetilde{v}_0}, \widetilde{R}_{\widetilde{v}}$, and $\widetilde{R}_{\widetilde{v}'_{n'}}$ in $M$ are the same immersed rectangle $R$ that is positively transversely to $\phi$ and intersecting the closed orbits $\gamma$ and $\gamma'$.
See \Cref{fig:friedpantsarg}.

We can now perform Fried's construction in \cite{Fri83}:
For each $i$, following the orbits of $\widetilde{\phi}$ upwards determines a partially defined map $\widetilde{R}_{\widetilde{v}_{i-1}} \dashrightarrow \widetilde{R}_{\widetilde{v}_{i}}$.
More specifically, the map is defined on a subrectangle of $\widetilde{R}_{\widetilde{v}_{i-1}}$ that is as tall as $\widetilde{R}_{\widetilde{v}_{i-1}}$, i.e. every $\widetilde{\Lambda^u}$-leaf that intersects $\widetilde{R}_{\widetilde{v}_{i-1}}$ intersects this domain subrectangle, mapping it homeomorphically to a subrectangle of $\widetilde{R}_{\widetilde{v}_{i}}$ that is as wide as $\widetilde{R}_{\widetilde{v}_{i}}$, i.e. every $\widetilde{\Lambda^s}$-leaf that intersects $\widetilde{R}_{\widetilde{v}_{i}}$ intersects this image subrectangle.

\begin{figure}
    \centering
    \selectfont\fontsize{6pt}{6pt}
    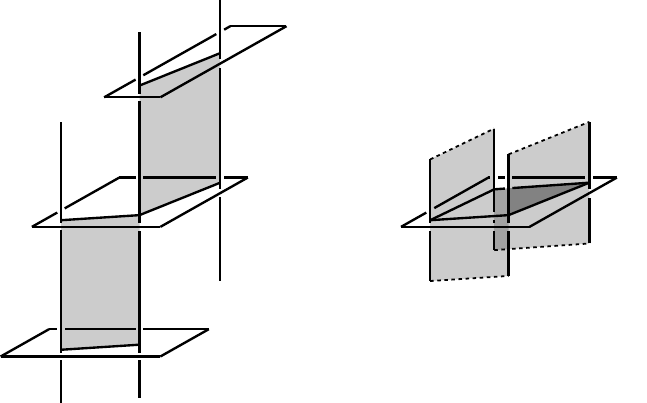
    \caption{Constructing a Fried pants in the proof of \Cref{prop:loopresolveimplyorbitresolve}.}
    \label{fig:friedpantsarg}
\end{figure}

The composition of all these partially defined maps with the deck transformation $[c*c']$ gives a partially defined map $F: \widetilde{R}_{\widetilde{v}_0} \dashrightarrow \widetilde{R}_{\widetilde{v}_0}$.
As for the individual partially defined maps, $F$ maps a subrectangle of $\widetilde{R}_{\widetilde{v}_0}$ that is as tall as $\widetilde{R}_{\widetilde{v}_0}$ to a subrectangle of $\widetilde{R}_{\widetilde{v}_0}$ that is as wide as $\widetilde{R}_{\widetilde{v}_0}$. Thus there is at least one fixed point. We pick one such point and suppose it is the intersection of an orbit $\widetilde{\gamma}''$ of $\widetilde{\phi}$ with $\widetilde{R}_{\widetilde{v}_0}$.

Note that the image $\gamma''$ of $\widetilde{\gamma}''$ is the closed orbit corresponding to the loop $c*c'$.
This is because $\widetilde{\gamma}''$ passes through the rectangles $\widetilde{R}_{\widetilde{v}_0}, \dots, \widetilde{R}_{\widetilde{v}_n} = \widetilde{R}_{\widetilde{v}} = \widetilde{R}_{\widetilde{v}'_0}, \dots, \widetilde{R}_{\widetilde{v}'_{n'}}$.

Now pick an arc $\widetilde{\alpha}$ on $\widetilde{R}_{\widetilde{v}}$ connecting $\widetilde{\gamma} \cap \widetilde{R}_{\widetilde{v}}$ to $\widetilde{\gamma}'' \cap \widetilde{R}_{\widetilde{v}}$ that is transverse to $\widetilde{\Lambda^s}$ and $\widetilde{\Lambda^u}$. 
Similarly, pick an arc $\widetilde{\alpha}'$ on $\widetilde{R}_{\widetilde{v}}$ connecting $\widetilde{\gamma}' \cap \widetilde{R}_{\widetilde{v}}$ to $\widetilde{\gamma}'' \cap \widetilde{R}_{\widetilde{v}}$ that is transverse to $\widetilde{\Lambda^s}$ and $\widetilde{\Lambda^u}$. 

Consider the projection to $M$ of the rectangle $\widetilde{\alpha} \times [-1,0]$ obtained by taking the union of (backward) flow segments from $\widetilde{\alpha}$ to $\widetilde{R}_{\widetilde{v}_0}$ and the rectangle $\widetilde{\alpha'} \times [0,1]$ obtained by taking the union of flow segments from $\widetilde{\alpha'}$ to $\widetilde{R}_{\widetilde{v}'_{n'}}$.
This gives two immersed rectangles tangent to the flow $\phi^\sharp$, each having a top side and a bottom side on $R$. The four sides bound a quadrilateral $Q \subset R$ with corners at $\gamma \cap R$, $\gamma' \cap R$, and two points of $\gamma'' \cap R$.
See \Cref{fig:friedpantsarg} again.

Perturbing the union of $Q$ and the two immersed rectangles to be transverse to the flow, we get a Fried pants $P$ so that $\gamma \cup \gamma'$ is related to $\gamma''$ by resolution along $P$.
This concludes the proof.
\end{proof}

As an immediate corollary  of \Cref{lemma:loopresolution} we have the following theorem  generalizing \Cref{prop:sleekclasssleekorbit} from the previous subsection.

\begin{thm}
Suppose $\x$ and $\y$ are Heegaard states such that $\ts(\x)=\ts(\y)$. Then $\gamma_\x$ and $\gamma_\y$ are pants-equivalent.
\end{thm}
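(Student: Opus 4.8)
The plan is to reduce the statement to the single-step case $\widetilde{\epsilon}(\x)\cap\widetilde{\epsilon}(\y)\neq\varnothing$ and then to chain together \Cref{lemma:loopresolution} and \Cref{prop:loopresolveimplyorbitresolve}.

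First I would record that pants-equivalence is an honest equivalence relation: reflexivity and transitivity are built into the definition (concatenate sequences of resolutions), and symmetry holds because ``$\gamma^\circ\cup\{\gamma_+\}$ and $\gamma^\circ\cup\{\gamma_-\}$ are related by resolution along $P$'' is a symmetric condition in the two multi-orbits. Since $\ts(\x)=\ts(\y)$ means there is a chain $\x=\z_0,\dots,\z_n=\y$ with $\widetilde{\epsilon}(\z_i)\cap\widetilde{\epsilon}(\z_{i+1})\neq\varnothing$, transitivity reduces us to the case $n=1$: if $\widetilde{\epsilon}(\x)\cap\widetilde{\epsilon}(\y)\neq\varnothing$ then $\gamma_\x$ and $\gamma_\y$ are pants-equivalent.

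Next I would unwind the $\widetilde{\epsilon}$-map. Choosing $\lambda\in\widetilde{\epsilon}(\x)\cap\widetilde{\epsilon}(\y)$ produces multi-loops $c_\x\in\widetilde{\mu}_\x$ and $c_\y\in\widetilde{\mu}_\y$, both lying in $G$, with $[c_\x]=\lambda=[c_\y]$ in $H_1(G)$. Since $c_\x$ is obtained from $\mu_\x$ by strumming its components across sectors, $c_\x$ is sweep-equivalent (componentwise) to $\mu_\x$, and likewise $c_\y$ to $\mu_\y$; as $\mathcal{F}$ factors through sweep-equivalence (\Cref{thm:pavbscorr}), this gives $\mathcal{F}(c_\x)=\mathcal{F}(\mu_\x)=\gamma_\x$ and $\mathcal{F}(c_\y)=\gamma_\y$. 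Now I would apply \Cref{lemma:loopresolution}: because $c_\x$ and $c_\y$ represent the same class in $H_1(G)$, there is a sequence of multi-loops $c_\x=\mu_0,\dots,\mu_m=c_\y$ of $G$ with consecutive terms related by a resolution at a vertex. \Cref{prop:loopresolveimplyorbitresolve} then converts each step into a Fried pants exhibiting $\mathcal{F}(\mu_j)$ and $\mathcal{F}(\mu_{j+1})$ as related by resolution; for a step that splits a component (rather than merging two) one invokes the lemma with $\mu_j$ and $\mu_{j+1}$ interchanged, using the symmetry of pants-resolution noted above. Concatenating the resulting Fried pants shows $\gamma_\x=\mathcal{F}(c_\x)$ and $\gamma_\y=\mathcal{F}(c_\y)$ are pants-equivalent, and then concatenating over $\z_0,\dots,\z_n$ finishes the argument.

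The part I expect to demand the most care is the bookkeeping around the map $\mathcal{F}$: one must be precise that $\mathcal{F}$ on a multi-loop is defined componentwise and is unchanged by strumming each component, so that $\mathcal{F}(c_\x)=\gamma_\x$ really holds for the representative $c_\x$ extracted from $\widetilde{\epsilon}(\x)$; and one must check that the ``resolution at a vertex'' moves produced by \Cref{lemma:loopresolution}---which may split a loop into two as well as join two loops---are each covered by \Cref{prop:loopresolveimplyorbitresolve} after the symmetry reduction. Beyond this the proof is a formal chaining of the two cited lemmas.
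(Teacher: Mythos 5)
Your proposal is correct and follows essentially the same route as the paper, which presents the theorem as an immediate consequence of \Cref{lemma:loopresolution} combined with \Cref{prop:loopresolveimplyorbitresolve}: passing from a common class in $\widetilde{\epsilon}(\x)\cap\widetilde{\epsilon}(\y)$ to homologous multi-loops in $G$, resolving at vertices, and converting each resolution into a Fried pants. Your extra bookkeeping (transitivity of pants-equivalence, invariance of $\mathcal{F}$ under strumming, symmetry of the resolution move) just makes explicit the details the paper leaves implicit.
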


Finally we put the discussion of this subsection in context with the discussion regarding sleek states.  We say that a closed multi-orbit is \emph{pants-irreducible} if it is not connected to any closed multi-orbit by resolving along a Fried pants. 

\begin{prop} \label{prop:pantsirreducibleimplysleek}
A pants-irreducible multi-orbit is sleek.
\end{prop}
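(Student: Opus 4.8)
The plan is to prove the contrapositive: if $\gamma$ is not sleek, then $\gamma$ is related to some closed multi-orbit by a resolution along a Fried pants, and hence is not pants-irreducible.

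First I would unwind the definitions. By \Cref{thm:pavbscorr} there is a multi-loop $c$ of $G_+$ with $\mathcal{F}(c)=\gamma$, and the assumption that $\gamma$ is not sleek forces $c$ itself not to be sleek — otherwise $\gamma=\mathcal{F}(c)$ would be sleek by \Cref{defn:sleek}. By the definition of sleekness for multi-loops, this means that some multi-loop $c'$ of $G_+$ which is sweep-equivalent to $c$ is non-embedded; note that $\mathcal{F}(c')=\mathcal{F}(c)=\gamma$.

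The second step, which I expect to need the most care, is to replace $c'$ by a non-embedded multi-loop that lies entirely in $G$, so that the resolution machinery of \Cref{lemma:loopresolution} and \Cref{prop:loopresolveimplyorbitresolve} — which is formulated for multi-loops of $G$ — becomes available. The key observation is that a strum only replaces a diagonal edge $b_S\to t_S$ of a sector $S$ by a two-edge path $b_S\to s_S\to t_S$ running through a side corner $s_S$; consequently it leaves the number of times the multi-loop passes through each vertex unchanged, except that it increases the count at $s_S$ by one. In particular, a strum can never undo a self-intersection. Strumming away all of the finitely many diagonal edges of $c'$ therefore produces a multi-loop $c^G\subset G$ that is still non-embedded and still has $\mathcal{F}(c^G)=\gamma$.

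Finally, pick a triple point $v$ through which $c^G$ passes at least twice, and re-pair two of its strands at $v$; this yields a multi-loop $c''$ of $G$ related to $c^G$ by a resolution at $v$ (splitting one loop component in two, or merging two components, according to whether the two passes through $v$ lie on the same component). By \Cref{prop:loopresolveimplyorbitresolve}, the multi-orbits $\gamma=\mathcal{F}(c^G)$ and $\mathcal{F}(c'')$ are then related by resolution along a Fried pants, so $\gamma$ is not pants-irreducible. This completes the plan; the only genuinely subtle point is the observation in the second step that strumming cannot remove a self-intersection, which lets us transfer the problem from $G_+$ to $G$.
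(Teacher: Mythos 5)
Your argument is correct and is essentially the paper's own proof, run in contrapositive form: a non-embedded multi-loop sweep-equivalent to one representing $\gamma$ can be resolved at a vertex, and \Cref{prop:loopresolveimplyorbitresolve} then yields a Fried pants resolution of $\gamma$, contradicting pants-irreducibility. Your intermediate step of strumming away all diagonal edges to land in $G$ first---justified by the observation that strums never decrease vertex multiplicities, so a self-intersection cannot be undone---is a small refinement the paper leaves implicit, since the resolution operation and \Cref{prop:loopresolveimplyorbitresolve} are stated for multi-loops of $G$.
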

\begin{proof}
Suppose $\gamma$ is a pants-irreducible multi-orbit. There is some multi-loop $c_0$ such that $\gamma = \mathcal{F}(c_0)$. 
We have to show that $c_0$ is sleek, that is, we need to show that if $c$ is sweep-equivalent to $c_0$, then $c$ is embedded.

For such a $c$, we have $\gamma = \mathcal{F}(c)$. If $c$ is not embedded, then $c$ can be resolved at a vertex. But then by \Cref{prop:loopresolveimplyorbitresolve}, this implies that $\gamma$ can be resolved along a Fried pants, contradicting pants-irreducibility.
\end{proof}

\Cref{prop:pantsirreducibleimplysleek} is sometimes convenient because checking for sleekness involves dealing with the veering branched surface and involves checking for all multi-loops in a sweep-equivalence class, which is difficult, whereas checking for pants-irreducibility just involves studying the flow.

\section{Dynamic annuli and Möbius bands} \label{sec:dynannuli}

In this section, we develop the language of dynamic annuli and Möbius bands. These provide a combinatorial device for keeping track of the set of loops in $G_+$ that represent a closed orbit. 

\subsection{Construction}

We start by recalling the language of dynamic planes. These were introduced by Landry, Minsky, and Taylor in \cite{LMT23}.

Let $B$ denote as usual a veering branched surface in a 3-manifold $M$, $G$ its dual graph, and $G_+$ its augmented dual graph. Consider $\widetilde{M}$, the universal cover of $M$, and $\widetilde{B}$, the lift of $B$ to $\widetilde{M}$. Note that $\widetilde{B}$ is also a branched surface with branch locus $\widetilde{G}$. We say that a path $\rho : I \to\widetilde{B}$ is \emph{positive} if it intersects $\widetilde{G}$ transversely, and at the points where $\rho(t)\in \widetilde{G}$, one has that $\dot{\rho}(t)$ points in the direction prescribed by the maw co-orientation.

\begin{defn}[Dynamic plane]
Let $c$ be a loop in $G_+$. We choose a lift of $c$ to a bi-infinite path $\widetilde{c}: \R \to \widetilde{G_+}$. The \emph{dynamic plane} associated to $c$, which we denote by $\widetilde{D}(c)$, is the union of all sectors $S$ of $\widetilde{B}$ for which there is a positive path starting at a point on $\widetilde{c}$ and ending inside $S$.
\end{defn}

The arguments in \cite[Section 3.1]{LMT23} show that if $c$ does not cover a branch loop, then $\widetilde{D}(c)$ is homeomorphic to a plane. We illustrate an example in \Cref{fig:dynplane}. In the special case when $c$ covers a branch loop, $\widetilde{D}(c)$ is homeomorphic to a half-plane, and has as a boundary line the lift $\widetilde{c}$.

\begin{figure}
    \centering
    \selectfont\fontsize{10pt}{10pt}
    %% Creator: Inkscape 1.3 (0e150ed6c4, 2023-07-21), www.inkscape.org
%% PDF/EPS/PS + LaTeX output extension by Johan Engelen, 2010
%% Accompanies image file '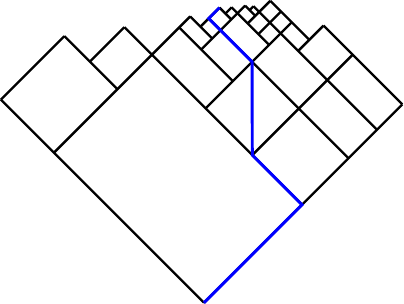' (pdf, eps, ps)
%%
%% To include the image in your LaTeX document, write
%%   \input{<filename>.pdf_tex}
%%  instead of
%%   \includegraphics{<filename>.pdf}
%% To scale the image, write
%%   \def\svgwidth{<desired width>}
%%   \input{<filename>.pdf_tex}
%%  instead of
%%   \includegraphics[width=<desired width>]{<filename>.pdf}
%%
%% Images with a different path to the parent latex file can
%% be accessed with the `import' package (which may need to be
%% installed) using
%%   \usepackage{import}
%% in the preamble, and then including the image with
%%   \import{<path to file>}{<filename>.pdf_tex}
%% Alternatively, one can specify
%%   \graphicspath{{<path to file>/}}
%% 
%% For more information, please see info/svg-inkscape on CTAN:
%%   http://tug.ctan.org/tex-archive/info/svg-inkscape
%%
\begingroup%
  \makeatletter%
  \providecommand\color[2][]{%
    \errmessage{(Inkscape) Color is used for the text in Inkscape, but the package 'color.sty' is not loaded}%
    \renewcommand\color[2][]{}%
  }%
  \providecommand\transparent[1]{%
    \errmessage{(Inkscape) Transparency is used (non-zero) for the text in Inkscape, but the package 'transparent.sty' is not loaded}%
    \renewcommand\transparent[1]{}%
  }%
  \providecommand\rotatebox[2]{#2}%
  \newcommand*\fsize{\dimexpr\f@size pt\relax}%
  \newcommand*\lineheight[1]{\fontsize{\fsize}{#1\fsize}\selectfont}%
  \ifx\svgwidth\undefined%
    \setlength{\unitlength}{193.53727566bp}%
    \ifx\svgscale\undefined%
      \relax%
    \else%
      \setlength{\unitlength}{\unitlength * \real{\svgscale}}%
    \fi%
  \else%
    \setlength{\unitlength}{\svgwidth}%
  \fi%
  \global\let\svgwidth\undefined%
  \global\let\svgscale\undefined%
  \makeatother%
  \begin{picture}(1,0.75345619)%
    \lineheight{1}%
    \setlength\tabcolsep{0pt}%
    \put(0,0){\includegraphics[width=\unitlength,page=1]{dynplane.pdf}}%
    \put(0.68893406,0.09526113){\color[rgb]{0,0,1}\makebox(0,0)[lt]{\lineheight{1.25}\smash{\begin{tabular}[t]{l}$\wt{c}$\end{tabular}}}}%
  \end{picture}%
\endgroup%

    \caption{An example of a dynamic plane.}
    \label{fig:dynplane}
\end{figure}

Let $g=[c] \in \pi_1(M)$ be the automorphism of $\widetilde{M}$ specified by $c$. The dynamic plane $\widetilde{D}(c)$ is invariant under the action of $g$, and the quotient $D(c) = \widetilde{D}(c)/\langle g \rangle$ is homeomorphic to either: 
\begin{enumerate}
\item \emph{an open annulus} if $c$ is not a multiple of a branch loop and is orientation-preserving,
\item \emph{an open M\"obius band} if $c$ is not a multiple of a branch loop and is orientation-reversing, or
\item \emph{a half-open annulus} if $c$ is a multiple of a branch loop.
\end{enumerate}
We call $D(c)$ the \emph{dynamic annulus or M\"obius band} associated to $c$ accordingly. Note that as an abstract surface with a cell decomposition, $D(c)$ only depends on $c$, and not on the choice of the lift $\widetilde{c}$. This justifies our terminology.

Also, note that there is a map $D(c) \to B$, obtained by factoring the restriction of the covering map $\widetilde{B} \to B$ to $\widetilde{D}(c)$ through $D(c)$:
$$\begin{tikzcd}
& & D(c) \arrow[d] \\
\widetilde{D}(c) \arrow[r, hook] \arrow[rru] & \widetilde{B} \arrow[r] & B             
\end{tikzcd}$$

We denote the image of $\widetilde{G} \cap \widetilde{D}(c)$ in $D(c)$ as $G(c)$, and the image of $\widetilde{G_+} \cap \widetilde{D}(c)$ in $D(c)$ as $G_+(c)$. 
The map $D(c) \to B$ restricts to a simplicial map $G(c) \to G$ and $G_+(c) \to G_+$.

\begin{lemma} \label{lemma:invariancedynamic}
Suppose that $c_1$ and $c_2$ are two loops in the augmented dual graph $G_+$. If $c_1$ and $c_2$ are sweep-equivalent, then $D(c_1)=D(c_2)$.  
\end{lemma}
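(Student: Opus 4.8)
The plan is to reduce to a single strum and then show that strumming a loop across a sector leaves its dynamic plane unchanged, after which the quotient statement is immediate. Since ``$D(c_1)=D(c_2)$'' is an equivalence relation on loops and, by \Cref{defn:sweepequivalent}, sweep-equivalence is generated by strums, it suffices to treat the case in which $c_2$ is a strum of $c_1$ across a single sector $S$ (\Cref{defn:strum}), say obtained by replacing an augmented edge $e$ of $c_1$, running from the bottom corner to the top corner of $S$, by a path $e'\ast e''$ in $G$ consisting of a bottom side and a top side of $S$. (Loops that cover a branch loop admit no strums, so this case is vacuous and causes no trouble.) First I would fix a lift $\widetilde{c_1}$ of $c_1$ to $\widetilde{G_+}$; lifting the strum homotopy, which sweeps $c_1$ across the triangle of $S$ cut off by $e$, starting from $\widetilde{c_1}$ produces a compatible lift $\widetilde{c_2}$ of $c_2$ with $[c_1]=[c_2]=:g$ in $\pi_1(M)$, and the bi-infinite paths $\widetilde{c_1}$ and $\widetilde{c_2}$ in $\widetilde{B}$ agree outside a single lift $\widetilde{S}$ of $S$, where $\widetilde{c_1}$ runs along the corresponding lift $\widetilde{e}\subset\mathrm{int}\,\widetilde{S}$ and $\widetilde{c_2}$ runs along $\partial\widetilde{S}$.

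Next I would prove $\widetilde{D}(c_1)=\widetilde{D}(c_2)$ as subsets of $\widetilde{B}$, using two formal observations. The first is that each $\widetilde{D}(c)$ is a union of closed sectors and is closed under positive-path accessibility: if $\widetilde{S}'\subseteq\widetilde{D}(c)$ and there is a positive path from a point of $\widetilde{S}'$ into the interior of a sector $\widetilde{S}''$, then $\widetilde{S}''\subseteq\widetilde{D}(c)$, by concatenation, using that any path contained in a single sector is vacuously positive (positivity only constrains crossings of $\widetilde{G}$). The second is that $\widetilde{S}\subseteq\widetilde{D}(c_1)$ for free, since $\widetilde{e}\subset\widetilde{c_1}$ already lies in $\mathrm{int}\,\widetilde{S}$. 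Granting in addition that $\widetilde{S}\subseteq\widetilde{D}(c_2)$, the inclusion $\widetilde{D}(c_1)\subseteq\widetilde{D}(c_2)$ follows: a positive path witnessing $\widetilde{S}'\subseteq\widetilde{D}(c_1)$ either starts on the part of $\widetilde{c_1}$ shared with $\widetilde{c_2}$, and so witnesses $\widetilde{S}'\subseteq\widetilde{D}(c_2)$ directly, or starts on $\widetilde{e}\subset\mathrm{int}\,\widetilde{S}$, and we prepend a witness of $\widetilde{S}\subseteq\widetilde{D}(c_2)$ together with a (vacuously positive) path inside $\mathrm{int}\,\widetilde{S}$. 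The reverse inclusion $\widetilde{D}(c_2)\subseteq\widetilde{D}(c_1)$ needs no further geometric input at all: a positive path witnessing $\widetilde{S}'\subseteq\widetilde{D}(c_2)$ that starts on the new part of $\widetilde{c_2}$ begins at a point $q\in\partial\widetilde{S}$ and immediately crosses the branch-locus edge through $q$, either into $\widetilde{S}$ or out of it; in the first case it meets $\mathrm{int}\,\widetilde{S}\subseteq\widetilde{D}(c_1)$, and in the second case, since positivity is a condition on the crossing direction alone, a path from $\mathrm{int}\,\widetilde{S}$ can follow it across the same edge in the same direction. Finally $\widetilde{D}(c_1)=\widetilde{D}(c_2)$ and $[c_1]=[c_2]=g$ give $D(c_1)=\widetilde{D}(c_1)/\langle g\rangle=\widetilde{D}(c_2)/\langle g\rangle=D(c_2)$, with the noted independence of $D(c)$ from the choice of lift absorbing any remaining ambiguity.

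The single step that requires real care — and the main obstacle — is establishing $\widetilde{S}\subseteq\widetilde{D}(c_2)$, i.e. exhibiting a positive path from the part of $\partial\widetilde{S}$ traversed by $\widetilde{c_2}$ into $\mathrm{int}\,\widetilde{S}$. This amounts to checking that one of the sides of the diamond $\widetilde{S}$ crossed by $\widetilde{c_2}$ (I expect the bottom side) carries a maw coorientation pointing into $\widetilde{S}$, which I would deduce from the local structure of diamond sectors in \Cref{prop:vbssector} and \Cref{prop:vbstogglefan}; it is also implicit in the analysis of dynamic planes in \cite{LMT23}. Once this is in hand, everything else is a purely formal consequence of accessibility-closure.
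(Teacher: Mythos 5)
Your proof is correct and follows essentially the same route as the paper's: show the strummed sector lies in both lifted dynamic planes (trivially from the augmented edge through the interior for one, and from the maw-coorientation fact for the other), deduce $\widetilde{D}(c_1)=\widetilde{D}(c_2)$ by positive-path accessibility-closure, and quotient by $\langle g\rangle$. You also usefully make explicit the one geometric input that the paper leaves unstated, namely that the maw coorientation on a bottom side of a sector points into that sector.

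The one inaccuracy to correct is in the lifting step: $\widetilde{c_1}$ and $\widetilde{c_2}$ differ not over a \emph{single} lift $\widetilde{S}$ of $S$, but over the entire $\langle g\rangle$-orbit of lifts of $S$ met by the $g$-periodic bi-infinite path $\widetilde{c_1}$ (one lift in each fundamental domain); the paper accordingly says ``strumming across a $\langle g\rangle$-orbit of sectors.'' This is harmless here because the same local analysis applies $g$-equivariantly at every lift in the orbit, but the phrasing should be adjusted so the later case analysis (``starts on $\widetilde{e}$,'' ``begins at a point $q\in\partial\widetilde{S}$'') is understood to quantify over that orbit.
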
 
\begin{proof}
It suffices to show that if $c_1$ is a strum of $c_2$ then $D(c_1) = D(c_2)$:
We can choose lifts $\widetilde{c_1}$ and $\widetilde{c_2}$ that are related by strumming across a $\langle g \rangle$-orbit of sectors, where $g=[c_1]=[c_2] \in \pi_1(M)$. 
Each sector in this orbit lies in $\widetilde{D}(c_1)$, thus we have $\widetilde{c_2} \subset \widetilde{D}(c_1)$. 
This implies that $\widetilde{D}(c_2) \subset \widetilde{D}(c_1)$, since every positive path starting from a point of $\widetilde{c_2}$ must lie on $\widetilde{D}(c_1)$. 
Similarly, since each sector in the collection lies in $\widetilde{D}(c_2)$, we have $\widetilde{D}(c_1) \subset \widetilde{D}(c_2)$, thus $\widetilde{D}(c_1)=\widetilde{D}(c_2)$. 
Finally, we have $D(c_1) = \widetilde{D}(c_1)/\langle g \rangle = \widetilde{D}(c_2)/\langle g \rangle = D(c_2)$. 
\end{proof}

Suppose the veering branched surface $B$ corresponds to a blown-up pseudo-Anosov flow $\phi^\sharp$. Recall the bijection $\mathcal{F}$ between the sweep-equivalence classes of loops in $G_+$ and the orbits of $\phi^\sharp$ from \Cref{thm:pavbscorr}.
If $\gamma$ is a closed orbit of $\phi^\sharp$, we define $D(\gamma)$ to be $D(c)$ for any loop $c$ in $G_+$ representing $\gamma$, i.e. any loop $c$ such that $\mathcal{F}(c) = \gamma$.
\Cref{lemma:invariancedynamic} ensures that this is well-defined.

In this case, we also write $G(c)$ as $G(\gamma)$ and $G_+(c)$ as $G_+(\gamma)$. 
We say that a loop $c$ of $G_+(\gamma)$ represents $\gamma$ if its image in $G_+$ represents $\gamma$.

\begin{prop} \label{prop:dynannuluscoreloops}
Let $\gamma$ be a closed orbit of $\phi^\sharp$. The map $D(\gamma) \to B$ determines a bijection between the set of loops in $G_+(\gamma)$ that represent $\gamma$ and the set of loops in $G_+$ that represent $\gamma$. Moreover, every loop in $G_+(\gamma)$ that represents $\gamma$ is embedded.
\end{prop}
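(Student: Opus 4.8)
The plan is to pass to the universal cover $\widetilde{M}$ and use the presentation $D(\gamma) = \widetilde{D}(\gamma)/\langle g\rangle$, where $g = [\gamma] \in \pi_1(M)$; by \Cref{lemma:invariancedynamic} the dynamic plane $\widetilde{D}(\gamma)$ may be taken to be $\widetilde{D}(c)$ for any loop $c$ of $G_+$ representing $\gamma$, relative to a lift $\widetilde{c}$, and I fix a lift $\widetilde{\gamma}$ of $\gamma$ so that $\widetilde{D}(\gamma)$ is ``based at'' $\widetilde{\gamma}$ (the point $\bigcap_n R_{\widetilde{v}_n}$ cut out by the vertices of $\widetilde{c}$, as in the proof of \Cref{prop:loopresolveimplyorbitresolve}). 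The one structural ingredient I will need, call it $(\star)$, is the following fact about dynamic planes from \cite{LMT23}: every directed bi-infinite path contained in $\widetilde{G_+} \cap \widetilde{D}(\gamma)$ is embedded and determines the orbit $\widetilde{\gamma}$, i.e. the nested maximal rectangles along its vertices intersect exactly in $\widetilde{\gamma}$. Embeddedness here is the statement that a positive path lying in a dynamic plane is monotone for the plane's transverse structure, and the second assertion reflects that $\widetilde{D}(\gamma)$ is assembled by flowing upward from $\widetilde{\gamma}$, so it contains no bi-infinite positive track of any other orbit. Pinning $(\star)$ down in exactly this form is the step I expect to require the most care; everything else is covering-space bookkeeping.

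Granting $(\star)$, I would argue as follows. \emph{Surjectivity}: given a loop $c'$ of $G_+$ representing $\gamma$, choose the lift $\widetilde{c}_0$ of $c'$ with $\widetilde{D}(\gamma) = \widetilde{D}(c')$ built from it; then $\widetilde{c}_0 \subset \widetilde{D}(\gamma)$ (directly from the definition of the dynamic plane) and is $g$-invariant since $[c'] = g$, so its image in $D(\gamma)$ is a loop of $G_+(\gamma)$ mapping to $c'$. \emph{Embeddedness}: let $\ell$ be a loop of $G_+(\gamma)$ representing $\gamma$ and $\widetilde{\ell} \subset \widetilde{G_+} \cap \widetilde{D}(\gamma)$ a lift; $\ell$ is essential in $D(\gamma)$ (otherwise its image $c'$ would bound a disc in $M$, contradicting $c' \simeq \gamma$ and $g \neq 1$, the latter because $D(\gamma) = \widetilde{D}(\gamma)/\langle g\rangle$ is an open annulus or M\"obius band and not a plane), so $\widetilde{\ell}$ is a $g$-invariant directed bi-infinite path, embedded by $(\star)$; since $g$ acts freely on $\widetilde{M}$ and preserves the line $\widetilde{\ell}$ it acts on $\widetilde{\ell}$ as a translation, hence $\ell = \widetilde{\ell}/\langle g\rangle$ is a circle topologically embedded in $D(\gamma)$, so $\ell$ traverses no vertex of $G(\gamma)$ twice. \emph{Injectivity}: if $\ell_1, \ell_2$ are loops of $G_+(\gamma)$ representing $\gamma$ with the same image $c'$, lift them to directed bi-infinite paths $\widetilde{\ell}_1, \widetilde{\ell}_2 \subset \widetilde{D}(\gamma)$, each a lift of $c'$; writing $\widetilde{\ell}_i = h_i \widetilde{c}_0$ for a fixed $g$-invariant lift $\widetilde{c}_0$ of $c'$ and $h_i \in \pi_1(M)$, the path $\widetilde{\ell}_i$ determines the orbit $h_i \widetilde{\gamma}_0$, where $\widetilde{\gamma}_0$ is the orbit determined by $\widetilde{c}_0$ (a lift of $\gamma$). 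By $(\star)$ both determine $\widetilde{\gamma}$, so $h_1 \widetilde{\gamma}_0 = \widetilde{\gamma} = h_2 \widetilde{\gamma}_0$ and hence $h_2^{-1} h_1 \in \mathrm{Stab}_{\pi_1(M)}(\widetilde{\gamma}_0) = \langle g\rangle$; then $\widetilde{\ell}_1 = h_1 \widetilde{c}_0 = h_2 g^k \widetilde{c}_0 = h_2 \widetilde{c}_0 = \widetilde{\ell}_2$, so $\ell_1 = \ell_2$.

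Two points to settle in the writeup: that $\gamma$ is primitive, so $\langle g\rangle$ is exactly $\mathrm{Stab}_{\pi_1(M)}(\widetilde{\gamma}_0)$ and the loops wrap $D(\gamma)$ once — automatic since $\mathcal{F}$ is a bijection onto the (primitive) closed orbits — and the degenerate case where $c$ covers a branch loop, where $\widetilde{D}(\gamma)$ is a half-plane with boundary line $\widetilde{c}$ and the only directed bi-infinite path in $\widetilde{G_+} \cap \widetilde{D}(\gamma)$ is that line, making both claims immediate. The real work is $(\star)$: that the directed bi-infinite paths in $\widetilde{G_+} \cap \widetilde{D}(\gamma)$ are embedded and all determine $\widetilde{\gamma}$. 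If a ready reference is lacking I would prove it from the orbit-space construction of $\widetilde{D}(\gamma)$ — a positive path issuing from $\widetilde{\gamma}$ stays on the ``downstream'' side of $\widetilde{\gamma}$, and an orbit that is downstream of $\widetilde{\gamma}$ along a forward ray of the path and upstream of it along a backward ray can only be $\widetilde{\gamma}$ itself.
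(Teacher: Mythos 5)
Your overall skeleton (build the inverse by projecting the canonical $g$-invariant lift; handle injectivity and embeddedness upstairs in $\widetilde{D}(\gamma)$) is the same as the paper's, which simply constructs the two maps and observes via \Cref{lemma:invariancedynamic} that they are inverse. The problem is the claim $(\star)$ on which you hang everything: as stated --- \emph{every} directed bi-infinite path in $\widetilde{G_+}\cap\widetilde{D}(\gamma)$ determines the orbit $\widetilde{\gamma}$ --- it is false, and it is not a statement you will find in \cite{LMT23}. The dynamic plane contains vertices whose maximal rectangles do not contain the point $\widetilde{\gamma}$ of the orbit space: moving up the plane the rectangles get thinner, and for a generic upward ray the nested intersection limits onto a stable leaf other than that of $\widetilde{\gamma}$, so eventually $\widetilde{\gamma}$ is lost. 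Any such vertex lies on a bi-infinite directed path contained in the plane (run backwards along a positive path to $\widetilde{c}$ and then along $\widetilde{c}$ itself), and the rectangles along that path intersect inside a rectangle missing $\widetilde{\gamma}$; so that path does not determine $\widetilde{\gamma}$. Your closing sketch (``downstream/upstream'') cannot repair this, precisely because the statement fails for non-periodic paths.

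The good news is that you only ever apply $(\star)$ to lifts of loops of $G_+(\gamma)$ representing $\gamma$, and for those a weaker, true statement suffices: once you know such a loop wraps $D(\gamma)$ once, its preimage is a single $g$-invariant line $\widetilde{\ell}$, so the orbit $\bigcap_n R_{\widetilde{v}_n}$ it determines is fixed by $g$ acting on the orbit space; by no perfect fits (equivalently, the bijectivity of $\mathcal{F}$ in \Cref{thm:pavbscorr}) the $g$-fixed point is unique, namely $\widetilde{\gamma}$. This does not even use that $\widetilde{\ell}$ lies in the dynamic plane. With $(\star)$ replaced by this restricted version, your surjectivity, injectivity, and embeddedness arguments go through (embeddedness of directed bi-infinite paths in $\widetilde{G_+}$ is also easier than you make it: a repeated vertex would give a directed cycle in $\widetilde{G_+}$, i.e.\ a contractible directed loop in $G_+$, which would represent a contractible closed orbit --- impossible). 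You should also tighten the ``wraps once'' point: the class of the loop in $\pi_1(D(\gamma))\cong\langle g\rangle$ is $g^k$ while its image in $\pi_1(M)$ is conjugate to $g$, and $g^k\sim g$ forces $k=1$; citing primitivity of $\mathcal{F}$ alone is not quite an argument. After these repairs your proof is correct and is essentially a more detailed version of the paper's.
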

\begin{proof}
By definition, every loop $\widehat{c}$ in $G_+(\gamma)$ that represents $\gamma$ maps to a loop $c$ in $G_+$ that represents $\gamma$. 

Conversely, given a loop $c$ in $G_+$ that represents $\gamma$, the projection of $\widetilde{c} \subset \widetilde{D}(c)$ to $D(c) = D(\gamma)$ is an embedded loop $\widehat{c}$ in $G_+(\gamma)$. Since $\widehat{c}$ maps to $c$ in $G_+$, it represents $\gamma$.

It is straightforward to verify that these two maps are inverses of each other.
\end{proof}

\subsection{Cores}

In what follows we assume that $B$ is the veering branched surface corresponding to the blow-up $(Y^\#, \phi^\#)$ of a pseudo-Anosov flow $(Y, \phi)$ along a collection $\mathcal{C}$ with respect to which $\phi$ has no perfect fits.

\begin{defn}[Cores]
A \emph{core} of the dynamic annulus/Möbius band $D(\gamma)$ is a compact subset $C$ such that the closure of each component of $D(\gamma) \backslash C$ is a half-open annulus given by a union of sectors, and whose boundary component is a loop in $G(\gamma)$ that represents $\gamma$ if $\gamma$ is orientation-preserving, or  $\gamma^2$ if $\gamma$ is orientation-reversing.
\end{defn}

Note that any loop in $G(\gamma)$ that represents $\gamma$ is a core for $D(\gamma)$. 
Conversely, we can always grow a core to contain a given loop in $G(\gamma)$ representing the orbit $\gamma$.

\begin{lemma} \label{lemma:coregrow}
Let $\gamma$ be a closed orbit, and $D(\gamma)$ its associated dynamic annulus/Möbius band. Given a core $C$ of $D(\gamma)$, and a loop $c$ in $G_+(\gamma)$ that represents 
\begin{itemize}
    \item $\gamma$ if $\gamma$ is orientation-preserving, or
    \item $\gamma^2$ if $\gamma$ is orientation-reversing,
\end{itemize} 
there exists a sequence of cores $C = C_0 \subset C_1 \subset \dots \subset C_n$ such that $C_i$ contains exactly one more sector than $C_{i-1}$ for each $i$ and $C_n$ contains $c$.
\end{lemma}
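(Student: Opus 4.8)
The plan is to reduce the statement to a single elementary move---enlarging a core by one sector across a fringe---and then to iterate it finitely many times using the compactness of $c$.

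The elementary move is the following. Recall that each sector $S$ of $B$, being a diamond (\Cref{prop:vbssector}), has two directed boundary paths $P_L(S)$ and $P_R(S)$, each running from the bottom corner of $S$ to its top corner. Let $C$ be a core of $D(\gamma)$ and let $A$ be the closure of a component of $D(\gamma)\setminus C$, a half-open annulus that is a union of sectors with boundary loop $\ell = \partial A \cap \partial C$ in $G(\gamma)$. Suppose $S\subset A$ is a sector such that one of its boundary paths, say $P_L(S)$, is a subpath of $\ell$, with $S$ lying on the $A$-side of $\ell$. Then $C' := C \cup S$ is again a core: its new fringe boundary $\ell' = (\ell \setminus P_L(S)) \cup P_R(S)$ is again a directed loop (both paths run between the same two endpoints), it lies in $G(\gamma)$, and since $P_L(S)$ and $P_R(S)$ cobound $S$ in $B$ it is homotopic to $\ell$ and hence represents the same orbit. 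Moreover $C'$ contains exactly one more sector than $C$. I would then argue that (i) whenever $A\ne\varnothing$ there is always such a flippable sector adjacent to $\ell$, and (ii) a suitable order of flips within a fixed fringe absorbs \emph{every} sector of that fringe. For both points I would invoke the dynamic-plane machinery of \cite{LMT23}: the half-plane $\widetilde A\subset\widetilde D(\gamma)$ over a lift $\widetilde\ell$ is swept out by positive paths issuing from $\widetilde\ell$, so each of its sectors has finite ``flip distance'' from $\widetilde\ell$, and a breadth-first enumeration of these sectors, $A = \bigcup_{i\ge 1}S_i$, has the property that $C\cup S_1\cup\dots\cup S_k$ is a core for every $k$ and escapes every compact subset of $A$ as $k\to\infty$.

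Granting (i) and (ii), the lemma is immediate. Let $\overline{A^+},\overline{A^-}$ be the closures of the (at most two) components of $D(\gamma)\setminus C$, with sectors enumerated $A^\pm=\bigcup_i S^\pm_i$ as above; in the orientation-reversing case there is a single fringe, with boundary representing $\gamma^2$, and only the $+$ family appears. Since $c$ is compact and $\bigcup_{i>k}S^\pm_i$ escapes every compact subset of $\overline{A^\pm}$, there are $N,M$ with
$$c\subset C\cup(S^+_1\cup\dots\cup S^+_N)\cup(S^-_1\cup\dots\cup S^-_M).$$
Now set $C_0=C$, adjoin $S^+_1,\dots,S^+_N$ one sector at a time, and then $S^-_1,\dots,S^-_M$ one sector at a time. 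Adding a sector on the $+$ side leaves the $-$ fringe untouched and vice versa, so every intermediate set is a core with exactly one more sector than its predecessor, and the terminal core contains $c$. This is the required chain $C=C_0\subset C_1\subset\dots\subset C_n$.

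The main obstacle is the elementary-move package (i)--(ii), and especially the exhaustion statement (ii): while the existence of \emph{some} flippable sector adjacent to a fringe boundary is essentially local---read off from the diamond description of sectors in \Cref{prop:vbssector} and the structure of sides in \Cref{prop:vbstogglefan}---one must also show that the ``layers'' of sectors at bounded flip distance from a core line are finite, and that no sector of the fringe is ever left behind, so that a breadth-first order of flips genuinely sweeps out the whole half-open annulus. These are facts about the combinatorics of dynamic planes that I would extract from \cite{LMT23}. The degenerate cases require only cosmetic changes: when $\gamma$ covers a branch loop, $D(\gamma)$ is a half-open annulus with a single fringe whose boundary is the core line itself, and the orientation-reversing case is handled by passing to the annular double cover.
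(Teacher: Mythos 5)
There is a genuine gap, and it is exactly at the point you defer to ``dynamic-plane machinery'': your exhaustion claim (ii) is not merely unproven but false. You assert that a breadth-first enumeration $A=\bigcup_{i\geq 1}S_i$ of the fringe has the property that $C\cup S_1\cup\dots\cup S_k$ is a core for every $k$ and escapes every compact subset of $A$. This contradicts the boundedness of cores: by \Cref{lemma:coresbounded} every core of $D(\gamma)$ lies in a fixed compact set $\overline{C}$ (equivalently, by \Cref{prop:maxcore} there is a maximal core), while $A$ is a non-compact half-open annulus. Correspondingly, claim (i) also fails once the core is maximal: the fringe is still nonempty, but no sector adjacent to the boundary loop can be ``flipped'' while keeping the boundary a loop of $G(\gamma)$ representing $\gamma$. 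A symptom of the problem is that your final covering step uses only the compactness of $c$, never the hypothesis that $c$ represents $\gamma$ (respectively $\gamma^2$); if the argument were valid it would show that \emph{every} compact subset of $D(\gamma)$ is contained in some core, which is false for the same reason.

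The hypothesis on $c$ is what must drive the finiteness and the fact that the enlargement reaches $c$, and this is how the paper argues: the boundary loop $c'$ of the component $A$ of $D(\gamma)\setminus C$ meeting $c$ also represents $\gamma$ (or $\gamma^2$), so by the bijection $\mathcal{F}$ of \Cref{thm:pavbscorr} the images of $c$ and $c'$ in $G_+$ are sweep-equivalent, i.e.\ related by strumming across finitely many sectors of $B$; lifting these sectors to $\widetilde{D}(c')$ and projecting to $D(\gamma)$, the loops $c$ and $c'$ are related by sweeping across finitely many sectors inside $D(\gamma)$, and adding precisely those sectors one at a time keeps the boundary of $A$ a loop of $G(\gamma)$ representing $\gamma$ or $\gamma^2$ (so each intermediate set is a core) and terminates with a core containing $c$. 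Your elementary move (adding a single sector whose boundary path from bottom to top corner lies along the fringe boundary) is essentially the right local step, but the sectors to add must be selected by the sweep-equivalence between $c$ and $c'$, not by an exhaustion of the fringe, which does not exist.
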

\begin{proof}
We perform the following operation on the component $A$ of $D(\gamma) \setminus C$ that meets $c$: 
Let $c'$ be the boundary component of the closure of $A$. 
Since the images of $c$ and $c'$ in $G_+$ both represent the same closed orbit, they are related by sweeping across finitely many sectors in $B$.
Lifting these sectors to $\widetilde{D}(c') \subset \widetilde{B}$ and projecting them to $D(c') = D(\gamma)$, we see that $c$ and $c'$ are related by sweeping across finitely many sectors in $D(\gamma)$ as well.
We can enlarge $C$ by adding these sectors one-by-one.

At each stage, one sector is removed from $A$, and the boundary component of $A$ is a loop of $G(\gamma)$ that represents $\gamma$ or $\gamma^2$, hence $C$ stays a core all the way. 
At the end of the construction, $C$ contains $c$.
\end{proof}

\begin{cor} \label{lemma:corecombine}
Given two cores $C$ and $C'$, there exists a sequence of cores $C = C_0 \subset C_1 \subset \dots \subset C_n$ such that $C_i$ contains exactly one more sector than $C_{i-1}$ for each $i$ and $C_n$ contains $C'$.    
\end{cor}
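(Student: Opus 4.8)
The plan is to deduce this from \Cref{lemma:coregrow} applied once for each boundary loop of $C'$, followed by an elementary end-counting argument. First I would record the data attached to $C'$: by the definition of a core, the components of $D(\gamma)\setminus C'$ are half-open annuli $A_1',\dots,A_k'$, each a union of sectors, and the frontier of $A_j'$ is a loop $c_j'$ in $G(\gamma)\subseteq G_+(\gamma)$ that represents $\gamma$ (if $\gamma$ is orientation-preserving) or $\gamma^2$ (if $\gamma$ is orientation-reversing). Since $D(\gamma)$ has one or two ends, $k\le 2$; in particular $c_1',\dots,c_k'$ is a finite list of loops of $G_+(\gamma)$ of exactly the kind accepted by \Cref{lemma:coregrow}.

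Next I would iterate \Cref{lemma:coregrow}. Applying it to the core $C$ and the loop $c_1'$ produces a chain $C=C_0\subset C_1\subset\dots\subset C_{n_1}$ in which each $C_i$ has exactly one more sector than $C_{i-1}$ and $c_1'\subset C_{n_1}$. If there is a second boundary loop $c_2'$, apply \Cref{lemma:coregrow} again, this time to the core $C_{n_1}$ and the loop $c_2'$, extending the chain to $C_{n_1}\subset\dots\subset C_n$ with the same one-sector-at-a-time property and $c_2'\subset C_n$; since sectors are only added and never removed, $c_1'\subset C_n$ as well. Set $\widehat C=C_n$, a core of $D(\gamma)$ containing all of $c_1',\dots,c_k'$.

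It then remains to check that $C'\subseteq\widehat C$. Each component $A$ of $D(\gamma)\setminus\widehat C$ is a connected collar of an end of $D(\gamma)$ and is disjoint from $c_1'\cup\dots\cup c_k'\subseteq\widehat C$; hence $A$ lies in a single component of $D(\gamma)\setminus(c_1'\cup\dots\cup c_k')$. The components of the latter are $\intr(C')$ together with the $A_j'$. Since $A$ accumulates on an end of $D(\gamma)$ it cannot lie in the compact set $\intr(C')$, so $A\subseteq A_j'$ for some $j$. Therefore $D(\gamma)\setminus\widehat C\subseteq\bigcup_j A_j'=D(\gamma)\setminus C'$, i.e. $C'\subseteq\widehat C=C_n$. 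Concatenating, the chain $C=C_0\subset C_1\subset\dots\subset C_n$ has all the required properties.

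\textbf{Main obstacle.} The substantive points to get right are all bookkeeping: making sure \Cref{lemma:coregrow} can be invoked even when $c_1'$ (or $c_2'$) already overlaps the current core — which is harmless, as \Cref{lemma:coregrow} carries no disjointness hypothesis — and confirming that $c_1'$ remains inside the core throughout the second round of growth, which is immediate since the operation only adds sectors. The only genuinely topological step is the last paragraph, namely that a core containing the frontier loops of $C'$ must swallow $C'$; this is exactly the statement that the ends of the annulus, Möbius band, or half-open annulus $D(\gamma)$ are separated from the compact region $C'$ by the loops $c_j'$.
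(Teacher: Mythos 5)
Your proposal is correct and follows essentially the same route as the paper: the paper's proof is simply to apply \Cref{lemma:coregrow} to grow $C$ until it contains the boundary components of $D(\gamma) \setminus C'$. Your additional final paragraph just makes explicit the separation argument (a core containing the frontier loops of $C'$ must contain $C'$) that the paper leaves implicit.
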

\begin{proof}
We apply \Cref{lemma:coregrow} to grow $C$ until it contains the boundary components of $D(\gamma) \setminus C'$.
\end{proof}

Our next goal is to show that this growing operation leads to a unique maximal core, see \Cref{prop:maxcore} below. We shall need two lemmas.

\begin{lemma} \label{lemma:divergingbranchrays}
Let $S$ and $S'$ be two sectors on a dynamic annulus/Möbius band $D(\gamma)$ where
\begin{itemize}
    \item the bottom vertex of $S'$ is a side vertex of $S$, and
    \item $S'$ is a toggle sector.
\end{itemize}
Let $r$ be the branch ray starting from the top side of $S$ that does not meet the bottom vertex of $S'$, and let $r'$ be the branch ray starting from top side of $S'$ that does not meet $S$, see \Cref{fig:divergingbranchrays}. 
Then $r$ and $r'$ never meet in $D(\gamma)$.
\end{lemma}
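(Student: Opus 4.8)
The plan is to argue by contradiction using the structure of branch rays on a dynamic annulus/Möbius band, together with the local picture of sectors in a veering branched surface from Propositions~\ref{prop:vbssector} and~\ref{prop:vbstogglefan}. Suppose $r$ and $r'$ first meet at some sector $T$ in $D(\gamma)$. Then $T$ lies on the top side of $S$ and also on the top side of $S'$, so $S$ and $S'$ are both among the sectors whose bottom side lies on (a top side of) their respective ancestors leading up to $T$. The key geometric input is that since the bottom vertex of $S'$ is a \emph{side} vertex of $S$ and $S'$ is a toggle sector, the two branch rays $r$ and $r'$ start out ``diverging'' in a precise combinatorial sense: $r$ emanates from the top side of $S$ on the far side from the bottom vertex of $S'$, while $r'$ emanates from the top side of $S'$ on the far side from $S$. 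I would formalize this by pulling everything back to the dynamic plane $\widetilde{D}(\gamma)$, where branch rays become genuinely embedded properly embedded rays and the ``taller and thinner'' monotonicity of maximal rectangles from the discussion around \Cref{fig:maxrectdualgraph} applies.

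First I would set up coordinates on $\widetilde{D}(\gamma)$: lift $S$, $S'$, $r$, $r'$ to $\widetilde{S}$, $\widetilde{S}'$, $\widetilde{r}$, $\widetilde{r}'$, and recall that each sector of $\widetilde{B}$ lying in $\widetilde{D}(\gamma)$ corresponds to a maximal rectangle in the orbit space $\mathcal{O}$, with the two sectors following a given one being taller and thinner (every stable leaf meeting the child meets the parent, every unstable leaf meeting the parent meets the child). Walking up a branch ray corresponds to passing to a monotone-decreasing-in-width, monotone-increasing-in-height nested family of such rectangles, all sharing the relevant stable sides. So $\widetilde{r}$ determines a monotone nested sequence of rectangles $R_0 \supset' R_1 \supset' \cdots$ starting from $\widetilde{S}$ (where $\supset'$ denotes the ``contains the unstable extent of'' relation), and similarly $\widetilde{r}'$ determines one starting from $\widetilde{S}'$. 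If $\widetilde{r}$ and $\widetilde{r}'$ were to meet at a common sector $\widetilde{T}$, then $R_{\widetilde{T}}$ would simultaneously be an unstable-subrectangle reachable from $\widetilde{S}$ along the side of $S$ opposite the bottom vertex of $S'$, and an unstable-subrectangle reachable from $\widetilde{S}'$ along the side of $S'$ opposite $S$.

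The main step — and the main obstacle — is to derive a contradiction from this configuration. The idea is that the stable leaf through $\widetilde{T}$ must be ``sandwiched'': on the one hand, following $\widetilde{r}$ from $\widetilde{S}$, the stable side of $R_{\widetilde{T}}$ that gets inherited lies strictly on the opposite side of (the stable leaf through) the bottom vertex of $\widetilde{S}'$ from $\widetilde{S}'$ itself; on the other hand, following $\widetilde{r}'$ from $\widetilde{S}'$, the stable extent of $R_{\widetilde{T}}$ must be contained in the stable extent of $\widetilde{S}'$. Since $S'$ is a toggle sector, \Cref{prop:vbstogglefan} pins down exactly which edges of $\widetilde{G}$ make up the top sides of $\widetilde{S}$ and $\widetilde{S}'$ and hence which branch rays can emanate from them; in particular the branch ray $r$ from the far top side of $S$ stays on the ``$\widetilde{S}$-side'' and the branch ray $r'$ from the far top side of $S'$ stays on the ``$\widetilde{S}'$-far side,'' and these two regions of $\widetilde{D}(\gamma)$ are separated by the stable leaf through the bottom vertex of $\widetilde{S}'$. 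Because stable leaves are embedded and non-crossing in $\mathcal{O}$, two rays trapped on opposite sides of such a leaf can never share a sector. I expect the delicate part to be bookkeeping the toggle-versus-fan colorings along the way (using \Cref{prop:vbstogglefan} repeatedly, since the top side of a blue sector is a chain of edges whose bottom-most and top-most are toggle and the interior ones are red fan, and vice versa) to confirm that the separating stable leaf is genuinely never crossed by either branch ray; this is a finite case analysis on the color of $S$ and the position of the side vertex, closely mirroring \Cref{fig:divergingbranchrays}. Once the separation is established, the conclusion $r \cap r' = \varnothing$ in $D(\gamma)$ follows by projecting from $\widetilde{D}(\gamma)$.
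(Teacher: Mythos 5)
Your proposal takes a genuinely different route from the paper, and it has a real gap at its center. The paper's proof is a purely combinatorial infinite descent carried out inside the dynamic annulus/Möbius band: assuming $r$ and $r'$ meet, they (together with the bottom sides of $S$ and $S'$) bound a compact region $R_0$; then, looking at the top two edges of the relevant top side of $S$ and the sectors $S_1,S'_1$ attached there, \Cref{prop:vbstogglefan} reproduces the exact same hypotheses (bottom vertex of $S'_1$ a side vertex of $S_1$, $S'_1$ toggle), with the new rays $r_1,r'_1$ being subrays of $r'$ and $r$ respectively and bounding a \emph{strictly smaller} compact region $R_1 \subsetneq R_0$. Iterating gives an infinite strictly nested chain $R_0 \supsetneq R_1 \supsetneq \cdots$ of compact sub-regions, contradicting compactness. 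No appeal to the orbit space, to maximal rectangles, or to the stable/unstable foliations is needed.

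Your argument instead pushes everything into the orbit space $\mathcal{O}$ and tries to exhibit a stable leaf separating $r$ from $r'$. The gap is precisely the step you flag as ``the delicate part'': you assert that the branch ray $r$ is trapped on one side of the stable leaf through the bottom vertex of $\widetilde{S}'$ and $r'$ on the other, but you do not prove it, and it is not at all immediate from the ``taller and thinner'' monotonicity alone. The monotonicity only tells you that the stable extents of the rectangles along each ray form a decreasing nested family; it does not by itself tell you that the two families are eventually disjoint, nor which stable leaf (there is more than one candidate through a given triple point's rectangle) plays the separating role, nor that the ray cannot cross that leaf as it turns at successive triple points. This is exactly where the toggle hypothesis on $S'$ has to do real work, and in your write-up it only appears as a vague invocation of \Cref{prop:vbstogglefan} inside an unspecified ``finite case analysis.'' In the paper's argument, by contrast, the toggle hypothesis is what lets you re-create the lemma's hypotheses one level up ($S'_1$ is again toggle with bottom vertex a side vertex of $S_1$), which is the engine of the descent. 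If you want to salvage your approach, you would need to make the separating-leaf claim precise and then prove by induction along the rays that it is preserved — at which point you are essentially running the paper's descent in disguise, with the added overhead of the orbit-space translation. I would recommend working directly in the dynamic annulus as the paper does.
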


\begin{figure}
    \centering
    \selectfont\fontsize{6pt}{6pt}
    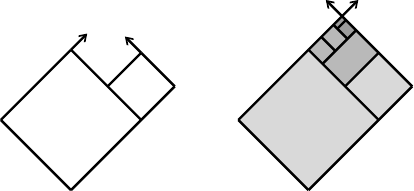
    \caption{The setup in \Cref{lemma:divergingbranchrays}.}
    \label{fig:divergingbranchrays}
\end{figure}

\begin{proof}
Suppose otherwise, then $r$, $r'$, and the bottom sides of $S$ and $S'$ bound a compact rectangle $R_0$.
Let $e$ and $e'$ be the second-to-top and top edge in the top side of $S$ that meet $S'$, and let $S_1$ and $S'_1$ be the sectors that have $e$ and $e'$ as bottom edges respectively.
Then the bottom vertex of $S'_1$ is a side vertex of $S_1$ and, by \Cref{prop:vbstogglefan}, $S'_1$ is a toggle sector.
Let $r_1$ be the branch ray starting from the top side of $S_1$ that does not meet the bottom vertex of $S'_1$, and let $r'_1$ be the branch ray starting from top side of $S'_1$ that does not meet $s_1$. Then $r_1$ is a subray of $r'_0$ and $r'_1$ is a subray of $r_0$, thus they must meet at a corner of $R_0$. 
In particular, $r_1$, $r'_1$, and the bottom sides of $S_1$ and $S'_1$ bound a proper sub-rectangle $R_1$. 

Repeating the argument, we get an infinite chain of rectangles $R_0 \supset R_1 \supset \dots$, which implies that $R_0$ contains infinitely many sectors. Contradiction.
\end{proof}

Using \Cref{lemma:divergingbranchrays}, we can show that there is an upper limit to the size of the cores.

\begin{lemma} \label{lemma:coresbounded}
There is a compact subset $\overline{C}$ such that every core is contained in $\overline{C}$.
\end{lemma}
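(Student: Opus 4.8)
The plan is to show that only finitely many sectors of $D(\gamma)$ can lie in a core; then the union $\overline{C}$ of all sectors occurring in some core is a finite union of sectors, hence compact, and contains every core. By \Cref{lemma:corecombine} every core is contained in one obtained by growing a fixed base core, so we may fix $C_0$ to be any loop of $G(\gamma)$ representing $\gamma$ (which, as noted after the definition of cores, is itself a core) and it suffices to bound the sectors that can be added when growing $C_0$. Since $D(\gamma)\setminus C_0$ has at most two components, each a half-open annulus fanning toward an end of $D(\gamma)$, it is enough to prove: for each such component $A$, only finitely many sectors of $A$ occur in some core.

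Suppose to the contrary that infinitely many sectors of $A$ occur in cores. The boundary loop $c_0=\partial A$ is compact, so it meets only finitely many sectors, and hence only finitely many sectors of $A$ abut $c_0$. After enlarging $C_0$ by finitely many sectors if necessary, \Cref{prop:vbstogglefan} produces among the sectors of $A$ abutting $c_0$ finitely many pairs $S_j,S_j'$ in the configuration of \Cref{lemma:divergingbranchrays}: $S_j'$ is a toggle sector whose bottom vertex is a side vertex of the neighbouring sector $S_j$. For each such pair, \Cref{lemma:divergingbranchrays} tells us that the branch ray $r_j$ issuing from the top side of $S_j$ and the branch ray $r_j'$ issuing from the top side of $S_j'$ never meet in $D(\gamma)$. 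Consequently the closed subsurface $\Omega_j\subset D(\gamma)$ cut off by $r_j$, $r_j'$ and the bottom sides of $S_j,S_j'$ is non-compact (otherwise the argument of \Cref{lemma:divergingbranchrays} would exhibit an infinite nested chain of rectangles inside a compact one), so $\Omega_j$ is connected, reaches the end of $A$, and contains infinitely many sectors. The crucial point is that the boundary loop $c$ of any core $C\supseteq C_0$, being sweep-equivalent to $c_0$, cannot enter the interior of $\Omega_j$: a strum moves a strand of the loop only over a single sector and never across a branch ray emanating from that sector's top side, so $c$ stays on the $c_0$-side of $r_j\cup r_j'$. Since $\Omega_j$ is connected and reaches the end of $A$, this forces $C$ to be disjoint from $\intr\Omega_j$, so none of the infinitely many sectors of $\intr\Omega_j$ occurs in any core. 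Finally, the branch rays $r_j,r_j'$ issuing from the finitely many sectors along $c_0$ separate off all of $A$ outside a compact collar of $c_0$: every sector of $A$ that lies sufficiently far toward the end of $A$ is interior to one of the $\Omega_j$. This contradicts the assumption that infinitely many sectors of $A$ occur in cores, and the lemma follows.

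The step I expect to be the main obstacle is the last part of this argument: verifying that a spanning loop of $D(\gamma)$ representing $\gamma$ genuinely cannot be strummed past a divergent pair of branch rays, and that finitely many such ray-pairs (those coming from the sectors abutting $c_0$) suffice to trap everything outside a compact collar. Both amount to an exact description of which sectors of $D(\gamma)$ are swept through by a loop representing $\gamma$. One clean way to package this is via the orbit space used in the proof of \Cref{prop:loopresolveimplyorbitresolve}: a sector of $D(\gamma)$ lies in some core precisely when the orbit point $p=\bigcap_n R_{\widetilde{v}_n}$ corresponding to $\gamma$ belongs to its maximal rectangle, and \Cref{lemma:divergingbranchrays} is exactly the combinatorial shadow of the fact that, since $g=[c_0]$ acts hyperbolically at $p$ while $\widetilde{\mathcal C}$ is discrete, only finitely many $\langle g\rangle$-orbits of maximal rectangles can contain $p$. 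Either route completes the proof; the branch-ray route stays entirely within $D(\gamma)$, which is presumably why \Cref{lemma:divergingbranchrays} is proved immediately beforehand.
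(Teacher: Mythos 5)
Your proposal is built around the same key lemma as the paper's proof (\Cref{lemma:divergingbranchrays}): use diverging branch rays to bound how far a loop sweep-equivalent to $c_0$ can travel, so that only finitely many sectors can ever appear in a core. But the execution leaves the two decisive steps as assertions rather than arguments, and one of them is set up incorrectly.

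First, the pairs $(S_j, S'_j)$. Your proposal claims that after enlarging $C_0$ by finitely many sectors, \Cref{prop:vbstogglefan} produces among the sectors \emph{abutting} $c_0$ pairs satisfying the hypotheses of \Cref{lemma:divergingbranchrays}. There is no reason for this to hold near $c_0$: the required configuration (a toggle sector $S'$ whose bottom vertex is a side vertex of $S$) need not occur in any prescribed compact neighborhood of $c_0$. The paper sidesteps this by a specific walk: for each branch ray $r_i$ issuing from a turn of $c_0$, stack the sectors $S_{i,1}, S_{i,2}, \dots$ along $r_i$ and walk outward until the \emph{second} toggle sector $S_{i,j}$; only then does \Cref{lemma:divergingbranchrays} apply to $(S_{i,j-1}, S_{i,j})$, yielding a ray $r'_{i,j}$ that never meets $r_{i+1}$. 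This walk is what produces a well-defined finite bound $m_i$ for each $i$.

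Second, the trapping. You assert that the regions $\Omega_j$ cut out by the diverging rays "separate off all of $A$ outside a compact collar." This is precisely the content of the lemma, and it is not automatic from the existence of a few divergent ray pairs: between consecutive $\Omega_j$'s there could a priori be unbounded regions that the loop could still reach. The paper's explicit bound $m_i$ on each ray is what guarantees this does not happen, together with the observation that a loop sweep-equivalent to $c_0$ meets each $r_i$ in an initial segment, so a loop escaping $\overline C = \bigcup_i \bigcup_{j \le m_i} S_{i,j}$ would meet some $r'_{i,j}$ with $j > m_i$, contradicting that $r'_{i,j}$ and $r_{i+1}$ never meet. You flag both of these points yourself as "the main obstacle"; they are not peripheral difficulties but the actual substance of the lemma, so the proposal should be regarded as having the right strategy with a genuine gap in the core construction.
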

\begin{proof}
Let $c_0$ be a loop in $G(\gamma)$ that represents $\gamma$. For each component $A$ of $D(\gamma) \backslash c_0$, we build a compact subset in $A$ as follows:
Let $(r_i)_{i \in \mathbb{Z}/k}$ be the set of branch rays in $A$ that start at a point of $\gamma_0$. For each $i$, let $S_{i,1},S_{i,2},\dots$ be the set of sectors whose bottom edges lie along $r_i$.
For each $j$, let $r'_{i,j}$ be the branch ray starting from the top side of $S_{i,j}$ that does not meet $S_{i,j-1}$. 

We claim that there is a value $m_i$ such that $r'_{i,j}$ meets $r_{i+1}$ if and only if $j \leq m_i$. 
Indeed, if $r'_{i,j}$ meets $r_{i+1}$ then $r'_{i,j-1}$ meets $r_{i+1}$, hence it suffices to show that $r'_{i,j}$ does not meet $r_{i+1}$ for some $j$.
Such a value of $j$ can be obtained by letting $j$ be the second smallest value such that $S_{i,j}$ is a toggle sector and applying \Cref{lemma:divergingbranchrays} (with $S_{i,j-1}$ playing the role of $S$ and $S_{i,j}$ playing the role of $S'$). 

We now take the union $\bigcup_{i \in \mathbb{Z}/k} \bigcup_{j=1}^{m_i} S_{i,j}$ as our compact subset of $A$. See \Cref{fig:coresbounded}. 
Let $\overline{C}$ be the union of these compact subsets as $A$ ranges over components of $D(\gamma) \backslash c_0$.
We claim that if $c$ is a loop in $G(\gamma)$ that represents $\begin{cases} \gamma & \text{if $\gamma$ is orientation-preserving} \\ \gamma^2 & \text{if $\gamma$ is orientation-reversing} \end{cases}$, then $c$ must be contained in $\overline{C}$.

\begin{figure}
    \centering
    \selectfont\fontsize{6pt}{6pt}
    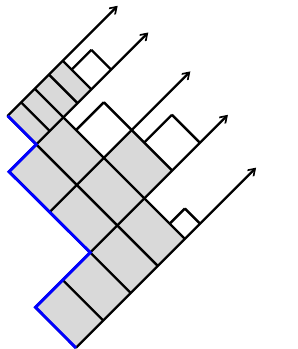
    \caption{Defining a compact region $\overline{C}$ so that every core is contained in $\overline{C}$.}
    \label{fig:coresbounded}
\end{figure}

Suppose otherwise, then since $c$ represents the same closed orbit as $c_0$ or $c_0^2$, we know that $c$ is related to $c_0$ or $c_0^2$ by sweeping across finitely many sectors. Hence, in each component of $D(\gamma) \backslash c_0$, in the above notation, $c$ meets each $r_i$ in a (possibly degenerate) segment. If $c$ is not contained in $\overline{C}$, then for at least one component of $D(\gamma) \backslash c_0$, $c$ meets $r'_{i,j}$ for some $i$ and some $j > m_i$. This contradicts the fact that $r'_{i,j}$ does not meet $r_{i+1}$.

To complete the proof, let $C$ be a core. By the claim above, the boundary of each component of $D(\gamma) \backslash C$ is contained in $\overline{C}$. Thus $C$ is contained in $\overline{C}$.
\end{proof}

\begin{thm} \label{prop:maxcore}
There exists a unique maximal core $C$ of $D(\gamma)$. 
The map $D(\gamma) \to B$ determines a bijection between the loops in $G_+(\gamma) \cap C$ that represent $\gamma$ and the loops in $G_+$ that represent $\gamma$.
Furthermore, given any core $C'$, there exists a sequence of cores $C' = C'_0 \subset C'_1 \subset \dots \subset C'_n$ such that $C'_i$ contains exactly one more sector than $C'_{i-1}$ for each $i$ and $C'_n=C$.
\end{thm}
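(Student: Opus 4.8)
The plan is to produce the maximal core as the union of all cores, and to see that this union is again a core by combining the boundedness statement \Cref{lemma:coresbounded} with the growing operations of \Cref{lemma:coregrow} and \Cref{lemma:corecombine}. First I would record that a core $C$ is a subcomplex of $D(\gamma)$: by definition each component of $D(\gamma)\setminus C$ is a union of sectors, so $C$ is exactly the union of the remaining sectors. By \Cref{lemma:coresbounded} every core is contained in the fixed compact set $\overline{C}$, and $\overline{C}$ meets only finitely many sectors of $D(\gamma)$; hence there are only finitely many cores, say $C_1,\dots,C_k$.

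Next I would show these cores have a maximum. By \Cref{lemma:corecombine}, for any two cores there is a core containing both, so iterating there is a core $C$ with $C\supseteq C_1\cup\dots\cup C_k$. Since $C$ is itself one of the $C_i$, we get $C=C_1\cup\dots\cup C_k$, i.e.\ $C$ equals the union of all cores. In particular $C$ is a core (it satisfies the defining properties because it \emph{is} one of the $C_i$), it is the unique maximal core, and every core is contained in it. For the ``furthermore'' clause, given an arbitrary core $C'$ I would apply \Cref{lemma:corecombine} to the pair $(C',C)$ to obtain a chain $C'=C_0'\subset C_1'\subset\dots\subset C_n'$, each step adding a single sector, with $C_n'\supseteq C$; since $C_n'$ is a core and $C$ is maximal, $C_n'=C$.

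For the bijection, \Cref{prop:dynannuluscoreloops} already gives a bijection between loops of $G_+(\gamma)$ representing $\gamma$ and loops of $G_+$ representing $\gamma$, induced by $D(\gamma)\to B$. So it suffices to check that every loop of $G_+(\gamma)$ representing $\gamma$ is contained in $C$, for then the set of loops in $G_+(\gamma)$ representing $\gamma$ coincides with the set of such loops lying in $G_+(\gamma)\cap C$, and the claimed restriction of the bijection is immediate. Given such a loop $\widehat{c}$, I would pick any core $C_0$ (for instance a loop in $G(\gamma)$ representing $\gamma$, which is a core), and apply \Cref{lemma:coregrow} to grow $C_0$ to a core containing $\widehat{c}$: directly in the orientation-preserving case, and, when $\gamma$ is orientation-reversing, by applying it to $\widehat{c}$ traversed twice, which represents $\gamma^2$ and is the same subset of $D(\gamma)$. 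The resulting core is contained in $C$, so $\widehat{c}\subseteq C$.

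The substantive content has already been extracted in \Cref{lemma:coresbounded} (via \Cref{lemma:divergingbranchrays}); the remaining work here is essentially bookkeeping. I expect the only real points requiring care to be (i) the finiteness/directedness step that upgrades ``union of all cores'' to ``is a core'', and (ii) the orientation-reversing case in the bijection, where one must pass between loops representing $\gamma$ and loops representing $\gamma^2$ in order to invoke \Cref{lemma:coregrow}.
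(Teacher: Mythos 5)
Your proposal is correct and relies on the same three ingredients as the paper---\Cref{lemma:coregrow}, \Cref{lemma:corecombine}, and \Cref{lemma:coresbounded}---but it organizes the existence and uniqueness of the maximal core differently. The paper constructs the maximal core by iteratively enlarging an arbitrary core via \Cref{lemma:coregrow} whenever some representative loop is missing, with \Cref{lemma:coresbounded} ensuring termination, and then proves uniqueness by a separate contradiction argument. You instead observe that \Cref{lemma:coresbounded}, together with the fact that cores are unions of full sectors and $\overline{C}$ contains only finitely many sectors, forces the set of cores to be finite; \Cref{lemma:corecombine} then makes this finite collection directed, so its union is itself a core and is the unique maximum. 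This is cleaner in the sense that existence, uniqueness, and ``contains every other core'' all fall out at once from the directedness argument. Your handling of the ``furthermore'' clause and the bijection matches the paper's. One genuine improvement in your version: you explicitly address the mild hypothesis mismatch in \Cref{lemma:coregrow} for the orientation-reversing case---a loop of $G_+(\gamma)$ representing $\gamma$ does not literally satisfy the hypothesis (which asks for a representative of $\gamma^2$), and you correctly patch this by passing to the doubled traversal, which has the same support. The paper's proof of this theorem leaves that point implicit.
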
 
\begin{proof}
To show that there is a maximal core of $D(\gamma)$, we run the following operation: We start with some core $C$ of $D(\gamma)$. If there is a loop in $G_+(\gamma)$ that represents $\gamma$ but is not contained in $C_0$, we apply \Cref{lemma:coregrow} to enlarge $C$. This operation must eventually terminate by \Cref{lemma:coresbounded}, at which point we have a maximal core.

To show uniqueness of the maximal core, suppose $C_1$ and $C_2$ are both maximal cores. If $C_1 \neq C_2$, then without loss of generality, a boundary component of $D(\gamma) \backslash C_1$ is not contained in $C_2$. But we can then apply \Cref{lemma:coregrow} to enlarge $C_2$, contradicting maximality. 

By \Cref{prop:dynannuluscoreloops}, the map $D(\gamma) \to B$ determines a bijection between the loops in $G_+(\gamma)$ that represent $\gamma$ and the loops in $G_+$ that represent $\gamma$. But the maximal core $C$ contains every such loop in $G_+(\gamma)$, thus the set of loops in $G_+(\gamma)$ that represent $\gamma$ and the set of loops in $G_+(\gamma) \cap C$ that represent $\gamma$ coincide.

Finally, the last statement is \Cref{lemma:corecombine}.
\end{proof}

\section{Computation of the differential for sleek summands} \label{sec:sleeksummands}

Let $(Y,\phi)$ be a pseudo-Anosov flow and $\mathcal{C}$ be a collection of closed orbits of $\phi$ such that $\phi$ has no perfect fits relative to $\mathcal{C}$. In this section, we explain how to compute the homology of any sleek summand of $SFC(\phi, \mathcal{C},\ts)$. Our main goal is to show the following.

\begin{thm} \label{thm:insulatedorbithomology}
In any sleek $\ts$-grading we have that $\dim H_*(SFC(\phi, \mathcal{C},\widetilde{\mathfrak{s}})) = 1$.
\end{thm}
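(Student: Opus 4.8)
\emph{Proof sketch.} The strategy is to build, for every core $C$ of the dynamic annulus or Möbius band $D(\gamma)$, a combinatorial chain complex $CC(C)$ that interpolates between the trivial complex and the summand $SFC(\phi,\mathcal{C},\ts)$, and then to induct on the number of sectors of $C$.

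\emph{Step 1 (the complexes $CC(C)$).} Let $\gamma=\gamma_{\ts}$ be the common multi-orbit of all states in $\ts$, which exists by \Cref{lemma:sleekgrading}, and let $C_{\max}$ be the maximal core of $D(\gamma)$ from \Cref{prop:maxcore}. For a core $C$, let $CC(C)$ be the $\mathbb{F}$-vector space spanned by the loops of $G_+(\gamma)$ that lie in $C$ and represent $\gamma$, with differential counting (mod $2$) the pairs of such loops related by a single strum across a sector contained in $C$ (\Cref{defn:strum}). Since $C$ is a union of full sectors, and since, by sleekness, every loop sweep-equivalent to a $\gamma$-loop is again an embedded $\gamma$-loop, a strum or un-strum of a loop contained in $C$ still lies in $C$; hence $CC(C)\subseteq CC(C')$ is a subcomplex whenever $C\subseteq C'$.

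\emph{Step 2 (identifying $CC(C_{\max})$).} The generators of $CC(C_{\max})$ are canonically the generators of $SFC(\phi,\mathcal{C},\ts)$: states in the sleek class $\ts$ correspond to embedded multi-loops of $G_+$ with $\gamma_\x=\gamma$ by \Cref{prop:generators} and \Cref{prop:sleekclasssleekorbit}, and these are identified with the $\gamma$-loops of $G_+(\gamma)$ lying in $C_{\max}$ by \Cref{prop:dynannuluscoreloops} and \Cref{prop:maxcore}. To match the differentials I would combine the structural results on effective domains in our diagram: a Lipshitz-index-$1$ effective domain is embedded, has boundary alternating between $\alpha$- and $\beta$-arcs, and contains no point of $\xt$ in the interior of an arc (\Cref{prop:admdomaincorners}), whence its support is an embedded disk with an even number of corners, i.e.\ an empty $2n$-gon, so it contributes exactly $1$ (\Cref{thm:polygonscountonce}). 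Lifting the picture to $D(\gamma)$ and using the $\e$-obstruction of \Cref{prop:transhomclassobsdomain} one then checks that two sleek states are connected by such a domain precisely when their multi-loops differ by a single strum across one sector, and conversely every single strum is realized by one empty $2n$-gon. This gives $CC(C_{\max})=SFC(\phi,\mathcal{C},\ts)$.

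\emph{Step 3 (the induction) and the main obstacle.} A core consisting of a single $\gamma$-loop $c_0$ gives $CC(c_0)=\mathbb{F}\cdot c_0$ with vanishing differential, so $H_*(CC(c_0))=\mathbb{F}$; by \Cref{prop:maxcore} any core is reached from such a $c_0$ by a chain of cores each adding one sector, so it is enough to prove that if $C=C'\cup S$ is a core with exactly one more sector than the core $C'$, then $CC(C')\hookrightarrow CC(C)$ is a quasi-isomorphism. The long exact sequence of the pair — the "zig-zag" of the outline — reduces this to the acyclicity of $CC(C)/CC(C')$. That quotient is spanned by the $\gamma$-loops in $C$ whose support meets the new sector $S$; near $S$ such a loop realizes one of finitely many local patterns dictated by \Cref{prop:vbssector} and \Cref{prop:vbstogglefan} (running along the diagonal of $S$, along its subdivided top sides, or combinations thereof), and I would pair a loop using the diagonal of $S$ with a fixed strum of it across $S$ to get an acyclic matching with no critical cells; Gaussian elimination (discrete Morse theory) then kills the quotient. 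The hard part is exactly this local bookkeeping: enumerating how a $\gamma$-loop can interact with $S$ when the top side of $S$ is subdivided into toggle and fan sectors, setting up the matching so that the bounding loop of $D(\gamma)\setminus C$ does not survive, and proving acyclicity of the matching — here one leans on the finiteness of cores and the "diverging branch rays" mechanism of \Cref{lemma:divergingbranchrays} and \Cref{lemma:coresbounded}. Concatenating the resulting isomorphisms yields $H_*(SFC(\phi,\mathcal{C},\ts))=H_*(CC(C_{\max}))\cong H_*(CC(c_0))=\mathbb{F}$.
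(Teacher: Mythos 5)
Your overall architecture — define a combinatorial complex $CC(C)$ for each core $C$ of $D(\gamma)$, show $CC(C_{\max})$ is isomorphic to $SFC(\phi,\mathcal{C},\ts)$, then induct on the size of the core via a two-step filtration / acyclic-quotient argument — is the same as the paper's, and your Step 3 is a valid way to package \Cref{lemma:cchomologyinduct} (the paper's version is the mapping-cone / long-exact-sequence form of your discrete-Morse pairing). However, there is a genuine gap in the definition of the differential on $CC(C)$ that you need to repair before anything else goes through.

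You define the differential by ``counting (mod 2) the pairs of such loops related by a single strum across a sector contained in $C$.'' Read symmetrically, this does not satisfy $\partial^2=0$: for a loop $c$ with an odd number of strum-neighbors representing $\gamma$, the length-two zig-zags $c\to c'\to c$ contribute an odd number of copies of $c$ to $\partial^2 c$. Read as a one-direction rule independent of the sector, it fails to match the Heegaard Floer differential. The missing ingredient is the \emph{red/blue asymmetry} (\Cref{defn:blueredsectors}): the paper's differential strums a \emph{red} diagonal out to the two-edge path and unstrums a two-edge path \emph{into} a \emph{blue} diagonal, i.e.\ $\partial c=\sum_{c'\in\mathcal{R}(c)}c'+\sum_{c'\in\mathcal{L}(c)}c'$ with $\mathcal{R},\mathcal{L}$ as in \Cref{subsec:combinchaincomplex}. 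This is what kills the $c\to c'\to c$ terms in $\partial^2$ (a return trip would require the same sector to be both red and blue) and, crucially, what makes the direction of each term agree with the direction of the corresponding empty $2n$-gon in the Heegaard diagram (compare \Cref{fig:heegaardcombinalphaglue} and \Cref{fig:sfc=cc}: the orientation of the elementary domain $D(t)$ at the turn $t$ depends on the color of the adjacent triple point). Your Step 3 pairing ``diagonal of $S$ with a fixed strum across $S$'' also inherits this case distinction: for $S$ red $CC(C')$ is a subcomplex of $CC(C)$, while for $S$ blue it is a quotient complex, and which strum lies outside $C'$ is determined by the color.

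Relatedly, your Step 2 asserts rather than proves that a Lipshitz-index-$1$ effective domain between two sleek states corresponds exactly to a single strum. The paper's \Cref{prop:sfc=cc} does more: after fixing a common strum $c_0$ of the two multi-loops (guaranteed by \Cref{prop:transhomclassobsdomain}) and encoding states by sets of turns, it shows any effective domain is forced to be a disjoint union of elementary domains $D=\sum_{t\in\mathcal{T}^L_1}D(t)+\sum_{t\in\mathcal{T}^R_2}D(t)$, that $\mathcal{T}^L_2=\mathcal{T}^R_1=\varnothing$, and that $\mu(D)=|\mathcal{T}^L_1|+|\mathcal{T}^R_2|$. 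It is this explicit index computation — together with \Cref{thm:polygonscountonce} applied to the resulting bigons — that pins down the contributing domains, and it is again the red/blue dichotomy that determines whether a given turn $t$ contributes to $\mathcal{T}^L$ or $\mathcal{T}^R$ and hence in which direction the arrow goes. Once you build the color asymmetry into the definition of $\partial$ on $CC(C)$ and carry out this index computation, the rest of your argument matches the paper's.
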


We shall split the discussion in two parts. First, given a sleek $\ts$-grading we describe a combinatorial model  for the chain complex  $SFC(\phi, \mathcal{C},\ts)$ for which it is easy to prove that the homology is one dimensional. For this purpose we use the theory of comapct cores we exposed in the previous section. 
Then we identify $SFC(\phi, \mathcal{C},\ts)$ with this abstract model.

\subsection{Combinatorial chain complexes} \label{subsec:combinchaincomplex}

Let $\gamma$ be a closed orbit of the blown-up flow $(Y^\#, \phi^\#)$ associated to $(\phi,\mathcal{C})$.
In this subsection, we will associate a chain complex to each core of the dynamic annulus/Möbius band $D(\gamma)$. 
We will also show that the homology of each of these chain complexes has dimension $1$. 

Suppose that $C\subset D(\gamma)$ is a core for $D(\gamma)$. We define $CC(C)$ to be the $\mathbb{F}_2$-vector space with basis the loops in $G_+(\gamma) \cap C$ that represent the orbit $\gamma$. (Here $CC$ stands for Combinatorial Chain complex.)

Recall the notion of red and blue sectors from \Cref{defn:blueredsectors}. Given a basis element $c$ of $CC(C)$, we consider the sets
\begin{align*}
\mathcal{R}(c) &:=  \{c'\subset G_+(\gamma) \cap C \mid c' \text{ is a strum of } c \text{ across a single red sector}\} \\
\mathcal{L}(c) &:=  \{c'\subset G_+(\gamma) \cap C \mid c \text{ is a strum of } c' \text{ across a single blue sector}\}
\end{align*}
and we define 
\[\partial: CC(C)\to CC(C)\] using the canonical basis of $CC(C)$ via the formula
\begin{align*}
\partial c =& \sum_{c'\in \mathcal{R}(c)} c' + \sum_{c'\in \mathcal{L}(c)} c'  \ . 
\end{align*}

\begin{lemma} 
We have that $\partial^2=0$, that is, $(CC(C), \partial)$ is a chain complex.
\end{lemma}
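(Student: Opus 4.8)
The plan is to prove $\partial^2 c=0$ for every basis loop $c$ by a cancellation--in--pairs argument. Write $\partial=\partial_R+\partial_L$, where $\partial_R c=\sum_{c'\in\mathcal R(c)}c'$ records the strums of $c$ across red sectors and $\partial_L c=\sum_{c'\in\mathcal L(c)}c'$ records the \emph{folds} of $c$ across blue sectors, i.e.\ replacing a side--path of a blue sector by its diagonal edge. Then $\partial^2 c=\sum_{c''}N(c,c'')\,c''$, where $N(c,c'')$ counts length--two sequences of moves $c\to c'\to c''$, so it suffices to exhibit a fixed--point--free involution on the set of such sequences that preserves the endpoint $c''$. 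Before building it I would first observe, using the structure of cores and \Cref{prop:dynannuluscoreloops}, that $\mathcal R(c)$ and $\mathcal L(c)$ in fact capture \emph{all} red strums, respectively all blue folds, of $c$, so that nothing is lost to the truncation in their definitions: a red strum deletes a diagonal $e_S\subset c\subset C$, which forces the whole sector $S$ into $C$ and hence its side--paths as well; and a blue fold cannot leave $C$ because the top side of any sector lying in a complementary half--open annulus points away from the core boundary, so a complete side--path of such a sector can never lie on a loop contained in $C$.

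Each elementary move is supported on a single sector $S$: a red strum deletes the diagonal $e_S$ of a red sector and inserts one of its two side--paths (one bottom edge followed by one top side $b_1\cdots b_\delta$), while a blue fold deletes a side--path of a blue sector and inserts $e_S$. I would next dispose of the bookkeeping. First, two consecutive moves are never supported on the same sector $S$: a strum deletes $e_S$, a fold deletes a side--path, and once $e_S$ is inserted by a fold the embeddedness of the basis loops (again \Cref{prop:dynannuluscoreloops}) prevents the other side--path of $S$ from being present, while the colour of $S$ forbids the opposite operation there in all cases. Second, since a red strum inserts only edges of $G$ and a blue fold inserts only a diagonal, a move can act on an edge created by a preceding move \emph{only} if the first move is a red strum and the second a blue fold whose folded side--path overlaps the side--path just inserted. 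In every other case the sub--path supporting the second move already occurs in $c$ and is disjoint from the one supporting the first move, so the two moves can be performed in the opposite order to yield the same $c''$; pairing such a sequence with its order--reversal is the desired involution on the non--interacting part.

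What remains is the single interacting configuration: a red strum of a red sector $S_1$ (say to the left, replacing $e_{S_1}$ by $a,b_1,\dots,b_\delta$) followed by a blue fold of a blue sector $S_2$ whose folded side--path meets $\{a,b_1,\dots,b_\delta\}$. Here I would invoke the local combinatorics of \Cref{prop:vbssector,prop:vbstogglefan}: the bottom sides of $S_1$ are single edges, its top side $b_1\cdots b_\delta$ carries sectors that are toggle at the two ends and blue fan in between, so $S_2$ is forced to be one of a short explicit list of sectors (one of these, or a sector meeting $a$, or one meeting the old neighbours of $e_{S_1}$ in $c$). Running through this finite list of local pictures, I expect that each interacting sequence $c\to c'\to c''$ admits exactly one companion with the same endpoint: either a second red-strum--then--blue-fold in which $S_1$ is strummed to its \emph{other} side and the fold is performed along the correspondingly shifted blue sector, or a blue-fold--then--red-strum in which the fold is applied first along a side--path already present in $c$. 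Extending the involution over these pairs finishes the argument. The main obstacle is exactly this last step: one must verify that the shifted move demanded of the companion is genuinely available --- that the relevant side--paths really occur as sub--paths of the relevant loops and that the companion loop still lies in $G_+(\gamma)\cap C$ and represents $\gamma$ --- and this is where the diamond/toggle--fan structure of a veering branched surface does the real work.
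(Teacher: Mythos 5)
Your overall strategy is the same as the paper's: pair off the two-step routes $c \to c'_1 \to c''$ by performing the two elementary moves in the opposite order. Your bookkeeping reductions (consecutive moves never use the same sector; moves with disjoint supports commute; the only possible interaction is a red strum followed by a blue fold whose folded side path uses freshly inserted edges) are correct and correctly isolate the crux. But the proposal stops exactly there: for the interacting configuration you only say you \emph{expect} a companion route to exist, and you yourself flag its verification as the main obstacle. Since in all non-interacting situations the cancellation is immediate, that unverified case is the entire content of the lemma, so as written the proof is incomplete. (Also, your preliminary claims that $\mathcal{R}(c)$ and $\mathcal{L}(c)$ lose nothing to the restriction to the core $C$ are not needed: in every swapped route the new intermediate loop is contained in $c \cup c''$, hence in $C$ automatically.)

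Moreover, the interacting case does not resolve in the way you anticipate (a companion obtained by strumming to the other side, or by folding first); it resolves by being \emph{vacuous}, so the naive order swap works uniformly in all four cases --- which, stated tersely, is the paper's proof. Concretely, suppose the red strum across $S$ inserts the side path $b, e_1, \dots, e_\delta$ ($b$ a bottom side, $e_1,\dots,e_\delta$ the top side of $S$), producing $c'_1$, which is embedded by \Cref{prop:dynannuluscoreloops}. A side path of a blue sector $S'$ inside $c'_1$ must consist of a bottom edge of $S'$ followed by a turn onto a top side of $S'$, and it cannot use any of the new edges: the edge $b$ is a bottom side of $S$ only, and if $b$ lay on a top side of $S'$ then, since the loop turns at the side corner of $S$, the top corner of $S'$ would be that side corner, which is red by \Cref{prop:vbstogglefan}, contradicting $S'$ blue; if the bottom edge of $S'$ were $e_i$ with $i<\delta$, the embedded loop continues straight along $e_{i+1}$ at the terminal vertex of $e_i$, so it cannot also turn onto the top side of $S_i$; and if it were $e_\delta$, then $S'=S_\delta$, which the corner-color bookkeeping of \Cref{prop:vbstogglefan} forces to be red (red fan if $\delta=1$; if $\delta\geq 2$ its bottom corner is a side corner of the blue sector $S_{\delta-1}$, respectively of the blue toggle $S_1$ when $\delta=2$, hence blue, and $S_\delta$ is toggle). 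Hence the folded side path always lies already in $c$, the two moves commute, and the pairing closes up. Supplying this vacuity argument (or an equivalent check) is precisely what your write-up is missing.
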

\begin{proof}
We argue that the terms in $\partial^2 c$ occur in pairs. Indeed, suppose $c''$ is a term in $\partial^2 c$. Then $c''$ is a term in $\partial c'_1$ where $c'_1$ is a term in $\partial c$, thus either
\begin{enumerate}
    \item $c'_1$ is obtained from $c$ by strumming across a red sector $S$ and $c''$ is obtained from $c'_1$ by strumming across another red sector $S'$, or
    \item $c$ is obtained from $c'_1$ by strumming across a blue sector $S$ and $c'_1$ is obtained from $c''$ by strumming across another blue sector $S'$, or
    \item $c$ is obtained from $c'_1$ by strumming across a blue sector $S$ and $c''$ is obtained from $c'_1$ by strumming across a red sector $S'$, or    
    \item $c'_1$ is obtained from $c$ by strumming across a red sector $S$ and obtained from $c''$ by strumming across a blue sector $S'$.
\end{enumerate}

In cases (1) and (3), we define $c'_2$ by strumming $c$ across $S'$. Then $c'_2$ is a term in $\partial c$ and $c''$ is a term in $\partial c'_2$, thus $c''$ appears twice in $\partial^2 c$. A similar construction works in cases (2) and (4).
\end{proof}

\begin{lemma} \label{lemma:cchomologyinduct}
Suppose $C \subset C'$ are cores where $C'$ contains exactly one more sector $S$ than $C$. Then the homology of $CC(C)$ and $CC(C')$ are isomorphic.
\end{lemma}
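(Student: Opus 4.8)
The plan is to realise the subcomplex spanned by the loops of $CC(C')$ that already lie in $C$ as either a subcomplex or a quotient complex of $CC(C')$ whose complement is acyclic, the color of the new sector $S$ dictating which of the two cases occurs; this is the ``zig‑zag'' indicated schematically in \Cref{fig:introzigzag}.

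First I would pin down the combinatorics of $C'\setminus C$. By the construction used in the proof of \Cref{lemma:coregrow}, the extra sector $S$ is glued to $C$ along one of the two directed paths along $\partial S$ from the bottom corner of $S$ to its top corner; call this path $P_1$ (it is an arc of the boundary loop of the relevant component of $D(\gamma)\setminus C$), and call the other such path $P_2$. Thus the cells of $C'$ not already in $C$ are the interior of $S$, the augmented edge $e_S$, and the open edges of $P_2$ together with the side vertex of $S$ on $P_2$. The key local claim I would establish, using \Cref{prop:vbssector} and \Cref{prop:vbstogglefan} to analyse the sectors incident to $P_2$ (and handling toggle sectors whose top side is subdivided into several edges), is that a loop of $G_+(\gamma)\cap C'$ representing $\gamma$ but not already lying in $C$ must use either $e_S$ or the whole of $P_2$. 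Here I would invoke that every such loop is embedded, by \Cref{prop:dynannuluscoreloops}, so that it meets the region of $S$ at most once. Call these loops of \emph{type $E$} and \emph{type $P$} respectively; then strumming across $S$ towards $P_2$ gives a bijection $c\mapsto c^{\flat}$ from type‑$E$ loops to type‑$P$ loops, while strumming the same $c$ towards $P_1$ yields a loop lying in $C$.

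Next I would read off the differential of $CC(C')$ in the basis split into the loops lying in $C$, the type‑$E$ loops, and the type‑$P$ loops, using only the definition of $\partial$ by strumming across a single red sector or by the reverse of a strum across a single blue sector, together with the fact that $C'\subseteq$ the union of sectors of $C$ plus the single sector $S$. The two outputs I expect are: (i) the only strums relating a type‑$E$ loop to a type‑$P$ loop are those across $S$, so that the off‑diagonal block of $\partial$ between these two families is exactly the bijection $c\mapsto c^{\flat}$, hence an isomorphism; and (ii) $\partial$ is block‑triangular for this ordering. Concretely, if $S$ is red then $\partial$ carries the loops of $C$ into their own span, carries the type‑$P$ loops into their own span, and carries each type‑$E$ loop $c$ to $c^{\flat}$ plus a loop of $C$ plus type‑$E$ loops; hence $CC(C)$ is a subcomplex of $CC(C')$, and the quotient, spanned by the type‑$E$ and type‑$P$ loops, is the mapping cone of the chain isomorphism $c\mapsto c^{\flat}$, hence acyclic. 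If $S$ is blue the roles are reversed: the span of the type‑$E$ and type‑$P$ loops is itself an acyclic subcomplex (again the mapping cone of the isomorphism induced by strumming across $S$), and the quotient is canonically $CC(C)$. In either case one checks that the surviving differential on the loops of $C$ is precisely that of $CC(C)$: a strum relating two loops of $C$ takes place across a sector contained in $C$, so no term is lost or gained. Hence $H_*(CC(C))\cong H_*(CC(C'))$.

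The main obstacle I anticipate is carrying out (i)–(ii) rigorously: verifying that the only new generators are of types $E$ and $P$, that they pair off bijectively, and that no strum can send a loop of $C$ directly to a type‑$P$ loop in the red case (or, symmetrically, that the type‑$E$/type‑$P$ span is a subcomplex in the blue case). This is where the fine structure of the veering branched surface is needed — controlling which sectors can share an edge with $S$, treating subdivided top sides, and ruling out degenerate positions of $\partial S$ inside $C$ — and it is where the embeddedness of $\gamma$‑representing loops and \Cref{prop:vbstogglefan} do the real work. Once this bookkeeping is in place the remaining algebra (a mapping cone on an isomorphism is acyclic, and replacing a complex by a subcomplex or quotient with acyclic complement preserves homology) is routine.
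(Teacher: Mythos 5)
Your proposal is correct and takes essentially the same route as the paper's proof: filter $CC(C')$ by the span of the loops lying in $C$ (a subcomplex when $S$ is red, a quotient complex when $S$ is blue), split the remaining generators into loops through the augmented edge of $S$ and loops through the free boundary path of $S$, and observe that this remaining piece is the mapping cone of the strum-across-$S$ isomorphism, hence acyclic, so the long exact sequence gives the result. The bookkeeping you flag as the main obstacle is in fact softer than you expect: the paper verifies the block-triangularity directly from the definition of the differential (a term is a strum across a single red sector or the reverse of a strum across a single blue sector) together with the color of $S$, without needing \Cref{prop:vbstogglefan}.
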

\begin{proof}
We first suppose that $S$ is red.
Let $e_1$ be the path in $G_+(\gamma) \cap C'$ which lies on the boundary of $S$ and consists of the top and bottom sides of $S$ that lie in the interior of $D(\gamma) \backslash C$. Let $e_2$ be the edge of $G_+(\gamma) \cap C'$ that connects the bottom vertex of $S$ to the top vertex of $S$.

Since the basis of $CC(C)$ is a subset of the basis of $CC(C')$, the chain complex $CC(C)$ is naturally a subspace of $CC(C')$. 
In fact, we claim that $CC(C)$ is a subcomplex of $CC(C')$. Suppose otherwise, then there is a loop $c$ contained in $C$ and a loop $c'$ contained in $C'$ but not in $C$ such that $c'$ appears as a term in $\partial c$. 
By definition of $\partial c$, either $c'$ is a strum of $c$ across a single red sector $S'$, or $c$ is a strum of $c'$ across a single blue sector $S'$.
In the former case, if $S' \neq S$, then $S'$ is contained in $C$, and $c'$ is contained in $C$, contradicting our assumption. Thus $S' = S$. But then since $c$ is contained in $C$, we cannot strum $c$ across $S'=S$, hence we reach a contradiction as well.
In the latter case, since $S'$ is blue and $S$ is red, we have $S' \neq S$, which gives us the contradiction that $c'$ is contained in $C$ as above.

Thus we have a two-step filtration $CC(C)\subset CC(C')$. The quotient complex $\overline{CC}(C') = CC(C')/CC(C)$ can be regarded as the vector space generated by the loops in $G_+(\gamma) \cap C'$ that pass through $e_1$ or $e_2$. 
Consequently, we can decompose the quotient complex $\overline{CC}(C')$ into $\overline{CC}(C')_1 \oplus \overline{CC}(C')_2$ where $\overline{CC}(C')_i$ is the subspace generated by the loops that pass through $e_i$.

We claim that $\overline{CC}(C')_1$ is a subcomplex of $\overline{CC}(C')$. Suppose otherwise, then there is a loop $c_1$ passing through $e_1$ and a loop $c_2$ passing through $e_2$ such that $c_2$ appears as a term in $\partial c_1$. 
By definition of $\partial c_1$, either $c_2$ is a strum of $c_1$ across a single red sector $S'$, or $c_1$ is a strum of $c_2$ across a single blue sector $S'$.
But since $e_1$ is a strum of $e_2$ across $S$, the latter must be true, with $S'=S$. We reach a contradiction since $S'$ is blue while $S$ is red.

Because of the claim, we can write the differential of $\overline{CC}(C')$ as $\begin{bmatrix} \partial_1 & \alpha \\ 0 & \partial_2 \end{bmatrix}$, where $\partial_i$ is the restriction of the differential to $\overline{CC}(C')_i$, for $i=1,2$, and $\alpha: \overline{CC}(C')_2 \to \overline{CC}(C')_1$ is a chain map.
In other words, the quotient complex $\overline{CC}(C')$ can be identified with the mapping cone of $\alpha: \overline{CC}(C')_2 \to \overline{CC}(C')_1$. Thus we have an exact triangle
\[
\begin{tikzcd}
 H_*(\overline{CC}(C')_2) \arrow[rr, "\alpha_* "] & & H_*(\overline{CC}(C')_1) \arrow[dl] \\ & H_*(\overline{CC}(C')) \arrow[ul]
\end{tikzcd}
\] 
In fact, we can describe $\alpha$ explicitly as the map that takes $c_2$ to $c_1 = (c_2 \backslash e_2) \cup e_1$. 
In particular, we observe that $\alpha$ is an isomorphism, thus in the diagram above, the horizontal map is an isomorphism, showing that $H_*(\overline{CC}(C'))=0$.

Now looking at the exact triangle
\[
\begin{tikzcd}
 H_*(CC(C)) \arrow[rr, "\iota_*"] & & H_*(CC(C')) \arrow[dl, "\pi_*"] \\ & H_*(\overline{CC}(C')) \arrow[ul, "\delta"]
\end{tikzcd}
\]
associated to the short exact sequence 
\[
\begin{tikzcd}
 0 \arrow[r, ] &CC(C) \arrow[r, "\iota"] & CC(C') \arrow[r, "\pi"] &  \overline{CC}(C') \arrow[r] & 0 
\end{tikzcd}
\] 
via the Snake Lemma, we deduce that $CC(C)$ and $CC(C')$ have the same homology.

A similar argument holds when $s$ is blue, where we consider $CC(C)$ as a quotient complex of $CC(C')$ instead.
\end{proof}

\begin{prop} \label{prop:cchomology}
For every core $C$, $CC(C)$ is a chain complex with homology $\mathbb{F}$.
\end{prop}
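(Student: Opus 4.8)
The plan is to bootstrap from \Cref{lemma:cchomologyinduct} together with the existence of a unique maximal core. First I would invoke \Cref{prop:maxcore} to obtain the maximal core $C^{\max}$ of $D(\gamma)$ and, for the given core $C$, a chain of cores $C = C_0 \subset C_1 \subset \dots \subset C_n = C^{\max}$ in which each $C_i$ contains exactly one more sector than $C_{i-1}$. Applying \Cref{lemma:cchomologyinduct} to each inclusion $C_{i-1} \subset C_i$ gives a chain of isomorphisms $H_*(CC(C)) \cong H_*(CC(C_1)) \cong \dots \cong H_*(CC(C^{\max}))$. Thus the homology of $CC(C)$ is independent of the choice of core, and it suffices to compute it for one convenient core.

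For that I would take $C = c_0$ to be a single loop of $G(\gamma)$ that represents $\gamma$: such a loop exists because one can strum away the augmented edges of any loop of $G_+$ representing $\gamma$ and transport the result to $D(\gamma)$ via \Cref{prop:dynannuluscoreloops}, and it is a core by the remark immediately following the definition of cores. Since $c_0$ contains no sector, the only loop of $G_+(\gamma)$ lying in $c_0$ and representing $\gamma$ is $c_0$ itself, so $CC(c_0) = \mathbb{F} \cdot c_0$ with vanishing differential, and hence $H_*(CC(c_0)) = \mathbb{F}$. Applying the first paragraph to $c_0$ in place of $C$ then yields $H_*(CC(C)) \cong H_*(CC(C^{\max})) \cong H_*(CC(c_0)) = \mathbb{F}$.

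I do not expect a genuine obstacle here: all the real work — that $\partial^2 = 0$, that adding one sector preserves homology, and that a maximal core exists and can be reached from any core by one-sector enlargements — has already been carried out in \Cref{lemma:cchomologyinduct} and \Cref{prop:maxcore}. The only subtlety worth a line is to note that $CC(C)$ is nonzero for every core, so that ``homology $\mathbb{F}$'' is not vacuous; but this is automatic, since $CC(c_0) \neq 0$ and the isomorphisms above are induced by inclusions of the (nonzero) complex $CC(c_0)$ into $CC(C^{\max})$ and thence identified with $CC(C)$. If one preferred not to mention $C^{\max}$ at all, one could instead use \Cref{lemma:corecombine} to connect $C$ and $c_0$ through a common enlargement, but routing everything through the maximal core is the cleanest presentation.
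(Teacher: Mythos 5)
Your proposal is correct and follows essentially the same route as the paper: use \Cref{prop:maxcore} together with \Cref{lemma:cchomologyinduct} to see that the homology is independent of the core, then evaluate on the core consisting of a single loop representing $\gamma$, where the complex is one-dimensional with zero differential. Your extra remarks (routing explicitly through $C^{\max}$, existence of the single-loop core, nontriviality of $CC(C)$) are just slightly more detailed bookkeeping of the same argument.
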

\begin{proof}
By \Cref{prop:maxcore} and \Cref{lemma:cchomologyinduct}, it suffices to compute $CC(C)$ for one core $C$. We choose $C$ to be a loop in $G_+(\gamma)$ that represents $\gamma$, then $CC(C) \cong \mathbb{F}$ with a zero differential, hence its homology is $\mathbb{F}$.
\end{proof}

\subsection{Identification of chain complexes}

Let $B \subset Y^\#$ be the veering branched surface associated to  the blow-up  $(Y^\#, \phi^\#)$  along $\mathcal{C}$.
In this subsection, we will identify the summand $SFC(\phi, \mathcal{C},\ts)$ with the chain complex of the maximal core of the corresponding sleek orbit. Together with \Cref{prop:cchomology}, this implies \Cref{thm:insulatedorbithomology}.

\begin{prop} \label{prop:sfc=cc}
Suppose $SFC(\phi, \mathcal{C},\ts)$ is a sleek summand, and that $\gamma=\gamma_{\ts}$ is the sleek orbit associated to $\ts$.
Let $C$ be the maximal core of $D(\gamma)$. Then we have an isomorphism of chain complexes $SFC(\phi, \mathcal{C},\ts) \cong CC(C)$.
\end{prop}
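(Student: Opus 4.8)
The plan is to prove \Cref{prop:sfc=cc} by establishing two things: (1) a bijection between the generators of $SFC(\phi,\mathcal{C},\ts)$ and the basis of $CC(C)$, and (2) that under this bijection the two differentials agree. The generator matching should be essentially formal given the machinery already built. Recall that, by definition, a Heegaard state $\x$ lies in the $\ts$-grading $\ts$ if and only if it is $\widetilde{\epsilon}$-equivalent to a fixed state with sleek orbit $\gamma$. By \Cref{lemma:sleekgrading}, every state $\x \in \ts$ has $\gamma_\x = \gamma$, and conversely, by \Cref{prop:sleekclasssleekorbit}, any state $\y$ with $\gamma_\y = \gamma$ lies in $\ts$. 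Now the bijection $\x \leftrightarrow \mu_\x$ between Heegaard states and embedded multi-loops of $G_+$ (\cite[Proposition 4.12]{AT25a}) restricts to a bijection between the states in $\ts$ and the embedded multi-loops of $G_+$ that represent $\gamma$; since $\gamma$ is sleek, \emph{every} loop of $G_+$ representing $\gamma$ is embedded, and by \Cref{prop:dynannuluscoreloops} and \Cref{prop:maxcore} these correspond bijectively to the loops in $G_+(\gamma) \cap C$ representing $\gamma$, i.e. to the basis of $CC(C)$. Composing, we get the desired bijection $\Phi: SFC(\phi,\mathcal{C},\ts) \xrightarrow{\sim} CC(C)$ on the level of $\mathbb{F}$-vector spaces.

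The heart of the matter is identifying the differentials. We must show that for two states $\x, \y \in \ts$, the Floer coefficient $c(\x,\y)$ equals $1$ precisely when $\mu_\y$ is obtained from $\mu_\x$ by strumming across a single red sector, or $\mu_\x$ is obtained from $\mu_\y$ by strumming across a single blue sector; and $c(\x,\y) = 0$ otherwise. The plan is to analyze this in two directions. First, for the \emph{upper bound}: suppose $c(\x,\y) \neq 0$, so there is an effective domain $D \geq 0$ with $n_\z(D) = 0$ and Lipshitz index $1$ connecting $\x$ to $\y$. By \Cref{prop:transhomclassobsdomain} (equivalently \Cref{lemma:introepsilondifference}), $\widetilde{\epsilon}(\x) \cap \widetilde{\epsilon}(\y) \neq \varnothing$; since $\gamma$ is sleek, this forces $\widetilde{\mu}_\x$ and $\widetilde{\mu}_\y$ to share a multi-loop, so the dynamic annulus $D(\gamma)$ lets us view $\mu_\x, \mu_\y$ as embedded loops of $G_+(\gamma) \cap C$ differing by a finite sequence of strums. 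The key local analysis, using \Cref{prop:globalcombinatorics}, \Cref{fig:heegaardcombinalphaglue}, and \Cref{prop:admdomaincorners}, is to show that a domain of Lipshitz index $1$ forces the support $s(D)$ to be contained in a single sector's worth of the Heegaard surface, and hence $\mu_\x, \mu_\y$ differ by exactly one strum, with the color of that sector dictating the direction ($\x \to \y$ via a red strum, or $\y \to \x$ via a blue strum). This mirrors the analysis in \cite{AT25a} of the local combinatorics near a triple point, and the orientation/coorientation data in \Cref{defn:vbs}(3) is what pins down red versus blue. Second, for the \emph{lower bound}: whenever $\mu_\y$ is a red strum of $\mu_\x$ (or $\mu_\x$ a blue strum of $\mu_\y$), the corresponding region of the Heegaard diagram is an empty $2n$-gon in the sense of \Cref{defn:emptypolygon} — emptiness coming from the fact that both $\x$ and $\y$ already use all other corners consistently and $n_\z(D) = 0$ — so \Cref{thm:polygonscountonce} gives $\#\mathcal{M}(D) = 1$, hence $c(\x,\y) = 1$.

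The main obstacle I anticipate is the rigidity argument in the upper bound: showing that an index-one effective domain between two states of $\ts$ cannot be "larger" than a single strum region, i.e. ruling out index-one domains whose support spans several sectors or wraps around the dynamic annulus. The subtlety is that $\widetilde{\epsilon}(\x) \cap \widetilde{\epsilon}(\y) \neq \varnothing$ only tells us the homology classes of some strums agree — a priori $\mu_\x$ and $\mu_\y$ could differ by many strums while still admitting a small domain between some intermediate configurations. The resolution should come from combining the Euler-measure bookkeeping of \Cref{defn:lipshitzindex} with \Cref{prop:admdomaincorners}: each sector traversed by the support contributes a definite amount to $e(D) + n_\x(D) + n_\y(D)$ because of the diamond structure of sectors (\Cref{prop:vbssector}) and the placement of the $\alpha$- and $\beta$-curves (\Cref{prop:globalcombinatorics}), so index one leaves room for only one sector's worth of support. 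I would carry this out by first reducing, via the dynamic annulus $D(\gamma)$, to a purely planar picture in the maximal core $C$ where all relevant loops are embedded, then arguing that an index-one domain lifts to $D(\gamma)$ as a polygon meeting exactly one sector. Once this is done, matching the direction and color is a direct inspection of the two local models in \Cref{fig:heegaardcombinalphaglue}, and combined with \Cref{thm:polygonscountonce} on the other side, the two differentials coincide, completing the identification $SFC(\phi,\mathcal{C},\ts) \cong CC(C)$.
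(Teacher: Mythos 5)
Your generator identification is fine and matches the paper (it is exactly \Cref{prop:maxcore} plus the sleekness lemmas), and your overall skeleton -- common strum via \Cref{prop:transhomclassobsdomain}, local analysis of domains, empty polygons counting once via \Cref{thm:polygonscountonce} -- is the paper's skeleton too. But the step you yourself flag as the main obstacle is the actual content of the proposition, and the mechanism you propose for it does not work as stated. You want to argue that ``each sector traversed by the support contributes a definite amount to $e(D)+n_\x(D)+n_\y(D)$, so index one leaves room for only one sector's worth of support.'' That is not a valid general principle: an empty $2n$-gon has Lipshitz index $1$ regardless of how many elementary domains its support sweeps through (think of large empty rectangles in a grid-type diagram, which cross many elementary regions yet have index $1$; the elementary quadrilaterals here each have Euler measure $0$, so the index is carried entirely by the four corner contributions, not by the area). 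So assuming index $1$ up front and trying to bound the size of the support by index bookkeeping leaves a real gap; the smallness of the support has to come from the boundary conditions, not from the index.

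The paper closes this gap by classifying \emph{all} effective domains between two states in the sleek grading before any index is computed. Writing both states as strums of the common embedded loop $c_0$ guaranteed by \Cref{prop:transhomclassobsdomain} and sleekness, it parametrizes such states by subsets of the turns of $c_0$, and shows, using \Cref{prop:admdomaincorners} (embeddedness, alternating $\alpha$/$\beta$ boundary arcs each running from a point of one state to a point of the other, no $\xt$-points in arc interiors), that any effective domain from $\x$ to $\y$ must contain the specific elementary empty polygon $D(t)$ at every turn $t$ where the two loops differ, that differences in the ``wrong direction'' (which would correspond to counting a red strum backwards or a blue strum forwards) are impossible, and that $D$ equals the disjoint union of these $D(t)$ -- disjointness being where embeddedness of $c_0$, i.e. sleekness, enters. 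Only then is \Cref{defn:lipshitzindex} invoked: the index is the number of differing turns, so index $1$ means a single strum of the correct color, and the connecting domain is moreover \emph{unique}, which is what lets one conclude $c(\x,\y)=1$ on the nose from \Cref{thm:polygonscountonce} (your sketch also leaves open the possibility of a second index-one domain between the same pair contributing an extra term). If you reorganize your argument along these lines -- classify the domains from the boundary conditions first, read off the index afterwards -- the rest of your plan goes through.
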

\begin{proof}
By \Cref{prop:maxcore}, the map $D(\gamma) \to B$ determines a natural bijection between the generators of $SFC(\phi, \mathcal{C},\ts)$ and the generators of $CC(C)$.
Throughout this proof, we will apply this identification implicitly. Our task is to identify the differentials of $SFC(\phi, \mathcal{C},\ts)$ and $CC(C)$. 
In turn, we have to determine the effective domains between the generators that counts towards the differential.

By \Cref{prop:transhomclassobsdomain}, two generators that are connected by an effective domain have a common strum. Thus we fix a loop $c_0$ in $G$ and consider the set $\mathcal{S}$ of loops in $G_+$ that have $c_0$ as a strum. 
We refer to a point of $c_0$ where $c_0$ locally changes from lying on one branch loop to another branch loop as a \emph{turn}. In other words, a turn is a non-smooth point of $c_0$.
Let $\mathcal{T}$ be the set of turns of $c_0$. Observe that $\mathcal{S}$ can be identified with a subset of the power set of $\mathcal{T}$. Indeed, a loop in $G$ can be recovered by specifying which turns arise from strumming to $c_0$. Thus from now on we identify the elements of $\mathcal{S}$ with subsets of $\mathcal{T}$. 

For each $t \in \mathcal{T}$, we define $D(t)$ to be the elementary domain whose bottom left or bottom right corner is the component of $\xb$ on the $\alpha$-curve corresponding to $t$. (Recall \Cref{prop:globalcombinatorics} for the combinatorics of the Heegaard diagram.)
Equivalently, note that the elementary domains are in correspondence with the top sides of sectors, where the $\alpha$-curves met by an elementary domain $D$ corresponds to the triple points lying in the top side corresponding to $D$. Under this correspondence, the elementary domain $D(t)$ corresponds to the top side contained in $c_0$ starting at $t$.
Observe that since $\ts$ is sleek, $c_0$ is embedded, thus the elementary domains $D(t)$ have mutually disjoint interiors.

Now suppose there is an effective domain $D$ connecting $c_1 \in \mathcal{S}$ to $c_2 \in \mathcal{S}$. We consider the sets
\begin{align*}
\mathcal{T}^L_1 &:= \{t \in \mathcal{T} \mid \text{$t$ is a side vertex of a blue sector, $t \not\in c_1$, and $t \in c_2$}\} \\
\mathcal{T}^L_2 &:= \{t \in \mathcal{T} \mid \text{$t$ is a side vertex of a blue sector, $t \in c_1$, and $t \not\in c_2$}\} \\
\mathcal{T}^R_1 &:= \{t \in \mathcal{T} \mid \text{$t$ is a side vertex of a red sector, $t \not\in c_1$, and $t \in c_2$}\} \\
\mathcal{T}^R_2 &:= \{t \in \mathcal{T} \mid \text{$t$ is a side vertex of a red sector, $t \in c_1$, and $t \not\in c_2$}\}.
\end{align*}
We claim that $\mathcal{T}^L_2 = \mathcal{T}^R_1 = \varnothing$ and $D = \sum_{t \in \mathcal{T}^L_1} D(t) + \sum_{t \in \mathcal{T}^R_2} D(t)$. 

Take $t \in \mathcal{T}^L_1$. 
Then the Heegaard diagram near $D(t)$ is of the form in \Cref{fig:sfc=cc}.
Recall from \Cref{prop:admdomaincorners} that $D$ is embedded and the boundary of the support of $D$ consists of alternating arcs lying on $\alpha$ and $\beta$. 
Moreover, since $D$ connects $c_1$ to $c_2$, each $\alpha$ arc connects a component of $c_1$ to $c_2$ while each $\beta$-arc connects a component of $c_2$ to $c_1$. 
Thus $D$ must contain $D(t)$.
Similarly, $D$ must contain $D(t)$ for every $t \in \mathcal{T}^R_2$.

On the other hand, suppose we have $t \in \mathcal{T}^L_2$, then the Heegaard diagram near $D(t)$ is of the form in \Cref{fig:sfc=cc} but with $c_1$ and $c_2$ interchanged. In this case, we have a contradiction to the hypothesis that $D$ connects $c_1$ to $c_2$ (as opposed to connecting $c_2$ to $c_1$). Similarly, we have a contradiction if an element $t \in \mathcal{T}^R_1$ exists. 
This shows our claim that $\mathcal{T}^L_2 = \mathcal{T}^R_1 = \varnothing$ and $D = \sum_{t \in \mathcal{T}^L_1} D(t) + \sum_{t \in \mathcal{T}^R_2} D(t)$.

\begin{figure}
    \centering
    \selectfont\fontsize{10pt}{10pt}
    %% Creator: Inkscape 1.3 (0e150ed6c4, 2023-07-21), www.inkscape.org
%% PDF/EPS/PS + LaTeX output extension by Johan Engelen, 2010
%% Accompanies image file '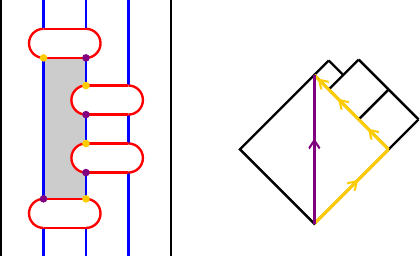' (pdf, eps, ps)
%%
%% To include the image in your LaTeX document, write
%%   \input{<filename>.pdf_tex}
%%  instead of
%%   \includegraphics{<filename>.pdf}
%% To scale the image, write
%%   \def\svgwidth{<desired width>}
%%   \input{<filename>.pdf_tex}
%%  instead of
%%   \includegraphics[width=<desired width>]{<filename>.pdf}
%%
%% Images with a different path to the parent latex file can
%% be accessed with the `import' package (which may need to be
%% installed) using
%%   \usepackage{import}
%% in the preamble, and then including the image with
%%   \import{<path to file>}{<filename>.pdf_tex}
%% Alternatively, one can specify
%%   \graphicspath{{<path to file>/}}
%% 
%% For more information, please see info/svg-inkscape on CTAN:
%%   http://tug.ctan.org/tex-archive/info/svg-inkscape
%%
\begingroup%
  \makeatletter%
  \providecommand\color[2][]{%
    \errmessage{(Inkscape) Color is used for the text in Inkscape, but the package 'color.sty' is not loaded}%
    \renewcommand\color[2][]{}%
  }%
  \providecommand\transparent[1]{%
    \errmessage{(Inkscape) Transparency is used (non-zero) for the text in Inkscape, but the package 'transparent.sty' is not loaded}%
    \renewcommand\transparent[1]{}%
  }%
  \providecommand\rotatebox[2]{#2}%
  \newcommand*\fsize{\dimexpr\f@size pt\relax}%
  \newcommand*\lineheight[1]{\fontsize{\fsize}{#1\fsize}\selectfont}%
  \ifx\svgwidth\undefined%
    \setlength{\unitlength}{201.3182993bp}%
    \ifx\svgscale\undefined%
      \relax%
    \else%
      \setlength{\unitlength}{\unitlength * \real{\svgscale}}%
    \fi%
  \else%
    \setlength{\unitlength}{\svgwidth}%
  \fi%
  \global\let\svgwidth\undefined%
  \global\let\svgscale\undefined%
  \makeatother%
  \begin{picture}(1,0.61109717)%
    \lineheight{1}%
    \setlength\tabcolsep{0pt}%
    \put(0,0){\includegraphics[width=\unitlength,page=1]{sfc=cc.pdf}}%
    \put(0.13163811,0.29080285){\color[rgb]{0,0,0}\makebox(0,0)[lt]{\lineheight{1.25}\smash{\begin{tabular}[t]{l}$D$\end{tabular}}}}%
    \put(0.67067933,0.25651916){\color[rgb]{0.50196078,0,0.50196078}\makebox(0,0)[lt]{\lineheight{1.25}\smash{\begin{tabular}[t]{l}$c_1$\end{tabular}}}}%
    \put(0.82785422,0.38114082){\color[rgb]{1,0.8,0}\makebox(0,0)[lt]{\lineheight{1.25}\smash{\begin{tabular}[t]{l}$c_2$\end{tabular}}}}%
  \end{picture}%
\endgroup%

    \caption{If there is an effective domain $D$ connecting $c_1 \in \mathcal{S}$ to $c_2 \in \mathcal{S}$ with Lipshitz index $1$, then $c_1$ and $c_2$ differ by a turn $t$ and $D=D(t)$.} 
    \label{fig:sfc=cc}
\end{figure}

Since each elementary domain $D(t)$ is an empty polygon in the sense of \Cref{defn:emptypolygon}, and $D(t)$ have mutually disjoint interiors, we can apply \Cref{defn:lipshitzindex} to compute the Lipshitz index of $D$ to be $\mu(D) = |\mathcal{T}^L_1| + |\mathcal{T}^R_2|$. Thus $D$ contributes to the differential of $SFC(\phi, \mathcal{C},\ts)$ only if $\mathcal{T}^L_1 = \varnothing$ and $\mathcal{T}^R_2$ consists of exactly one element, or vice versa.
Furthermore, in this case, by \Cref{thm:polygonscountonce}, $D$ contributes a term $c_2$ for $c_2 \in \mathcal{R}(c_1)$ if $\mathcal{T}^L_1 = \varnothing$, or contributes a term $c_2$ for $c_2 \in \mathcal{L}(c_1)$ if $\mathcal{T}^R_2 = \varnothing$.
Thus the differentials of $SFC(\phi,\mathcal{C},\ts)$ and $CC(C)$ coincide.
\end{proof}

\section{Applications}

\subsection{Orientable veering branched surfaces}

We say that a veering branched surface $B$ is \emph{orientable} if one can consistently assign an orientation to the tangent plane $T_pB$ as $p$ varies over points of $B$.

If a veering branched surface $B$ is orientable, then we say that a branch loop $b$ is \emph{eastward} if at every point of $b$, (orientation of $b$, maw coorientation of $b$) is a positively oriented basis of $TB$. Otherwise we say that $b$ is \emph{westward}. See \Cref{fig:orientablevbslocal}.

\begin{figure}
    \centering
    \selectfont\fontsize{8pt}{8pt}
    %% Creator: Inkscape 1.3 (0e150ed6c4, 2023-07-21), www.inkscape.org
%% PDF/EPS/PS + LaTeX output extension by Johan Engelen, 2010
%% Accompanies image file '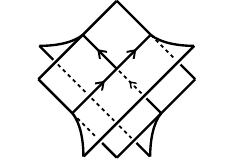' (pdf, eps, ps)
%%
%% To include the image in your LaTeX document, write
%%   \input{<filename>.pdf_tex}
%%  instead of
%%   \includegraphics{<filename>.pdf}
%% To scale the image, write
%%   \def\svgwidth{<desired width>}
%%   \input{<filename>.pdf_tex}
%%  instead of
%%   \includegraphics[width=<desired width>]{<filename>.pdf}
%%
%% Images with a different path to the parent latex file can
%% be accessed with the `import' package (which may need to be
%% installed) using
%%   \usepackage{import}
%% in the preamble, and then including the image with
%%   \import{<path to file>}{<filename>.pdf_tex}
%% Alternatively, one can specify
%%   \graphicspath{{<path to file>/}}
%% 
%% For more information, please see info/svg-inkscape on CTAN:
%%   http://tug.ctan.org/tex-archive/info/svg-inkscape
%%
\begingroup%
  \makeatletter%
  \providecommand\color[2][]{%
    \errmessage{(Inkscape) Color is used for the text in Inkscape, but the package 'color.sty' is not loaded}%
    \renewcommand\color[2][]{}%
  }%
  \providecommand\transparent[1]{%
    \errmessage{(Inkscape) Transparency is used (non-zero) for the text in Inkscape, but the package 'transparent.sty' is not loaded}%
    \renewcommand\transparent[1]{}%
  }%
  \providecommand\rotatebox[2]{#2}%
  \newcommand*\fsize{\dimexpr\f@size pt\relax}%
  \newcommand*\lineheight[1]{\fontsize{\fsize}{#1\fsize}\selectfont}%
  \ifx\svgwidth\undefined%
    \setlength{\unitlength}{111.62525688bp}%
    \ifx\svgscale\undefined%
      \relax%
    \else%
      \setlength{\unitlength}{\unitlength * \real{\svgscale}}%
    \fi%
  \else%
    \setlength{\unitlength}{\svgwidth}%
  \fi%
  \global\let\svgwidth\undefined%
  \global\let\svgscale\undefined%
  \makeatother%
  \begin{picture}(1,0.68153875)%
    \lineheight{1}%
    \setlength\tabcolsep{0pt}%
    \put(0,0){\includegraphics[width=\unitlength,page=1]{orientablevbslocal.pdf}}%
    \put(0.67630173,0.56440797){\color[rgb]{0,0,0}\makebox(0,0)[lt]{\lineheight{1.25}\smash{\begin{tabular}[t]{l}Eastward\end{tabular}}}}%
    \put(-0.00321947,0.56440797){\color[rgb]{0,0,0}\makebox(0,0)[lt]{\lineheight{1.25}\smash{\begin{tabular}[t]{l}Westward\end{tabular}}}}%
  \end{picture}%
\endgroup%

    \caption{When a veering branched surface $TB$ is orientable, we can label its branch loops as westward or eastward. In this figure, the orientation on $TB$ is induced from the standard one on the page.}
    \label{fig:orientablevbslocal}
\end{figure}

Every veering branched surface admits a 2-fold orientable cover. More precisely, if $B$ is a veering branched surface on a compact 3-manifold $M$, then either $B$ is orientable, or there exists a 2-fold cover $\widehat{M}$ of $M$ such that the lift $\widehat{B}$ of $B$ is orientable. Namely, the 2-fold cover is determined by the homomorphism $\pi_1 M \cong \pi_1 B \to \mathbb{Z}/2$ sending a loop $\alpha$ to $0$ if the pullback of $TB$ along $\alpha$ is orientable and $1$ otherwise.

\begin{lemma} \label{lemma:orientablevbsbranchloopemb}
Let $B$ be an orientable veering branched surface.
Any collection of eastward branch loops is an embedded multi-loop. 
Similarly, any collection of westward branch loops is an embedded multi-loop.
\end{lemma}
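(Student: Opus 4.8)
The plan is to reduce the statement to the local combinatorics at a single triple point. First I would recall that the branch locus $\brloc(B) = G$ is a $4$-valent graph on which the branch loops provide a smooth resolution at each triple point: each triple point has two smooth strands passing through it transversely, and each edge of $\brloc(B)$ belongs to a unique branch loop, so distinct branch loops are edge-disjoint. Consequently, a self-intersection of a branch loop, or an intersection point of two distinct branch loops, can only occur at a triple point, where two strands (either two strands of the same loop or one strand from each of two loops) cross transversely.

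The heart of the argument is the following local claim: \emph{at every triple point $v$, of the two strands passing through $v$, exactly one is eastward and the other is westward}. To prove it, identify $T_vB$ with $\mathbb{R}^2$ carrying the fixed orientation, take the two strands $b_1,b_2$ to be the coordinate axes, and normalize so that $b_1$ is oriented in the $+x$ direction. The veering condition \Cref{defn:vbs}(3) says that the oriented direction of $b_1$ points to the maw-coorientation side of $b_2$ and that the direction of $b_2$ points to the maw-coorientation side of $b_1$; plugging this into the local model (as in \Cref{fig:veeringcondition}) forces the maw coorientation of $b_2$ to point in the $+x$ direction and the maw coorientation of $b_1$ to point in the same direction as the orientation of $b_2$. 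A one-line determinant check then shows that the ordered frames $(\text{orientation of }b_1,\ \text{maw coorientation of }b_1)$ and $(\text{orientation of }b_2,\ \text{maw coorientation of }b_2)$ have opposite signs with respect to the fixed orientation of $T_vB$, which by definition (see \Cref{fig:orientablevbslocal}) says exactly that one strand is eastward and the other westward. I would also note that being eastward or westward is a well-defined property of an entire branch loop, since the sign of the frame $(\text{orientation},\ \text{maw coorientation})$ is continuous along the loop and the loop is connected.

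Granting the local claim, the lemma is immediate. If a branch loop $b$ passed through a triple point $v$ twice, both strands at $v$ would lie on $b$, hence be simultaneously eastward (if $b$ is eastward) or simultaneously westward (if $b$ is westward), contradicting the claim; so every branch loop is embedded. If two distinct eastward branch loops met at a triple point $v$, the strand coming from each would again be eastward, the same contradiction; so a collection of eastward branch loops consists of pairwise disjoint embedded loops, i.e.\ is an embedded multi-loop. Interchanging ``eastward'' and ``westward'' gives the statement for westward branch loops.

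The step I expect to be the main obstacle is pinning down the local claim rigorously: one must be precise about what ``the orientation of one branch loop induces the maw coorientation of the other'' means as a relation between a tangent direction and a transverse direction, and then verify honestly that exchanging the two transverse strands reverses the sign of the associated $2$-frame. Once the local model at a triple point is set up to match \Cref{fig:veeringcondition}, this is a short and robust computation, and everything else is formal bookkeeping.
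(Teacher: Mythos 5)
Your proof is correct and follows essentially the same route as the paper's (one-sentence) proof. The paper simply asserts the key fact that ``each triple point meets one eastward and one westward branch loop'' and immediately deduces the lemma; you isolate the same fact as the local claim and then verify it by a determinant computation against the veering condition \Cref{defn:vbs}(3). The final deduction from the local claim (both self-intersections and intersections of distinct eastward loops would force two eastward strands at a triple point) is exactly as in the paper.
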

\begin{proof}
Each triple point meets one eastward and one westward branch loop, so eastward branch loops cannot intersect each other (nor self-intersect), and same for westward branch loops.
\end{proof}

\begin{cor}
Let $B$ be an orientable veering branched surface. Suppose $B$ has $2N$ branch loops. Then $\dim SFH(Q) \geq 2^{N+1}$.
\end{cor}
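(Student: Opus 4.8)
The plan is to manufacture a large family of pairwise distinct \emph{sleek} summands of $SFC(\phi,\mathcal{C})$ and then invoke \Cref{thm:insulatedorbithomology}, which says each such summand carries one-dimensional homology. Since $B$ is orientable, each of its $2N$ branch loops is eastward or westward; write $\mathcal{E}$ and $\mathcal{W}$ for these two sets and put $p=|\mathcal{E}|$, $q=|\mathcal{W}|$, so $p+q=2N$. For any subset $E\subseteq\mathcal{E}$, \Cref{lemma:orientablevbsbranchloopemb} gives that $c_E:=\bigcup E$ is an embedded multi-loop of $G$, hence of $G_+$; by \Cref{prop:embbranchloopssleek} it is sleek, so $\gamma_E:=\mathcal{F}(c_E)$ is a sleek multi-orbit, \Cref{lemma:sleekgrading} collects the states with this multi-orbit into a single class $\ts_E\in\widetilde{\Spinc}\hd$, and \Cref{thm:insulatedorbithomology} yields $H_*(SFC(\phi,\mathcal{C},\ts_E))\cong\mathbb{F}$. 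Running the same construction over subsets $W\subseteq\mathcal{W}$ gives sleek classes $\ts_W$ with one-dimensional homology. Thus the first step produces $2^{p}$ summands from $\mathcal{E}$ and $2^{q}$ from $\mathcal{W}$, each contributing $1$ to $\dim SFH(Q)$.

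Next I would check that these summands are essentially pairwise distinct. The assignment $E\mapsto c_E$ is injective on subsets of $\mathcal{E}$ (a union of branch loops remembers which loops compose it), similarly on subsets of $\mathcal{W}$, and since no branch loop is both eastward and westward, $c_E=c_W$ forces $E=W=\varnothing$. To promote this to distinctness of the $\ts$-classes I would use that, by \Cref{prop:embbranchloopssleek}, a union of branch loops cannot be swept, i.e.\ its sweep-equivalence class is a singleton; as $\mathcal{F}$ is a bijection on sweep-equivalence classes (\Cref{thm:pavbscorr}), distinct $c_E$ or $c_W$ determine distinct multi-orbits, hence distinct $\ts$-gradings by \Cref{prop:sleekclasssleekorbit}. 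This gives $2^{p}+2^{q}-1$ distinct one-dimensional summands, the single coincidence being the empty multi-orbit, whose class is that of $\xb$. To recover the missing summand I would add $SFC(\phi,\mathcal{C},\s_{\phi^\sharp})$: this is one-dimensional since $\xt$ is the unique generator in the $\spinc$-grading $\s_{\phi^\sharp}$ (equivalently, $\gamma_{\xt}$ is sleek), and it differs from every $\ts_E,\ts_W$ because $\mu_{\xt}$ is the union of the diagonal edges of $G_+$, one per sector, which is not a union of branch loops and hence not sweep-equivalent to any $c_E$ or $c_W$. Altogether $\dim SFH(Q)\geq 2^{p}+2^{q}$, and since $p+q=2N$ the inequality of arithmetic and geometric means gives $2^{p}+2^{q}\geq 2\sqrt{2^{p}2^{q}}=2\cdot 2^{N}=2^{N+1}$, completing the argument.

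The main obstacle I anticipate is the distinctness bookkeeping: pinning down exactly how much of \Cref{prop:embbranchloopssleek} ("cannot be swept around") is needed to guarantee that distinct unions of branch loops sit in distinct $\ts$-gradings, and making precise that the top grading $\s_{\phi^\sharp}$ really supplies the extra summand — namely that $\xt$ is its only generator (so its homology is $\mathbb{F}$) and that $\mu_{\xt}$ is not sweep-equivalent to any union of branch loops (so it is genuinely a new $\ts$-class, even in the torsion cases where $\s_{\phi^\sharp}$ and $\overline{\s_{\phi^\sharp}}$ might coincide as $\spinc$-structures). The remaining ingredients are routine: sleekness comes from \Cref{lemma:orientablevbsbranchloopemb} and \Cref{prop:embbranchloopssleek}, the homological input is \Cref{thm:insulatedorbithomology}, and the final count is just convexity of $x\mapsto 2^{x}$.
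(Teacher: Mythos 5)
Your construction of the $2^{p}+2^{q}-1$ sleek summands is exactly the paper's argument (unions of eastward, resp.\ westward, branch loops are embedded by \Cref{lemma:orientablevbsbranchloopemb}, sleek by \Cref{prop:embbranchloopssleek}, pairwise non-sweep-equivalent since a union of whole branch loops contains no diagonal edge and never traverses a bottom--top corner, so its sweep-class is a singleton, and each contributes $1$ by \Cref{thm:insulatedorbithomology}); your AM--GM step $2^{p}+2^{q}\geq 2^{N+1}$ is in fact slightly more careful than the paper, which simply asserts that exactly $N$ of the $2N$ loops are eastward.

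The gap is in the final extra dimension. You justify it by claiming that $\xt$ is the unique generator in the $\spinc$-grading $\s_{\phi^\sharp}$, ``equivalently, $\gamma_{\xt}$ is sleek.'' The parenthetical is false: $\mu_{\xt}$ consists of one diagonal edge per sector, and the paper observes that strumming any one of these edges produces a non-embedded multi-loop, so $\xt$ is \emph{not} sleek; and uniqueness of the generator in the top $\spinc$-grading is established nowhere in this paper and fails in general, since nothing prevents another state $\y$ from having $[\gamma_\y]=[\gamma_{\xt}]$ in $H_1(Y^\sharp)$ (the $\spinc$-grading only sees homology classes of multi-orbits). Without some such input, the summand $SFC(\phi,\mathcal{C},\s_{\phi^\sharp})$ could a priori have several generators and vanishing homology, and your count stalls at $2^{p}+2^{q}-1$. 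The paper closes this step differently: it cites \cite[Theorem 1.4]{AT25a}, which says that $\xt$ represents a \emph{nonvanishing} class in $SFH(Y^\sharp)$, and then uses \Cref{lemma:sleekgrading} (a sleek $\ts$-grading contains only sleek states) together with the non-sleekness of $\xt$ to conclude that this nonzero class lives outside every sleek summand, hence is a genuinely new dimension. If you replace your uniqueness claim by this citation, the rest of your argument (including the distinctness bookkeeping) goes through.
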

\begin{proof}
There are $N$ eastward branch loops. From these we can form $2^N$ multi-loops that are unions of eastward branch loops, including the empty union. By \Cref{lemma:orientablevbsbranchloopemb}, these are all embedded, so by \Cref{prop:embbranchloopssleek}, they are all sleek.
Similarly, there are $2^N$ multi-loops that are unions of westward branch loops. 
Accounting for double-counting the empty union, we have $2^{N+1}-1$ such branch multi-loops, determining $2^{N+1}-1$ sleek summands. 
By \Cref{thm:insulatedorbithomology}, each sleek summand contributes $1$ to $\dim SFH(Q)$.

The last dimension comes from the state $\xt$. This state corresponds to the multi-loop consisting of all bottom-to-top loops. In particular, it is not sleek since we can strum any one of the edges and create a self-intersection, so we cannot apply \Cref{thm:insulatedorbithomology}. But this also implies that $\xt$ does not lie in any sleek $\widetilde{\ts}$-grading, and we know that $x^\top$ determines a nonvanishing class by \cite[Theorem 1.4]{AT25a}, so we have an additional dimension that is not counted above.
\end{proof}

\subsection{Pseudo-Anosov maps}

Let $S$ be a closed oriented surface. An orientation-preserving homeomorphism $f:S \to S$ is \emph{pseudo-Anosov} if there exists a pair of transverse measured singular foliation $(\ell^s,\ell^u)$ such that $f$ contracts the leaves of $\ell^s$ and dilates the leaves of $\ell^u$ by a common factor $\lambda > 1$.

If $f$ is pseudo-Anosov, then the suspension flow on the mapping torus of $f$ is a pseudo-Anosov flow $\phi_f$. 
There is a correspondence between the periodic orbits of $f$ and the closed orbits of $\phi_f$. 
More precisely, we define the \emph{period} of a closed orbit $\gamma$ of $\phi_f$ to be the number of times which it intersects the fiber surface $S$.
If $x$ is a periodic orbit of $f$ of period $p$, then the suspension of $x$ is a primitive closed orbit of period $p$. Conversely, a primitive closed orbit of period $p$ determines a periodic orbit of period $p$, namely, the $p$ points where it intersects the fiber surface.

Given a finite collection $\mathfrak{c}$ of periodic orbits of $f$, we can obtain a homeomorphism $f^\sharp$ on $S^\sharp = S \backslash \nu(\mathfrak{c})$ by blowing up $f$ along the points in $\mathfrak{c}$. 
The suspension flow of $f^\sharp$ is the blow-up of $\phi_f$ along the collection $\mathcal{C}$ of closed orbits corresponding to $\mathfrak{c}$.
The discussion in the previous paragraph on the correspondence between periodic points and closed orbits carries through for blown-up pseudo-Anosov maps.

Finally, we observe that the flow $\phi_f$ has no perfect fits relative to any nonempty finite collection $\mathcal{C}$ of closed orbits that contains the singular orbits. 
In this context, we say that the corresponding veering branched surface $B$ is \emph{layered}. 

\begin{prop} \label{prop:leastorbitpantsirreducible}
Let $f^\sharp$ be a blown-up pseudo-Anosov map and let $B$ be the corresponding layered veering branched surface. Suppose $P$ is the least period among the periodic points of $f^\sharp$.
Then every closed multi-orbit $\gamma$ of $\phi^\sharp$ of period $n \in [P,2P-1]$ is a closed orbit and is pants-irreducible.
\end{prop}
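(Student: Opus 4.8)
The plan is to deduce both assertions from elementary estimates on the \emph{period} of closed orbits, where the period of a closed orbit is its (geometric) intersection number with the fiber $S^\sharp$ and the period of a multi-orbit is the sum of the periods of its components. The one input needed is that every component of a closed multi-orbit of $\phi^\sharp_f$ has period at least $P$: a primitive closed orbit is the suspension of a periodic point of $f^\sharp$, so its period equals that point's period and hence is $\geq P$; a non-primitive orbit is a multiple cover of a primitive one, so its period is a positive multiple of something $\geq P$. Consequently, if $\gamma$ had $k \geq 2$ components then its period would be $\geq kP \geq 2P > 2P-1 \geq n$, a contradiction, so $\gamma$ is a single closed orbit; and the same bound rules out $\gamma$ being a $d$-fold cover with $d \geq 2$ of a primitive orbit (that would again force period $\geq 2P$), so $\gamma$ is in fact primitive.

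For pants-irreducibility I would argue by contradiction. Suppose $\gamma$ is related to some multi-orbit $\gamma''$ by resolution along a Fried pants $\Pi$; by definition this means $\{\gamma, \gamma''\} = \{\gamma^\circ \cup \gamma_+,\ \gamma^\circ \cup \gamma_-\}$ for some multi-orbit $\gamma^\circ$, where $\gamma_+$ and $\gamma_-$ denote the flow-oriented multi-orbits formed by the positive and negative boundary circles of $\Pi$. Let $\eta$ be the closed $1$-form on $Y^\sharp_f$ obtained by pulling back a generator of $H^1(S^1)$ along the fibration $Y^\sharp_f \to S^1$, normalized so that $\int_\delta \eta$ equals the period of any closed orbit $\delta$ (this is legitimate because every intersection of an orbit with $S^\sharp$ is positive). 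Since $\partial \Pi = \gamma_+ - \gamma_-$ as flow-oriented $1$-manifolds, Stokes' theorem gives $\int_{\gamma_+}\eta - \int_{\gamma_-}\eta = \int_{\partial\Pi}\eta = \int_\Pi d\eta = 0$, i.e.\ $\gamma_+$ and $\gamma_-$ have the same period. In particular neither of them is empty, since otherwise the nonempty one would have period $0$; and since $\Pi$ is a pair of pants, one of $\gamma_+, \gamma_-$ is a single orbit and the other a union of two orbits.

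Because $\gamma$ is a single orbit, in whichever of the expressions $\gamma = \gamma^\circ \cup \gamma_\bullet$ holds (with $\gamma_\bullet \in \{\gamma_+, \gamma_-\}$ nonempty) we must have $\gamma^\circ = \varnothing$ and $\gamma_\bullet$ equal to the single-orbit side. So $\gamma$ equals that side, and the other side $\gamma'$ (which then equals $\gamma''$) consists of two closed orbits. Then the previous paragraph gives $\mathrm{period}(\gamma') = \mathrm{period}(\gamma) = n \leq 2P-1$, whereas the first paragraph gives $\mathrm{period}(\gamma') \geq 2P$, a contradiction. Hence $\gamma$ is pants-irreducible.

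The estimates on periods and the Stokes computation are routine. The only steps warranting some care — though neither is a genuine obstacle — are extracting from the definition of resolution that $\gamma^\circ$ is forced to vanish and that the single-orbit side of $\Pi$ is exactly $\gamma$, and keeping the orientation bookkeeping $\partial\Pi = \gamma_+ - \gamma_-$ consistent; the whole argument is essentially a flux estimate across the fiber.
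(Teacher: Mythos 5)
Your proof is correct and follows essentially the same route as the paper: the paper also shows $\gamma$ is a single orbit by noting that the periods of the components of a multi-orbit of period $n\leq 2P-1$ would have to include one smaller than $P$, and rules out a pants resolution by observing that the two boundary multi-orbits of a Fried pants are homologous, so the two-component side would again have period $n$ and hence a component of period less than $P$. Your Stokes/flux computation is just an explicit unwinding of that homology statement, and your extra care with the degenerate cases (nonempty $\gamma^{\circ}$, or a pants with all boundary components of one sign) tightens a step the paper passes over with a ``without loss of generality.''
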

\begin{proof}
We first show that $\gamma$ can consist of only one orbit. Otherwise suppose $\gamma = \{\gamma_1,\dots,\gamma_k\}$ with $k \geq 2$. 
Then $[\gamma] = [\gamma_1] + \dots + [\gamma_k]$ in $H_1(Y^\sharp)$. In particular, the periods of $\gamma_i$ must sum up to $n \leq 2P-1$. But since the period of each $\gamma_i$ must be positive, the period of at least one $\gamma_i$ must be less than $P$. This gives periodic orbits of $f^\sharp$ of period less than $P$, contradicting the definition of $P$.

Now suppose $\gamma$ can be resolved along a immersed Fried pants $P$. Without loss of generality suppose $\gamma$ is the positive boundary of $P$. Let $\gamma'$ be the negative boundary of $P$. Then $\gamma'$ consists of two closed orbits, say $\gamma'_1$ and $\gamma'_2$.

Meanwhile, $\gamma$ is homologous to $\gamma'$, so $[\gamma] = [\gamma'_1] + [\gamma'_2]$ in $H_1(Y^\sharp)$. 
Reasoning as in the first paragraph, we reach a contradiction.
\end{proof}

\begin{proof}[Proof of \Cref{thm:introperiodicpoint}]
Recall from \Cref{lemma:difference} and \Cref{lemma:epsilon=mu=gamma} that under the identification $\Spinc(Y^\sharp) \cong H_1(Y^\sharp)$ that sends $\overline{\mathfrak{s}_{\phi^\sharp}}$ to $0$, the $\spinc$-structure $\mathfrak{s}(\x)$ associated to a state $\x$ is $[\gamma_\x] \in H_1(Y^\sharp)$.

For $n \in [P,2P-1]$, every closed multi-orbit of period $n$ is a closed orbit and is pants-irreducible by \Cref{prop:leastorbitpantsirreducible}, thus sleek by \Cref{prop:pantsirreducibleimplysleek}, hence determines a sleek $\ts$-grading in $SFC(\phi,\mathcal{C},n)$.
Conversely, suppose $\ts(\x)$ is a $\ts$-grading where $\langle \mathfrak{s}(\x), [S^\sharp] \rangle = n$, then $\gamma_\x$ is a closed multi-orbit of period $n$ thus is sleek by \Cref{prop:leastorbitpantsirreducible} and \Cref{prop:pantsirreducibleimplysleek} as above.
This argument implies that every $\ts$-grading in $SFC(\phi,\mathcal{C},n)$ is sleek, and there are as many $\ts$-gradings in $SFC(\phi,\mathcal{C},n)$ as there are period $n$ closed orbits, which is in turn equals to $n$ times the number of period $n$ periodic points of $f^\sharp$.

Thus we conclude the theorem by \Cref{thm:insulatedorbithomology}.
\end{proof}

\begin{proof}[Proof of \Cref{cor:introknotfixedpoint}]
Since $f^\sharp$ does not have interior singularities, it is the blow-up of a pseudo-Anosov map $f:S \to S$ at a fixed point $\mathfrak{c}$. By the Poincare-Hopf theorem, $\mathfrak{c}$ must be a $(4g-2)$-pronged singular point of $f$.
\Cref{thm:introperiodicpoint} implies that 
$$\dim SFH(Y^\sharp,6g-4) = \text{\# fixed points of $f^\sharp$} = \text{\# interior fixed points of $f^\sharp$} + (8g-4).$$

Meanwhile, since we are assuming that the degeneracy slope of $f^\sharp$ is the meridian of $K$, $Y^\sharp$ is the sutured manifold obtained by placing $8g-4$ sutures along the meridian of $Y \backslash \nu(K)$. 
Let $Y^\natural$ be the sutured manifold obtained by placing two sutures along the meridian of $Y \backslash \nu(K)$ instead. 
Then $Y^\sharp$ can be decompose along product annuli into $Y^\natural$ and $4g-3$ solid tori, $T_1,\dots,T_{4g-3}$, each with 4 longitudinal sutures.
By \cite[Proposition 8.6]{Juh08}, this implies that $SFH(Y^\sharp) \cong SFH(Y^\natural) \otimes SFH(T_1) \otimes \dots \otimes SFH(T_{4g-3})$.
By \cite[Example 7.5]{Juh10}, $SFH(T_i) \cong \mathbb{F}\langle \z^{\top} \rangle \oplus \mathbb{F}\langle \z^{\bot} \rangle$, for each $i=1,\dots,4g-3$. Here the difference in $\spinc$-gradings for the generators $\z^{\top}$, $\z^{\bot}$ is a meridian curve in $Y \backslash \nu(K)$. 
Thus the image of $\xb$ in $SFH(Y^\natural) \otimes SFH(T_1) \otimes \dots \otimes SFH(T_{4g-3})$ is of the form $(\xb)^\natural \times (\z^{\bot})^{\otimes (4g-3)}$, and if we identify $\Spinc(Y^\natural) \cong H_1(Y^\natural)$ so that $\mathfrak{s}((\xb)^\natural)$ is sent to $0$, then we have 
$$\dim SFH(Y^\sharp,6g-3) = \dim SFH(Y^\natural,2g)=1$$
and
\begin{align*}
\dim SFH(Y^\sharp,6g-4) &= \dim SFH(Y^\natural,2g-1)+(4g-3) \dim SFH(Y^\natural,2g) \\
&= \dim SFH(Y^\natural,2g-1)+(4g-3).
\end{align*}

Now, by \cite[Proposition 9.2]{Juh06} that $\widehat{HFK}(Y,K) \cong SFH(Y^\natural)$. More specifically, under the identification $\Spinc(Y^\natural) \cong H_1(Y^\natural)$ that we have chosen, we have 
$$\widehat{HFK}(Y,K,n) \cong SFH(Y^\natural,n+g)$$
for every $n$. 
Combining all the equalities, we have 
\begin{align*}
\dim \widehat{HFK}(Y,K,g-1) &= \dim SFH(Y^\natural,2g-1) \\
&= \dim SFH(Y^\sharp,6g-4)-(4g-3) \\
&= \text{\# interior fixed points of $f^\sharp$} + (4g-1)
\end{align*}
as claimed.
\end{proof}

\bibliographystyle{alpha}

\bibliography{bib.bib}

\end{document}